\theoremstyle{plain}
\newtheorem{theorem}{\bf Theorem}[section]
\newtheorem{proposition}[theorem]{\bf Proposition}
\newtheorem{lemma}[theorem]{\bf Lemma}
\newtheorem{conjecture}[theorem]{\bf Conjecture}
\theoremstyle{definition}
\newtheorem{remark}[theorem]{\bf Remark}
\newcommand{\R}{\mathbb R}
\newcommand{\Z}{\mathbb Z}
\DeclareMathOperator{\Div}{div}
\numberwithin{equation}{section}
\begin{document}
\title[]{Solutions with clustering concentration layers to the Ambrosetti-Prodi type problem}

\author[]{Qiang Ren$^{\dag}$}
\thanks{$^\dag$ Q. Ren is supported by Inner Mongolia Autonomous Region Natural Science Foundation(no. 2024LHMS01001) and NSF of China(no. 12001298)}

\address[Qiang Ren]{School of Mathematical Sciences and Inner Mongolia Key Laboratory of Mathematical Modeling and Scientific Computing, Yuquan West Campus, Inner Mongolia University, Hohhot 010030, P.R. China} \email{renq@imu.edu.cn}

\date{Submitted in \today.}

\keywords{Ambrosetti-Prodi type problem, Hollman McKenna conjecture, clustering concentration layers, resonance phenomena}

\subjclass[2010]{35B25, 35B40}

\begin{abstract}
We consider the following Ambrosetti-Prodi type problem
\begin{equation}\label{e50}
\left\{\begin{array}{ll}
-\Div (A(x)\nabla u)=|u|^p-\tilde{t}\mathbf{\Psi}(x), &\mbox{in $\Omega$,} \\
u=0, & \mbox{on $\partial \Omega$},
\end{array}
\right.
\end{equation}
where $\Omega \subset \R^2$, $\tilde{t}>0$, $p>3$ and  $\mathbf{\Psi}$ is an eigenfunction corresponding to the first eigenvalue of the following operator
\[\mathfrak{L}(u)=-\Div (A(x)\nabla u).\]
Moreover, $A(x)=\{A_{ij}(x)\}_{2\times 2}$ is a symmetric positive definite matrix-valued function.
Let $\Gamma \subset \Omega$ be a closed curve and also  a non-degenerate critical point of the functional
\[\mathcal{K}(\Gamma)=\int_\Gamma \mathbf{\Psi}^{\frac{p+3}{2p}}dvol_{\mathfrak{g}},\]
where $\mathfrak{g}(X,Y)=\langle A^*X,Y\rangle$ denotes a Riemannian metric on $\R^2$ and $A^*$ is the adjoint matrix for $A$.
We prove that there exists a sequence of $\tilde{t}=\tilde{t}_l\to +\infty$ such that problem \eqref{e50} admits solutions $u_{\tilde{t}_l}$ with clustering concentration layers directed along $\Gamma$.
\end{abstract}
\maketitle

\section{Introduction}
The Ambrosetti-Prodi problem is formulated as follows:
\begin{equation}\label{e51}
\left\{\begin{array}{ll}
-\Delta u=\zeta(u)-\tilde{t}\mathbf{\Psi}_0 (x), &\mbox{in $\Omega$,} \\
u=0,&\mbox{on $\partial \Omega$},
\end{array}\right.
\end{equation}
where $\tilde{t}>0$, $\Omega\subset \R^\mathcal{N}$ is a smooth bounded domain,  $\mathbf{\Psi}_0$ is  an eigenfunction of $-\Delta$ under the Dirichlet boundary condition corresponding to the first eigenvalue, the function $\zeta(t)$ satisfies
\[-\infty\le\mu=\lim_{t\to -\infty} \frac{\zeta(t)}{t}<\lim_{t\to +\infty} \frac{\zeta(t)}{t}=\nu\le +\infty,\]
and the interval $(\mu, \nu)$ contains some eigenvalues of $-\Delta$ with Dirichlet boundary condition.

Problem \eqref{e51} was first investigated by Ambrosetti and Prodi \cite{Ambrosetti_Prodi1972}, and it has been extensively studied since the 1980s( see, e.g., \cite{deFigueiredo1984,deFigueiredo_Solimini1984,Lazer_McKenna1981,Lazer_McKenna1984,Lazer_McKenna1988,Ruf_Srikanth986a,Ruf_Srikanth1986,Solimini1985,Ruf_Solimini1986}). The core results in these works show that if $\zeta(t)$ grows subcritical at infinity, problem \eqref{e51} has at least two solutions: one is local minimizer of the Euler-Lagrange functional, the other is the mountain-pass solution. Breuer, McKenna and Plum \cite{Breuer_McKenna_Plum2003} considered the case where $\zeta(t)=t^2$ and $\Omega$ is a unit square in $\R^2$. Through a computer assisted proof, they demonstrated  that \eqref{e51} has at least 4 solutions.  By comparing the Morse index of mountain pass solution in different spaces, de Figueiredo, Srikanth and Santra \cite{deFigueiredo_Srikanth_Santra2005}  obtained a non-radial solution of \eqref{e51}, where $\Omega$ is a unit ball and $\zeta(t)=t^2$.

When $\zeta(t)=|t|^p$ with $1<p<\frac{\mathcal{N}+2}{\mathcal{N}-2}$ for $\mathcal{N}\ge3$ and $p>1$ for $\mathcal{N}=2$, problem \eqref{e51} reduces to
\begin{equation}\label{e52}
\left\{\begin{array}{ll}
-\Delta u=|u|^p-\tilde{t}\mathbf{\Psi}_0 (x), &\mbox{in $\Omega$,} \\
u=0,&\mbox{on $\partial \Omega$}.
\end{array}\right.
\end{equation}
Dancer and Yan \cite{Dancer_Yan2005} constructed arbitrary many peak solutions of \eqref{e52} for $\tilde{t}>0$ large enough.  This result verifies that Lazer-McKenna conjecture \cite{Lazer_McKenna1981} is valid in this setting. Dancer and Yan \cite{Dancer_Yan2005} also proved that the mountain pass solution of \eqref{e51} has a sharp peak near the boundary as $\tilde{t}\to +\infty$. As a consequence,   \eqref{e51} admits solutions concentrating at some points in $\Omega$ or the  ones on $\partial \Omega$ as $\tilde{t}\to +\infty$. These results have been extended to various  nonlinearities( see \cite{Dancer_Yan2005a, Dancer_Santra2007, delPino_Munoz2006, Li_Yan_Yang2007, Li_Yan_Yang2006, Molle_Passaseo2010, Wei_Yan2007} for instance and references therein).

However, these studies above only focus on point-concentrating solutions. Based on numerical evidence, Hollman and McKenna \cite{Hollman_McKenna2011} posed the question: do there exist other types of concentration behaviors  for the solutions of \eqref{e52} as $\tilde{t}\to+\infty$. Addressing this, Manna and Santra \cite{Manna_Santra2016} considered \eqref{e52} with $\Omega\subset \R^2$ and $p>2$, proving that \eqref{e52} has a family of solutions clustering along a closed curve $\Gamma\subset \Omega$, where $\Gamma$ is a non-degenerate critical point of the functional
\begin{equation}\label{e54}
\mathcal{K}_0(\Gamma)=\int_{\Gamma} \mathbf{\Psi}_0^{\frac{p+3}{2p}}(x)dvol.
\end{equation}
Subsequently, Khemiri, Mahmoudi and Messaoudi  \cite{Khemiri_Mahmoudi_Messaoudi2017} extended this result to higher dimensions. For $\mathcal{N}\ge 3$ and $\Omega$ containing a $k$-dimensional compact submanifold $\Gamma$( a non-degenerate critical point of the functional
\[\mathcal{K}_1(\Gamma)=\int_{\Gamma} \mathbf{\Psi}_0^{\left(1-\frac1p\right)\left(\frac{p+1}{p-1}-\frac{n-k}2\right)}(x)dvol),\]
they showed that there exists a sequence $\tilde{t}=\tilde{t}_j\to +\infty$ such that \eqref{e52} has solutions  $u_{\tilde{t}_j}$ with  concentration layers  near $\Gamma$.

Baraket \textit{et. al.} \cite{Baraket_Khemiri_Mahmoudi_Messaoudi2018} considered the following Neumann problem:
\begin{equation}\label{e53}
\left\{\begin{array}{ll}
-\Delta u=|u|^p-\tilde{t}\psi (x), &\mbox{in $\Omega$,} \\
\frac{\partial u}{\partial \mathbf{n}}=0,&\mbox{on $\partial \Omega$},
\end{array}\right.
\end{equation}
where  $\Omega\subset\R^\mathcal{N}$ is a smooth bound domain and $\mathbf{n}$ is the unit outward normal vector of $\partial\Omega$.
For $\mathcal{N}=2$, they proved that \eqref{e53} has a solution $u_{\tilde{t}}$ concentrating along a curve $\Gamma\subset \bar{\Omega}$ when $\tilde{t}>0$ is sufficiently large, with  $\Gamma$ being a non-degenerate critical point of the functional $\mathcal{K}_2(\Gamma)=\int_{\Gamma} \psi^{\frac{p+3}{2p}}(x)dvol$. The curve $\Gamma$ intersects $\partial\Omega$ at a right angle and divides $\Omega$ into two parts. Additionally, for $\psi\equiv1$ and  $\mathcal{N}\ge 2$, Bendahou, Khemiri and Mahmoudi \cite{Bendahou_Khemiri_Mahmoudi2020} constructed a new family of solutions to \eqref{e53} with numerous spikes concentrating along an interior straight line in $\Omega$ as $t\to+\infty$. Using a similar approach, Ao, Fu and Liu \cite{Ao_Fu_Liu2022} constructed solutions of \eqref{e53} concentrating along a boundary segment on $\partial \Omega$ in the two dimensional case.

These results reveal that the high-dimensional concentration behavior of Ambrosetti-Prodi type problems resembles that of nonlinear Schr\"{o}dinger equation:
\begin{equation}\label{e55}
-\varepsilon^2 \Delta u+V(y)u=u^p, \qquad \mathrm{in} \qquad \R^\mathcal{N}.
\end{equation}
In 2003, Ambrosetti, Malchiodi and Ni \cite{Ambrosetti_Malchiodi_Ni2003} proposed the following conjecture:
\begin{conjecture}
Let $\Gamma$ be a $k$-dimensional submanifold in $\R^\mathcal{N}$ and a nondegenerate critical point of the following functional
\[\mathcal{K}(\Gamma)=\int_\Gamma V^{\frac{p+1}{p-1}-\frac12(\mathcal{N}-k)}dvol,\]
where $1<p<\frac{\mathcal{N}+2-k}{\mathcal{N}-2-k}$ for $\mathcal{N}-k\ge 3$ and $p>1$ for $\mathcal{N}-k=2$.
Then there exists a family of solutions to \eqref{e55} concentrating along $\Gamma$ at least for a subsequence $\varepsilon=\varepsilon_j\to 0$.
\end{conjecture}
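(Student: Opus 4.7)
The plan is to construct solutions to \eqref{e55} that concentrate on $\Gamma$ via an infinite-dimensional Lyapunov--Schmidt reduction, adapting the techniques developed for concentration along curves and higher-dimensional submanifolds. I would first introduce Fermi coordinates $(y,z)\in \Gamma\times \R^{\mathcal{N}-k}$ in a tubular neighborhood of $\Gamma$, rescale $z=\varepsilon\xi$, and take as leading ansatz the rescaled ground state profile $u_\varepsilon^{(0)}(y,\xi)=V(y)^{\frac{1}{p-1}}U\!\left(V(y)^{1/2}\xi\right)$, where $U$ is the unique radial positive solution of $-\Delta U+U=U^p$ on $\R^{\mathcal{N}-k}$. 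The subcriticality assumption $p<(n+2-k)/(n-2+k)$ is exactly what ensures this fiber problem has a non-degenerate ground state with an $(\mathcal{N}-k)$-dimensional kernel.

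Next, I would refine the ansatz by building correctors $\varphi_1,\ldots,\varphi_K$ of order $\varepsilon,\varepsilon^2,\ldots$ which cancel successively the contributions from (i) the mean curvature of $\Gamma$, (ii) the tangential derivatives of $V$, and (iii) the Jacobian expansion of the Fermi chart. Each corrector solves a linear problem on the normal fiber driven by $L_y:=-\Delta_\xi +V(y)-p(u_\varepsilon^{(0)})^{p-1}$, a self-adjoint operator whose bounded kernel is spanned by the translations $\partial_{\xi_i}u_\varepsilon^{(0)}$. Solvability at each order is guaranteed by orthogonality to this kernel, so that the resulting approximate solution $\tilde u_\varepsilon$ has error $O(\varepsilon^K)$ in an exponentially weighted norm.

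The decisive step is the linear theory for the global operator $\mathcal{L}_\varepsilon\phi:=-\varepsilon^2\Delta\phi+V\phi-p\tilde u_\varepsilon^{p-1}\phi$. Decomposing $\phi$ along eigenfunctions $\{e_j(y)\}$ of a suitable Jacobi-type operator on $\Gamma$, the invertibility of $\mathcal{L}_\varepsilon$ modulo the approximate kernel $\mathrm{span}\{\partial_{\xi_i}u_\varepsilon^{(0)}\,e_j(y)\}$ reduces to uniform control of an infinite family of fiber operators of the form $L_y+\varepsilon^2\mu_j$. A standard contraction-mapping argument then produces a genuine solution whose position is parametrized by a small normal section $\Gamma'$ of $\Gamma$, and projecting onto the kernel yields a finite-dimensional reduced problem. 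A careful energy expansion shows the reduced functional is $\mathcal{J}_\varepsilon(\Gamma')=c_0\,\varepsilon^{\mathcal{N}-k}\mathcal{K}(\Gamma')+o(\varepsilon^{\mathcal{N}-k})$ in $C^2$-norm on nearby submanifolds, so the non-degeneracy of $\Gamma$ as a critical point of $\mathcal{K}$ yields via the implicit function theorem a critical point of $\mathcal{J}_\varepsilon$, hence a genuine solution of \eqref{e55}.

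The main obstacle is the \emph{resonance phenomenon} in the linear theory: the approximate kernel of $\mathcal{L}_\varepsilon$ has dimension growing like $\varepsilon^{-k}$, and for generic $\varepsilon$ the spectral gap between genuine and near-zero modes can collapse, destroying uniform invertibility. This is precisely why the conjecture asserts existence only for a subsequence $\varepsilon_j\to 0$: one must implement a gap/non-resonance argument, in the spirit of Del~Pino--Kowalczyk--Wei, to show that the set of ``good'' parameters on which $|\varepsilon^2\mu_j-\lambda|\gtrsim \varepsilon^\alpha$ for all relevant $\mu_j$ and all eigenvalues $\lambda$ of $L_y$ has positive density in $(0,\varepsilon_0)$, and then extract $\varepsilon_j$ from this set. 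Reconciling the $\varepsilon^{-\alpha}$ loss coming from the gap estimate against the size $\varepsilon^K$ of the error, and coupling this with the exponential decay of the approximate kernel elements transverse to $\Gamma$, is the central analytic difficulty, and the reason the full conjecture remains open in general dimension and codimension.
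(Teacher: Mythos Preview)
The statement you are attempting to prove is not a theorem of this paper at all: it is stated as a \emph{conjecture} (the Ambrosetti--Malchiodi--Ni conjecture) purely as historical background, and the paper offers no proof of it. Immediately after stating the conjecture, the paper simply records that it was resolved in stages by del~Pino--Kowalczyk--Wei \cite{delPino_Kowalczyk_Wei2007} ($\mathcal{N}=2$, $k=1$), Wang--Wei--Yang \cite{Wang_Wei_Yang2011} ($\mathcal{N}\ge 3$, $k=\mathcal{N}-1$), and finally Mahmoudi--S\'anchez--Yao \cite{Mahmoudi_Sanchez_Yao2015} for all cases. There is therefore no ``paper's own proof'' to compare your proposal against.

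Your sketch is a reasonable high-level outline of exactly the infinite-dimensional Lyapunov--Schmidt machinery used in those references: Fermi coordinates, a fiberwise ground-state ansatz with iterative correctors, linear theory modulo the approximate kernel, reduction to a Jacobi-type problem on $\Gamma$, and a gap condition to handle resonance (hence the subsequence $\varepsilon_j$). That much is correct in spirit. However, your final paragraph asserts that ``the full conjecture remains open in general dimension and codimension,'' which directly contradicts what the paper reports: according to \cite{Mahmoudi_Sanchez_Yao2015} the conjecture is settled in all cases. So either you are unaware of that reference, or you believe there is a gap in it; in the latter case you should say so explicitly rather than presenting a proof plan that ends by declaring the problem open.
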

del Pino, Kowalczyk and Wei \cite{delPino_Kowalczyk_Wei2007} first confirmed this conjecture for $\mathcal{N}=2$ and $k=1$.  Wang, Wei and Yang \cite{Wang_Wei_Yang2011} further established its validity for  $\mathcal{N}\ge3$ and $k=\mathcal{N}-1$. Mahmoudi, Sanchez and Yao \cite{Mahmoudi_Sanchez_Yao2015}  verified it for all cases.

Moreover, the following more general Neumann problem for the nonlinear Schr\"{o}dinger equation on smooth bounded domain also admits solutions  concentrating on high-dimensional sets:
\begin{equation}\label{e90}
\left\{\begin{array}{ll}
\varepsilon^2 \Div(\nabla_{\mathfrak{a}(y)}u)-V(y)u+u^p=0, \qquad u>0, &\mbox{in $\Omega$,}\\
\nabla_{\mathfrak{a}(y)} u\cdot \mathbf{n}=0,  &\mbox{on $\partial\Omega$,}
\end{array}\right.
\end{equation}
where $\Omega\subset \R^{\mathcal{N}}$ is a smooth bounded domain, $\mathbf{n}$ is the unit outward normal of $\partial \Omega$,
\[\mathfrak{a}(y)=(\mathfrak{a}_1(y),\mathfrak{a}_2(y), \cdots, \mathfrak{a}_N(y))\]
and
\[\nabla_{\mathfrak{a}(y)}u=(\mathfrak{a}_1(y)u_{y_1}, \mathfrak{a}_2(y)u_{y_2}, \cdots, \mathfrak{a}_N(y)u_{y_N}).\]

In the case of $\mathfrak{a}\equiv 1$ and $V(y)\equiv 1$, \eqref{e90} becomes the following problem
\begin{equation}\label{e113}
\left\{\begin{array}{ll}
\varepsilon^2 \Delta u-u+u^p=0, \qquad u>0, &\mbox{in $\Omega$,}\\
\frac{\partial u}{\partial \mathbf{n}}=0,  &\mbox{on $\partial\Omega$,}
\end{array}\right.
\end{equation}
It is known as the stationary equation of Keller-Segal system in chemotaxis \cite{Lin_Ni_Takagi1988}. It can be also considered as a limiting stationary equation of Gierer-Meinhardt system in biological pattern formation \cite{Gierer_Meinhardt1973}. In the pioneering papers \cite{Lin_Ni_Takagi1988,Ni_Takagi1991,Ni_Takagi1993}, Lin, Ni and Takagi proved that the least energy solution to \eqref{e113} concentrate near a maximum point of the mean curvature of $\partial \Omega$ as $\varepsilon\to 0$. Since then, many paper discussed the solutions to \eqref{e113} concentrating near one or multiple points in $\bar{\Omega}$. The existence of solutions concentrating at some points in $\Omega$ was studied in \cite{Bates_Fusco2000,Dancer_Yan1999,delPino_Felmer_Wei2000,Grossi_Pistoia2000,Grossi_Pistoia2000,Grossi_Pistoia2000,Grossi_Pistoia2000,Gui_Wei1999,Wei1998} and references therein. The solutions concentrating near some points on $\partial\Omega$ were discussed in \cite{Bates_Dancer_Shi1999,Dancer_Yan1999a,delPino_Felmer_Wei1999,Gui_Wei_Winter2000,Li1998,Wei1998,Wei_Winter1998} and references therein. We refer to the survey paper \cite{Ni2004} for this topic up to 2004.

Ni conjectured in \cite{Ni1998} that \eqref{e113} admits  solutions concentrating on a $k$-dimensional subset of $\bar{\Omega}$. For $\mathcal{N}\ge 2$, Malchiodi and Montenegro \cite{Malchiodi_Montenegro2002,Malchiodi_Montenegro2004} obtained a sequence of solutions to \eqref{e113} that  concentrate near $\partial\Omega$ or one of its connected components. In three dimensional setting, Malchiodi \cite{Malchiodi2005} constructed a sequence of solutions to \eqref{e113} concentrating near a nondegenerate closed geodesic on $\partial\Omega$. These results were subsequently generalized by Mahmoudi and Malchiodi. In \cite{Mahmoudi_Malchiodi2007}, they built solutions to \eqref{e113} concentrating along a $k$-dimensional non-degenerate minimal submanifold for $1\le k\le \mathcal{N}-1$. However, in the two dimensional case, Wei and Yang \cite{Wei_Yang2007} constructed a sequence of solutions concentrating near a segment $\Gamma_2$ in $\Omega$, where $\Gamma_2$ intersects $\partial \Omega$ at a right angle and divides $\Omega$ into two parts. Unlike the solutions in  \cite{delPino_Kowalczyk_Wei2007, Wang_Wei_Yang2011, Mahmoudi_Sanchez_Yao2015},  Wei and Yang \cite{Wei_Yang2008} further constructed solutions to \eqref{e113} with  multiple concentration layers near $\Gamma_2$.

In the case of $\mathfrak{a}(y)\equiv 1$ and $\mathcal{N}=2$, Wei, Xu and Yang \cite{Wei_Xu_Yang2018} investigated \eqref{e90}. Let $\Gamma\subset\bar{\Omega}$ be a curve that intersects $\partial \Omega$ perpendicularly and dividing $\Omega$ into two subdomains. Under the assumption that $\Gamma$ is a nondegenerate critical point of $\int_\Gamma V^{\frac{p+1}{p-1}-\frac12}dvol$. They constructed a  sequence of solutions to \eqref{e90} concentrating along $\Gamma$. Recently, Wei and Yang \cite{Wei_Yang2021} extended the result in \cite{Wei_Yang2008} to the general case. They  constructed a sequence of solutions to \eqref{e90} with multiple concentration layers near either a closed curve $\Gamma_0\subset\Omega$ or a curve $\Gamma_1$ that intersects $\partial \Omega$ at a right angle and divides $\Omega$ into two parts. Furthermore, both $\Gamma_0$ and $\Gamma_1$ are  nondegenerate geodesics embedded in the Riemannian manifold $\R^2$ equipped with the metric $V^{\frac{2(p+1)}{p-1}-1}[\mathfrak{a}_2(y)dy_1^2+\mathfrak{a}_1(y)dy_2^2]$.

Inspired by \cite{Wei_Yang2021,Wei_Yang2008}, we consider the following problem:
\begin{equation}\label{2}
\left\{\begin{array}{ll}
-\Div (A(x)\nabla u)=|u|^p-\tilde{t}\mathbf{\Psi}(x), &\mbox{in $\Omega$,} \\
u=0, & \mbox{on $\partial \Omega$},
\end{array}
\right.
\end{equation}
where $\Omega \subset \R^2$ is a smooth bounded domain, $\tilde{t}>0$ is a constant, and  $\mathbf{\Psi}(x)$ is an eigenfunction corresponding to the first Dirichlet eigenvalue of the operator $\mathfrak{L}(u)=-\Div (A(x)\nabla u)$ on $\Omega$.
Moreover, $A(x)=\{A_{ij}(x)\}_{2\times 2}$ is a symmetric positive definite matrix-valued function, and satisfies the condition that
\begin{equation}\label{e98}
\lambda |\alpha|^2 \leq \langle A(x)\alpha, \alpha \rangle \leq \Lambda |\alpha|^2,\qquad \mathrm{for} \qquad \forall x,\alpha \in \R^n,
\end{equation}
where $\lambda$ and $\Lambda$ are positive constants.
Note that $\Div(\nabla_{\mathfrak{a}(x)}u)$ is a special form of $\Div (A(x)\nabla u)$. Building on the aforementioned work, we investigate  whether problem \eqref{2} admits solutions similar to those constructed in \cite{Wei_Yang2021,Wei_Yang2008}.

Our motivation for writing this paper is twofold. First, we aim to construct solutions to \eqref{2} with multiple concentration layers. Second, we intend to explore the influence of the matrix $A(x)$ on the high-dimensional concentration behaviors of the solutions to \eqref{2}.

Let $\varepsilon^2=\tilde{t}^{-(p-1)/p}$. It is straightforward to see that $u$ is a solution of \eqref{2} if and only if $\tilde{t}^{-\frac1p}u$ is a solution of the following problem
\begin{equation}\label{1}
\left\{\begin{array}{ll}
-\varepsilon^2\Div (A(x)\nabla u)=|u|^p-\mathbf{\Psi}(x), &\mbox{in $\Omega$,} \\
u=0, & \mbox{on $\partial \Omega$}.
\end{array}
\right.
\end{equation}
\begin{theorem}\label{th1}
Let $p>3$.
Assume that $\Gamma$ is a simple closed smooth curve of unit length in $\Omega$, which is also a non-degenerate critical point of the functional
\begin{equation}\label{e91}
\mathcal{K}(\Gamma)=\int_\Gamma \mathbf{\Psi}^{\frac{p+3}{2p}}dvol_{\mathfrak{g}},
\end{equation}
where $\mathfrak{g}(X,Y)=\langle A^*X,Y\rangle$ denotes a Riemannian metric on $\R^2$ and $A^*$ is the adjoint matrix of $A$. We further assume that the following inequality holds on $\Gamma$:
\begin{eqnarray*}
\Upsilon_0&=&-\alpha^{1-p}\left[ 2\alpha^{-1}\alpha'a_{22}+\beta^{-1}\beta'b_{11}+b_{22}-a_{33}+\frac{p+3}2\alpha^{p-2}\beta^{-2}\mathbf{q}_{tt}+\frac{p+2}{2(p+3)}\alpha^{1-p} \beta^2 (a_{32})^2\right. \\
&&\left.-\frac{p+1}{p+3}\alpha^{1-p}\beta^2 a_{32}b_{21}-\frac{2}{p+3}\alpha^{1-p}\beta^2(b_{21})^2 +a_{11}\left(\beta^{-1}\beta''+2\alpha^{-1}\beta^{-1}\alpha'\beta'-\beta^{-2}(\beta')^2\right)\right]>0,
\end{eqnarray*}
where $a_{ij}$'s and $b_{ij}$'s are defined in Lemma \ref{lm8}, $\alpha$ and $\beta$ are defined in \eqref{80}, and $\mathbf{q}$ is defined in \eqref{e88}.

Then for each  integer $N>0$, there exists a sequence of $\varepsilon$, \textit{i.e.}, $\{\varepsilon_l\}$ converging to $0$ such that \eqref{1} admits a positive solution $u_{\varepsilon_l}$ with exactly $N$ concentration layers, where the mutual distance between these layers is $O(\varepsilon_l|\log\varepsilon_l|)$. The center of mass of the $N$ concentration layers collapses to $\Gamma$ at a rate of $O(\varepsilon_l^{1+\mu})$ for some small positive constant $\mu\in(0,1/2)$. More precisely $u_{\varepsilon_l}$ has the following form:
\begin{eqnarray*}
u_{\varepsilon_l}(y_1,y_2)+\mathbf{\Psi}^{\frac1p}(y_1,y_2)\approx \mathbf{\Psi}^{\frac1p}(\gamma(\theta))\sum_{k=1}^N w\left(\left[\frac{\mathbf{\Psi}^{\frac{p-1}p}(1-\langle\gamma',n\rangle)}{\langle A^*n,n\rangle}\right]^{\frac12}\frac{t-\varepsilon_l f_k}{\varepsilon_l}\right),
\end{eqnarray*}
where $\gamma$ is a natural parametrization of $\Gamma$, $n$ is the unit vector defined in \eqref{e102} and $w$ is  the unique solution of the following problem:
\begin{equation}\label{29}
-w''=|1-w|^p -1, \qquad w>0 \quad \mathrm{in} \quad \R, \qquad w'(0)=w(\pm \infty)=0.
\end{equation}
In the expression above, the functions $f_j$'s satisfy:
\[\|f_j\|_{L^\infty(0,1)}\le C|\log\varepsilon_l|^2,\qquad \sum_{j=1}^N f_j =O(\frac1{|\log\varepsilon_l|^{\frac32}}),\]
\[\min_{1\le j\le N}(f_{j+1}-f_j)\approx\frac2{\sqrt{p}}|\log\varepsilon_l|\left[\frac{\langle A^*n,n\rangle}{\Psi^{\frac{p-1}p}(1-\langle\gamma',n\rangle)}\right]^{\frac12}\]
and solves the Jacobi-Toda system for $j=1,2,\cdots,N$:
\begin{eqnarray*}
\nonumber&&\varepsilon^2\alpha^{1-p}\beta\left\{-a_{11}f_j''+\left[a_{22}-b_{11}-a_{11}\left(\beta^{-1}\beta'+2\alpha^{-1}\alpha'\right)\right]f_j'+\left[a_{22}(\beta^{-1}\beta'+2\alpha^{-1}\alpha') +b_{22}-a_{33}\right]f_j\right. \\
\nonumber&&+\left[\frac{p+3}2\alpha^{p-2}\beta^{-2} \mathbf{q}_{tt}+\frac{p+2}{2(p+3)}\alpha^{1-p}\beta^2(a_{32})^2-\frac{p+1}{p+3}\alpha^{1-p}\beta^2 a_{32}b_{21}-\frac2{p+3}\alpha^{1-p}\beta^2 (b_{21})^2\right]f_j \\
&&+C_0 p\alpha_p\left[e^{-\sqrt{p}\beta(f_j-f_{j-1})}-e^{-\sqrt{p}\beta (f_{j+1}-f_j)}\right]\approx0.
\end{eqnarray*}
\end{theorem}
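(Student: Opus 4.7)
The plan is to carry out a finite-dimensional Lyapunov--Schmidt reduction, following the scheme of del~Pino--Kowalczyk--Wei \cite{delPino_Kowalczyk_Wei2007} and Wei--Yang \cite{Wei_Yang2008,Wei_Yang2021} adapted to the Riemannian metric $\mathfrak{g}$. The first step is to introduce Fermi coordinates $(\theta,t)$ in a tubular $\mathfrak{g}$-neighborhood of $\Gamma$, with $\theta$ arclength along $\gamma$ and $t$ the signed $\mathfrak{g}$-distance along the unit normal $n$. In these coordinates $\mathfrak{L}$ expands, after the blow-up $t=\varepsilon s$, as a one-dimensional operator in $s$ plus tangential derivatives and lower-order terms whose coefficients are precisely the functions $\alpha,\beta$ of \eqref{80} and the $a_{ij},b_{ij}$ of Lemma~\ref{lm8}. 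Since $-\mathbf{\Psi}^{1/p}$ solves the bulk equation identically, the whole analysis will be localized in a tube of width $O(\varepsilon|\log\varepsilon|)$ around $\Gamma$.

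Next I would construct an approximate solution
\begin{equation*}
U_\varepsilon(\theta,t) \;=\; -\mathbf{\Psi}^{1/p} \;+\; \mathbf{\Psi}^{1/p}(\gamma(\theta))\sum_{k=1}^{N} w(\xi_k) \;+\; \varepsilon\phi_1+\varepsilon^2\phi_2,
\end{equation*}
with $\xi_k$ the rescaled normal variable centered at $t=\varepsilon f_k(\theta)$ using the square-root normalization of the statement, and $w$ the solution of \eqref{29}. The correctors $\phi_j$ are obtained inductively by solving $L_0\phi:=-\phi''-p|1-w|^{p-1}\phi=R_j$ on $\R$, whose Fredholm condition $\int_{\R}R_j w'\,d\xi=0$ determines which part of each residue must be absorbed into the reduced equations for $\mathbf{f}=(f_1,\dots,f_N)$. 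One then seeks a genuine solution $u=U_\varepsilon+\phi$ with $\phi$ orthogonal, in a weighted $L^2$ pairing, to $w'(\xi_k)$, $k=1,\dots,N$; the linearization around $U_\varepsilon$ is a small perturbation of $N$ decoupled copies of $L_0$, and weighted-norm estimates for $L_0$ give, by contraction, $\phi=\phi(\varepsilon,\mathbf{f})$ of size $o(\varepsilon)$. Projecting back onto the $w'(\xi_k)$ and performing Laplace-type expansions of the overlap integrals between adjacent layers produces the exponential coupling $e^{-\sqrt{p}\beta(f_{j+1}-f_j)}-e^{-\sqrt{p}\beta(f_j-f_{j-1})}$ and reproduces exactly the Jacobi--Toda system of the theorem.

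The main obstacle is the solvability of this Jacobi--Toda system for $\mathbf{f}$. After extracting the leading logarithmic spacing $\tfrac{2}{\sqrt{p}}|\log\varepsilon|[\langle A^*n,n\rangle/\mathbf{\Psi}^{(p-1)/p}(1-\langle\gamma',n\rangle)]^{1/2}$, the remainders satisfy a coupled nonlinear system on $S^1$ whose linear part has the form $\varepsilon^2\mathcal{J}+\mathcal{T}$, where $\mathcal{J}$ is a self-adjoint second-order elliptic operator on $L^2(S^1)$ whose zero-order coefficient is exactly the expression $\Upsilon_0$, and $\mathcal{T}$ is the tridiagonal Toda interaction. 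The positivity $\Upsilon_0>0$ together with the non-degeneracy of $\Gamma$ as a critical point of $\mathcal{K}$ render $\mathcal{J}$ invertible, but its eigenvalues tend to $+\infty$, so $\varepsilon^2\mathcal{J}+\mathcal{T}$ is singular along a countable resonant set of $\varepsilon$. To bypass this I would invoke a gap lemma in the style of del~Pino--Kowalczyk--Wei: by Weyl asymptotics the spectrum of $\mathcal{J}$ is sufficiently spread that one can extract a sequence $\varepsilon_l\to 0$ on which the linearized system is invertible with operator norm at most $\varepsilon_l^{-2\mu}$ for any prescribed $\mu\in(0,1/2)$. A contraction argument on this sequence then solves the Toda system with $\|f_j\|_{L^\infty}=O(|\log\varepsilon_l|^2)$ and the prescribed mean constraint $\sum f_j=O(|\log\varepsilon_l|^{-3/2})$, completing the construction of $u_{\varepsilon_l}$.
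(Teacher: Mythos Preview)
Your overall architecture matches the paper's, but there is one structural omission that would make the linear step fail as you have written it. The linearization
\[
L_0\phi \;=\; \phi'' + p|w-1|^{p-2}(w-1)\phi
\]
around the profile $w$ of \eqref{29} does not have only the translational kernel $w'$: it also has a strictly positive principal eigenvalue $\lambda_0>0$ with a ground-state eigenfunction $Z$ (since $p|w-1|^{p-2}(w-1)\to -p$ at infinity, $w'$ has one sign change and is therefore only the \emph{second} eigenfunction). Consequently $L_0$ is \emph{not} coercive on $\{w'\}^\perp$, and your sentence ``weighted-norm estimates for $L_0$ give, by contraction, $\phi=\phi(\varepsilon,\mathbf f)$'' breaks down: the direction $Z$ is unstable and cannot be absorbed into the error.

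The paper repairs this by enlarging both the ansatz and the orthogonality. The approximate solution carries an extra modulation $\varepsilon e_k(\theta)Z(\xi_k)$ for each layer, and the remainder $\phi$ is required to be orthogonal to \emph{both} $w'(\xi_k)$ and $Z(\xi_k)$. Projection onto the $Z_j$'s then produces a second reduced system
\[
\varepsilon^2 a_{11}\alpha^{1-p}e_j'' + \lambda_0 e_j + p\alpha_p C_1\bigl[e^{-\sqrt p\beta(f_j-f_{j-1})}+e^{-\sqrt p\beta(f_{j+1}-f_j)}\bigr]=O(\varepsilon^{2}),
\]
coupled to your Jacobi--Toda system for $\mathbf f$. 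This $e$-equation has its own resonances (eigenvalues of a periodic Sturm--Liouville problem crossing $\lambda_0/\varepsilon^2$), so the gap lemma must be invoked \emph{twice}: once for the $\mathbf f$-system as you describe, and once more for the $\mathbf e$-system. The final sequence $\varepsilon_l$ is chosen to avoid both resonance sets simultaneously, and the coupled $(\mathbf f,\mathbf e)$ problem is closed by a combined contraction/Schauder argument.

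Two smaller points: (i) $-\mathbf\Psi^{1/p}$ does not solve the bulk equation exactly; the paper works with the genuine negative solution $\bar u_\varepsilon=-\mathbf\Psi^{1/p}+O(\varepsilon^2)$, whose existence and asymptotics require a separate argument. (ii) Your Jacobi operator $\mathcal J$ is not self-adjoint in general --- it carries a first-order coefficient $\Upsilon_1$ coming from the non-symmetric divergence form --- and the paper reduces it to Sturm--Liouville form by a Liouville transformation before applying Weyl asymptotics.
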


\begin{remark}
The condition that  $\Gamma$ is a nondegenerate critical point of the functional $\mathcal{K}(\Gamma)$ is equivalent to  $\Gamma$ being a non-degenerate geodesic embedded into the Riemannian manifold $(\R^2,\tilde{\mathfrak{g}})$, where $\tilde{\mathfrak{g}}(y)=\mathbf{\Psi}^{\frac{p+3}p}[A_{22}dy_1^2-2A_{12}(y)dy_1dy_2+A_{11}dy_2^2]$.

In the Jacobi-Toda system, we does not impose any boundary condition, since the solutions we found are clustered near some closed curve $\Gamma \subset \Omega$. In this case $\mathrm{dist}(\Gamma, \partial \Omega)>0$.
\end{remark}
To prove Theorem \ref{th1}, we will employ the infinite dimensional reduction method developed in \cite{delPino_Kowalczyk_Wei2007,delPino_Kowalczyk_Wei2008,Wei_Yang2021}. Infinite dimensional reduction method is commonly used to find solutions to elliptic partial differential equations that concentrate on high dimensional sets. To construct a solution to \eqref{1}, we first analyze the negative solution of \eqref{1} and their associated properties in Section \ref{sectA}. However, we must overcome the difficulty that  $-\Div (A(x)\nabla u)$ is not a symmetric operator( see Lemma \ref{lm1} for details).

To derive a local approximate solution to \eqref{1}, we need to expand the operator $\Div(A(\varepsilon y)\nabla u)$ in an appropriate manner. Wei and Yang \cite{Wei_Yang2021} developed a method to expand $\Div(\nabla_{\mathfrak{a}(y)}u)$, while we propose a simpler approach for the more general operator $\Div(A(\varepsilon y)\nabla u)$ in Lemma \ref{lm8}. However, there is a slight difference. The solution we constructed concentrates near a closed curve $\Gamma\subset\Omega$ that does not intersect the boundary. Thus, we do not need to consider the boundary condition when expanding $\Div(A(\varepsilon y)\nabla u)$. If we use the method in Lemma \ref{lm8} to consider the following more general nonlinear Schr\"{o}dinger equation
\begin{equation*}
\left\{\begin{array}{ll}
\varepsilon^2 \Div(A(x)\nabla u)-V(y)u+u^p=0, \qquad u>0, &\mbox{in $\Omega$,}\\
A(x)\nabla u\cdot \mathbf{n}=0,  &\mbox{on $\partial\Omega$,}
\end{array}\right.
\end{equation*}
we believe it can refine the results obtained in \cite{Wei_Yang2021}.

Similar to \cite{Wei_Yang2021}, constructing a local approximate solution with $N$ concentration layers requires refined asymptotic estimate of the function $w$. Nevertheless, $w$ admits no explicit expression( c.f. \cite[(3.1)]{Wei_Yang2021}). To overcome this difficulty, we use the method in \cite{Gidas_Ni_Nirenberg1981} to derive the asymptotic estimates of $w$ in the Section \ref{sectB}. The identity \eqref{e115} is crucial in constructing the solutions with $N$ concentration layers(see \eqref{e82}, \eqref{e110}and \eqref{e114}).

This paper is organized as follows. In Section \ref{sect1}, we establish a new modified Fermi coordinate system in a small neighborhood of $\Gamma$ and derive the local form of the operator $\Div (A(\varepsilon y)\nabla u)$ under the stretched modified Fermi coordinate. In Section \ref{sect2}, we construct a local approximate solution to \eqref{1}. Subsequently, we perform the gluing procedure in Section \ref{sect5} and solve the outer problem. To tackle  the inner problem, we develop  the linear and nonlinear theories in Section \ref{sect6} and Section \ref{sect9}, respectively. Through intricate calculations in Section \ref{sect4}, we reduce the original problem to a system of partial differential equations in Section \ref{sect7}.  Finally, Theorem \ref{th1} is proved in Section \ref{sect8}.

\section{geometric description}\label{sect1}
Let $\gamma: [0,1]\to \Gamma \subset \Omega$ be the natural parametrization of $\Gamma$ and $\nu$ be the outward unit normal vector to $\Gamma$. The following Frenet formula holds:
\[\gamma''(\theta)=k(\theta)\nu(\theta), \qquad \nu'(\theta)=-k(\theta)\gamma'(\theta),\]
where $k(\theta)$ represents the curvature of $\Gamma$. For sufficient small $\tilde{\delta}\in \left(0, \mathrm{dist}(\Gamma, \partial \Omega)\right)$, the $\tilde{\delta}$-neighborhood of $\Gamma$ is parameterized by
\[y=\gamma(\theta)+t\nu(\theta), \quad \mathrm{where} \quad \theta\in[0,1], \quad t\in(-\tilde{\delta}, \tilde{\delta}).\]

To construct solutions to \eqref{2} near $\Gamma$, we modify the Fermi coordinate above. Define the following unit vector on $\Gamma$ by
\begin{equation}\label{e102}
n(\theta)=\frac{A(\gamma(\theta))\nu(\theta)}{|A(\gamma(\theta))\nu(\theta)|}.
\end{equation}
Then the following map is a local diffeomorphism for $\delta_0>0$ small enough:
\[\Phi^0(\theta, t)=\gamma(\theta)+tn(\theta), \quad \mathrm{where} \quad \theta\in[0,1], \quad t\in(-\delta_0, \delta_0).\]
Under this local coordinate, the components of the standard Riemanian metric of $\R^2$ are
\[\tilde{g}_{11}(\theta,t)=\left\langle \frac{\partial \Phi^0}{\partial \theta}, \frac{\partial \Phi^0}{\partial \theta}\right\rangle= 1+2t\langle \gamma'(\theta), n'(\theta)\rangle +t^2 \langle n'(\theta), n'(\theta)\rangle,\]
\[\tilde{g}_{12}(\theta,t)=\tilde{g}_{21}(\theta,t)=\left\langle \frac{\partial \Phi^0}{\partial \theta}, \frac{\partial \Phi^0}{\partial t}\right\rangle=\langle \gamma'(\theta), n(\theta)\rangle\]
and
\[\tilde{g}_{22}(\theta,t)=\left\langle \frac{\partial \Phi^0}{\partial t}, \frac{\partial \Phi^0}{\partial t}\right\rangle=1.\]
Thus,
\[\det \tilde{g}=1-\langle \gamma'(\theta), n(\theta)\rangle^2+2t\langle \gamma'(\theta), n'(\theta)\rangle +t^2 \langle n'(\theta), n'(\theta)\rangle.\]
Note that $\det \tilde{g}>0$ for $\delta_0>0$ small enough, since $1-\langle \gamma'(\theta), n(\theta)\rangle^2>0$. Otherwise, if $1-\langle \gamma'(\theta), n(\theta)\rangle^2=0$, then $n(\theta)=\pm\gamma'(\theta)$. From \eqref{e102}, $A\nu=C\gamma'$ for some constant $C$, leading to
\[\lambda |\nu|^2\le \langle  A\nu,\nu\rangle=C\langle \gamma', \nu\rangle=0.  \]
Then we get a contradiction.

Direct computation yields the following expansions:
\[\tilde{g}^{11}(\theta,t)=\frac1{1-\langle \gamma', n\rangle^2}-t\frac{2\langle\gamma',n'\rangle}{(1-\langle \gamma', n\rangle^2)^2}+t^2 \left[\frac{4\langle\gamma',n'\rangle^2}{({1-\langle \gamma', n\rangle}^2)^3}-\frac{\langle n',n'\rangle}{({1-\langle \gamma', n\rangle}^2)^2}\right]+O(t^3),\]
\[\tilde{g}^{12}(\theta,t)=\tilde{g}^{21}(\theta,t)=-\frac{\langle \gamma',n \rangle}{1-\langle \gamma', n\rangle^2}+ t\frac{2\langle \gamma',n\rangle\langle \gamma',n'\rangle}{(1-\langle \gamma', n\rangle^2)^2}+t^2\left[\frac{\langle \gamma',n\rangle\langle n',n'\rangle}{(1-\langle \gamma', n\rangle^2)^2}-\frac{4\langle \gamma',n\rangle\langle \gamma',n'\rangle^2}{(1-\langle \gamma', n\rangle^2)^3}\right]+O(t^3)\]
and
\[\tilde{g}^{22}(\theta,t)=\frac1{1-\langle \gamma', n\rangle^2}-t\frac{2\langle\gamma',n'\rangle\langle \gamma',n\rangle^2}{(1-\langle \gamma', n\rangle^2)^2}+t^2 \left[\frac{4\langle\gamma',n'\rangle^2\langle \gamma',n \rangle^2}{({1-\langle \gamma', n\rangle}^2)^3}-\frac{\langle n',n'\rangle\langle \gamma',n\rangle^2}{({1-\langle \gamma', n\rangle}^2)^2}\right]+O(t^3).\]

For notation simplicity, we denote
\[A(\theta, t)=A(\gamma(\theta)+tn(\theta)), \qquad {A^*}(\theta, t)={A^*}(\gamma(\theta)+tn(\theta))\]
and
\begin{equation}\label{e88}
\mathbf{q}(\theta, t)=\mathbf{\Psi}^{\frac1p}(\gamma(\theta)+tn(\theta)).
\end{equation}
Recall the functional $\mathcal{K}(\Gamma)$ is defined in \eqref{e91}. We have the following lemma.

\begin{lemma}\label{lm5}
If the simple closed curve $\Gamma\subset \Omega$ is a non-degenerate critical point of the functional $\mathcal{K}(\Gamma)$, then:
\begin{enumerate}
  \item It holds that
  \begin{equation}\label{33}
  \frac{p+3}2 \langle{A^*}\gamma',\gamma' \rangle \mathbf{q}_t=-\left(\langle {A^*}n',\gamma'\rangle +\frac12\langle{A^*_t}\gamma',\gamma'\rangle\right)\mathbf{q} \quad \mathrm{on} \quad \Gamma.
  \end{equation}
  \item The following equation has only trivial solution
  \begin{eqnarray}\label{42}
  \nonumber&&-\left(\frac{\langle {A^*}n,n\rangle}{\sqrt{\langle {A^*}\gamma',\gamma'\rangle}}\mathbf{q}^{\frac{p+3}2} h'\right)'+ \frac{\mathbf{q}^{\frac{p+3}2}}{\sqrt{\langle {A^*}\gamma',\gamma'\rangle}}\left(\langle{A^*}n',n'\rangle+2\langle{A^*_t} n',\gamma'\rangle+\frac12\langle{A^*_{tt}} \gamma',\gamma'\rangle \right) h \\
  \nonumber&&  -\left[\frac{\mathbf{q}^{\frac{p+3}2}}{\sqrt{\langle{A^*}\gamma',\gamma' \rangle}}\left(\langle{A^*}n,n'\rangle +\langle {A^*_t} n,\gamma'\rangle  \right)\right]_\theta h  -\frac{\mathbf{q}^{\frac{p+3}2}}{\langle{A^*}\gamma',\gamma' \rangle^{\frac32}}\left(\langle{A^*}n',\gamma'\rangle +\frac12\langle {A^*_t} \gamma',\gamma'\rangle\right)^2 h \\
  && +\left[\frac{p+3}2\mathbf{q}^{\frac{p+1}2}\mathbf{q}_{tt}-\frac{(p+3)(p+5)}4\mathbf{q}^{\frac{p-1}2}(\mathbf{q}_t)^2\right]\sqrt{\langle{A^*}\gamma',\gamma' \rangle} h=0.
\end{eqnarray}
\end{enumerate}
\end{lemma}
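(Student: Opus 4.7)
The plan is to recognize both claims as the standard first- and second-variation identities for the functional $\mathcal{K}(\Gamma)$ at a critical point $\Gamma$. I would work with normal perturbations $\gamma_s(\theta) = \gamma(\theta) + s h(\theta) n(\theta)$, where $h : [0,1]\to \R$ is a smooth $1$-periodic test function, and compute $\mathcal{K}(\gamma_s)$ using the fact that along this family the integrand is $\mathbf{q}^{(p+3)/2}$ (since $\mathbf{q} = \mathbf{\Psi}^{1/p}$) and the arc-length element is $\sqrt{\langle A^*(\gamma_s)\gamma_s',\gamma_s'\rangle}\,d\theta$.

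For part (1), I would differentiate $\mathcal{K}(\gamma_s)$ at $s=0$. Three contributions arise: (a) the derivative of $\mathbf{q}^{(p+3)/2}$ along $n$ produces $\tfrac{p+3}{2}\mathbf{q}^{(p+1)/2}\mathbf{q}_t h$; (b) the derivative of $A^*(\gamma_s)$ contributes $h A^*_t$ inside the inner product; (c) the derivative of $\gamma_s'$ contributes $h' n + h n'$. After one integration by parts to absorb $h'$ and factoring out the common positive quantity $\mathbf{q}^{(p+1)/2}/\sqrt{\langle A^*\gamma',\gamma'\rangle}$, the vanishing of the first variation for arbitrary $h$ yields \eqref{33} pointwise on $\Gamma$; this is precisely the Euler--Lagrange equation of $\mathcal{K}$.

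For part (2), having established \eqref{33}, I would compute the second variation $\tfrac{d^2}{ds^2}\mathcal{K}(\gamma_s)|_{s=0}$. Expanding each of the three factors $\mathbf{q}^{(p+3)/2}(\gamma+shn)$, $A^*(\gamma+shn)$ and $\gamma_s'$ to second order in $s$, multiplying out and simplifying with the critical-point identity \eqref{33} to cancel the surviving first-order pieces, one obtains a self-adjoint quadratic form $\int_0^1 h\cdot(\mathcal{J}h)\,d\theta$ whose associated linear operator $\mathcal{J}$ is exactly the left-hand side of \eqref{42}. The non-degeneracy hypothesis on $\Gamma$---which by definition asserts that $\mathcal{J}$ has trivial kernel among $1$-periodic functions---then states precisely that \eqref{42} admits only the trivial solution.

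The main technical obstacle is bookkeeping rather than conceptual. The divergence-form term $-((\langle A^*n,n\rangle/\sqrt{\langle A^*\gamma',\gamma'\rangle})\mathbf{q}^{(p+3)/2} h')'$ at the top of \eqref{42} arises from one integration by parts applied to the $(h')^2$ contributions of the arc-length factor. The terms involving $\langle A^*_t n,\gamma'\rangle$, $\langle A^*_{tt}\gamma',\gamma'\rangle$ and $\mathbf{q}_{tt}$ come from the second Taylor coefficients of $A^*(\gamma+shn)$ and $\mathbf{q}(\gamma+shn)$, while the squared term $(\langle A^*n',\gamma'\rangle+\tfrac12\langle A^*_t\gamma',\gamma'\rangle)^2$ on the third line of \eqref{42} appears from squaring the first-order expansion of the square root in the arc-length element (a consequence of the Taylor formula $\sqrt{1+x} = 1+x/2-x^2/8+\cdots$). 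Keeping track of these cross terms and using \eqref{33} systematically to eliminate the first-order residues is the only delicate step.
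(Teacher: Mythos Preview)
Your proposal is correct and follows the same approach as the paper: one introduces the normal perturbations $\gamma_h(\theta)=\gamma(\theta)+h(\theta)n(\theta)$, sets $J(h)=\mathcal{K}(\Gamma_h)$, and reads off \eqref{33} from $J'(0)=0$ and \eqref{42} from the nondegeneracy of $J''(0)$. One small remark: in part~(1) no integration by parts is actually needed, because the specific choice $n=A\nu/|A\nu|$ forces $\langle A^*\gamma',n\rangle=\tfrac{\det A}{|A\nu|}\langle\gamma',\nu\rangle=0$ on $\Gamma$, so the $h'$ contribution to the first variation of $\langle A^*\gamma_h',\gamma_h'\rangle$ vanishes identically; the paper uses this orthogonality explicitly when expanding $W(\theta,h)$.
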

\begin{proof}
For any $h\in C^\infty(\R/ \Z)$, consider the closed curves $\Gamma_h:\gamma_h(\theta)=\gamma(\theta)+h(\theta)n(\theta)$.
It is apparent that $\Gamma_0=\Gamma$. The functional $J(h):=\mathcal{K}(\Gamma_h)$ is of the following form:
\begin{equation}\label{21}
J(h)=\int_0^1 \mathbf{\Psi}^{\frac{p+3}{2p}}(\gamma_h(\theta))\sqrt{\langle{A^*}(\gamma_h(\theta))\gamma_h'(\theta), \gamma_h'(\theta) \rangle} d\theta =\int_0^1\mathbf{q}^{\frac{p+3}2}(\theta, h(\theta))\sqrt{W(\theta, h(\theta))}d\theta,
\end{equation}
where  $W(\theta, h(\theta)):=\langle{A^*}(\gamma_h(\theta))\gamma_h'(\theta), \gamma_h'(\theta) \rangle$. Hence $0$ is a non-degenerate critical point of $J(h)$. It holds that \[J'(0)h=0, \qquad \forall h\in C^\infty(\R/ \Z).\]

A direct computation yields that
\begin{eqnarray}\label{41}
\nonumber W(\theta, h(\theta))&=& \langle {A^*}\gamma',\gamma'\rangle +h(\theta)\left[2\langle {A^*}n', \gamma'\rangle +\langle {A^*_t} \gamma',\gamma'\rangle\right] +2h(\theta)h'(\theta)\left[ \langle {A^*}n, n' \rangle +\langle{A^*_t} n, \gamma' \rangle \right]\\
\nonumber &&+(h'(\theta))^2\langle {A^*}n,n \rangle+(h(\theta))^2\left[\langle {A^*}n',n' \rangle +2\langle {A^*_t}n',\gamma' \rangle +\frac12\langle {A^*_{tt}}\gamma',\gamma'\rangle\right]  \\
&&+O(|h|^3)+O(|h'||h|^2)+O(|h||h'|^2),
\end{eqnarray}
where we use the fact that $\langle A^* \gamma',n\rangle =\frac{\det A}{|A\nu|}\langle \gamma', \nu \rangle=0$ on $\Gamma$. Then
\[J'(0)h=\int_0^1\frac{\mathbf{q}^{\frac{p+1}2}}{\sqrt{\langle {A^*}\gamma',\gamma'\rangle}}\left[\frac{p+3}2\mathbf{q}_t\langle {A^*}\gamma',\gamma' \rangle +\mathbf{q}\left(\langle {A^*}n,\gamma'\rangle +\frac12\langle{A^*_t}\gamma',\gamma'\rangle\right) \right]hd\theta.\]
Thus, \eqref{33} holds. From \eqref{33}, \eqref{41} and direct computation,
\begin{eqnarray*}
&&J''(0)[h,h] \\
&=&\int_0^1\left[\frac{(p+3)(p+1)}4\mathbf{q}^{\frac{p-1}2}(\mathbf{q}_t)^2+\frac{p+3}2\mathbf{q}^{\frac{p+1}2}\mathbf{q}_{tt}\right]\sqrt{\langle {A^*}\gamma',\gamma'\rangle} h^2 d\theta \\
&& +\frac{p+3}2\int_0^1\frac{\mathbf{q}^{\frac{p+1}2}\mathbf{q}_t}{\sqrt{\langle {A^*}\gamma',\gamma'\rangle}}\left[2\langle {A^*}n',\gamma'\rangle +\langle {A^*_t}\gamma',\gamma'\rangle\right]h^2d\theta  \\
&&+ \int_0^1 \frac{\mathbf{q}^{\frac{p+3}2}}{\sqrt{\langle {A^*}\gamma',\gamma'\rangle}}\left[\left(\langle {A^*}n',n'\rangle +2\langle {A^*_t}n',\gamma' \rangle +\frac12\langle {A^*_{tt}}\gamma',\gamma'\rangle \right)h^2+\langle{A^*}n,n\rangle (h')^2\right. \\
&&\left. +2\left(\langle {A^*}n,n'\rangle +\langle {A^*_t} n,\gamma'\rangle\right)hh'\right]d\theta -\int_0^1 \frac{\mathbf{q}^{\frac{p+3}2}}{\langle{A^*}\gamma',\gamma' \rangle^{\frac32}}\left[\langle {A^*}n',\gamma'\rangle+ \frac12\langle{A^*_t} \gamma',\gamma'\rangle\right]^2 h^2d\theta \\
&=&\int_0^1\frac{\langle {A^*}n,n\rangle}{\sqrt{\langle {A^*}\gamma',\gamma'\rangle}}\mathbf{q}^{\frac{p+3}2}(h')^2d\theta +\int_0^1 \frac{\mathbf{q}^{\frac{p+3}2}}{\sqrt{\langle {A^*}\gamma',\gamma'\rangle}}\left(\langle{A^*}n',n'\rangle+2\langle{A^*_t} n',\gamma'\rangle+\frac12\langle{A^*_{tt}} \gamma',\gamma'\rangle \right)h^2 d\theta \\
&& -\int_0^1\left[\frac{\mathbf{q}^{\frac{p+3}2}}{\sqrt{\langle{A^*}\gamma',\gamma' \rangle}}\left(\langle{A^*}n,n'\rangle +\langle {A^*_t} n,\gamma'\rangle  \right)\right]_\theta h^2d\theta -\int_0^1 \frac{\mathbf{q}^{\frac{p+3}2}}{\langle{A^*}\gamma',\gamma' \rangle^{\frac32}}\left(\langle{A^*}n',\gamma'\rangle +\frac12\langle {A^*_t} \gamma',\gamma'\rangle\right)^2 h^2 \\
&& +\int_0^1 \left[\frac{p+3}2\mathbf{q}^{\frac{p+3}2}\mathbf{q}_{tt}-\frac{(p+3)(p+5)}4\mathbf{q}^{\frac{p-1}2}(\mathbf{q}_t)^2\right]\sqrt{\langle{A^*}\gamma',\gamma' \rangle} h^2.
\end{eqnarray*}
Since $0$ is a nondegenerate critical point of $J(h)$, we get \eqref{42} has only trivial solutions.
\end{proof}

In some neighborhood of $\Gamma_\varepsilon=\Gamma/\varepsilon$, we define the stretched modified Fermi coordinate by
\[\Phi_\varepsilon(z,s)=\frac1\varepsilon \Phi^0(\varepsilon z,\varepsilon s)=\frac1\varepsilon \left(\gamma(\varepsilon z) +\varepsilon s n(\varepsilon z)\right), \quad \mathrm{where} \quad z\in[0,1/\varepsilon], \quad s\in(-\delta_0/\varepsilon, \delta_0/\varepsilon).\]
Under this coordinate, the components of the Riemanian metric are
\[g_{11}(z,s)=\left\langle \frac{\partial \Phi_\varepsilon}{\partial z}, \frac{\partial \Phi_\varepsilon}{\partial z}\right\rangle= 1+2\varepsilon s\langle \gamma'(\varepsilon z), n'(\varepsilon z)\rangle +\varepsilon^2s^2 \langle n'(\varepsilon z), n'(\varepsilon z)\rangle,\]
\[g_{12}(z,s)=g_{21}(z,s)=\left\langle \frac{\partial \Phi_\varepsilon}{\partial z}, \frac{\partial \Phi_\varepsilon}{\partial s}\right\rangle=\langle \gamma'(\varepsilon z), n(\varepsilon z)\rangle\]
and
\[g_{22}(z,s)=\left\langle \frac{\partial \Phi_\varepsilon}{\partial s}, \frac{\partial \Phi_\varepsilon}{\partial s}\right\rangle=1.\]
Thus,
\[\det g=1-\langle \gamma'(\varepsilon z), n(\varepsilon z)\rangle^2+2\varepsilon s\langle \gamma'(\varepsilon z), n'(\varepsilon z)\rangle +\varepsilon^2 s^2\langle n'(\varepsilon z), n'(\varepsilon z)\rangle.\]
We obtain that $g^{ij}$'s have the following expansion
\[g^{11}(z,s)=\frac1{1-\langle \gamma', n\rangle^2}-\varepsilon s\frac{2\langle\gamma',n'\rangle}{(1-\langle \gamma', n\rangle^2)^2}+\varepsilon^2 s^2 \left[\frac{4\langle\gamma',n'\rangle^2}{({1-\langle \gamma', n\rangle}^2)^3}-\frac{\langle n',n'\rangle}{({1-\langle \gamma', n\rangle}^2)^2}\right]+O(\varepsilon^3s^3),\]
\begin{eqnarray*}
g^{12}(z,s)=g^{21}(z,s)&=&-\frac{\langle \gamma',n \rangle}{1-\langle \gamma', n\rangle^2}+ \varepsilon s\frac{2\langle \gamma',n\rangle\langle \gamma',n'\rangle}{(1-\langle \gamma', n\rangle^2)^2} \\
&&+\varepsilon^2 s^2\left[\frac{\langle \gamma',n\rangle\langle n',n'\rangle}{(1-\langle \gamma', n\rangle^2)^2}-\frac{4\langle \gamma',n\rangle\langle \gamma',n'\rangle^2}{(1-\langle \gamma', n\rangle^2)^3}\right]+O(\varepsilon^3 s^3)
\end{eqnarray*}
and
\[g^{22}(z,s)=\frac1{1-\langle \gamma', n\rangle^2}-\varepsilon s\frac{2\langle\gamma',n'\rangle\langle \gamma',n\rangle^2}{(1-\langle \gamma', n\rangle^2)^2}+\varepsilon^2 s^2 \left[\frac{4\langle\gamma',n'\rangle^2\langle \gamma',n \rangle^2}{({1-\langle \gamma', n\rangle}^2)^3}-\frac{\langle n',n'\rangle\langle \gamma',n\rangle^2}{({1-\langle \gamma', n\rangle}^2)^2}\right]+O(\varepsilon^3 s^3).\]

Next, we expand the operator $\Div(A(\varepsilon y)\nabla v)$ under the local coordinate $(z,s)$.
\begin{lemma}\label{lm8}
Under the stretched modified Fermi coordinate defined by $\Phi_\varepsilon(z,s)$, we get
\begin{eqnarray*}
\nonumber\Div(A(\varepsilon y)\nabla v)&=& a_{11}(\varepsilon z)v_{zz}+2\varepsilon s a_{22}(\varepsilon z)v_{sz} +\left(a_{31}(\varepsilon z)+\varepsilon s a_{32}(\varepsilon z) +\varepsilon^2 s^2 a_{33}(\varepsilon z)\right)v_{ss} \\
&& +\varepsilon b_{11}(\varepsilon z)v_z +\left(\varepsilon b_{21}(\varepsilon z)+\varepsilon^2 s b_{22}(\varepsilon z)\right) v_s +B_0(v),
\end{eqnarray*}
where
\[a_{11}(\theta)=\frac{\langle{A^*}n,n\rangle}{1-\langle\gamma',n\rangle^2},\qquad a_{22}(\theta)=-\frac{\langle{A^*}n,n'\rangle}{1-\langle\gamma',n\rangle^2}-\frac{\langle{A^*_t}n,\gamma'\rangle}{1-\langle\gamma',n\rangle^2}, \]
\[a_{31}(\theta)=\frac{\langle{A^*}\gamma',\gamma'\rangle}{1-\langle\gamma',n\rangle^2}, \qquad a_{32}(\theta)=\frac{\langle{A^*_t}\gamma',\gamma'\rangle}{1-\langle\gamma',n\rangle^2}+ \frac{2\langle{A^*}\gamma',n'\rangle}{1-\langle\gamma',n\rangle^2}-\frac{2\langle\gamma',n'\rangle\langle{A^*}\gamma',\gamma'\rangle}{\left(1-\langle\gamma',n\rangle^2\right)^2}, \]
\begin{eqnarray*}
a_{33}(\theta)&=& \frac1{1-\langle\gamma',n\rangle^2}\left(\langle{A^*}n',n'\rangle +2\langle{A^*_t}\gamma',n'\rangle+\frac12\langle{A^*_{tt}}\gamma',\gamma'\rangle\right)  \\
&&-\frac{2\langle\gamma',n'\rangle}{\left(1-\langle\gamma',n\rangle^2\right)^2}\left(\langle{A^*_t} \gamma',\gamma'\rangle +2\langle{A^*}\gamma',n'\rangle\right)  \\
&&+\left(\frac{4\langle\gamma',n'\rangle^2}{\left(1-\langle\gamma',n\rangle^2\right)^3}- \frac{\langle n',n'\rangle}{\left(1-\langle\gamma',n\rangle^2\right)^2}\right)\langle{A^*}\gamma',\gamma'\rangle,
\end{eqnarray*}
\[b_{11}(\theta)=\left[\frac{\langle{A^*}n,n\rangle}{1-\langle\gamma',n\rangle^2}\right]_{\theta}-\frac12\left(\frac1{1-\langle\gamma',n\rangle^2}\right)_\theta \langle{A^*}n,n\rangle -\frac{\langle{A^*}n,n'\rangle}{1-\langle\gamma',n\rangle^2}-\frac{\langle{A^*_t}n,\gamma'\rangle}{1-\langle\gamma',n\rangle^2},\]
\[b_{21}(\theta)=\frac{\langle\gamma',n'\rangle}{1-\langle\gamma',n\rangle^2}a_{31}+a_{32},\]
\[b_{22}(\theta)=\frac12\frac{(1-\langle\gamma',n\rangle^2)_\theta}{1-\langle\gamma',n\rangle^2}a_{22}+\left[\frac{\langle n',n'\rangle}{1-\langle\gamma',n\rangle^2}-\frac{2\langle\gamma',n'\rangle^2}{(1-\langle\gamma',n\rangle^2)^2}\right]a_{31}+\frac{\langle\gamma',n'\rangle}{1-\langle\gamma', n\rangle^2}a_{32} +\partial_\theta a_{22}+2a_{33}\]
and
\[B_0(v)=\varepsilon a_{12}(\varepsilon z,s)v_{zz}+\varepsilon^2 a_{23}(\varepsilon z,s)v_{zs}+\varepsilon^3a_{33}(\varepsilon z,s)v_{ss} +\varepsilon^2 b_{12}(\varepsilon z,s) v_z +\varepsilon^3 b_{23}(\varepsilon z,s)v_s.\]
In the expression above, funtions $a_{12}$, $a_{23}$, $a_{33}$, $b_{12}$ and $b_{23}$ are smooth functions satisfying the following estimate
\[|a_{12}(\varepsilon z, s)|\le C(1+|s|), \quad |a_{23}(\varepsilon z, s)|\le C(1+|s|^2), \quad |a_{33}(\varepsilon z, s)|\le C(1+|s|^3),\]
and
\[|b_{12}(\varepsilon z, s)|\le C(1+|s|), \quad |b_{23}(\varepsilon z, s)|\le C(1+|s|^2).\]
\end{lemma}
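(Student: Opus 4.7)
The plan is a direct chain-rule factorization combined with a two-dimensional adjugate identity. For any scalar $v$ viewed as a function of $(z,s)$ via $y=\Phi_\varepsilon(z,s)$, repeated use of the chain rule gives the clean decomposition
\begin{align*}
\Div\bigl(A(\varepsilon y)\nabla v\bigr) &= \langle A\nabla_y z,\nabla_y z\rangle\, v_{zz} + 2\langle A\nabla_y z,\nabla_y s\rangle\, v_{zs} + \langle A\nabla_y s,\nabla_y s\rangle\, v_{ss}\\
&\quad + \Div_y(A\nabla_y z)\, v_z + \Div_y(A\nabla_y s)\, v_s,
\end{align*}
with every coefficient evaluated at $y=\Phi_\varepsilon(z,s)$ and $A$ shorthand for $A(\varepsilon y)$; this decoupling of second- and first-order pieces is presumably the ``easier way'' alluded to in the introduction. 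Since the Jacobian $J=[\gamma'(\varepsilon z)+\varepsilon s n'(\varepsilon z),\,n(\varepsilon z)]$ has $\det J = \pm\sqrt{\det g}$, the adjugate formula for $J^{-1}$ expresses $\nabla_y z$ and $\nabla_y s$ as scalar multiples of $Rn(\varepsilon z)$ and $R(\gamma'(\varepsilon z)+\varepsilon s n'(\varepsilon z))$, where $R$ is the $\pi/2$-rotation. A direct computation on symmetric $2\times 2$ matrices yields $R^{T}AR=A^{*}$, equivalently $\langle ARv,Rw\rangle = \langle A^{*}v,w\rangle$.

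Substituting, the three quadratic coefficients reduce to the explicit ratios
\[
\langle A\nabla_y z,\nabla_y z\rangle = \tfrac{\langle A^{*}n,n\rangle}{\det g},\quad \langle A\nabla_y s,\nabla_y s\rangle = \tfrac{\langle A^{*}(\gamma'+\varepsilon s n'),\gamma'+\varepsilon s n'\rangle}{\det g},\quad \langle A\nabla_y z,\nabla_y s\rangle = -\tfrac{\langle A^{*}n,\gamma'+\varepsilon s n'\rangle}{\det g},
\]
with $A^{*}$ evaluated at $\gamma(\varepsilon z)+\varepsilon s n(\varepsilon z)$. I would then Taylor expand $A^{*}$ in the normal variable through order two and combine with the series for $1/\det g$ already displayed just above the lemma. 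The crucial simplification is $\langle A^{*}n,\gamma'\rangle=0$ on $\Gamma$, established in the proof of Lemma~\ref{lm5}: it forces the $v_{zs}$-coefficient to vanish at $s=0$ and begin at order $\varepsilon s$, producing precisely the structure $2\varepsilon s\, a_{22}(\varepsilon z)$. Reading off the $s$-independent pieces at each power of $\varepsilon s$ yields $a_{11},a_{22},a_{31},a_{32},a_{33}$, while the $O(\varepsilon^{3} s^{3})$ tails become $\varepsilon a_{12} v_{zz}+\varepsilon^{2} a_{23} v_{zs}+\varepsilon^{3} a_{33} v_{ss}$ with the polynomial $s$-growth inherited from Taylor remainders.

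For the first-order coefficients, I would rewrite
\[
\Div_y(A\nabla_y z) = \tfrac{1}{\sqrt{\det g}}\bigl[\partial_z(\sqrt{\det g}\,B^{11}) + \partial_s(\sqrt{\det g}\,B^{12})\bigr],
\]
and analogously for $\Div_y(A\nabla_y s)$, where $B^{\alpha\beta}$ are the quadratic coefficients just computed. Because every $(z,s)$-dependence enters through $(\varepsilon z,\varepsilon s)$, each derivative produces a factor $\varepsilon$, so both divergences are automatically $O(\varepsilon)$. Restricting to $s=0$ in the leading $O(\varepsilon)$ piece yields $\varepsilon b_{11}$ and $\varepsilon b_{21}$; pushing one Taylor order further in $s$ gives $\varepsilon^{2} s\, b_{22}$. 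All remaining terms of order $\varepsilon^{2}$ or higher are collected into $\varepsilon^{2} b_{12} v_z + \varepsilon^{3} b_{23} v_s$ with the stated polynomial $s$-growth.

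\emph{Main obstacle.} The conceptual skeleton is short; the substance is algebraic bookkeeping, specifically matching the intricate linear combinations given for $b_{11},b_{21},b_{22}$ against the combined contributions from derivatives of $\sqrt{\det g}$ and $B^{\alpha\beta}$. For example, the relation $b_{21}=\tfrac{\langle\gamma',n'\rangle}{1-\langle\gamma',n\rangle^{2}}a_{31}+a_{32}$ reflects a delicate cancellation between the $\partial_z(\sqrt{\det g}\,B^{12})$ and $\partial_s(\sqrt{\det g}\,B^{22})$ contributions, which requires careful use of $\langle A^{*}n,\gamma'\rangle=0$ together with its tangential derivative along $\Gamma$. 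Verifying each such explicit identity term by term, and confirming that every remainder's $s$-growth falls within the stated $C(1+|s|^{k})$ envelopes, is where the routine but delicate bulk of the work lies.
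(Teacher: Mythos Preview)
Your approach is essentially the paper's: both arrive at the identical coefficient formulas $X_1=\langle A^{*}n,n\rangle/\det g$, $X_2=-\langle A^{*}n,\gamma'+\varepsilon s\,n'\rangle/\det g$, $X_3=\langle A^{*}(\gamma'+\varepsilon s\,n'),\gamma'+\varepsilon s\,n'\rangle/\det g$, then Taylor-expand in $\varepsilon s$ and apply the divergence formula $(1/\sqrt{\det g})\,\partial_\alpha(\sqrt{\det g}\,X_{\cdot})$ to read off the $b_{ij}$. The only cosmetic difference is packaging: the paper reaches these $X_i$ via the Riemannian gradient/divergence formalism together with an explicit adjugate decomposition of $A\,\partial_\alpha\Phi_\varepsilon$ in the coordinate frame, while you use the equivalent direct chain-rule form plus the one-line identity $R^{T}AR=A^{*}$; the downstream bookkeeping is identical.
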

\begin{proof}
Under the stretched modified Fermi coordinate, we get the following expression from the definition of gradient operator in Riemannian manifold(c.f.\cite{Chen_Li2002}):
\[\nabla v=\left(g^{11}v_z+g^{12}v_s\right)\frac{\partial \Phi_\varepsilon}{\partial z}+\left(g^{21}v_z+g^{22}v_s\right)\frac{\partial \Phi_\varepsilon}{\partial s}.\]
Then
\begin{equation}\label{e89}
A(\varepsilon z,\varepsilon s)\nabla v=\left(g^{11}v_z+g^{12}v_s\right)A(\varepsilon z,\varepsilon s)\frac{\partial \Phi_\varepsilon}{\partial z}+\left(g^{21}v_z+g^{22}v_s\right)A(\varepsilon z,\varepsilon s)\frac{\partial \Phi_\varepsilon}{\partial s}.
\end{equation}
According to the method in linear algebra, we have
\begin{eqnarray}\label{22}
A(\varepsilon z,\varepsilon s)\frac{\partial \Phi_\varepsilon}{\partial z}&=&\left[g^{11}\left\langle A(\varepsilon z,\varepsilon s)\frac{\partial \Phi_\varepsilon}{\partial z}, \frac{\partial \Phi_\varepsilon}{\partial z}\right\rangle+g^{12}\left\langle A(\varepsilon z,\varepsilon s)\frac{\partial \Phi_\varepsilon}{\partial z}, \frac{\partial \Phi_\varepsilon}{\partial s}\right\rangle\right]\frac{\partial \Phi_\varepsilon}{\partial z} \\
\nonumber&& + \left[g^{21}\left\langle A(\varepsilon z,\varepsilon s)\frac{\partial \Phi_\varepsilon}{\partial z}, \frac{\partial \Phi_\varepsilon}{\partial z}\right\rangle+g^{22}\left\langle A(\varepsilon z,\varepsilon s)\frac{\partial \Phi_\varepsilon}{\partial z}, \frac{\partial \Phi_\varepsilon}{\partial s}\right\rangle\right]\frac{\partial \Phi_\varepsilon}{\partial s}
\end{eqnarray}
and
\begin{eqnarray}\label{23}
A(\varepsilon z,\varepsilon s)\frac{\partial \Phi_\varepsilon}{\partial s}&=&\left[g^{11}\left\langle A(\varepsilon z,\varepsilon s)\frac{\partial \Phi_\varepsilon}{\partial s}, \frac{\partial \Phi_\varepsilon}{\partial z}\right\rangle+g^{12}\left\langle A(\varepsilon z,\varepsilon s)\frac{\partial \Phi_\varepsilon}{\partial s}, \frac{\partial \Phi_\varepsilon}{\partial s}\right\rangle\right]\frac{\partial \Phi_\varepsilon}{\partial z} \\
\nonumber&& + \left[g^{21}\left\langle A(\varepsilon z,\varepsilon s)\frac{\partial \Phi_\varepsilon}{\partial s}, \frac{\partial \Phi_\varepsilon}{\partial z}\right\rangle+g^{22}\left\langle A(\varepsilon z,\varepsilon s)\frac{\partial \Phi_\varepsilon}{\partial s}, \frac{\partial \Phi_\varepsilon}{\partial s}\right\rangle\right]\frac{\partial \Phi_\varepsilon}{\partial s}.
\end{eqnarray}
Hence \eqref{e89} can be written into the following form
\[A(\varepsilon z,\varepsilon s)\nabla v=\left(X_1v_z+X_2 v_s\right)\frac{\partial \Phi_\varepsilon}{\partial z}+\left(X_2 v_z+X_3v_s\right)\frac{\partial \Phi_\varepsilon}{\partial s},\]
where
\begin{eqnarray*}
X_1&=&\left(g^{11}\right)^2 \left\langle A(\varepsilon z,\varepsilon s)\frac{\partial \Phi_\varepsilon}{\partial z}, \frac{\partial \Phi_\varepsilon}{\partial z}\right\rangle +2g^{11}g^{12} \left\langle A(\varepsilon z,\varepsilon s)\frac{\partial \Phi_\varepsilon}{\partial s}, \frac{\partial \Phi_\varepsilon}{\partial z}\right\rangle \\
 &&+\left(g^{12}\right)^2 \left\langle A(\varepsilon z,\varepsilon s)\frac{\partial \Phi_\varepsilon}{\partial s}, \frac{\partial \Phi_\varepsilon}{\partial s}\right\rangle,
\end{eqnarray*}
\begin{eqnarray*}
X_2&=&g^{11}g^{12} \left\langle A(\varepsilon z,\varepsilon s)\frac{\partial \Phi_\varepsilon}{\partial z}, \frac{\partial \Phi_\varepsilon}{\partial z}\right\rangle +\left[\left(g^{12}\right)^2+g^{11}g^{22}\right] \left\langle A(\varepsilon z,\varepsilon s)\frac{\partial \Phi_\varepsilon}{\partial s}, \frac{\partial \Phi_\varepsilon}{\partial z}\right\rangle \\
 &&+g^{12}g^{22} \left\langle A(\varepsilon z,\varepsilon s)\frac{\partial \Phi_\varepsilon}{\partial s}, \frac{\partial \Phi_\varepsilon}{\partial s}\right\rangle
\end{eqnarray*}
and
\begin{eqnarray*}
X_3&=&\left(g^{12}\right)^2 \left\langle A(\varepsilon z,\varepsilon s)\frac{\partial \Phi_\varepsilon}{\partial z}, \frac{\partial \Phi_\varepsilon}{\partial z}\right\rangle +2g^{12}g^{22} \left\langle A(\varepsilon z,\varepsilon s)\frac{\partial \Phi_\varepsilon}{\partial s}, \frac{\partial \Phi_\varepsilon}{\partial z}\right\rangle \\
 &&+\left(g^{22}\right)^2 \left\langle A(\varepsilon z,\varepsilon s)\frac{\partial \Phi_\varepsilon}{\partial s}, \frac{\partial \Phi_\varepsilon}{\partial s}\right\rangle.
\end{eqnarray*}
Recall ${A^*}$ is the adjoint matrix of $A$. From \eqref{22}, \eqref{23} and the method in linear algebra, we get
\begin{eqnarray*}
{A^*}(\varepsilon z,\varepsilon s)\frac{\partial \Phi_\varepsilon}{\partial z}&=&\left[g^{12}\left\langle A(\varepsilon z,\varepsilon s)\frac{\partial \Phi_\varepsilon}{\partial s}, \frac{\partial \Phi_\varepsilon}{\partial z}\right\rangle+g^{22}\left\langle A(\varepsilon z,\varepsilon s)\frac{\partial \Phi_\varepsilon}{\partial s}, \frac{\partial \Phi_\varepsilon}{\partial s}\right\rangle\right]\frac{\partial \Phi_\varepsilon}{\partial z} \\
\nonumber&& - \left[g^{12}\left\langle A(\varepsilon z,\varepsilon s)\frac{\partial \Phi_\varepsilon}{\partial z}, \frac{\partial \Phi_\varepsilon}{\partial z}\right\rangle+g^{22}\left\langle A(\varepsilon z,\varepsilon s)\frac{\partial \Phi_\varepsilon}{\partial z}, \frac{\partial \Phi_\varepsilon}{\partial s}\right\rangle\right]\frac{\partial \Phi_\varepsilon}{\partial s}
\end{eqnarray*}
and
\begin{eqnarray*}
{A^*}(\varepsilon z,\varepsilon s)\frac{\partial \Phi_\varepsilon}{\partial s}\nonumber &=& -\left[g^{11}\left\langle A(\varepsilon z,\varepsilon s)\frac{\partial \Phi_\varepsilon}{\partial s}, \frac{\partial \Phi_\varepsilon}{\partial z}\right\rangle+g^{12}\left\langle A(\varepsilon z,\varepsilon s)\frac{\partial \Phi_\varepsilon}{\partial s}, \frac{\partial \Phi_\varepsilon}{\partial s}\right\rangle\right]\frac{\partial \Phi_\varepsilon}{\partial z} \\
&& + \left[g^{11}\left\langle A(\varepsilon z,\varepsilon s)\frac{\partial \Phi_\varepsilon}{\partial z}, \frac{\partial \Phi_\varepsilon}{\partial z}\right\rangle+g^{12}\left\langle A(\varepsilon z,\varepsilon s)\frac{\partial \Phi_\varepsilon}{\partial z}, \frac{\partial \Phi_\varepsilon}{\partial s}\right\rangle\right]\frac{\partial \Phi_\varepsilon}{\partial s}.
\end{eqnarray*}
Hence the following identities hold:
\[X_1=\frac1{\det g}\left\langle{A^*}(\varepsilon z,\varepsilon s)\frac{\partial \Phi_\varepsilon}{\partial s}, \frac{\partial \Phi_\varepsilon}{\partial s} \right\rangle,\]
\[X_2=-\frac1{\det g}\left\langle{A^*}(\varepsilon z,\varepsilon s)\frac{\partial \Phi_\varepsilon}{\partial s}, \frac{\partial \Phi_\varepsilon}{\partial z} \right\rangle\]
and
\[X_3=\frac1{\det g}\left\langle{A^*}(\varepsilon z,\varepsilon s)\frac{\partial \Phi_\varepsilon}{\partial z}, \frac{\partial \Phi_\varepsilon}{\partial z} \right\rangle.\]
From direct computation,
\begin{eqnarray*}
\left\langle{A^*}(\varepsilon z,\varepsilon s)\frac{\partial \Phi_\varepsilon}{\partial z}, \frac{\partial \Phi_\varepsilon}{\partial z}\right\rangle &=&\langle{A^*}\gamma',\gamma'\rangle +\varepsilon s\left[\langle{A^*_t}\gamma',\gamma'\rangle+2\langle{A^*}\gamma',n'\rangle\right]  \\
&&+\varepsilon^2s^2\left[\langle{A^*}n',n'\rangle+2\langle{A^*_t}\gamma',n' \rangle+\frac12\langle{A^*_{tt}}\gamma',\gamma'\rangle\right]+O\left(\varepsilon^3 s^3\right),
\end{eqnarray*}
\[\left\langle{A^*}(\varepsilon z,\varepsilon s)\frac{\partial \Phi_\varepsilon}{\partial s}, \frac{\partial \Phi_\varepsilon}{\partial z} \right\rangle= \varepsilon s\left[\langle{A^*}n,n'\rangle+\langle{A^*_t} n,\gamma'\rangle\right] +\varepsilon^2 s^2\left[\langle{A^*_t} n,n'\rangle +\frac12\langle{A^*_{tt}}n,\gamma'\rangle\right] +O\left(\varepsilon^3 s^3\right)\]
and
\[\left\langle{A^*}(\varepsilon z,\varepsilon s)\frac{\partial \Phi_\varepsilon}{\partial s}, \frac{\partial \Phi_\varepsilon}{\partial s} \right\rangle=\langle{A^*}n,n\rangle +\varepsilon s\langle {A^*_t} n,n\rangle+\frac12 \varepsilon^2 s^2\langle{A^*_{tt}}n,n\rangle+O\left(\varepsilon^3 s^3\right).\]
Thus,
\[X_1=\frac{\langle{A^*}n,n\rangle}{1-\langle\gamma',n\rangle^2}+O\left(\varepsilon s\right),\]
\[X_2=-\varepsilon s\left(\frac{\langle{A^*}n,n'\rangle}{1-\langle\gamma',n\rangle^2}+\frac{\langle{A^*_t}n,\gamma'\rangle}{1-\langle\gamma',n\rangle^2}\right)+ O\left(\varepsilon^2 s^2\right)\]
and
\begin{eqnarray*}
X_3&=& \frac{\langle{A^*}\gamma',\gamma'\rangle}{1-\langle\gamma',n\rangle^2} +\varepsilon s\left[\frac{\langle{A^*_t}\gamma',\gamma'\rangle}{1-\langle\gamma',n\rangle^2}+ \frac{2\langle{A^*}\gamma',n'\rangle}{1-\langle\gamma',n\rangle^2}-\frac{2\langle\gamma',n'\rangle\langle{A^*}\gamma',\gamma'\rangle}{\left(1-\langle\gamma',n\rangle^2\right)^2} \right] \\
&&+ \varepsilon^2 s^2 \left[\frac1{1-\langle\gamma',n\rangle^2}\left(\langle{A^*}n',n'\rangle +2\langle{A^*_t}\gamma',n'\rangle+\frac12\langle{A^*_{tt}}\gamma',\gamma'\rangle\right) \right. \\
&&\left.-\frac{2\langle\gamma',n'\rangle}{\left(1-\langle\gamma',n\rangle^2\right)^2}\left(\langle{A^*_t} \gamma',\gamma'\rangle +2\langle{A^*}\gamma',n'\rangle\right) \right. \\
&&\left.+\left(\frac{4\langle\gamma',n'\rangle^2}{\left(1-\langle\gamma',n\rangle^2\right)^3}- \frac{\langle n',n'\rangle}{\left(1-\langle\gamma',n\rangle^2\right)^2}\right)\langle{A^*}\gamma',\gamma'\rangle\right] +O\left(\varepsilon^3 s^3\right).
\end{eqnarray*}
According to the definition of divergence operator, we get
\begin{eqnarray*}
\Div(A(\varepsilon y)\nabla v)&=&X_1v_{zz}+2X_2v_{sz}+X_3 v_{ss} +\left(\frac{\partial_z \det g}{2\det g}X_1+\frac{\partial_s \det g}{2\det g}X_2+\partial_z X_1+\partial_s X_2\right) v_z \\
&& +\left(\frac{\partial_z \det g}{2\det g}X_2+\frac{\partial_s \det g}{2\det g}X_3+\partial_z X_2+\partial_s X_3\right) v_s.
\end{eqnarray*}
This lemma follows from direct computation.
\end{proof}

Using the notation in Lemma \ref{lm8} and \eqref{80}, we rewrite Lemma \ref{lm5} as follows:
\begin{remark}\label{rm1}
If $\Gamma\subset \Omega$ is a non-degenerate critical point of the functional $\mathcal{K}(\Gamma)$, then
\begin{equation}\label{e57}
\mathbf{q}_t =\frac1{p+3}a_{32}\alpha^{2-p}\beta^2- \frac2{p+3}b_{21}\alpha^{2-p}\beta^2,\quad \mathrm{on} \quad \Gamma
\end{equation}
and  the following problem has only trivial solution:
\begin{eqnarray}\label{e105}
\nonumber && -a_{11}h''+\left[a_{22}-b_{11}-a_{11}\left(\beta^{-1}\beta' +2\alpha^{-1}\alpha'\right)\right]h'  \\
\nonumber&&+\left[a_{22}\left(\beta^{-1}\beta' +2\alpha^{-1}\alpha'\right)-a_{33}+b_{22}+\frac{p+3}2\alpha^{p-2}\beta^{-2}\mathbf{q}_{tt}\right. \\
&&\left.+\frac{p+2}{2(p+3)}\alpha^{1-p}\beta^2 (a_{32})^2-\frac{p+1}{p+3}\alpha^{1-p}\beta^2 a_{32}b_{21}-\frac2{p+3} \alpha^{1-p}\beta^2 (b_{21})^2\right]h=0.
\end{eqnarray}
\end{remark}
Let $h(\theta)=\beta(\theta) u(\theta)$ in \eqref{e105}. we have:
\begin{remark}\label{rm3}
Under the condition that  $\Gamma\subset \Omega$ is a non-degenerate critical point of the functional $\mathcal{K}(\Gamma)$, we get  the following problem also has only trivial solution.
\[-\Upsilon_2 u''+\Upsilon_1 u'-\Upsilon_0 u=0, \qquad \mathrm{in} \qquad (0,1),\]
where $\Upsilon_2$, $\Upsilon_1$ and $\Upsilon_0$ are defined in \eqref{e103} and \eqref{e104}.
\end{remark}

\section{approximate solutions}\label{sect2}

To prove Theorem \ref{th1},  We only need to find a solution to \eqref{1} of the form
\[u(y)=\bar{u}_\varepsilon(y)+v(y/\varepsilon),\]
where $\bar{u}_\varepsilon(y)$ is the unique negative solution of \eqref{1}, whose properties are studied in Proposition \ref{pro1}. Then $v$ solves the following problem
\begin{equation}\label{25}
\left\{\begin{array}{ll}
-\Div(A(\varepsilon y)\nabla v)=|\bar{u}_\varepsilon(\varepsilon y)+v|^p-|\bar{u}_\varepsilon(\varepsilon y)|^p, &\mbox{in $\Omega_\varepsilon$}, \\
v=0, &\mbox{on $\partial \Omega_\varepsilon$},
\end{array}\right.
\end{equation}
where $\Omega_\varepsilon=\Omega/\varepsilon$. In this section, we construct a local approximate solution to \eqref{25}.

Let
\[\bar{\mathbf{q}}(\theta, t)=-\bar{u}_\varepsilon (\gamma(\theta)+tn(\theta))\qquad \mathrm{where} \qquad |t|<\delta_0, \quad \theta \in[0,1].\]
With the help of Lemma \ref{lm8}, we write the first equation in \eqref{25} into the following form in some neighborhood of the curve $\Gamma_\varepsilon$
\begin{equation}\label{26}
a_{11}(\varepsilon z)v_{zz}+a_{31}(\varepsilon z)v_{ss}+|v-\bar{\mathbf{q}}(\varepsilon z, \varepsilon s)|^p-|\bar{\mathbf{q}}(\varepsilon z, \varepsilon s)|^p+B_1(v)=0,
\end{equation}
where $z\in[0,1/\varepsilon]$, $s\in (-\delta_0/\varepsilon, \delta_0/\varepsilon)$ and
\[B_1(v)=2\varepsilon s a_{22}(\varepsilon z)v_{sz}+\left(\varepsilon s a_{32}(\varepsilon z)+\varepsilon^2 s^2a_{33}(\varepsilon z)\right)v_{ss} +\varepsilon b_{11}(\varepsilon z)v_z +\left(\varepsilon b_{21}(\varepsilon z)+\varepsilon^2 s b_{22}(\varepsilon z)\right)v_s +B_0(v).\]
Since we need to construct an approximate solution near $\Gamma_\varepsilon\subset \Omega_\varepsilon$, and $\mathrm{dist}(\Gamma, \Omega)>0$, we ignore the boundary condition for the moment. We will deal with the boundary condition in outer problem in Section \ref{sect5}.

Let
\begin{equation*}
\tilde{\alpha}(\theta)=\bar{\mathbf{q}}(\theta,0), \qquad \tilde{\beta}(\theta)=\frac{\left[\bar{\mathbf{q}}(\theta,0)\right]^{\frac{p-1}2}}{\left[a_{31}(\theta)\right]^{\frac12}}
\end{equation*}
and
\begin{equation}\label{80}
\alpha(\theta)=\mathbf{q}(\theta,0), \qquad \beta(\theta)=\frac{\left[\mathbf{q}(\theta,0)\right]^{\frac{p-1}2}}{\left[a_{31}(\theta)\right]^{\frac12}}.
\end{equation}
From the argument in \cite{Manna_Santra2016}, we get
\begin{equation}\label{65}
\tilde{\alpha}(\theta)=\alpha(\theta)+O(\varepsilon^2), \qquad \tilde{\alpha}'(\theta)=\alpha'(\theta)+O(\varepsilon^2), \qquad \tilde{\alpha}''(\theta)=\alpha''(\theta)+O(\varepsilon^2)
\end{equation}
and
\begin{equation}\label{e56}
\tilde\beta(\theta)=\beta(\theta)+O(\varepsilon^2), \qquad \tilde\beta'(\theta)=\beta'(\theta)+O(\varepsilon^2), \qquad \tilde\beta''(\theta)=\beta''(\theta)+O(\varepsilon^2).
\end{equation}
Let
\[v(z,s)=\tilde{\alpha}(\varepsilon z)u(z,x), \qquad \mathrm{where} \qquad x=\tilde{\beta}(\varepsilon z)s.\]
Then we have
\[v_s=\tilde{\alpha}\tilde{\beta}u_x, \qquad v_{ss}=\tilde{\alpha}\tilde{\beta}^2u_{xx},\]
\[v_z=\varepsilon \tilde{\alpha}'u+\tilde{\alpha}\left(u_z+\varepsilon \tilde\beta's u_x\right),\]
\[v_{sz}=\varepsilon \tilde{\alpha}'\tilde{\beta}u_x+\varepsilon \tilde{\alpha}\tilde{\beta}'u_x+\tilde{\alpha}\tilde{\beta}\left(u_{xz}+\varepsilon \tilde\beta's u_{xx}\right)\]
and
\begin{eqnarray*}
v_{zz}&=& \varepsilon^2 \tilde{\alpha}'' u +2\varepsilon \tilde{\alpha}'\left(u_z+\varepsilon \tilde{\beta}' s u_x\right) +\tilde{\alpha}\left[u_{zz}+2\varepsilon \tilde{\beta}'s u_{xz}+ \varepsilon^2(\tilde{\beta}')^2s^2 u_{xx} +\varepsilon^2\tilde{\beta}'' su_x  \right].
\end{eqnarray*}

It is easy to get
\[\bar{\mathbf{q}}(\varepsilon z, \varepsilon s) =\tilde{\alpha}(\varepsilon z) +\varepsilon s \bar{\mathbf{q}}_t(\varepsilon z, 0) +\frac12 \varepsilon^2 s^2 \bar{\mathbf{q}}_{tt}(\varepsilon z,0) +O(\varepsilon^3 |s|^3).\]
Thus,
\begin{eqnarray*}
&&|v-\bar{\mathbf{q}}(\varepsilon z, \varepsilon s)|^p- \left|\bar{\mathbf{q}}(\varepsilon z, \varepsilon s)\right|^p \\
&=& \tilde{\alpha}^p\left\{|u-1|^p-1 -p\varepsilon s\tilde{\alpha}^{-1}\bar{\mathbf{q}}_t\left(|u-1|^{p-2}(u-1)+1\right) \right. \\
&&\left.+\frac12\varepsilon^2 s^2\left[p(p-1)\tilde{\alpha}^{-2}\bar{\mathbf{q}}_t^2\left( |u-1|^{p-2}-1\right) -p\tilde\alpha^{-1}\bar{\mathbf{q}}_{tt}\left(|u-1|^{p-2}(u-1)+1\right)\right] \right\} \\
&& +O(\varepsilon^3|s|^3|u|^{\min\{p-3,1\}}).
\end{eqnarray*}
The equation \eqref{26} is transformed into the following one:
\begin{equation}\label{27}
S(u):=u_{xx}+a_{11}\tilde{\alpha}^{1-p}u_{zz}+|u-1|^p-1 +B_4(u)=0,
\end{equation}
where $B_4(u)=B_4^1(u)+B_4^2(u)+B_4^3(u)$, $B_4^1(u)$ and $B_4^3(u)$ are linear functions of $u$. However, $B_4^2(u)$ is a nonlinear function of $u$. More precisely,
\begin{eqnarray*}
B_4^1(u)&=& \varepsilon a_{32} \tilde{\alpha}^{1-p}\tilde{\beta} xu_{xx} +\varepsilon b_{21}\tilde{\alpha}^{1-p}\tilde\beta u_x  +2\varepsilon a_{11}\tilde{\alpha}^{-p}\tilde{\alpha}' u_z +2\varepsilon a_{11}\tilde{\alpha}^{1-p}\tilde{\beta}^{-1}\tilde{\beta}'xu_{xz}+2\varepsilon a_{22}\tilde{\alpha}^{1-p}xu_{xz}  \\
&& +\varepsilon b_{11}\tilde{\alpha}^{1-p}u_z+\varepsilon^2 a_{11}\tilde{\alpha}^{-p}\tilde\alpha'' u +2\varepsilon^2 a_{11}\tilde{\alpha}^{-p}\tilde{\beta}^{-1}\tilde{\alpha}'\tilde{\beta}' xu_x +\varepsilon^2 a_{11}\tilde{\alpha}^{1-p}\tilde{\beta}^{-2}(\tilde{\beta}')^2 x^2 u_{xx} \\
&&+\varepsilon^2 a_{11}\tilde{\alpha}^{1-p}\tilde\beta^{-1}\tilde{\beta}'' xu_x +2\varepsilon^2 a_{22}\tilde{\alpha}^{-p}\tilde{\alpha}'xu_x +2\varepsilon^2 a_{22}\tilde{\alpha}^{1-p}\tilde{\beta}^{-1}\tilde{\beta}' xu_x +2\varepsilon^2 a_{22} \tilde{\alpha}^{1-p} \tilde{\beta}^{-1}\tilde{\beta}'x^2 u_{xx} \\
&&+\varepsilon^2 a_{33}\tilde{\alpha}^{1-p}x^2 u_{xx} +\varepsilon^2 b_{11}\tilde{\alpha}^{-p}\tilde{\alpha}' u +\varepsilon^2 b_{11}\tilde{\alpha}^{1-p}\tilde{\beta}^{-1}\tilde{\beta}'xu_x +\varepsilon^2 b_{22}\tilde{\alpha}^{1-p}xu_x,
\end{eqnarray*}
\begin{eqnarray*}
B_4^2(u)&=& -p\varepsilon\tilde{\alpha}^{-1}\tilde{\beta}^{-1}\bar{\mathbf{q}}_t x\left[|u-1|^{p-2}(u-1)+1\right] +\frac{p(p-1)}2\varepsilon^2 \tilde{\alpha}^{-2}\tilde{\beta}^{-2}( \bar{\mathbf{q}}_t)^2x^2\left[|u-1|^{p-2}-1\right] \\
&& -\frac{p}2\varepsilon^2 \tilde{\alpha}^{-1}\tilde{\beta}^{-2}\bar{\mathbf{q}}_{tt}x^2\left[|u-1|^{p-2}(u-1)+1\right]+O(\varepsilon^3s^3|u|^{\min\{p-3,1\}})
\end{eqnarray*}
and
\begin{eqnarray*}
B_4^3(u)&=& \varepsilon a_{12}\tilde{\alpha}^{1-p}u_{zz} +2\varepsilon^2 a_{12}\tilde{\alpha}^{-p}\tilde{\alpha}' u_z +2\varepsilon^2 a_{12}\tilde{\alpha}^{1-p}\tilde{\beta}^{-1}\tilde{\beta}' xu_{zx} +\varepsilon^2 a_{23}\tilde{\alpha}^{1-p}\tilde{\beta} u_{zx} \\
&&+\varepsilon^2 b_{12}\tilde{\alpha}^{1-p}u_z +\varepsilon^3 a_{12}\tilde{\alpha}^{-p}\tilde{\alpha}'' u +2\varepsilon^3a_{12}\tilde{\alpha}^{-p}\tilde{\beta}^{-1}\tilde{\alpha}'\tilde{\beta}'xu_x +\varepsilon^3 a_{12}\tilde{\alpha}^{1-p}\tilde{\beta}^{-2}(\tilde{\beta}')^2x^2 u_{xx} \\
&&+\varepsilon^3 a_{12}\tilde{\alpha}^{1-p}\tilde{\beta}^{-1}\tilde{\beta}'' xu_x +\varepsilon^3 a_{23}\tilde{\alpha}^{-p}\tilde{\beta}\tilde{\alpha}'u_x +\varepsilon^3a_{23}\tilde{\alpha}^{1-p}\tilde{\beta}'u_x +\varepsilon^3 a_{23}\tilde{\alpha}^{1-p}\tilde{\beta}'xu_{xx} \\
&&+\varepsilon^3 a_{33}\tilde{\alpha}^{1-p}\tilde{\beta}^2 u_{xx} +\varepsilon^3 b_{12}\tilde{\alpha}^{-p}\tilde{\alpha}' u +\varepsilon^3 b_{12}\tilde{\alpha}^{1-p}\tilde{\beta}^{-1}\tilde{\beta}' xu_x +\varepsilon^3 b_{23}\tilde{\alpha}^{1-p}\tilde{\beta} u_x.
\end{eqnarray*}
In order to construct a local approximate solution to \eqref{27}, we introduce the parameters $\{f_j\}_{j=1}^N$ and $\{e_j\}_{j=1}^N$ satisfying:
\begin{equation}\label{66}
\|f_j\|_{H^2(0,1)}\leq C|\log\varepsilon|^2, \qquad \beta(f_{j+1}-f_j)>\frac2{\sqrt{p}}|\ln \varepsilon|-\frac4{\sqrt{p}}\ln|\ln\varepsilon|
\end{equation}
and
\begin{equation}\label{11}
\|e_j\|_*:= \|e_j\|_{L^\infty(0,1)}+\varepsilon\|e_j'\|_{L^2(0,1)}+\varepsilon^2\|e_j''\|_{L^2(0,1)}<\varepsilon^{\frac12}.
\end{equation}
Let $f_0=-\infty$ and $f_{N+1}=+\infty$, and denote
\[\mathbf{f}=(f_1, \cdots, f_N), \qquad \mathrm{and} \qquad \mathbf{e}=(e_1, \cdots, e_N).\]

Recall $w$ is the unique solution of \eqref{29}. It is well known the associated linearized eigenvalue problem of \eqref{29}
\[h''+p|w-1|^{p-2}(w-1)h=\lambda h, \quad \mathrm{in} \quad \R, \qquad h\in H^1(\R^2)\]
has a unique positive eigenvalue $\lambda_0$, with a unique even and positive eigenfunction $Z$ which we normalized so that $\int_\R Z^2 dx=1$.

We define an approximate solution of \eqref{27} by
\[\mathcal{V}(x,z)=\sum_{k=1}^N \bar{\mathcal{V}}_k(z,x),\]
where
\[\bar{\mathcal{V}}_k(z,x):=\mathcal{V}_k(z, x-\tilde{\beta}(\varepsilon z)f_k(\varepsilon z))\]
and
\begin{equation}\label{69}
\mathcal{V}_k(z,x)=w(x)+\varepsilon \varphi_k^{(1)}(\varepsilon z, x)+\varepsilon e_k(\varepsilon z) Z(x)+\varepsilon^2\varphi_k^{(2)}(\varepsilon z,x).
\end{equation}
In \eqref{69}, the function $\varphi_k^{(1)}$ and $\varphi^{(2)}_k$ will be determined in \eqref{39} and \eqref{78}, respectively.
We assume that $\mathcal{V}_k$ decays at infinity as $e^{-\sigma_1|x|}$ for any constant $\sigma_1\in (0, \sqrt{p})$,.

From direct computation,
\[\bar{\mathcal{V}}_{k,x}(z, x)=\mathcal{V}_{k,x}(z, x-\tilde{\beta} f_k), \qquad \bar{\mathcal{V}}_{k,xx}(z, x)=\mathcal{V}_{k,xx}(z, x-\tilde{\beta} f_k),\]
\[\bar{\mathcal{V}}_{k,z}(z, x)=\mathcal{V}_{k,z}(z, x-\tilde{\beta} f_k)-\varepsilon (\tilde{\beta} f_k)' \mathcal{V}_{k,x}(z, x-\tilde{\beta} f_k),\]
\begin{eqnarray*}
\bar{\mathcal{V}}_{k,zz}(z, x)&=&\mathcal{V}_{k,zz}(z, x-\tilde{\beta} f_k)-2\varepsilon(\tilde{\beta} f_k)' \mathcal{V}_{k,zx}(z, x-\tilde{\beta} f_k) \\
&&-\varepsilon^2 (\tilde{\beta} f_k)''\mathcal{V}_{k,x}(z, x-\tilde{\beta} f_k) +\varepsilon^2 |(\tilde{\beta} f_k)'|^2 \mathcal{V}_{k,xx}(z, x-\tilde{\beta} f_k)
\end{eqnarray*}
and
\[\bar{\mathcal{V}}_{k,zx}(z, x)=\mathcal{V}_{k,zx}(z, x-\tilde{\beta} f_k)-\varepsilon (\tilde{\beta} f_k)' \mathcal{V}_{k,xx}(z, x-\tilde{\beta} f_k).\]

Denote
\begin{equation}\label{28}
S(\bar{\mathcal{V}}_k)=\tilde{S}(\mathcal{V}_k)(z, x-\tilde{\beta} f_k),
\end{equation}
where
\[\tilde{S}(u) = u_{xx}+a_{11}\tilde{\alpha}^{1-p}u_{zz}+|u-1|^p-1+B_3(u), \]
and $B_3(u)=B_3^1(u)+B_3^2(u)+B_3^3(u)$. More precisely
\begin{eqnarray*}
B_3^1(u)&=& -2\varepsilon a_{11}\tilde{\alpha}^{1-p}(\tilde{\beta} f_k)'u_{zx}-\varepsilon^2 a_{11}\tilde{\alpha}^{1-p}(\tilde{\beta} f_k)'' u_x +\varepsilon^2 a_{11}\tilde{\alpha}^{1-p}|(\tilde{\beta} f_k)'|^2 u_{xx} +\varepsilon b_{21}\tilde{\alpha}^{1-p}\tilde{\beta} u_x\\
&& +\varepsilon a_{32} \tilde{\alpha}^{1-p}\tilde{\beta}(x+\tilde{\beta} f_k)u_{xx} +2\varepsilon a_{11}\tilde{\alpha}^{-p}\tilde{\alpha}' u_z -2\varepsilon^2 a_{11}\tilde{\alpha}^{-p}\tilde{\alpha}'(\tilde{\beta} f_k)'u_x\\
&& +2\varepsilon a_{11}\tilde{\alpha}^{1-p}\tilde{\beta}^{-1}\tilde{\beta}'(x+\tilde{\beta} f_k)u_{zx} -2\varepsilon^2 a_{11}\tilde{\alpha}^{1-p}\tilde{\beta}^{-1}\tilde{\beta}'(\tilde{\beta} f_k)'(x+\tilde{\beta} f_k)u_{xx} \\
&& +2\varepsilon a_{22}\tilde{\alpha}^{1-p}(x+\tilde{\beta} f_k)u_{zx} -2\varepsilon^2 a_{22}\tilde{\alpha}^{1-p}(\tilde{\beta} f_k)'(x+\tilde{\beta} f_k)u_{xx}+ \varepsilon b_{11}\tilde{\alpha}^{1-p}u_z \\
&& -\varepsilon^2 b_{11}\tilde{\alpha}^{1-p}(\tilde{\beta} f_k)' u_x +\varepsilon^2 a_{11}\tilde{\alpha}^{-p}\tilde{\alpha}'' u +2\varepsilon^2 a_{11}\tilde{\alpha}^{-p}\tilde{\beta}^{-1}\tilde{\alpha}'\tilde{\beta}'(x+\tilde{\beta} f_k)u_x \\
&& +\varepsilon^2 a_{11}\tilde{\alpha}^{1-p}\tilde{\beta}^{-2}(\tilde{\beta}')^2 (x+\tilde{\beta} f_k)^2u_{xx} +\varepsilon^2 a_{11}\tilde{\alpha}^{1-p}\tilde{\beta}^{-1}\tilde{\beta}''(x+\tilde{\beta} f_k)u_x  \\
&& +2\varepsilon^2 a_{22} \tilde{\alpha}^{-p} \tilde{\alpha}'(x+\tilde{\beta} f_k)u_x  +2\varepsilon^2 a_{22}\tilde{\alpha}^{1-p}\tilde{\beta}^{-1}\tilde{\beta}'(x+\tilde{\beta} f_k) u_x \\
&& +\varepsilon^2 a_{33}\tilde{\alpha}^{1-p}(x+\tilde{\beta} f_k)^2 u_{xx}+2\varepsilon^2 a_{22}\tilde{\alpha}^{1-p}\tilde{\beta}^{-1}\tilde{\beta}'(x+\tilde{\beta} f_k)^2 u_{xx}+\varepsilon^2 b_{11}\tilde{\alpha}^{-p}\tilde{\alpha}' u \\
&& +\varepsilon^2 b_{11}\tilde{\alpha}^{1-p}\tilde{\beta}^{-1}\tilde{\beta}'(x+\tilde{\beta} f_k)u_x  +\varepsilon^2 b_{22}\tilde{\alpha}^{1-p}(x+\tilde{\beta} f_k)u_x,
\end{eqnarray*}
\begin{eqnarray*}
B_3^2(u)&=& -p\varepsilon \tilde{\alpha}^{-1}\tilde{\beta}^{-1}\bar{\mathbf{q}}_t (x+\tilde{\beta} f_k)\left[|u-1|^{p-2}(u-1)+1\right] \\
&&+\frac{p(p-1)}2\varepsilon^2 \tilde{\alpha}^{-2}\tilde{\beta}^{-2}\bar{\mathbf{q}}_t^2(x+\tilde{\beta} f_k)^2\left[|u-1|^{p-2}-1\right] \\
&&-\frac{p}2\varepsilon^2 \tilde{\alpha}^{-1}\tilde{\beta}^{-2}\bar{\mathbf{q}}_{tt}(x+\tilde{\beta} f_k)^2 \left[|u-1|^{p-2}(u-1)+1\right] +O(\varepsilon^3|x+\tilde{\beta} f_k|^3 |u|^{\min\{p-3,1\}})
\end{eqnarray*}
and
\begin{eqnarray*}
B_3^3(u)&=&\varepsilon a_{12} \tilde{\alpha}^{1-p} u_{zz} -2\varepsilon^2 a_{12}\tilde{\alpha}^{1-p}(\tilde{\beta} f_k)' u_{zx} +2\varepsilon^2a_{12}\tilde{\alpha}^{-p}\tilde{\alpha}' u_z+\varepsilon^2a_{23}\tilde{\alpha}^{1-p}\tilde{\beta} u_{zx} \\
&& +\varepsilon^2 b_{12}\tilde{\alpha}^{1-p}u_z +2\varepsilon^2 a_{12}\tilde{\alpha}^{1-p}\tilde{\beta}^{-1}\tilde{\beta}'(x+\tilde{\beta} f_k)u_{zx} +\varepsilon^3a_{12}\tilde{\alpha}^{1-p}|(\tilde{\beta} f_k)'|^2u_{xx} \\
&& -\varepsilon^3 a_{12}\tilde{\alpha}^{1-p}(\tilde{\beta} f_k)''u_x-2\varepsilon^3 a_{12}\tilde{\alpha}^{-p}\tilde{\alpha}'(\tilde{\beta} f_k)' u_x -2\varepsilon^3 a_{12}\tilde{\alpha}^{1-p}\tilde{\beta}^{-1}\tilde{\beta}'(\tilde{\beta} f_k)'(x+\tilde{\beta} f_k)u_{xx} \\
&& -\varepsilon^3 a_{23}\tilde{\alpha}^{1-p}\tilde{\beta} (\tilde{\beta} f_k)'u_{xx}-\varepsilon^3 b_{12}\tilde{\alpha}^{1-p}(\tilde{\beta} f_k)' u_x +\varepsilon^3 a_{12}\tilde{\alpha}^{-p}\tilde{\alpha}'' u \\
&& +2\varepsilon^3 a_{12}\tilde{\alpha}^{-p}\tilde{\beta}^{-1}\tilde{\alpha}'\tilde{\beta}'(x+\tilde{\beta} f_k)u_x +\varepsilon^3 a_{12}\tilde{\alpha}^{1-p}\tilde{\beta}^{-2}(\tilde{\beta}')^2(x+\tilde{\beta} f_k)^2u_{xx} \\
&& +\varepsilon^3 a_{12}\tilde{\alpha}^{1-p}\tilde{\beta}^{-1}\tilde{\beta}''(x+\tilde{\beta} f_k)u_x +\varepsilon^3 a_{23}\tilde{\alpha}^{-p}\tilde{\beta} \tilde{\alpha}' u_x +\varepsilon^3 a_{23}\tilde{\alpha}^{1-p}\tilde{\beta}' u_x \\
&&+\varepsilon^3 a_{23}\tilde{\alpha}^{1-p}\tilde{\beta}'(x+\tilde{\beta} f_k)u_{xx} +\varepsilon^3 a_{33}\tilde{\alpha}^{1-p}\tilde{\beta}^2u_{xx} +\varepsilon^3 b_{12}\tilde{\alpha}^{-p}\tilde{\alpha}' u \\
&& +\varepsilon^3 b_{12}\tilde{\alpha}^{1-p}\tilde{\beta}^{-1}\tilde{\beta}'(x+\tilde{\beta} f_k) u_x+\varepsilon^3 b_{23}\tilde{\alpha}^{1-p}\tilde{\beta} u_x.
\end{eqnarray*}
In the expression of $B_3^3(u)$, the functions $a_{ij}$'s and $b_{ij}$'s take values at $(\varepsilon z,  \tilde{\beta}^{-1} x+ f_k)$.

Define
\[\mathcal{U}_k=\left\{(z,x)\in \mathfrak{S} \;:\; \frac12\tilde{\beta}(\varepsilon z)\left[f_{k-1}(\varepsilon z)+f_k(\varepsilon z)\right]\leq x\leq \frac12\tilde{\beta}(\varepsilon z)\left[f_k(\varepsilon z)+f_{k+1}(\varepsilon z)\right]\right\}.\]
We use the method in \cite{Yang_Yang2013, delPino_Kowalczyk_Wei2008} to estimate the nonlinear terms in $S(\mathcal{V})$. From \eqref{e56} and \eqref{66}, we derive that for $(z,x)\in \mathcal{U}_j$,
\begin{eqnarray*}
|\mathcal{V}-1|^p-1&=&|\bar{\mathcal{V}}_j-1|^p-1+p|\bar{\mathcal{V}}_j-1|^{p-2}(\bar{\mathcal{V}}_j-1)\sum_{k\not=j}\bar{\mathcal{V}}_k+O\left(\left(\sum_{k\not=j} \bar{\mathcal{V}}_k\right)^2\right) \\
&=& \sum_{k=1}^N \left(|\bar{\mathcal{V}}_k-1|^p-1\right) +\left[p|\bar{\mathcal{V}}_j-1|^{p-2}(\bar{\mathcal{V}}_j-1)\sum_{k\not=j}\bar{\mathcal{V}}_k -\sum_{k\not=j}(|\bar{\mathcal{V}}_k-1|^p-1)\right]     +O\left(\sum_{k\not=j} \bar{\mathcal{V}}_k^2\right) \\
&=& \sum_{k=1}^N \left(|\bar{\mathcal{V}}_k-1|^p-1\right) +p\left[|\bar{\mathcal{V}}_j-1|^{p-2}(\bar{\mathcal{V}}_j-1)+1\right] \sum_{k\not=j}\bar{\mathcal{V}}_k +\max_{k\not=j}O\left(e^{-(2\sqrt{p}-\tilde{\sigma})|x-\beta f_k|}\right),
\end{eqnarray*}
where $\tilde{\sigma}>0$ is a  constant small enough.
From the same method, we also obtain the following estimates for $(z,x)\in \mathcal{U}_j$:
\begin{eqnarray*}
&&|\mathcal{V}-1|^{p-2}(\mathcal{V}-1)+1 \\
&=&\sum_{k=1}^N\left[|\bar{\mathcal{V}}_k-1|^{p-2}(\bar{\mathcal{V}}_k-1)+1\right] + (p-1)\left(|\bar{\mathcal{V}}_j-1|^{p-2}-1\right)\sum_{k\not=j}\bar{\mathcal{V}}_k +\max_{k\not=j}O\left(e^{-(2\sqrt{p}-\tilde{\sigma})|x-\beta f_k|}\right) \\
&=& \sum_{k=1}^N\left[|\bar{\mathcal{V}}_k-1|^{p-2}(\bar{\mathcal{V}}_k-1)+1\right]+ \max_{k\not=j}O\left(e^{-(2\sqrt{p}-\tilde{\sigma})|x-\beta f_k|}+e^{-(\sqrt{p}-\tilde{\sigma})(|x-\beta f_k|+|x-\beta f_j|)}\right)
\end{eqnarray*}
and
\[|\mathcal{V}-1|^{p-2}-1=\sum_{k=1}^N \left[|\bar{\mathcal{V}}_k-1|^{p-2}-1\right]+\max_{k\not=j}O\left(e^{-(\sqrt{p}-\tilde{\sigma})|x-\beta f_k|}\right).\]

Hence we arrive at
\begin{equation}\label{67}
S(\mathcal{V}) = \sum_{j=1}^N\left[S(\bar{\mathcal{V}}_j) +p\chi_{\mathcal{U}_j}\left(|\bar{\mathcal{V}}_j-1|^{p-2}( \bar{\mathcal{V}}_j-1)+1\right)\sum_{k\not=j}\bar{\mathcal{V}}_k\right]  +\sum_{j=1}^N \tilde{\theta}_j,
\end{equation}
where $\chi_{\mathcal{U}_j}$ is a characteristic function which equals to $1$ if $x\in \mathcal{U}_j$ and equals to $0$ if $x\not\in \mathcal{U}_j$, and
\begin{eqnarray}\label{e62}
\nonumber \tilde{\theta}_j&=&\chi_{\mathcal{U}_j}\left[\max_{k\not=j}O\left(e^{-(2\sqrt{p}-\tilde{\sigma})|x-\beta f_k|}\right)+\max_{k\not=j}O\left(\varepsilon |x| e^{-(\sqrt{p}-\tilde{\sigma})(|x-\beta f_k|+|x-\beta f_j|)}\right)\right. \\
&& \left. + \max_{k\not=j}O\left(\varepsilon^2 |x|^2 e^{-(\sqrt{p}-\tilde{\sigma})|x-\beta f_k|}\right)\right].
\end{eqnarray}
Next, we expand  the term $\tilde{S}(\mathcal{V}_k)$. It is obvious that
\[|\mathcal{V}_k-1|^{p-2}(\mathcal{V}_k-1)+1=|w-1|^{p-2}(w-1)+1+(p-1)\varepsilon |w-1|^{p-2}(\varphi_k^{(1)}+e_k Z)+O(\varepsilon^2e^{-(\sqrt{p}-\tilde{\sigma})|x|}),\]
and
\begin{align}\label{68}
 \nonumber |\mathcal{V}_k-1|^p-1 =&|w-1|^p-1+p\varepsilon |w-1|^{p-2}(w-1)(\varphi_k^{(1)}+e_k Z) +p\varepsilon^2 |w-1|^{p-2}(w-1)\varphi^{(2)}_k \\
   & +\frac{p(p-1)}2\varepsilon^2|w-1|^{p-2}(\varphi_k^{(1)}+e_k Z)^2 +O(\varepsilon^3e^{-(\sqrt{p}-\tilde{\sigma})|x|}).
\end{align}
From \eqref{65}, \eqref{e56}, \eqref{69} and  direct computation, we get
\begin{eqnarray*}
B_3(\mathcal{V}_k)&=& \varepsilon\left\{a_{32}\alpha^{1-p}\beta xw_{xx}+b_{21}\alpha^{1-p}\beta w_x -p\alpha^{-1}\beta^{-1}\mathbf{q}_t x\left[|w-1|^{p-2}(w-1)+1\right]\right.\\
&& \left.+a_{32}\alpha^{1-p}\beta^2 f_k w_{xx}-p\alpha^{-1}\mathbf{q}_t f_k\left[|w-1|^{p-2}(w-1)+1\right]\right\} + \varepsilon^2 \left\{ -a_{11}\alpha^{1-p}(\beta f_k)''w_x \right.\\
&& -2a_{11}\alpha^{-p}\alpha'(\beta f_k)' w_x -2a_{11}\alpha^{1-p}\beta^{-1}\beta'(\beta f_k)'xw_{xx}   +b_{11}\alpha^{1-p}\beta' f_k w_x+b_{22}\alpha^{1-p}\beta f_k w_x\\
&&  -2a_{22}\alpha^{1-p}(\beta f_k)'xw_{xx} -b_{11}\alpha^{1-p}(\beta f_k)'w_x+2a_{11}\alpha^{-p}\alpha'\beta' f_k w_x +2a_{11}\alpha^{1-p}\beta^{-1}(\beta')^2 f_k xw_{xx}\\
&& +a_{11}\alpha^{1-p}\beta'' f_k w_x+ 2a_{22}\alpha^{-p}\alpha'\beta f_k w_x +p(p-1)\alpha^{-2}\beta^{-1}\mathbf{q}_t^2 f_k x[|w-1|^{p-2}-1]\\
&& -p\alpha^{-1}\beta^{-1}\mathbf{q}_{tt}f_k x\left[|w-1|^{p-2}(w-1)+1\right]+2a_{22}\alpha^{1-p}\beta' f_k w_x +2a_{33}\alpha^{1-p}\beta f_k xw_{xx} \\
&& +4a_{22}\alpha^{1-p}\beta' f_k xw_{xx} -p(p-1)\alpha^{-1}\beta^{-1}\mathbf{q}_t (x+\beta f_k)|w-1|^{p-2}\varphi_k^{(1)} +a_{32}\alpha^{1-p}\beta(x+\beta f_k)\varphi^{(1)}_{k,xx}\\
&& +b_{21}\alpha^{1-p}\beta \varphi^{(1)}_{k,x}-2\varepsilon a_{11}\alpha^{1-p}(\beta f_k)'e_k' Z'+2\varepsilon a_{11}\alpha^{1-p}\beta'e_k'f_kZ'+2\varepsilon a_{22}\alpha^{1-p}\beta e_k' f_k Z'\\
&& \left. + \hat{b}_2(e_k) + \check{b}_2(e_k, f_k)\right\} +O(\varepsilon^3|\log\varepsilon|^8e^{-\gamma_1|x|}).
\end{eqnarray*}
In this expression, $\gamma_1>0$ is a constant and  $\hat{b}_2$ is the  combinations of $e_k$ and some known functions, which is odd on $x$. However $\check{b}_2(e_k, f_k)$ is even on $x$ and it is a combinations of $f_k$, $e_k$ and some known functions. In the argument below, we consider $e_k$, $\varepsilon e_k'$ and $\varepsilon^2 e_k''$ as the same order( see \eqref{11}).

From  \eqref{67} and \eqref{68}, we get
\begin{eqnarray*}
\nonumber\tilde{S}(\mathcal{V}_k)&=&\varepsilon\left(\lambda_0 e_k +\varepsilon^2a_{11} \alpha^{1-p}e_k''\right) Z+\varepsilon\left[\varphi^{(1)}_{k,xx}+p|w-1|^{p-2}(w-1)\varphi^{(1)}_k\right] \\
\nonumber&& +\frac{p(p-1)}2\varepsilon^2 |w-1|^{p-2}(\varphi^{(1)}_k +e_k Z)^2 +\varepsilon^2\left[\varphi^{(2)}_{k,xx}+p|w-1|^{p-2}(w-1)\varphi^{(2)}_k\right] \\
\nonumber&&+B_3(\mathcal{V}_k) +O(\varepsilon^3|\log\varepsilon|^8 e^{-\gamma_1|x|}) \\
\nonumber&=& \varepsilon\left(\varepsilon^2 a_{11}\alpha^{1-p}e_k''+\lambda_0e_k\right)Z(x) +\varepsilon \left[\varphi^{(1)}_{k,xx}+p|w-1|^{p-2}(w-1)\varphi^{(1)}_k\right] +\varepsilon\left\{a_{32}\alpha^{1-p}\beta xw_{xx}\right.\\
\nonumber&& +b_{21}\alpha^{1-p}\beta w_x -p\alpha^{-1}\beta^{-1}\mathbf{q}_t x\left[|w-1|^{p-2}(w-1)+1\right] +a_{32}\alpha^{1-p}\beta^2 f_k w_{xx}\\
\nonumber&& \left.-p\alpha^{-1}\mathbf{q}_t f_k\left[|w-1|^{p-2}(w-1)+1\right]\right\} +\varepsilon^2 \left[\varphi^{(2)}_{k,xx} +p|w-1|^{p-2}(w-1)\varphi^{(2)}_k\right]  \\
&&+ \varepsilon^2 \mathbf{A}_k +O(\varepsilon^3|\log\varepsilon|^8e^{-\gamma_1|x|}),
\end{eqnarray*}
where
\begin{eqnarray*}
\mathbf{A}_k &= &  -a_{11}\alpha^{1-p}(\beta f_k)''w_x -2a_{11}\alpha^{-p}\alpha'(\beta f_k)' w_x -2a_{11}\alpha^{1-p}\beta^{-1}\beta'(\beta f_k)'xw_{xx}   +b_{11}\alpha^{1-p}\beta' f_k w_x\\
&&+b_{22}\alpha^{1-p}\beta f_k w_x -2a_{22}\alpha^{1-p}(\beta f_k)'xw_{xx} -b_{11}\alpha^{1-p}(\beta f_k)'w_x+2a_{11}\alpha^{-p}\alpha'\beta' f_k w_x\\
&&  +2a_{11}\alpha^{1-p}\beta^{-1}(\beta')^2 f_k xw_{xx} +a_{11}\alpha^{1-p}\beta'' f_k w_x+ 2a_{22}\alpha^{-p}\alpha'\beta f_k w_x\\
&& +p(p-1)\alpha^{-2}\beta^{-1}\mathbf{q}_t^2 f_k x[|w-1|^{p-2}-1]-p\alpha^{-1}\beta^{-1}\mathbf{q}_{tt}f_k x\left[|w-1|^{p-2}(w-1)+1\right]\\
&&+2a_{22}\alpha^{1-p}\beta' f_k w_x +2a_{33}\alpha^{1-p}\beta f_k xw_{xx} +4a_{22}\alpha^{1-p}\beta' f_k xw_{xx}+a_{32}\alpha^{1-p}\beta(x+\beta f_k)\varphi^{(1)}_{k,xx}\\
&&-p(p-1)\alpha^{-1}\beta^{-1}\mathbf{q}_t (x+\beta f_k)|w-1|^{p-2}\varphi^{(1)}_k +b_{21}\alpha^{1-p}\beta \varphi^{(1)}_{k,x}-2\varepsilon a_{11}\alpha^{1-p}(\beta f_k)'e_k' Z'\\
&&+2\varepsilon a_{11}\alpha^{1-p}\beta'e_k'f_kZ'+2\varepsilon a_{22}\alpha^{1-p}\beta e_k' f_k Z'+\frac{p(p-1)}2 |w-1|^{p-2}(\varphi^{(1)}_k +e_k Z)^2  \\
&&+ \hat{b}_2(e_k) + \check{b}_2(e_k, f_k).
\end{eqnarray*}
In order to get a better approximate solution, we need to cancel the terms of the order $\varepsilon$ in $\tilde{S}(\mathcal{V}_k)$. We solve the following equation
\begin{eqnarray}\label{30}
\nonumber&& -\varphi^{(1)}_{k,xx}-p|w-1|^{p-2}(w-1)\varphi^{(1)}_k  \\
\nonumber &=& a_{32} \alpha^{1-p}\beta xw_{xx} +b_{21}\alpha^{1-p}\beta w_x -p\alpha^{-1}\beta^{-1}\mathbf{q}_t x\left[|w-1|^{p-2}(w-1)+1\right] \\
&&+a_{32}\alpha^{1-p}\beta^2 f_k w_{xx}-p\alpha^{-1}\mathbf{q}_t f_k \left[|w-1|^{p-2}(w-1)+1\right]=: \tilde{R}.
\end{eqnarray}

From \eqref{29} and direct computation, we get
\begin{equation}\label{31}
\int_\R w= \frac{p+3}{2p} \int_\R w_x^2 dx, \qquad  \int_\R xw_{xx}w_xdx =-\frac12 \int_\R w_x^2,
\end{equation}
and
\begin{equation}\label{32}
\int_\R \left[|w-1|^{p-2}(w-1)+1\right]xw_x dx=-\int_\R w=-\frac{p+3}{2p}\int_\R w_x^2 dx.
\end{equation}
It is well known that \eqref{30} is uniquely solvable provided
\begin{equation}\label{e92}
\int_\R \tilde{R}w_xdx=0.
\end{equation}
With the help of  \eqref{31} and \eqref{32}, we see \eqref{e92} is equivalent to
\[\mathbf{q}_t =\frac1{p+3}a_{32}\alpha^{2-p}\beta^2- \frac2{p+3}b_{21}\alpha^{2-p}\beta^2.\]
From Remark \ref{rm1}, we get \eqref{30} is uniquely solvable.

Let $w_0$ be the unique solution of the following problem
\begin{equation}\label{34}
-w_{0,xx}-p|w-1|^{p-2}(w-1)w_0 =w_x+\frac{2p}{p+3}x\left[|w-1|^{p-2}(w-1)+1\right], \quad \int_\R w_0w_x dx=0
\end{equation}
and $w_1$ be the unique solution of the following problem
\begin{equation}\label{37}
-w_{1,xx}-p|w-1|^{p-2}(w-1)w_1=xw_{xx}-\frac{p}{p+3}x\left[|w-1|^{p-2}(w-1)+1\right], \quad \int_\R w_1w_x dx=0.
\end{equation}
Denote
\[w_2=-\frac12 xw_x\qquad \mathrm{and} \qquad w_3=\frac{1-p}{2p}xw_x -\frac1p w.\]
They solve the following problems, respectively:
\begin{equation}\label{35}
-w_{2,xx}-p|w-1|^{p-2}(w-1)w_2=w_{xx}, \quad \int_\R w_2w_x dx=0
\end{equation}
and
\begin{equation}\label{36}
-w_{3,xx}-p|w-1|^{p-2}(w-1)w_3=|w-1|^{p-2}(w-1)+1, \quad \int_\R w_3w_x dx=0.
\end{equation}
Hence the solution of \eqref{30} is represented by
\begin{equation}\label{39}
\varphi^{(1)}_k(\varepsilon z,x)=b_{21}\alpha^{1-p}\beta w_0+a_{32}\alpha^{1-p}\beta w_1+a_{32}\alpha^{1-p}\beta^2 f_k w_2 -p\alpha^{-1}\mathbf{q}_t f_k w_3.
\end{equation}
With \eqref{39}, we succeed to cancel the terms of the order $\varepsilon$ in $\tilde{S}(\mathcal{V}_k)$. There holds
\begin{equation}\label{40}
\tilde{S}(\mathcal{V}_k)=\varepsilon\left(\varepsilon^2 a_{11}\alpha^{1-p}e_k''+\lambda_0 e_k\right)Z+\varepsilon^2\left[\varphi^{(2)}_{k,xx} +p|w-1|^{p-2}(w-1)\varphi^{(2)}_k\right] +\varepsilon^2 \mathbf{A}_k +O(\varepsilon^3|\log\varepsilon|^8e^{-\gamma_1|x|}).
\end{equation}
The sums of odd terms and that of even part terms in $\mathbf{A}_k$ are denoted by $\hat{\mathbf{A}}_k$ and $\check{\mathbf{A}}_k$, respectively. Moreover, we have
\begin{eqnarray}\label{75}
\nonumber\hat{\mathbf{A}}_k&=& -a_{11}\alpha^{1-p}(\beta f_k)'' w_x -2a_{11}\alpha^{-p}\alpha'(\beta f_k)' w_x -2a_{11}\alpha^{1-p}\beta^{-1}\beta'(\beta f_k)'xw_{xx} -2a_{22}\alpha^{1-p}(\beta f_k)'xw_{xx} \\
\nonumber&& -b_{11}\alpha^{1-p}(\beta f_k)' w_x +2a_{11}\alpha^{-p}\alpha' \beta' f_k w_x +2 a_{11}\alpha^{1-p}\beta^{-1}(\beta')^2 f_k xw_{xx} +a_{11}\alpha^{1-p}\beta'' f_k w_x \\
\nonumber&& +2a_{22}\alpha^{-p}\alpha' \beta f_k w_x +p(p-1)\beta^{-1}\alpha^{-2}(\mathbf{q}_t)^2 f_k x\left[|w-1|^{p-2}-1\right] +2 a_{22}\alpha^{1-p}\beta' f_k w_x \\
\nonumber&& -p\beta^{-1}\alpha^{-1}\mathbf{q}_{tt}f_kx\left[|w-1|^{p-2}(w-1)+1\right] +2a_{33} \alpha^{1-p}\beta f_k xw_{xx} +4a_{22}\alpha^{1-p}\beta' f_k xw_{xx}  \\
\nonumber&& +b_{11}\alpha^{1-p}\beta' f_k w_x +b_{22}\alpha^{1-p}\beta f_kw_x + b_{21}a_{32}\alpha^{2-2p}\beta^3 f_k \left[w_{0,xx} +p(p-1)|w-1|^{p-2}w_0w_2\right] \\
\nonumber&& +(a_{32})^2 \alpha^{2-2p}\beta^3 f_k \left[w_{1,xx} +p(p-1)|w-1|^{p-2}w_1w_2\right] +(a_{32})^2 \alpha^{2-2p}\beta^3 f_k xw_{2,xx} \\
\nonumber&& -pa_{32}\alpha^{-p}\beta \mathbf{q}_t f_k \left[xw_{3,xx}+p(p-1)|w-1|^{p-2}w_1w_3\right] +b_{21}a_{32}\alpha^{2-2p}\beta^3 f_k w_{2,x}  \\
\nonumber&& -pb_{21}\alpha^{-p}\beta \mathbf{q}_t f_k w_{3,x} -p^2(p-1)b_{21}\alpha^{-p}\beta \mathbf{q}_t f_k |w-1|^{p-2}w_0w_3 \\
\nonumber&& -p(p-1)a_{32}\alpha^{-p}\beta \mathbf{q}_t f_k |w-1|^{p-2}xw_2 +p^2(p-1)\alpha^{-2}\beta^{-1}(\mathbf{q}_t)^2 f_k |w-1|^{p-2}xw_3 \\
\nonumber&& -p(p-1)b_{21}\alpha^{-p}\beta \mathbf{q}_t f_k|w-1|^{p-2}w_0 -p(p-1)a_{32}\alpha^{-p}\beta \mathbf{q}_t f_k|w-1|^{p-2}w_1  \\
&& -2\varepsilon a_{11}\alpha^{1-p}(\beta f_k)' e_k' Z_x +2\varepsilon a_{11}\alpha^{1-p}\beta' f_k e_k' Z_x +2\varepsilon a_{22}\alpha^{1-p}\beta f_k e_k' Z_x +\hat{b}_2(e_k).
\end{eqnarray}
To cancel the even terms of the order $\varepsilon^2$, we consider the following problem
\begin{equation}\label{78}
\varphi^{(2)}_{k,xx}+p|w-1|^{p-2}(w-1)\varphi^{(2)}_k=-\check{\mathbf{A}}_k.
\end{equation}
It is uniquely solvable, since $\int_\R\check{\mathbf{A}}_kw_xdx=0$.

Hence we get
\begin{equation}\label{49}
\tilde{S}(\mathcal{V}_k)=\varepsilon \left[\lambda_0 e_k +\varepsilon^2 a_{11}\alpha^{1-p}e_k''\right]Z+\varepsilon^2 \hat{\mathbf{A}}_k+\varepsilon^3\mathbf{B}_k,
\end{equation}
where $|\mathbf{B}_k|\leq C|\log\varepsilon|^8 e^{-\gamma_1|x|}$.

Let
\begin{equation}\label{e109}
w_j(z,x)=w(x-\beta f_j),\qquad \mathrm{and}  \quad Z_j(z,x)=Z(x-\beta f_j), \quad \mathrm{for} \quad j=1,2,\cdots, N.
\end{equation}
In the case of $(z,x)\in \mathcal{U}_j$, we get the following estimate from Lemma \ref{lm9}
\begin{eqnarray*}
&&\left[|\bar{\mathcal{V}}_j-1|^{p-2}(\bar{\mathcal{V}}_j-1)+1\right]\sum_{k\not=j}\bar{\mathcal{V}}_k \\
&=&\left[|\bar{\mathcal{V}}_j-1|^{p-2}(\bar{\mathcal{V}}_j-1)+1\right]\left(\bar{\mathcal{V}}_{j-1}+\bar{\mathcal{V}}_{j+1}\right) +O(\varepsilon^{3-\mu}e^{-(\sqrt{p}-\tilde{\sigma})|x-\beta f_j|}) \\
&=&\left[|w_j-1|^{p-2}(w_j-1)+1\right](w_{j-1}+w_{j+1})+\max_{k\not=j}O(\varepsilon e^{-(\sqrt{p}-\tilde{\sigma})(|x-\beta f_j|+|x-\beta f_k|)}) +O(\varepsilon^{3-\mu}e^{-\tilde{\sigma}|x-\beta f_j|}) \\
&=& \alpha_p\left[|w_j-1|^{p-2}(w_j-1)+1\right]\left(e^{-\sqrt{p}(x-\beta f_{j-1})} + e^{\sqrt{p}(x-\beta f_{j+1})}\right)+O(\varepsilon^{3-\mu}e^{-(\sqrt{p}-\tilde{\sigma})|x-\beta f_j|}) \\
&& +\max_{k\not=j}O(\varepsilon e^{-(\sqrt{p}-\tilde{\sigma})(|x-\beta f_j|+|x-\beta f_k|)}).
\end{eqnarray*}
Therefore, we obtain
\begin{eqnarray}\label{74}
\nonumber S(\mathcal{V})&=& \sum_{j=1}^N \varepsilon \left(\lambda_0 e_j +\varepsilon^2 a_{11}\alpha^{1-p}e_j''\right)Z_j +\sum_{j=1}^N\varepsilon^2 \hat{\mathbf{A}}_j(z,x-\beta f_j) +\varepsilon^3 \mathbf{B}_j(z,x-\beta f_j)+ \sum_{j=1}^N \tilde{\theta}_j \\
&& +\sum_{j=1}^N \chi_{\mathcal{U}_j}\left[p\alpha_p\left[|w_j-1|^{p-2}(w_j-1)+1\right]\left(e^{-\sqrt{p}(x-\beta f_{j-1})}+e^{\sqrt{p}(x-\beta f_{j+1})}\right)+\tilde{\theta}_{j2}\right],
\end{eqnarray}
where
\begin{equation}\label{e63}
\tilde{\theta}_{j2}=\chi_{\mathcal{U}_j}\left[O(\varepsilon^{3-\mu}e^{-(\sqrt{p}-\tilde{\sigma})|x-\beta f_j|})   +\max_{k\not=j}O(\varepsilon e^{-(\sqrt{p}-\tilde{\sigma})(|x-\beta f_j|+|x-\beta f_k|)})\right].
\end{equation}

Denote
\[\mathcal{E}_1= \sum_{j=1}^N \varepsilon \left(\lambda_0 e_j +\varepsilon^2 a_{11}\alpha^{1-p}e_j''\right)Z_j \qquad \mathrm{and} \qquad \mathcal{E}_2=S(\mathcal{V})-\mathcal{E}_1.\]
Let $\mathfrak{S}=\left\{(z,x): z\in \Gamma_\varepsilon , x\in \R\right\}$. From direct computation(see \cite{Wang_Wei_Yang2011} for the method), we get
\begin{equation}\label{e33}
\|\mathcal{E}_2\|_{L^2(\mathfrak{S})}\leq C\varepsilon^{\frac32-\mu},
\end{equation}
where $\mu>0$ is constant small enough.
 For example, a typical term is
\[K_1=p\alpha_p\chi_{\mathcal{U}_j}\left[|w_j-1|^{p-2}(w_j-1)+1\right]\left(e^{-\sqrt{p}(x-\beta f_{j-1})}+e^{\sqrt{p}(x-\beta f_{j+1})}\right).\]
It is easy to get
\begin{eqnarray*}
&&\|K_1\|_{L^2(\mathfrak{S})}^2 \\
&\leq& C\int_0^{1/\varepsilon} dz \int_{\frac{\beta}2(f_j+f_{j-1})}^{\frac{\beta}2(f_j+f_{j+1})}\left|[|w_j-1|^{p-2}(w_j-1)+1]\left(e^{-\sqrt{p}(x-\beta f_{j-1})}+e^{\sqrt{p}(x-\beta f_{j+1})}\right)\right|^2 dx \\
&=& C \int_0^{1/\varepsilon} dz \int_{\frac{\beta}2(f_{j-1}-f_j)}^{\frac{\beta}2(f_{j+1}-f_j)}\left|[|w-1|^{p-2}(w-1)+1]\left(e^{-\sqrt{p}\beta(f_j- f_{j-1})}e^{-\sqrt{p}t}+e^{\sqrt{p}\beta(f_{j+1}- f_j)}e^{\sqrt{p}t}\right)\right|^2 dx \\
&\leq& C \int_0^{1/\varepsilon} dz \int_{\frac{\beta}2(f_{j-1}-f_j)}^{\frac{\beta}2(f_{j+1}-f_j)}\left|[|w-1|^{p-2}(w-1)+1]e^{-\sqrt{p}\beta(f_j- f_{j-1})}e^{-\sqrt{p}t}\right|^2 dt \\
&&+ \int_0^{1/\varepsilon} dz \int_{\frac{\beta}2(f_{j-1}-f_j)}^{\frac{\beta}2(f_{j+1}-f_j)}\left|[|w-1|^{p-2}(w-1)+1]e^{-\sqrt{p}\beta(f_{j+1}- f_j)}e^{-\sqrt{p}t}\right|^2 dt \\
&\leq& C\varepsilon^3|\log\varepsilon|^{2q}.
\end{eqnarray*}
Hence $\|K_1\|_{L^2(\mathfrak{S})}\leq C\varepsilon^{3/2}|\log\varepsilon|^q$.

In fact, $\mathcal{E}_2$ can be written in the following form:
\begin{eqnarray*}
\mathcal{E}_2&=&\sum_{k=1}^N \left[S(\bar{\mathcal{V}}_k)-\varepsilon \left(\varepsilon^2 a_{11}\alpha^{1-p}e_k''+\lambda_0 e_k\right)Z_k\right] +\left[|\mathcal{V}-1|^p-1-\sum_{k=1}^N \left(|\bar{\mathcal{V}}_k-1|^p-1\right)\right] \\
&& +B_4^2(\mathcal{V})-\sum_{k=1}^N B_4^2(\bar{\mathcal{V}}_k).
\end{eqnarray*}
By estimating the derivatives of $\mathcal{E}_2$ with respect to $f_k$ and $e_k$, we get
\begin{equation}\label{e32}
\|\mathcal{E}_2(\mathbf{f}_1,\mathbf{e}_1)-\mathcal{E}_2(\mathbf{f}_2,\mathbf{e}_2)\|_{L^2(\mathfrak{S})}\leq C\varepsilon^{\frac32-\mu}\left[\|\mathbf{f}_1-\mathbf{f}_2\|_{H^2(0,1)} +\|\mathbf{e}_1-\mathbf{e}_2\|_*\right].
\end{equation}

\section{The gluing procedure}\label{sect5}
Fix a positive constant $\delta<\delta_0/100$ and let $\eta_\delta$ be a smooth cutoff function with $\eta_\delta(t)=1$ for $t<\delta$ and $\eta_\delta(t)=0$ for $t>2\delta$. Denote $\eta_\delta^\varepsilon(x)=\eta_\delta(\varepsilon |x|)$. We define a global approximate solution of \eqref{25} by
\begin{equation}\label{92}
\mathtt{w}(y)=\eta_{3\delta}^\varepsilon(x) \tilde{\alpha}(\varepsilon z)\mathcal{V}(z,x).
\end{equation}
Then $v=\mathtt{w}+\tilde{\phi}$ solves \eqref{25} if and only if $\tilde{\phi}$ solves the following problem
\begin{equation}\label{81}
\left\{\begin{array}{ll}
-\tilde{L}(\tilde{\phi})=\tilde{E}+\tilde{N}(\tilde{\phi}), &\mbox{in $\Omega_\varepsilon$,} \\
\tilde{\phi}=0, &\mbox{on $\partial\Omega_\varepsilon$,}
\end{array}
\right.
\end{equation}
where
\[\tilde{L}(\tilde{\phi})=\Div(A(\varepsilon y)\nabla\tilde{\phi}) +p|\mathtt{w}+\bar{u}_\varepsilon|^{p-2}(\mathtt{w}+\bar{u}_\varepsilon)\tilde{\phi},\]
\[\tilde{E}=\Div(A(\varepsilon y)\nabla\mathtt{w})+|\mathtt{w}+\bar{u}_\varepsilon|^p-|\bar{u}_\varepsilon|^p\]
and
\[\tilde{N}(\tilde{\phi})=|\mathtt{w}+\bar{u}_\varepsilon+\tilde{\phi}|^p-|\mathtt{w}+\bar{u}_\varepsilon|^p-p|\mathtt{w}+\bar{u}_\varepsilon|^{p-2}(\mathtt{w}+\bar{u}_\varepsilon) \tilde{\phi}.\]

In this section, we aim to find a solution to \eqref{81} of the following form
\[\tilde{\phi}=\eta_{3\delta}^\varepsilon \varphi+\psi,\]
where we assume that $\varphi$ is defined in the whole strip $\mathfrak{S}$.

From direct computation, we recognize that $\tilde{\phi}$ is a solution of \eqref{81} if the pair $(\varphi, \psi)$ solves the following problems:
\begin{equation}\label{8}
-\eta_{3\delta}^\varepsilon \tilde{L}(\varphi)=\eta_\delta^\varepsilon \tilde{E}+ \eta_\delta^\varepsilon\tilde{N}(\varphi+\psi) +p\eta_\delta^\varepsilon|\mathtt{w}+\bar{u}_\varepsilon|^{p-2}(\mathtt{w}+\bar{u}_\varepsilon)\psi-p\eta_\delta^\varepsilon |\bar{u}_\varepsilon|^{p-2}|\bar{u}_\varepsilon|\psi
\end{equation}
and
\begin{equation}\label{4}
\left\{\begin{array}{ll}
-\Div(A(\varepsilon y)\nabla \psi)-p(1-\eta_\delta^\varepsilon)|\mathtt{w}+\bar{u}_\varepsilon|^{p-2}(\mathtt{w}+\bar{u}_\varepsilon)\psi-p\eta_\delta^\varepsilon |\bar{u}_\varepsilon|^{p-2}\bar{u}_\varepsilon \psi \\
\qquad\qquad=(1-\eta_\delta^\varepsilon)\tilde{E}+(1-\eta_\delta^\varepsilon)\tilde{N}(\eta_{3\delta}^\varepsilon\varphi+\psi) \\
\qquad\qquad\qquad+\Div(A(\varepsilon y)\nabla \eta_{3\delta}^\varepsilon)\varphi+2\langle A(\varepsilon y)\nabla \eta_{3\delta}^\varepsilon, \nabla \varphi\rangle, &\mbox{in $\Omega_\varepsilon$} \\
\psi=0, & \mbox{on $\partial \Omega_\varepsilon$}.
\end{array}\right.
\end{equation}
We call \eqref{4} the outer problem and \eqref{8} the inner problem.


\subsection{Outer problem}
In order to solve the outer problem \eqref{4}, we consider the following problem first
\begin{equation}\label{e1}
\left\{\begin{array}{ll}
-\Div(A(\varepsilon x)\nabla \psi)-(1-\eta_{\delta}^\varepsilon)p|\mathtt{w}+\bar{u}_\varepsilon|^{p-2}(\mathtt{w}+\bar{u}_\varepsilon)\psi-p\eta_\delta^\varepsilon|\bar{u}_\varepsilon|^{p-2}\bar{u}_\varepsilon \psi=h, & \mbox{in $\Omega_\varepsilon$}, \\
\psi=0, &\mbox{on $\partial \Omega_\varepsilon$.}
\end{array}\right.
\end{equation}
\begin{lemma}\label{lme1}
Provided $h\in L^2(\Omega_\varepsilon)$ and $\varepsilon>0$ small enough, \eqref{e1} admits a unique solution $\psi\in H_0^1(\Omega_\varepsilon)$ satisfying the following estimate
\begin{equation}\label{e65}
\|\psi\|_{L^\infty(\Omega_\varepsilon)}\le C \sup_{y\in \Omega_\varepsilon}\|h\|_{L^2(B_1(y)\cap\Omega_\varepsilon)},
\end{equation}
where the constant $C>0$ is independent of $\varepsilon$.
\end{lemma}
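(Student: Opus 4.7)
The plan is to rewrite \eqref{e1} as $L_\varepsilon\psi := -\Div(A(\varepsilon x)\nabla\psi) + V_\varepsilon(x)\psi = h$, where
$$V_\varepsilon(x) := -(1-\eta_\delta^\varepsilon)\,p\,|\mathtt{w}+\bar{u}_\varepsilon|^{p-2}(\mathtt{w}+\bar{u}_\varepsilon) - p\,\eta_\delta^\varepsilon\,|\bar{u}_\varepsilon|^{p-2}\bar{u}_\varepsilon,$$
and to establish the result in two stages: first existence/uniqueness in $H^1_0(\Omega_\varepsilon)$, and then the sharper pointwise bound \eqref{e65}. Because the negative solution satisfies $\bar{u}_\varepsilon<0$ and is uniformly close to $-\mathbf{\Psi}^{1/p}$ (Proposition \ref{pro1}), the second summand equals $p\eta_\delta^\varepsilon|\bar{u}_\varepsilon|^{p-1}$ and is nonnegative, bounded below by a positive constant on any fixed interior subdomain of $\Omega$. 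On the support of $1-\eta_\delta^\varepsilon$ the layer profile $\mathtt{w}$ is exponentially small, so $\mathtt{w}+\bar{u}_\varepsilon<0$ there and the first summand also reduces to a nonnegative quantity comparable to $p|\bar{u}_\varepsilon|^{p-1}$. Hence $V_\varepsilon\ge 0$ on $\Omega_\varepsilon$, strictly positive in the interior.

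For existence and uniqueness I would apply Lax--Milgram to the bilinear form
$$B_\varepsilon(\psi,\varphi)=\int_{\Omega_\varepsilon}\bigl[\langle A(\varepsilon x)\nabla\psi,\nabla\varphi\rangle + V_\varepsilon\,\psi\varphi\bigr]dx$$
on $H^1_0(\Omega_\varepsilon)$. Uniform ellipticity of $A$, the Dirichlet boundary condition, and the non-degeneracy of $V_\varepsilon$ in the interior, combined with a Poincaré-type inequality that accounts for the degeneracy of $\mathbf{\Psi}^{(p-1)/p}$ near $\partial\Omega$, yield coercivity of $B_\varepsilon$. Thus for each $h\in L^2(\Omega_\varepsilon)$ there exists a unique weak solution $\psi\in H^1_0(\Omega_\varepsilon)$.

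The main ingredient is the weighted $L^\infty$ estimate, which I would establish by contradiction. Suppose sequences $\varepsilon_n\to 0$, $\psi_n, h_n$ exist with $\|\psi_n\|_{L^\infty(\Omega_{\varepsilon_n})}=1$ while $\sup_y\|h_n\|_{L^2(B_1(y)\cap\Omega_{\varepsilon_n})}\to 0$. Pick $y_n\in\Omega_{\varepsilon_n}$ with $|\psi_n(y_n)|\ge 1/2$ and set $\tilde\psi_n(z)=\psi_n(y_n+z)$, $\tilde h_n(z)=h_n(y_n+z)$. Since the coefficients of $L_{\varepsilon_n}$ are uniformly bounded, classical interior De Giorgi--Nash--Moser estimates yield uniform H\"older bounds on $\tilde\psi_n$ on compact subsets of its domain. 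After passing to a subsequence, $\tilde\psi_n\to\psi_\infty$ locally uniformly with $|\psi_\infty(0)|\ge 1/2$ and $|\psi_\infty|\le 1$. If $\varepsilon_n y_n$ stays at positive distance from $\partial\Omega$, the limit satisfies $-\Div(A_\infty\nabla\psi_\infty)+c_\infty\psi_\infty=0$ on $\R^2$ with $A_\infty$ constant positive definite and $c_\infty>0$, so the maximum principle forces $\psi_\infty\equiv 0$, a contradiction. If $\varepsilon_n y_n\to y_\star\in\partial\Omega$, the limit lives on a half-plane with zero Dirichlet data, and an exponential barrier built from the known boundary behavior of $\mathbf{\Psi}$ again forces $\psi_\infty\equiv 0$.

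The main obstacle will be this boundary case: near $\partial\Omega$ the zeroth order coefficient $V_\varepsilon\sim p\mathbf{\Psi}^{(p-1)/p}$ degenerates, so the limit operator may have $c_\infty=0$ and a straight Liouville argument fails; the fix is a tailored comparison function reflecting the linear vanishing of $\mathbf{\Psi}$ at $\partial\Omega$. Away from the boundary the contradiction closes easily, including the regime where $y_n$ is close to (or on) $\Gamma_{\varepsilon_n}$: there $\eta_\delta^\varepsilon = 1$, the curvature of $\Gamma$ becomes invisible on the unit scale, and the potential reduces in the limit to the positive constant $p|\bar{u}_\varepsilon(y_\star)|^{p-1}$, so the interior argument applies verbatim.
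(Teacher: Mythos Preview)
Your strategy coincides with the paper's: dispose of existence and uniqueness by ellipticity, then prove \eqref{e65} by contradiction, blowing up around near-maximum points $y_n$ and passing to a bounded solution of $-\Div(A(x_0)\nabla\psi_0)+p\mathbf{\Psi}^{(p-1)/p}(x_0)\psi_0=0$ on $\R^2$, which must vanish. The paper is terser on both counts: for existence it simply cites standard elliptic theory rather than running Lax--Milgram, and for the location of the blow-up point it writes only ``since $\psi_n$ satisfies Dirichlet condition, $x_n\to x_0\in\Omega$,'' then runs the interior Liouville argument without a separate boundary analysis.

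Your explicit discussion of the degeneracy of $V_\varepsilon\sim p\mathbf{\Psi}^{(p-1)/p}$ near $\partial\Omega$ is therefore an addition rather than a deviation, and it is well motivated. One refinement worth making: the dichotomy you state---interior limit on $\R^2$ with $c_\infty>0$ versus half-plane limit with zero Dirichlet data---is not exhaustive. If $\varepsilon_n y_n\to y_\star\in\partial\Omega$ while $\operatorname{dist}(y_n,\partial\Omega_{\varepsilon_n})\to\infty$, the blow-up domain is still all of $\R^2$ but now $c_\infty=0$, and Liouville yields only a (possibly nonzero) constant. This intermediate regime is precisely what your proposed barrier must exclude \emph{before} passing to the limit; make that role explicit, since the half-plane reflection argument does not reach it. Apart from this gap in the case split, your plan is sound and matches the paper's proof.
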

\begin{proof}
According to the standard theory of elliptic partial differential equations( c.f. \cite{Gilbarg_Trudinger2001}), we get \eqref{e1} admits a unique solution. We only need to prove the priori estimate  \eqref{e65}.

Suppose this estimate does not hold. There exist $\varepsilon_n\to 0$, and the function $h_n$ with the property $\sup_{y\in \Omega_\varepsilon}\|h_n\|_{L^2(B_1(y)\cap\Omega_\varepsilon)}\to 0$, such that the solution $\psi_n$ of \eqref{e1} corresponding to $h=h_n$ satisfies $\|\psi_n\|_{L^\infty(\Omega_{\varepsilon_n})}=1$. Thus, there exist $x_n\in \Omega$, so that $\psi_n(x_n/\varepsilon_n)>1/2$. Since $\psi_n$ satisfies Dirichlet condition, we get $x_n\to x_0\in \Omega$.

Let
\[\bar{\psi}_n(y)=\psi_n(y+x_n/\varepsilon_n).\]
Then it is the solution of the following problem
\begin{eqnarray}\label{e66}
\nonumber &&-\Div(A(x_n+\varepsilon_n y)\nabla \bar{\psi}_n) -p\eta_\delta(x_n+\varepsilon_ny) |\bar{u}_{\varepsilon_n}(x_n+\varepsilon_ny)|^{p-2}(\bar{u}_{\varepsilon_n}(x_n+\varepsilon_ny))\bar{\psi}_n  \\
\nonumber &&\qquad -p(1-\eta_\delta(x_n+\varepsilon_ny))|\mathtt{w}(y+x_n/\varepsilon_n)+\bar{u}_{\varepsilon_n}(x_n+\varepsilon_ny)|^{p-2} \\
&&\qquad \qquad\times(\mathtt{w}(y+x_n/\varepsilon_n)+\bar{u}_{\varepsilon_n}(x_n+\varepsilon_ny))\bar{\psi}_n   =\bar{h}_n, \quad \mathrm{in} \quad \tilde{\Omega}_{\varepsilon_n},
\end{eqnarray}
\[\bar{\psi}_n=0 \qquad \mathrm{on} \qquad \partial\tilde{\Omega}_{\varepsilon_n},\]
where
\[\tilde{\Omega}_{\varepsilon_n}=(\Omega-x_n)/\varepsilon_n, \qquad \mathrm{and}\qquad \bar{h}_n(y)=h_n(y+x_n/\varepsilon_n).\]

For any smooth bounded domain $D$ in $\R^2$, we get $\|\bar{h}_n\|_{L^2(D)}$ is uniformly bounded for $n$ large enough. From elliptic estimate, $\|\bar{\phi}_n\|_{H^2(D')}$ is uniformly bounded for any compact set $D'\subset\subset D$. Hence $\|\bar{\phi}_n\|_{C^{0, \gamma}(D')}$ is uniformly bounded. With the help of Proposition \ref{pro1}, we derive that $\bar{\psi}_n$ converges to the solution of the following problem on compact sets:
\begin{equation}\label{e67}
-\Div(A(x_0)\nabla \psi_0)+p\mathbf{\Psi}^{\frac{p-1}p}(x_0)\psi_0=0, \qquad \mathrm{in} \qquad \R^2.
\end{equation}
In fact, by multiplying the both sides of \eqref{e66} by $\phi\in C_0^\infty(\R^2)$, we have
\begin{eqnarray*}
&&-\int_\R \Div(A(x_n+\varepsilon_n y)\nabla\phi)\bar{\psi}_ndy -p\int_\R \eta_\delta(x_n+\varepsilon_ny) |\bar{u}_{\varepsilon_n}(x_n+\varepsilon_ny)|^{p-2}(\bar{u}_{\varepsilon_n}(x_n+\varepsilon_ny))\bar{\psi}_n\phi dy \\
&& \qquad -p\int_\R(1-\eta_\delta(x_n+\varepsilon_ny))|\mathtt{w}(y+x_n/\varepsilon_n)+\bar{u}_{\varepsilon_n}(x_n+\varepsilon_ny)|^{p-2} \times \\
&&\qquad \qquad(\mathtt{w}(y+x_n/\varepsilon_n)+\bar{u}_{\varepsilon_n}(x_n+\varepsilon_ny))\bar{\psi}_n\phi dy =\int_\R \bar{h}_n \phi dy.
\end{eqnarray*}
Letting $n\to +\infty$, we get
\[-\int_\R \Div(A(x_0)\nabla \phi)\psi_0-\int_\R \Psi^{\frac{p-1}p}(x_0)\psi_0\phi=0.\]
Hence $\psi_0$ is the solution to \eqref{e67}. Thus, $\psi_0=0$, which is a contradiction.
\end{proof}

Assume $\varphi$ satisfies the following decay condition
\begin{equation}\label{e69}
\sup_{|x|\ge \frac{3\delta}\varepsilon-1}\|\nabla \varphi\|_{L^2(B_1(z,x))}+\|\varphi\|_{L^\infty(|x|\ge 3\delta/\varepsilon)}\leq e^{-\sqrt{p}\delta/(4\varepsilon)}.
\end{equation}
From Lemma \ref{lme1} and fixed point theorem, we get \eqref{4} has a unique solution $\psi=\psi(\varphi)$, which satisfies the following estimates
\begin{equation}\label{e29}
\|\psi\|_{L^\infty(\Omega_\varepsilon)}\leq C\left[e^{-\frac{\sqrt{p}\delta}{2\varepsilon}}+\|\varphi\|_{L^\infty}^{\min\{p,2\}} +\varepsilon\| \varphi\|_{L^\infty(|s|>3\delta/\varepsilon)}+\varepsilon\sup_{|x|\ge \frac{3\delta}{\varepsilon} -1}\|\nabla\varphi\|_{L^2(B_1(z,x))}\right],
\end{equation}
and
\begin{eqnarray*}
\|\psi(\varphi_1)-\psi(\varphi_2)\|_{L^\infty(\Omega_\varepsilon)}&\leq& C\varepsilon\left[\|\varphi_1- \varphi_2\|_{L^\infty(|s|>3\delta/\varepsilon)}+\sup_{|x|\ge \frac{3\delta}{\varepsilon}-1}\|\nabla \varphi_1-\nabla \varphi_2\|_{L^2(B_1(z,x))}\right] \\
&&\quad+C\left[\|\varphi_1\|_{L^\infty}^{\min\{p-1,1\}}+\|\varphi_2\|_{L^\infty}^{\min\{p-1,1\}}\right]\|\varphi_1-\varphi_2\|_{L^\infty}.
\end{eqnarray*}

\subsection{Inner problem}

After solving the outer problem \eqref{4} for given $\varphi$ satisfying \eqref{e69}, we only need to solve the following problem
\begin{equation}\label{12}
-\eta_{3\delta}^\varepsilon \tilde{L}(\varphi)=\eta_\delta^\varepsilon \tilde{E}+ \eta_\delta^\varepsilon\tilde{N}(\varphi+\psi(\varphi)) +p\eta_\delta^\varepsilon|\mathtt{w}+\bar{u}_\varepsilon|^{p-2}(\mathtt{w}+\bar{u}_\varepsilon)\psi(\varphi)-p\eta_\delta^\varepsilon |\bar{u}_\varepsilon|^{p-2}|\bar{u}_\varepsilon|\psi(\varphi).
\end{equation}
Let
\[\varphi(z,s)=\tilde{\alpha}(\varepsilon z)\hat{\varphi}(z,x), \quad \mathrm{where} \quad x=\tilde{\beta}(\varepsilon z)s.\]
In $(z,x)$ coordinate, we have
\begin{equation*}
\tilde{\alpha}^{-p}\tilde{L}(\varphi)=\hat{\varphi}_{xx}+a_{11}\tilde{\alpha}^{1-p}\hat{\varphi}_{zz}+B_4^1(\hat{\varphi})+B_4^3(\hat{\varphi})+p\left|\eta_{3\delta}^\varepsilon \mathcal{V}-\tilde{\alpha}^{-1}\bar{\mathbf{q}}\right|^{p-2}\left(\eta_{3\delta}^\varepsilon \mathcal{V}-\tilde{\alpha}^{-1}\bar{\mathbf{q}}\right)\hat{\varphi}.
\end{equation*}
Now we extend the operator on the right hand side of the equality above to the whole $\mathfrak{S}$. Let
\begin{equation}\label{62}
\mathcal{L}(\phi)=\phi_{xx}+a_{11}\tilde{\alpha}^{1-p}\phi_{zz}+p|\mathcal{V}-1|^{p-2}(\mathcal{V}-1)\phi +B_5(\phi),
\end{equation}
where
\begin{equation*}
B_5(\phi)= \eta_{6\delta}^\varepsilon \left(B_4^1(\phi)+B_4^3(\phi)\right) +p\eta_{6\delta}^\varepsilon\left[\left|\eta_{3\delta}^\varepsilon \mathcal{V}-\tilde{\alpha}^{-1}\bar{\mathbf{q}}\right|^{p-2}\left(\eta_{3\delta}^\varepsilon \mathcal{V}-\tilde{\alpha}^{-1}\bar{\mathbf{q}}\right)-|\mathcal{V}-1|^{p-2}(\mathcal{V}-1)\right]\phi.
\end{equation*}

Rather than solving problem \eqref{12} directly, we consider the following problem
\begin{equation*}
-\mathcal{L}(\hat{\varphi})=\tilde{\alpha}^{-p}\left[\eta_\delta^\varepsilon \tilde{E}+ \eta_\delta^\varepsilon\tilde{N}(\varphi+\psi(\varphi)) +p\eta_\delta^\varepsilon|\mathtt{w}+\bar{u}_\varepsilon|^{p-2}(\mathtt{w}+\bar{u}_\varepsilon)\psi(\varphi)-p\eta_\delta^\varepsilon |\bar{u}_\varepsilon|^{p-2}|\bar{u}_\varepsilon|\psi(\varphi)\right],
\end{equation*}
which is equivalent to the following one:
\begin{equation}\label{e93}
\mathcal{L}(\hat{\varphi})=-\eta_\delta^\varepsilon S(\mathcal{V})-\eta_\delta^\varepsilon \hat{N}(\hat{\varphi}+\hat{\psi}(\hat{\varphi}))-p\eta_\delta^\varepsilon\left[\left| \mathcal{V}-\tilde{\alpha}^{-1}\bar{\mathbf{q}}\right|^{p-2}\left( \mathcal{V}-\tilde{\alpha}^{-1}\bar{\mathbf{q}}\right)+\left|\tilde{\alpha}^{-1}\bar{\mathbf{q}}\right|^{p-2}\left(\tilde{\alpha}^{-1}\bar{\mathbf{q}}\right)\right]\hat{\psi}(\hat{\varphi}),
\end{equation}
where $\psi(z,s)=\tilde{\alpha}(\varepsilon z)\hat{\psi}(z,x)$ and
\[\hat{N}(\phi)=|\mathcal{V}-\tilde{\alpha}^{-1}\bar{\mathbf{q}}+\phi|^p-|\mathcal{V}-\tilde{\alpha}^{-1}\bar{\mathbf{q}}|^p-p|\mathcal{V}-\tilde{\alpha}^{-1}\bar{\mathbf{q}}|^{p-2}(\mathcal{V}- \tilde{\alpha}^{-1}\bar{\mathbf{q}})\phi.\]

We first solve the following projective version of \eqref{e93}:
\begin{eqnarray}\label{24}
\nonumber\mathcal{L}(\hat{\varphi}) &=&-p\eta_\delta^\varepsilon\left[\left| \mathcal{V}-\tilde{\alpha}^{-1}\bar{\mathbf{q}}\right|^{p-2}\left( \mathcal{V}- \tilde{\alpha}^{-1}\bar{\mathbf{q}}\right)+\left|\tilde{\alpha}^{-1}\bar{\mathbf{q}}\right|^{p-2}\left(\tilde{\alpha}^{-1}\bar{\mathbf{q}}\right)\right]\hat{\psi}(\hat{\varphi})\\
&&-\eta_\delta^\varepsilon \mathcal{E}_2-\eta_\delta^\varepsilon \hat{N}(\hat{\varphi}+\hat{\psi}(\hat{\varphi})) +\sum_{j=1}^N \eta_\delta^\varepsilon c_j(\varepsilon z)w_{j,x}+\sum_{j=1}^N \eta_\delta^\varepsilon d_j(\varepsilon z)Z_j,
\end{eqnarray}
\begin{equation}\label{43}
\int_\R \hat{\varphi}(z,x)w_{j,x}(x)dx= \int_\R \hat{\varphi}(z,x)Z_j(x)dx=0,
\end{equation}
\begin{equation}\label{e2}
\hat{\varphi}(0,x)=\hat{\varphi}(1/\varepsilon,x), \qquad  \hat{\varphi}_z(0,x)=\hat{\varphi}_z(1/\varepsilon,x),
\end{equation}
where the term $\mathcal{E}_1$ is absorbed in $\sum_{j=1}^N \eta_\delta^\varepsilon d_j(\varepsilon z)Z_j$. Here we recall $w_j$ and $Z_j$ are defined in \eqref{e109}.

\section{Linear theory}\label{sect6}
In order to solve problem \eqref{24}-\eqref{e2}, we consider the following problem
\begin{equation}\label{48}
\left\{\begin{array}{ll}
L(\phi)=h+\sum_{j=1}^N c_j(\varepsilon z)w_{j,x} +\sum_{j=1}^N d_j(\varepsilon z)Z_j, &\mbox{in $\mathfrak{S}$,} \\
\int_\R \phi(z,x)w_{j,x}(x)dx= \int_\R \phi(z,x)Z_j(x)dx=0, &z\in (0,1/\varepsilon), \\
\phi(0,x)=\phi(1/\varepsilon,x), \quad \phi_z(0,x)=\phi_z(1/\varepsilon,x), &x\in \R,
\end{array}\right.
\end{equation}
where
\[L(\phi)=\phi_{xx}+a_{11}\tilde{\alpha}^{1-p}\phi_{zz}+p|\mathcal{V}-1|^{p-2}(\mathcal{V}-1)\phi.\]

Let
\[\phi(z,x)=\sum_{j=1}^N \eta_j\tilde{\phi}_j+\tilde{\psi}, \qquad \eta_j(z,x)=\eta\left(\frac{x-\beta(\varepsilon z) f_j(\varepsilon z)}R\right)\]
where $R=\frac1{\sqrt{p}}|\log\varepsilon|$,  $\eta(r)$ is a smooth cutoff function such that $\eta(r)=1$ for $|r|<1/2$ and  $\eta(r)=0$ for $|r|>5/6$. Then $\phi$ solves \eqref{48} if and only if the functions $\tilde{\phi}_j$, $\tilde{\psi}$ solve the following equations:
\begin{eqnarray}\label{50}
\nonumber&&\tilde{\phi}_{j,xx}+a_{11}\tilde{\alpha}^{1-p}\tilde{\phi}_{j,zz}+p|w_j-1|^{p-2}(w_j-1)\tilde{\phi}_j \\
&=& \tilde{\chi}_j h+p\left[|w_j-1|^{p-2}(w_j-1)-|\mathcal{V}-1|^{p-2}(\mathcal{V}-1)\right]\tilde{\chi}_j \tilde{\phi}_j  \\
\nonumber&&-p\left[|\mathcal{V}-1|^{p-2}(\mathcal{V}-1)+1\right]\tilde{\chi}_j\tilde{\psi} +c_j(\varepsilon z)w_{j,x}+d_j(\varepsilon z)Z_j,
\end{eqnarray}
\begin{equation}\label{51}
\int_\R \tilde{\phi}_j(z,x)w_{j,x}(x)dx=\Lambda_{j1}, \qquad \int_\R \tilde{\phi}_j(z,x)Z_j(x)dx=\Lambda_{j2}, \qquad z\in \left(0,\frac1\varepsilon\right),
\end{equation}
\begin{equation}\label{52}
\tilde{\phi}_j(0,x)=\tilde{\phi}_j(1/\varepsilon,x), \qquad \tilde{\phi}_{j,z}(0,x)=\tilde{\phi}_{j,z}(1/\varepsilon,x), \qquad x\in \R
\end{equation}
and
\begin{eqnarray}\label{54}
\nonumber&&\tilde{\psi}_{xx}+a_{11}\tilde{\alpha}^{1-p}\tilde{\psi}_{zz}+p\left(1-\sum_{j=1}^N\eta_j\right)|\mathcal{V}-1|^{p-2}(\mathcal{V}-1)\tilde{\psi}-p\sum_{j=1}^N \eta_j \tilde{\psi} \\
&=& \left(1-\sum_{j=1}^N\eta_j\right)h-\sum_{j=1}^N\left(\eta_{j,xx}\tilde{\phi}_j+2\eta_{j,x}\tilde{\phi}_{j,x}\right) -a_{11}\tilde{\alpha}^{1-p}\sum_{j=1}^N\left(\eta_{j,zz}\tilde{\phi}_j+2\eta_{j,z}\tilde{\phi}_{j,z}\right) \\
\nonumber&& +\sum_{j=1}^N (1-\eta_j)c_j(\varepsilon z)w_{j,x} +\sum_{j=1}^N (1-\eta_j)d_j(\varepsilon z) Z_j,
\end{eqnarray}
where
\[\Lambda_{j1}=\int_\R(1-\eta_j)\tilde{\phi}_j w_{j,x}dx-\sum_{k\not=j}\int_\R\eta_k \tilde{\phi}_k w_{j,x}dx-\int_\R \tilde{\psi}w_{j,x}dx,\]
\[\Lambda_{j2}=\int_\R(1-\eta_j)\tilde{\phi}_j Z_jdx-\sum_{k\not=j}\int_\R\eta_k \tilde{\phi}_k Z_jdx-\int_\R \tilde{\psi}Z_jdx,\]
\[\tilde{\chi}_j(z,x)=\chi\left(\frac{x-\beta(\varepsilon z) f_j(\varepsilon z)}R\right)\]
and $\chi(r)$ is a smooth cutoff function such that $\chi(r)=1$ for $|r|<5/6$ and  $\chi(r)=0$ for $|r|>7/8$.

In order to solve these problems, we consider the following problem first
\begin{equation}\label{e3}
\phi_{xx}+a_{11}\alpha^{1-p}\phi_{zz}+p|w-1|^{p-2}(w-1)\phi=h, \qquad \mathrm{in} \qquad \mathfrak{S},
\end{equation}
\begin{equation}\label{e4}
\int_\R \phi(z,x) w_x(x)dx=\Lambda_1(z); \qquad \int_\R \phi(z,x) Z(x)dx=\Lambda_3(z), \qquad z\in(0,1/\varepsilon),
\end{equation}
\begin{equation}\label{e5}
\phi(0,x)=\phi(1/\varepsilon,x), \qquad \phi_z(0,x)=\phi_z(1/\varepsilon,x), \qquad x\in \R.
\end{equation}
\begin{lemma}\label{lme2}
There exists a constant $C>0$ independent of $\varepsilon>0$ such that the solution $\phi$ to  \eqref{e3}-\eqref{e5} satisfies the following priori estimate
\begin{equation}\label{e10}
\|\phi\|_{H^2(\mathfrak{S})}\leq C\left[\|h\|_{L^2(\mathfrak{S})}+\|\Lambda_1\|_{H^2(0, 1/\varepsilon)}+\|\Lambda_2\|_{H^2(0, 1/\varepsilon)}\right].
\end{equation}
\end{lemma}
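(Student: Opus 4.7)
The plan is a contradiction/compactness argument preceded by a reduction to the case $\Lambda_1=\Lambda_3\equiv 0$. Set $\tilde\phi(z,x)=\phi(z,x)-g_1(z)w_x(x)-g_3(z)Z(x)$ with $g_1(z)=\Lambda_1(z)/\|w_x\|_{L^2(\R)}^2$ and $g_3(z)=\Lambda_3(z)/\|Z\|_{L^2(\R)}^2$, so that $\tilde\phi$ satisfies $\int_\R \tilde\phi w_x\,dx=\int_\R \tilde\phi Z\,dx=0$. Since $w_x,Z$ are smooth and exponentially decaying, substituting into \eqref{e3} yields an equation of the same form for $\tilde\phi$ with modified right-hand side $\tilde h=h-[\,\text{terms involving }g_1'',g_3''\,]$ satisfying $\|\tilde h\|_{L^2(\mathfrak S)}\le C(\|h\|_{L^2}+\|\Lambda_1\|_{H^2}+\|\Lambda_3\|_{H^2})$. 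So it suffices to prove \eqref{e10} under $\Lambda_1=\Lambda_3=0$.

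Assume the conclusion fails: there exist $\varepsilon_n\to 0$ and data $(\phi_n,h_n,c_{j,n},d_{j,n})$ solving \eqref{e3}--\eqref{e5} with $\Lambda_1=\Lambda_3=0$, $\|\phi_n\|_{H^2(\mathfrak S_n)}=1$, and $\|h_n\|_{L^2(\mathfrak S_n)}\to 0$. Standard Calder\'on--Zygmund interior estimates give a point $(z_n,x_n)\in\mathfrak S_n$ with $\|\phi_n\|_{H^2(B_1(z_n,x_n))}\ge c_0>0$. After extending $\phi_n$ periodically in $z$ and setting $\bar\phi_n(z,x)=\phi_n(z+z_n,x)$, pass to a subsequence so that $\varepsilon_n z_n\to\theta_0\in[0,1]$; then $a_{11}(\varepsilon_n(z+z_n))\to a_{11}(\theta_0)$ and $\alpha(\varepsilon_n(z+z_n))\to\alpha(\theta_0)$ uniformly on compacta. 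Elliptic regularity yields a weak $H^2_{\mathrm{loc}}(\R^2)$ limit $\phi_\infty$ with $\|\phi_\infty\|_{H^2(B_1(0))}\ge c_0$, and one has to rule out this limit.

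Distinguish two cases based on whether $x_n$ remains bounded. If $|x_n|\to\infty$, then $w(x+x_n)\to 0$ on compacta, so $\phi_\infty$ is a bounded solution of the coercive constant-coefficient equation $\phi_{xx}+a_0\phi_{zz}-p\phi=0$ on $\R^2$ with $a_0=a_{11}(\theta_0)\alpha^{1-p}(\theta_0)>0$; the maximum principle forces $\phi_\infty\equiv 0$, a contradiction. If $x_n$ stays bounded, the limit obeys
\[
\phi_{\infty,xx}+a_0\phi_{\infty,zz}+p|w-1|^{p-2}(w-1)\phi_\infty=0 \quad\text{on }\R^2,
\]
with $\int_\R \phi_\infty w_x\,dx=\int_\R \phi_\infty Z\,dx=0$ for every $z$. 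Writing $L_0=\partial_{xx}+p|w-1|^{p-2}(w-1)$, one has $L_0 w_x=0$ and $L_0 Z=\lambda_0 Z$ with $\lambda_0>0$; moreover these are the only non-negative elements of the spectrum of $L_0$, so $\langle L_0 u,u\rangle\le -c\|u\|_{L^2(\R)}^2$ on the orthogonal complement of $\mathrm{span}\{w_x,Z\}$. Decomposing $\phi_\infty$ into plane waves $e^{i\xi z}$ (Fourier--Bohr decomposition, since $\phi_\infty$ is only bounded in $z$) reduces the PDE to $(L_0-a_0\xi^2)\hat\phi(\cdot,\xi)=0$ with $\hat\phi(\cdot,\xi)\perp\{w_x,Z\}$; testing against $\hat\phi$ itself yields $\langle(L_0-a_0\xi^2)\hat\phi,\hat\phi\rangle\le -(c+a_0\xi^2)\|\hat\phi\|^2$, forcing $\hat\phi\equiv 0$ for every $\xi$, hence $\phi_\infty\equiv 0$, again a contradiction.

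The main technical obstacle is making the final spectral step rigorous: because $\phi_\infty$ is not assumed $L^2$ in $z$, the standard Fourier transform does not apply, and one must either work with an almost-periodic/Bloch-type decomposition or argue directly with a localized energy identity that exploits the coercivity of $-L_0$ on $\mathrm{span}\{w_x,Z\}^\perp$ combined with the sign of $a_0>0$. Once this is done, the negative-definiteness closes the contradiction uniformly in $\xi$, giving \eqref{e10}. Higher-order control ($H^2$ rather than $L^2$) then follows from standard Calder\'on--Zygmund estimates applied to \eqref{e3}.
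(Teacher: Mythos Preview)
Your reduction to $\Lambda_1=\Lambda_3\equiv 0$ matches the paper. After that, however, the paper does \emph{not} argue by contradiction/compactness; it gives a direct energy proof via separation of variables. It expands $\phi$ and $h$ in eigenfunctions $\upsilon_k$ of $-a_{11}\alpha^{1-p}\partial_{zz}$ on $(0,1/\varepsilon)$ with periodic conditions, so that \eqref{e3} becomes the family of ODEs $\phi_{k,xx}-\mu_k\phi_k+p|w-1|^{p-2}(w-1)\phi_k=h_k$ on $\R$ with $\mu_k\ge 0$. The orthogonality \eqref{e4} forces each $\phi_k\perp\{w_x,Z\}$, and testing against $\phi_k$ together with the spectral gap of $-L_0$ on $\{w_x,Z\}^\perp$ gives $\|\phi_{k,x}\|^2+(1+\mu_k)\|\phi_k\|^2\le C\|h_k\|^2$. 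Summing over $k$ yields $\|\phi\|_{H^1(\mathfrak S)}\le C\|h\|_{L^2(\mathfrak S)}$, and elliptic regularity upgrades to $H^2$. No limit, no Liouville step.

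Your route has a gap beyond the one you flag. The sentence ``Calder\'on--Zygmund interior estimates give a point $(z_n,x_n)$ with $\|\phi_n\|_{H^2(B_1(z_n,x_n))}\ge c_0$'' is unjustified: Calder\'on--Zygmund gives local \emph{upper} bounds, not lower bounds, and on the growing strip $\mathfrak S_n$ nothing you have written rules out $\phi_n$ spreading in $z$ so that every unit-ball norm tends to $0$ while $\|\phi_n\|_{H^2(\mathfrak S_n)}=1$. To locate mass you would need an $L^\infty$-type normalization together with an a priori exponential-decay estimate in $x$ (so that the orthogonality conditions even make sense for the limit $\phi_\infty$). And, as you correctly note, the Liouville step is genuinely delicate because $\phi_\infty$ is only bounded in $z$: neither Plancherel nor a naive energy identity applies, and a cutoff-and-error argument is required. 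All of this is avoidable: multiplying \eqref{e3} by $\phi$ and integrating over $\mathfrak S$, using periodicity in $z$ and the coercivity of $-L_0$ on $\{w_x,Z\}^\perp$, already gives $\|\phi\|_{H^1}\le C\|h\|_{L^2}$ directly---which is the paper's proof without the eigenfunction bookkeeping.
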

\begin{proof}
We first consider the case of $\Lambda_1=\Lambda_2=0$. According to \cite[Theorem 1.5.1]{Levitan_Sargsjan1991}, we expand $\phi$ and $h$ into the following form
\begin{equation}\label{e8}
\phi(z,x)=\sum_{k=0}^{+\infty}\phi_k(x) \upsilon_k(z), \qquad \mathrm{and} \qquad h(z,x)=\sum_{k=0}^{+\infty}h_k(x) \upsilon_k(z),
\end{equation}
where $\upsilon_k$ is the unit $L^2$ eigenfunction of the following problem
\begin{equation*}
\left\{\begin{array}{ll}
-a_{11}\alpha^{1-p}\upsilon_k''=\tilde{\mu}_k \upsilon_k, &\mbox{in $(0,1/\varepsilon)$}, \\
\upsilon_k(0)=\upsilon_k(1/\varepsilon),
\end{array}\right.
\end{equation*}
and $\tilde{\mu}_k\ge 0$.
Then $\phi_k$ and $h_k$ satisfy the following equations
\begin{equation}\label{e6}
\phi_{k,xx}-\tilde{\mu}_k \phi_k +p|w-1|^{p-2}(w-1)\phi_k=h_k \quad \mathrm{in} \quad \R,
\end{equation}
\begin{equation}\label{e7}
\int_\R \phi_k(x) w_x(x)dx=\int_\R \phi_k(x) Z(x)dx=0.
\end{equation}
Multiplying the both sides of \eqref{e6} by $\phi_k$ and integrating, we have
\begin{equation*}
\int_{-\infty}^{+\infty}\left[ |\phi_{k,x}|^2-p|w-1|^{p-2}(w-1)\phi_k^2 \right]dx +\tilde{\mu}_k\int_\R \phi_k^2 dx=-\int_\R h_k \phi_kdx.
\end{equation*}
Since \eqref{e7} holds, we get
\[\int_{-\infty}^{+\infty}\left[ |\phi_{k,x}|^2-p|w-1|^{p-2}(w-1)\phi_k^2 \right]dx \geq C\left[\|\phi_k\|_{L^2(\R)}^2+\|\phi_{k,x}\|_{L^2(\R)}^2\right].\]
Then we arrive at
\begin{equation}\label{e9}
\int_\R|\phi_{k,x}|^2 dx+(1+\tilde{\mu}_k)\int_\R\phi_k^2dx \leq \int_\R h_k^2dx.
\end{equation}
From the expression \eqref{e8}, we get
\[\|\phi\|_{L^2(\mathfrak{S})}^2=\sum_{k=0}^\infty \int_\R |\phi_k|^2dx, \qquad \|\phi_x\|_{L^2(\mathfrak{S})}^2=\sum_{k=0}^{+\infty}\int_\R|\phi_{k,x}|^2dx\]
and
\[\int_{\mathfrak{S}}a_{11}\alpha^{1-p}\phi_{zz}\phi dxdz=-\sum_{k=0}^\infty \tilde{\mu}_k\int_\R|\phi_k|^2dx, \qquad  \|h\|_{L^2(\mathfrak{S})}^2=\sum_{k=0}^\infty \int_\R |h_k|^2dx.\]
With the help of \eqref{e9} and these identities above, we get
\[\|\phi\|_{H^1(\mathfrak{S})}\leq C\|h\|_{L^2(\mathfrak{S})}.\]
From elliptic estimate,
\[\|\phi\|_{H^2(\mathfrak{S})}\leq C\|h\|_{L^2(\mathfrak{S})}.\]
Hence in the case of  $\Lambda_1=\Lambda_2=0$, \eqref{e10} holds.

In the general case, we define
\[\bar{\phi}(z,x)=\phi(z,x)-\frac{\Lambda_1(z)}{\int_\R w_x^2dx}w_x(x)-\frac{\Lambda_2(z)}{\int_\R Z^2dx}Z(x).\]
It is the solution of the following problem
\begin{eqnarray*}
\bar{\phi}_{xx}+a_{11}\alpha^{1-p}\bar{\phi}_{zz}+p|w-1|^{p-2}(w-1)\bar{\phi}&=&h-\frac{a_{11}\alpha^{1-p}\Lambda_1''}{\int_\R w_x^2dx}w_x-\frac{a_{11}\alpha^{1-p}\Lambda_2''}{\int_\R Z^2dx}Z(x) \\
&&-\frac{\lambda_0\Lambda_2}{\int_\R Z^2dx}Z(x),\qquad\qquad \mbox{in $\mathfrak{S}$},
\end{eqnarray*}
\begin{equation*}
\int_\R \bar{\phi}(x) w_x(x)dx=\int_\R \bar{\phi}(x) Z(x)dx=0, \qquad z\in(0, 1/\varepsilon).
\end{equation*}
Using the conclusion in the case $\Lambda_1=\Lambda_2=0$, we get
\[\|\bar{\phi}\|_{H^2(\mathfrak{S})}\leq C\left[\|h\|_{L^2(\mathfrak{S})}+\|\Lambda_1\|_{H^2(0, 1/\varepsilon)}+\|\Lambda_2\|_{H^2(0, 1/\varepsilon)}\right].\]
Then \eqref{e10} holds.
\end{proof}

Now we consider the following projective version
\begin{equation}\label{e11}
\phi_{xx}+a_{11}\alpha^{1-p}\phi_{zz}+p|w-1|^{p-2}(w-1)\phi=h+c(\varepsilon z)w_x(x) +d(\varepsilon z)Z(x), \qquad \mathrm{in} \qquad \mathfrak{S},
\end{equation}
\begin{equation}\label{e12}
\int_\R \phi(z,x) w_x(x)dx=\Lambda_1(z), \qquad \int_\R \phi(z,x) Z(x)dx=\Lambda_2(z), \qquad z\in(0,1/\varepsilon),
\end{equation}
\begin{equation}\label{e13}
\phi(0,x)=\phi(1/\varepsilon,x), \qquad \phi_z(0,x)=\phi_z(1/\varepsilon,x), \qquad x\in \R.
\end{equation}
\begin{lemma}\label{lme3}
Provided $h\in L^2(\mathfrak{S})$ and $\Lambda_1, \Lambda_2\in H^2(0,1/\varepsilon)$,  \eqref{e11}-\eqref{e13} has a unique solution $\phi=T_0(h, \Lambda_1, \Lambda_2)$. Moreover, it satisfies the following estimate
\begin{equation}\label{e16}
\|\phi\|_{H^2(\mathfrak{S})}\leq C\left[\|h\|_{L^2(\mathfrak{S})}+\|\Lambda_1\|_{H^2(0, 1/\varepsilon)}+\|\Lambda_2\|_{H^2(0, 1/\varepsilon)}\right].
\end{equation}
\end{lemma}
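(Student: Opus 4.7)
The plan is to treat $c(\varepsilon z)$ and $d(\varepsilon z)$ as Lagrange multipliers that are uniquely determined by the two orthogonality constraints in \eqref{e12}, and then to invoke Lemma \ref{lme2} on the resulting equation with fixed right-hand side. The two multipliers, being free scalar functions of $z$, supply exactly the freedom needed to match the two scalar constraints imposed on each vertical slice $\{z\}\times\R$.

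First I would derive explicit formulas for $c$ and $d$ by projecting the equation onto the one-dimensional subspaces spanned by $w_x$ and $Z$. Differentiating \eqref{29} with respect to $x$ gives $w_{xxx}+p|w-1|^{p-2}(w-1)w_x=0$, while by definition $Z''+p|w-1|^{p-2}(w-1)Z=\lambda_0 Z$. Writing $L_0:=\partial_x^2+p|w-1|^{p-2}(w-1)$, the operator $L_0$ is self-adjoint in $L^2(\R)$ and $w_x,Z$ are eigenfunctions for distinct eigenvalues, hence $\int_\R Zw_x\,dx=0$. Multiplying \eqref{e11} by $w_x$, integrating over $x\in\R$, integrating by parts twice on the $\phi_{xx}$ term, and using $\int_\R \phi w_x\,dx=\Lambda_1$, I obtain
\begin{equation*}
a_{11}\alpha^{1-p}\Lambda_1''(z)=\int_\R h(z,x)w_x(x)\,dx+c(\varepsilon z)\int_\R w_x^2\,dx.
\end{equation*}
Projecting against $Z$ and using $L_0Z=\lambda_0Z$ together with $\int_\R\phi Z\,dx=\Lambda_2$ gives
\begin{equation*}
\lambda_0\Lambda_2(z)+a_{11}\alpha^{1-p}\Lambda_2''(z)=\int_\R h(z,x)Z(x)\,dx+d(\varepsilon z)\int_\R Z^2\,dx.
\end{equation*}
These two identities determine $c$ and $d$ uniquely, and Cauchy--Schwarz together with the exponential decay of $w_x,Z$ yields the bound $\|cw_x\|_{L^2(\mathfrak S)}+\|dZ\|_{L^2(\mathfrak S)}\le C\bigl[\|h\|_{L^2(\mathfrak S)}+\|\Lambda_1\|_{H^2(0,1/\varepsilon)}+\|\Lambda_2\|_{H^2(0,1/\varepsilon)}\bigr]$.

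With $c,d$ so chosen, set $\bar h:=h+c(\varepsilon z)w_x(x)+d(\varepsilon z)Z(x)\in L^2(\mathfrak S)$; then \eqref{e11}--\eqref{e13} is exactly of the form treated by Lemma \ref{lme2} with source term $\bar h$, and that lemma delivers $\phi\in H^2(\mathfrak S)$ satisfying $\|\phi\|_{H^2(\mathfrak S)}\le C[\|\bar h\|_{L^2(\mathfrak S)}+\|\Lambda_1\|_{H^2}+\|\Lambda_2\|_{H^2}]$. Combining with the estimate on $c,d$ yields \eqref{e16}. Uniqueness follows by applying the same projection argument to the difference of two solutions: in the homogeneous case ($h=0$, $\Lambda_1=\Lambda_2=0$) the projections force $c=d=0$, after which Lemma \ref{lme2} gives $\phi=0$. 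The only real subtlety is that Lemma \ref{lme2} was stated as an a priori estimate, so I would need to verify that the Sturm--Liouville expansion in its proof actually constructs a solution: for each eigenvalue $\mu_k\ge 0$ of $-a_{11}\alpha^{1-p}\partial_z^2$ on $(0,1/\varepsilon)$ with periodic data, the fibre operator $L_0-\mu_k$ on $L^2(\R)$ is either invertible (when $\mu_k\notin\{0,\lambda_0\}$) or has one-dimensional kernel spanned by $w_x$ (resp.\ $Z$), so the orthogonality conditions built into our choice of $c,d$ eliminate the kernel via the Fredholm alternative and each mode $\phi_k$ is solvable with the required estimate.
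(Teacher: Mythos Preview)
Your proof is correct and follows essentially the same strategy as the paper: both determine the multipliers $c,d$ by projection and then construct $\phi$ mode-by-mode via the Fredholm alternative on each fibre $L_0-\mu_k$, with the a~priori bound supplied by Lemma~\ref{lme2}. The paper's only organizational difference is that it first reduces to $\Lambda_1=\Lambda_2=0$ by subtracting $\frac{\Lambda_1}{\int w_x^2}w_x+\frac{\Lambda_2}{\int Z^2}Z$ before carrying out the eigenfunction expansion, rather than projecting globally as you do; the resulting formulas for $c,d$ and the final estimate coincide.
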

\begin{proof}
We first consider the case of $\Lambda_1=\Lambda_2=0$. Write $h$ into the following form
\[
 h(z,x)=\sum_{k=0}^{+\infty}h_k(x) \upsilon_k(z).
\]
In order to solve \eqref{e11}-\eqref{e13}, we consider the following problem
\begin{equation}\label{e14}
\phi_{k,xx}-\tilde{\mu}_k \phi_k +p|w-1|^{p-2}(w-1)\phi_k=h_k(x) +c_k w_x(x)+d_k Z(x), \qquad \mathrm{in} \qquad \R,
\end{equation}
\begin{equation}\label{e15}
\int_\R \phi_k w_xdx=\int_\R \phi_k Zdx=0,
\end{equation}
where $c_k$ and $d_k$ are constants. From Fredholm alternative, we get \eqref{e14}-\eqref{e15} has a unique solution $\phi_k$, with
\[c_k=-\frac{\int_\R h_k w_xdx}{\int_\R w_x^2dx}, \qquad  d_k=-\frac{\int_\R h_k Zdx}{\int_\R Z^2dx}.\]
It is easy to get
\[\sum_{k=0}^\infty |c_k|^2 \leq C \|h\|_{L^2(\mathfrak{S})}^2, \qquad \sum_{k=0}^\infty |d_k|^2 \leq C \|h\|_{L^2(\mathfrak{S})}^2.\]
Let
\[\phi(z,x)=\sum_{k=0}^\infty \phi_k(x) \upsilon_k(z),\]
\[c(\varepsilon z)=\sum_{k=0}^\infty c_k \upsilon_k(z), \qquad \mathrm{and} \qquad  d(\varepsilon z)=\sum_{k=0}^\infty d_k \upsilon_k(z).\]
Then $\phi$ is the unique solution of  problem \eqref{e11}-\eqref{e13}.

However, in the general case, we define
\[\bar{\phi}(z,x)=\phi(z,x)-\frac{\Lambda_1(z)}{\int_\R w_x^2dx}w_x(x)-\frac{\Lambda_2(z)}{\int_\R Z^2dx}Z(x).\]
Then \eqref{e11}-\eqref{e13} is transformed into the following problem
\begin{eqnarray*}
\bar{\phi}_{xx}+a_{11}\alpha^{1-p}\bar{\phi}_{zz}+p|w-1|^{p-2}(w-1)\bar{\phi}&=&h-\frac{a_{11}\alpha^{1-p}\Lambda_1''}{\int_\R w_x^2dx}w_x-\frac{a_{11}\alpha^{1-p}\Lambda_2''}{\int_\R Z^2dx}Z(x) \\
&&-\frac{\lambda_0\Lambda_2}{\int_\R Z^2dx}Z(x) +c(\varepsilon z)w_x +d(\varepsilon z)Z(x), \quad \mathrm{in} \quad \mathfrak{S},
\end{eqnarray*}
\begin{equation*}
\int_\R \bar{\phi}(x) w_x(x)dx=\int_\R \bar{\phi}(x) Z(x)dx=0, \qquad z\in(0,1/\varepsilon).
\end{equation*}
It is uniquely solvable according to the argument above. The priori estimate \eqref{e16} follows from Lemma \ref{lme2}.
\end{proof}

Next,  we consider the following problem
\begin{equation}\label{e17}
\phi_{xx}+a_{11}\tilde{\alpha}^{1-p}\phi_{zz}+p|w_j-1|^{p-2}(w_j-1)\phi=h+c_j(\varepsilon z)w_{j,x}(x) +d_j(\varepsilon z)Z_j(x), \qquad \mathrm{in} \qquad \mathfrak{S},
\end{equation}
\begin{equation}\label{e18}
\int_\R \phi(z,x) w_{j,x}(x)dx=\Lambda_1(z), \qquad \int_\R \phi(z,x) Z_j(x)dx=\Lambda_2(z), \qquad z\in(0,1/\varepsilon),
\end{equation}
\begin{equation}\label{e19}
\phi(0,x)=\phi(1/\varepsilon,x), \qquad \phi_z(0,x)=\phi_z(1/\varepsilon,x), \qquad x\in \R.
\end{equation}
\begin{lemma}\label{lme4}
Given $h\in L^2(\mathfrak{S})$, \eqref{e17}-\eqref{e19} has a unique solution $\phi=T_j(h, \Lambda_1, \Lambda_2)$, which satisfies the following estimate
\begin{equation}\label{e21}
\|\phi\|_{H^2(\mathfrak{S})}\leq C\left[\|h\|_{L^2(\mathfrak{S})}+\|\Lambda_1\|_{H^2(0, 1/\varepsilon)}+\|\Lambda_2\|_{H^2(0, 1/\varepsilon)}\right].
\end{equation}
Furthermore, the operator $T_j$ is Lipschitz continuous with respect to $\mathbf{f}$, \text{i.e.}
\begin{equation}\label{e72}
\|T_{j,\mathbf{f}_1}-T_{j,\mathbf{f}_2}\|\le C\|\mathbf{f}_1-\mathbf{f}_2\|_{H^2(0,1)}.
\end{equation}
\end{lemma}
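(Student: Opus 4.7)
The plan is to reduce \eqref{e17}--\eqref{e19} to the constant-coefficient, centered problem \eqref{e11}--\eqref{e13} already solved in Lemma \ref{lme3}, by translating out the shift in $w_j$ and treating the remaining discrepancy as a small perturbation that we absorb by contraction. More precisely, I will make the change of variables
\[
\tilde x = x - \beta(\varepsilon z) f_j(\varepsilon z), \qquad \psi(z,\tilde x) := \phi(z,\tilde x + \beta(\varepsilon z) f_j(\varepsilon z)),
\]
under which $w_j \mapsto w(\tilde x)$, $Z_j \mapsto Z(\tilde x)$, $w_{j,x}\mapsto w_{\tilde x}$, and the orthogonality conditions \eqref{e18} become the conditions against $w_{\tilde x}$ and $Z$ in Lemma \ref{lme3}. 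The second order $z$--derivative transforms as
\[
\phi_{zz} = \psi_{zz} - 2\varepsilon (\beta f_j)'\psi_{z\tilde x} + \varepsilon^2 [(\beta f_j)']^2\psi_{\tilde x\tilde x} - \varepsilon^2 (\beta f_j)''\psi_{\tilde x},
\]
so the operator in \eqref{e17} becomes
\[
\psi_{\tilde x\tilde x} + a_{11}\tilde\alpha^{1-p}\psi_{zz} + p|w-1|^{p-2}(w-1)\psi + \mathcal{P}(\psi),
\]
where $\mathcal{P}$ collects the three cross/remainder terms above together with the difference $a_{11}(\tilde\alpha^{1-p}-\alpha^{1-p})\psi_{zz}$, which is $O(\varepsilon^2)$ by \eqref{65}.

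\textbf{Smallness of the perturbation.} By the constraint \eqref{66} we have $\|f_j\|_{H^2(0,1)}\le C|\log\varepsilon|^2$, hence $\|\varepsilon(\beta f_j)'\|_{L^\infty}\le C\varepsilon|\log\varepsilon|^2$ and $\|\varepsilon^2(\beta f_j)''\|_{L^2(0,1/\varepsilon)}\le C\varepsilon^{3/2}|\log\varepsilon|^2$. Together with \eqref{e56}, this shows
\[
\|\mathcal{P}(\psi)\|_{L^2(\mathfrak{S})}\le C\varepsilon|\log\varepsilon|^2\,\|\psi\|_{H^2(\mathfrak{S})}.
\]
Consequently, writing the transformed problem in the fixed point form $\psi = T_0\bigl(h - \mathcal{P}(\psi),\Lambda_1,\Lambda_2\bigr)$ with $T_0$ the solution operator from Lemma \ref{lme3}, the map $\psi\mapsto T_0(h-\mathcal{P}(\psi),\Lambda_1,\Lambda_2)$ is, by \eqref{e16}, a contraction on $H^2(\mathfrak{S})$ for $\varepsilon$ small. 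Its unique fixed point, pushed back through $\tilde x\mapsto x$, is the desired $\phi = T_j(h,\Lambda_1,\Lambda_2)$, and the bound \eqref{e21} follows directly from \eqref{e16} combined with the contraction estimate.

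\textbf{Lipschitz dependence on the parameters.} For the final assertion, fix $h,\Lambda_1,\Lambda_2$ and let $\phi_i = T_{j,\mathbf{f}_i}(h,\Lambda_1,\Lambda_2)$, $i=1,2$. Subtracting the equations satisfied by $\phi_1$ and $\phi_2$ produces an equation for the difference $\phi_1-\phi_2$ that is of the same form as \eqref{e17} but with an inhomogeneity
\[
h_{12}=p\bigl[|w_{j,\mathbf{f}_2}-1|^{p-2}(w_{j,\mathbf{f}_2}-1)-|w_{j,\mathbf{f}_1}-1|^{p-2}(w_{j,\mathbf{f}_1}-1)\bigr]\phi_2,
\]
and with orthogonality defects produced by the difference between the two pairs of projection functions $(w_{j,\mathbf{f}_i,x},Z_{j,\mathbf{f}_i})$. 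Since $w_{j,\mathbf{f}_i}(z,x)=w(x-\beta(\varepsilon z)f_{j,i}(\varepsilon z))$, the mean value theorem gives $\|h_{12}\|_{L^2(\mathfrak{S})}+\|\text{orthog. defects}\|_{H^2(0,1)} \le C\beta\|\mathbf{f}_1-\mathbf{f}_2\|_{H^2(0,1)}\|\phi_2\|_{H^2(\mathfrak{S})}$, and \eqref{e72} then follows from \eqref{e21} applied to $\phi_1-\phi_2$.

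\textbf{Main obstacle.} The delicate point is the quantitative control of $\mathcal{P}(\psi)$: the term $\varepsilon^2(\beta f_j)''\psi_{\tilde x}$ involves only $L^2$--information on $f_j''$, and pairing it against $H^2$--estimates on $\psi$ requires the explicit factor $\varepsilon^{3/2}|\log\varepsilon|^2 = o(1)$ coming from \eqref{66}; without the $\varepsilon^2$ prefactor inherent to the stretched Fermi coordinate, the contraction would fail. All other perturbation terms are uniformly $o(1)$ in $L^\infty$ and cause no difficulty.
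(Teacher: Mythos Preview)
Your proposal is correct and follows essentially the same route as the paper: translate by $\beta(\varepsilon z)f_j(\varepsilon z)$ to reduce to the centered problem of Lemma~\ref{lme3}, absorb the resulting cross terms (your $\mathcal{P}$, the paper's $B_6$) by a contraction, and for \eqref{e72} subtract the two equations and feed the potential difference plus the orthogonality defects back through \eqref{e21}. One small point you glossed over in the Lipschitz step: when you subtract, the right-hand side also contains $c_{j,2}(w_{j,\mathbf{f}_1,x}-w_{j,\mathbf{f}_2,x})+d_{j,2}(Z_{j,\mathbf{f}_1}-Z_{j,\mathbf{f}_2})$, so you must first bound $\|c_{j,2}\|_{L^2}+\|d_{j,2}\|_{L^2}$ in terms of $\|h\|_{L^2}+\|\Lambda_1\|_{H^2}+\|\Lambda_2\|_{H^2}$ (by testing \eqref{e17} against $w_{j,x}$ and $Z_j$); this is routine but should be stated.
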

\begin{proof}
Let
\[\tilde{\phi}(z,x)=\phi(z, x+\beta(\varepsilon z)f_j(\varepsilon z)), \qquad \mathrm{and} \qquad \tilde{h}(z,x)=h(z, x+\beta(\varepsilon z)f_j(\varepsilon z)).\]
Then \eqref{e17}-\eqref{e19} is transformed into the following problem
\[\tilde{\phi}_{xx}+a_{11}\alpha^{1-p}\tilde{\phi}_{zz}+B_6(\tilde{\phi})+p|w-1|^{p-2}(w-1)\tilde{\phi}=\tilde{h}+c_j w_x+d_j Z, \qquad \mathrm{in} \qquad \mathfrak{S},\]
\[\int_\R \tilde{\phi}(z,x) w_x(x)dx=\Lambda_1(z); \qquad \int_\R \tilde{\phi}(z,x) Z(x)dx=\Lambda_2(z),\]
\[\tilde{\phi}(0,x)=\tilde{\phi}(1/\varepsilon,x), \qquad \tilde{\phi}_z(0,x)=\tilde{\phi}_z(1/\varepsilon,x), \qquad x\in \R,\]
where
\[B_6(\tilde{\phi})=a_{11}\tilde{\alpha}^{1-p}\left[\varepsilon^2|(\beta f_j)'|^2\tilde{\phi}_{xx}-\varepsilon^2(\beta f_j)''\tilde{\phi}_x -2\varepsilon(\beta f_j)' \tilde{\phi}_{zx}\right] +a_{11}(\tilde{\alpha}^{1-p}-\alpha^{1-p})\tilde{\phi}_{zz}.\]
We reformulate this problem as the following fixed point problem:
\begin{equation}\label{e20}
\tilde{\phi}=T_0(\tilde{h}-B_6(\tilde{\phi}), \Lambda_1, \Lambda_2).
\end{equation}
We get $\|B_6(\tilde{\phi})\|_{L^2(\mathfrak{S})}\leq C\varepsilon^{1/2}\|\tilde{\phi}\|_{H^2(\mathfrak{S})}$ via direct computation along with \cite[Theorem 8.8]{Brezis2011}. By virtue of the fixed point theorem, \eqref{e20} has a unique solution. The unique solution further satisfies \eqref{e21} in light of Lemma \ref{lme3}.

Now we estimate the Lipschitz property of $T_j$. Let $\phi_i=T_{j,\mathbf{f}_i}(h, \Lambda_1, \Lambda_2)$ with $i=1,2$. Then $\phi_i$ is the solution of the following problem
\begin{equation}\label{e71}
\phi_{i,xx}+a_{11}\tilde{\alpha}^{1-p}\phi_{i,zz} +p|w_{ji}-1|^{p-2}(w_{ji}-1)\phi_i=h+c_{ji}w_{ji,x}+d_{ji}Z_{ji}, \qquad \mathrm{in} \qquad \mathfrak{S},
\end{equation}
\[\int_\R \phi_i(z,x)w_{ji}dx=\Lambda_1, \qquad \int_\R \phi_i(z,x)Z_{ji}dx=\Lambda_2,\]
\[\phi_i(0,x)=\phi_i(1/\varepsilon,x), \qquad \phi_{i,z}(0,x)=\phi_{i,z}(1/\varepsilon,x),\]
where
\[w_{ji}(z,x)=w(x-\beta(\varepsilon z) f_{ji}(\varepsilon z)), \qquad Z_{ji}(z,x)=Z(x-\beta(\varepsilon z) f_{ji}(\varepsilon z))\]

Set $\phi^*=\phi_1-\phi_2$. Then $\phi^*$ satisfies the following equations:
\begin{eqnarray*}
   &&\phi^*_{xx}+a_{11}\tilde{\alpha}^{1-p}\phi^*_{zz}+p|w_{j1}-1|^{p-2}(w_{j1}-1)\phi^* \\
   &=&  c_{j2}(w_{j1,x}-w_{j2,x})+d_{j2}(Z_{j1}-Z_{j2})  +(c_{j1}-c_{j2})w_{j1,x}+(d_{j1}-d_{j2})Z_{j1} \\
   && \quad -p\left[|w_{j1}-1|^{p-2}(w_{j1}-1)-|w_{j2}-1|^{p-2}(w_{j2}-1)\right]\phi_2, \qquad \mathrm{in} \qquad \mathfrak{S},
\end{eqnarray*}
\[\int_\R \phi^* w_{j1,x}dx=-\int_\R \phi_2(w_{j1,x}-w_{j2,x})dx, \qquad \int_\R \phi^* Z_{j1}dx=-\int_\R \phi_2(Z_{j1}-Z_{j2})dx,\]
\[\phi^*(0,x)=\phi^*(1/\varepsilon,x), \qquad \phi^*_z(0,x)=\phi^*_z(1/\varepsilon,x).\]

Multiplying the both sides of \eqref{e71} by $w_{ji,x}$ and $Z_{ji}$, respectively and integrating, we get
\[a_{11}\tilde{\alpha}^{1-p}\Lambda_1''=\int_\R hw_{ji,x}dx+c_{ji}\int_\R w_x^2dx\]
and
\[a_{11}\tilde{\alpha}^{1-p}\Lambda_2''+\lambda_0 \Lambda_2=\int_\R hZ_{ji}dx+d_{ji}.\]
Thus, we obtain
\[\|c_{ji}\|_{L^2(0,1/\varepsilon)}\le C\|\Lambda_1\|_{H^2(0,1/\varepsilon)}+\|h\|_{L^2(\mathfrak{S})}\]
and
\[\|d_{ji}\|_{L^2(0,1/\varepsilon)}\le C\|\Lambda_2\|_{H^2(0,1/\varepsilon)}+\|h\|_{L^2(\mathfrak{S})}.\]
From this fact, we have
\begin{eqnarray*}
&&\|c_{j2}(w_{j1,x}-w_{j2,x})+d_{j2}(Z_{j1}-Z_{j2})\|_{L^2(\mathfrak{S})} \\
&\le& \left[\|c_{ji}\|_{L^2(0,1/\varepsilon)}+\|d_{ji}\|_{L^2(0,1/\varepsilon)}\right] \|\mathbf{f}_1-\mathbf{f}_2\|_{H^2(0,1)} \\
&\le& C\|\mathbf{f}_1-\mathbf{f}_2\|_{H^2(0,1)}\left[\|h\|_{L^2(\mathfrak{S})}+\|\Lambda_1\|_{H^2(0, 1/\varepsilon)}+\|\Lambda_2\|_{H^2(0, 1/\varepsilon)}\right].
\end{eqnarray*}
Denote
\[\tilde{\Lambda}_1(z)=-\int_\R \phi_2(z,x)(w_{j1,x}-w_{j2,x})dx, \quad \mathrm{and} \quad \tilde{\Lambda}_2(z)=-\int_\R \phi_2(z,x)(Z_{j1}-Z_{j2})dx.\]
Through direct computation, we get
\[\|\tilde{\Lambda}_1\|_{H^2(0,1/\varepsilon)}+\|\tilde{\Lambda}_2\|_{H^2(0,1/\varepsilon)}\le C\|\phi_2\|_{H^2(\mathfrak{S})}\|\mathbf{f}_1-\mathbf{f}_2\|_{H^2(0,1)}\]
and
\[\|p[|w_{j1}-1|^{p-2}(w_{j1}-1)-|w_{j2}-1|^{p-2}(w_{j2}-1)]\phi_2\|_{L^2(\mathfrak{S})}\le C\|f_1-f_2\|_{H^2(0,1)}\|\phi_2\|_{H^2(\mathfrak{S})}.\]
Using the priori estimate in \eqref{e21}, we arrive at
\[\|\phi^*\|_{H^2(S)}\le  C\|\mathbf{f}_1-\mathbf{f}_2\|_{H^2(0,1)}\left[\|h\|_{L^2(\mathfrak{S})}+\|\Lambda_1\|_{H^2(0, 1/\varepsilon)}+\|\Lambda_2\|_{H^2(0, 1/\varepsilon)}\right].\]
Hence \eqref{e72} holds.
\end{proof}
\begin{proposition}\label{pre1}
Problem \eqref{48} has a unique solution $\phi=\tilde{T}(h)$, where $\tilde{T}$ is a linear operator and  the following estimate holds:
\begin{equation}\label{e74}
\|\phi\|_{H^2(\mathfrak{S})}\leq C\|h\|_{L^2(\mathfrak{S})}.
\end{equation}
Moreover, $\tilde{T}$ is Lipschitz continuous on $\mathbf{f}$ and $\mathbf{e}$:
\begin{equation}\label{e73}
\|\tilde{T}_{\mathbf{f}_1,\mathbf{e}_1}-\tilde{T}_{\mathbf{f}_2,\mathbf{e}_2}\|\le C\left[ \|\mathbf{f}_1-\mathbf{f}_2\|_{H^2(\mathfrak{S})}+\|\mathbf{e}_1-\mathbf{e}_2\|_*\right].
\end{equation}
\end{proposition}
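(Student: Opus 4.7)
The plan is to implement the gluing decomposition
\[
\phi(z,x)=\sum_{j=1}^N \eta_j(z,x)\tilde{\phi}_j(z,x)+\tilde{\psi}(z,x)
\]
introduced just before \eqref{50}, and reduce \eqref{48} to the coupled system \eqref{50}--\eqref{54}. The unknowns $(\tilde{\phi}_1,\dots,\tilde{\phi}_N,\tilde{\psi})$ will be obtained by a contraction mapping argument, treating the inner pieces $\tilde{\phi}_j$ via Lemma \ref{lme4} and the outer piece $\tilde{\psi}$ via a coercive solvability argument.

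The first step is to solve the $\tilde{\psi}$-equation \eqref{54} for a given tuple $(\tilde{\phi}_1,\dots,\tilde{\phi}_N)$. The point is that the linear operator governing $\tilde{\psi}$ is globally coercive: since $w(\pm\infty)=0$, one has $p|\mathcal{V}-1|^{p-2}(\mathcal{V}-1)\to -p$ away from the concentration layers, while on the cores the additional term $-p\sum_j\eta_j\tilde{\psi}$ contributes a uniformly negative coefficient. Hence the full zero-order coefficient on the left of \eqref{54} is bounded above by a negative constant uniformly in $\varepsilon$, so the operator is invertible by standard Lax–Milgram / energy arguments in $H^1(\mathfrak{S})$ with periodicity in $z$, and elliptic regularity gives an $H^2$ estimate. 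The resulting $\tilde{\psi}=\mathcal{T}_\psi(\tilde{\phi}_1,\dots,\tilde{\phi}_N,h,\{c_j\},\{d_j\})$ depends linearly on its arguments and satisfies a bound of the form
\[
\|\tilde{\psi}\|_{H^2(\mathfrak{S})}\le C\Bigl[\|h\|_{L^2(\mathfrak{S})}+\sum_j \bigl(\|\eta_{j,xx}\tilde{\phi}_j\|_{L^2}+\|\eta_{j,x}\tilde{\phi}_{j,x}\|_{L^2}\bigr)+\text{lower order}\Bigr],
\]
where the coupling terms are small because $\eta_{j,x},\eta_{j,z}$ are supported in an annulus of thickness $O(1)$ at distance $R\sim|\log\varepsilon|/\sqrt p$, so $\tilde{\phi}_j$ is exponentially small on this annulus when we work in the right weighted norm.

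Next I would substitute $\tilde{\psi}=\mathcal{T}_\psi(\dots)$ into \eqref{50} and solve the $N$ inner problems by Lemma \ref{lme4}. The right–hand side of \eqref{50} contains three coupling terms: the genuinely small factor $|w_j-1|^{p-2}(w_j-1)-|\mathcal{V}-1|^{p-2}(\mathcal{V}-1)$, which is $O(\varepsilon^{1/2})$ on $\mathrm{supp}\,\tilde{\chi}_j$ because $\mathcal{V}-w_j=\sum_{k\neq j}\bar{\mathcal{V}}_k+\varepsilon(\varphi^{(1)}_j+e_jZ)+\dots$ and the cross terms decay exponentially on that support; the factor $-p[|\mathcal{V}-1|^{p-2}(\mathcal{V}-1)+1]\tilde{\chi}_j\tilde{\psi}$, which is small since $|\mathcal{V}-1|^{p-2}(\mathcal{V}-1)+1$ vanishes at infinity; and the constraint data $\Lambda_{j1},\Lambda_{j2}$, which are quadratically small by the orthogonality structure and the exponential decay of $\eta_k$ at the $j$-th core. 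Applying the linear operator $T_j$ from Lemma \ref{lme4} to each $\tilde{\phi}_j$-equation, one obtains a map
\[
(\tilde{\phi}_1,\dots,\tilde{\phi}_N)\mapsto \bigl(T_j\bigl[\text{RHS of \eqref{50} depending on }(\tilde{\phi}_k)_k,\mathcal{T}_\psi(\dots)\bigr],\Lambda_{j1},\Lambda_{j2}\bigr)_{j=1}^N,
\]
and the above smallness estimates show this map is a contraction on a suitable ball in $\prod_j H^2(\mathfrak{S})$, for $\varepsilon$ small. Banach's fixed–point theorem then produces a unique solution, and the a priori bound \eqref{e74} follows by summing the estimates for $T_j$ and $\mathcal{T}_\psi$ together with the bound $\|\phi\|_{H^2}\le C(\sum\|\tilde{\phi}_j\|_{H^2}+\|\tilde{\psi}\|_{H^2})$.

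For the Lipschitz dependence \eqref{e73}, I would subtract the fixed–point equations corresponding to $(\mathbf{f}_1,\mathbf{e}_1)$ and $(\mathbf{f}_2,\mathbf{e}_2)$ and track where the parameters enter: through the cutoffs $\eta_j,\tilde{\chi}_j$ (translation by $\beta f_j$), through the profile $\mathcal{V}$ and its companions $w_j,Z_j$, and through the coefficients $a_{ij},b_{ij}$ evaluated along the layers. Each dependence is manifestly Lipschitz with constant controlled by $\|\mathbf{f}_1-\mathbf{f}_2\|_{H^2(0,1)}+\|\mathbf{e}_1-\mathbf{e}_2\|_*$, and combining these with the Lipschitz statement \eqref{e72} for $T_j$ and an analogous Lipschitz property for $\mathcal{T}_\psi$ (again immediate from coercivity and smooth parameter dependence) yields \eqref{e73}. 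The main obstacle I anticipate is bookkeeping: carefully verifying that the constraint data $\Lambda_{j1},\Lambda_{j2}$, which tie the different components of the decomposition together via projections involving $w_{j,x}$ and $Z_j$, can indeed be controlled by exponentially small quantities under the parameter bounds \eqref{66}--\eqref{11}, so that the fixed–point iteration genuinely contracts uniformly in $\varepsilon$ and the Lipschitz constants remain $O(1)$ when differentiating in $\mathbf{f}$ and $\mathbf{e}$.
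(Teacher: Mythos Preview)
Your approach is essentially the paper's: decompose $\phi=\sum_j\eta_j\tilde\phi_j+\tilde\psi$, invoke Lemma~\ref{lme4} for the inner pieces and coercivity of the outer operator for $\tilde\psi$, and close by a nested fixed point. The paper nests in the opposite order---it first solves \eqref{50}--\eqref{52} for $\tilde\phi_j(\tilde\psi)$ with $\tilde\psi$ frozen (a contraction in the $\tilde\phi_j$'s alone), then substitutes into \eqref{54} and runs the final contraction on $\tilde\psi$---but either order yields the same result, and your Lipschitz argument for \eqref{e73} matches the paper's.

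One piece of your reasoning is incorrect and needs repair. You say the term $-p\bigl[|\mathcal V-1|^{p-2}(\mathcal V-1)+1\bigr]\tilde\chi_j\tilde\psi$ in \eqref{50} is small ``since $|\mathcal V-1|^{p-2}(\mathcal V-1)+1$ vanishes at infinity.'' On the support of $\tilde\chi_j$ one has $\mathcal V\approx w_j$, which is \emph{not} close to $0$, so the bracket is $O(1)$ there; the paper records only $\|p[\,\cdots]\tilde\chi_j\tilde\psi\|_{L^2}\le C\|\tilde\psi\|_{H^2}$ with no gain. What actually makes your contraction close is that $\mathcal T_\psi$ has \emph{small} Lipschitz dependence on the $\tilde\phi_j$'s: every $\tilde\phi_j$-term on the right of \eqref{54} carries a factor $\eta_{j,x},\eta_{j,xx},\eta_{j,z},\eta_{j,zz}$ of size $O(R^{-1})$ or smaller, and the tails $(1-\eta_j)c_jw_{j,x}$, $(1-\eta_j)d_jZ_j$ are $O(\varepsilon^{1/2})$. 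So the composite Lipschitz constant of the $\tilde\psi$-coupling is small, not the coefficient in front of $\tilde\psi$. Relatedly, your claim that ``$\tilde\phi_j$ is exponentially small on the annulus in the right weighted norm'' does not apply here: the argument lives entirely in $H^2(\mathfrak S)$, and the smallness of $\|\eta_{j,x}\tilde\phi_{j,x}\|_{L^2}$ etc.\ is only $O(R^{-1})=O(1/|\log\varepsilon|)$, coming from the size of the cutoff derivatives, not from decay of $\tilde\phi_j$. With these two corrections your scheme goes through.
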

\begin{proof}
According to the argument above, we only need to solve the system \eqref{50}-\eqref{52} and the problem \eqref{54}.
With Lemma \ref{lme4}, we first consider problem \eqref{50}-\eqref{52} for fixed function $\tilde{\psi}$, which we reformulate as the following problem:
\begin{eqnarray}\label{e22}
\nonumber \tilde{\phi}_j&=&T_j\left[\tilde{\chi}_j h+p\left[|w_j-1|^{p-2}(w_j-1)-|\mathcal{V}-1|^{p-2}(\mathcal{V}-1)\right]\tilde{\chi}_j \tilde{\phi}_j \right. \\ &&\left.-p\left[|\mathcal{V}-1|^{p-2}(\mathcal{V}-1)+1\right]\tilde{\chi}_j\tilde{\psi}, \Lambda_{j1}, \Lambda_{j2}\right], \qquad j=1,2,\cdots, N.
\end{eqnarray}
From tedious computation and Sobolev imbbeding theorem in \cite[Theorem 8.8]{Brezis2011}, we get
\[\|\Lambda_{j1}\|_{H^2(0,1/\varepsilon)}+\|\Lambda_{j2}\|_{H^2(0,1/\varepsilon)}\leq C\varepsilon^{1/2}\sum_{k=1}^N\|\tilde{\phi}_k\|_{H^2(\mathfrak{S})}+C\|\tilde{\psi}\|_{H^2(\mathfrak{S})},\]
\[\|p[|\mathcal{V}-1|^{p-2}(\mathcal{V}-1)+1]\tilde{\chi}_j\tilde{\psi}\|_{L^2(\mathfrak{S})}\le C\|\tilde{\psi}\|_{H^2(\mathfrak{S})}\]
and
\[\|p\left[|w_j-1|^{p-2}(w_j-1)-|\mathcal{V}-1|^{p-2}(\mathcal{V}-1)\right]\tilde{\chi}_j \tilde{\phi}_j\|_{L^2(\mathfrak{S})}\leq C\varepsilon^{1/2}\|\tilde{\phi}_j\|_{H^2(\mathfrak{S})}.\]
Using the fixed point theorem, we deduce that \eqref{e22} admits a unique solution $\tilde{\phi}_j=\tilde{\phi}_j(\tilde{\psi})$, $j=1,2,\cdots,N$. Furthermore, $\tilde{\phi}_j$ satisfies the following estimate
\begin{equation}\label{e24}
\|\tilde{\phi}_j\|_{H^2(\mathfrak{S})}\leq C\left[\|h\|_{L^2(\mathfrak{S})}+\|\tilde{\psi}\|_{H^2(\mathfrak{S})}\right].
\end{equation}
It is easy to get $\phi_j$ is Lipschitz dependent on $\psi$:
\begin{equation}\label{e70}
\|\tilde{\phi}_j(\tilde{\psi}_1)-\tilde{\phi}_j(\tilde{\psi}_2)\|_{H^2(\mathfrak{S})}\le C\|\tilde{\psi}_1-\tilde{\psi}_2\|_{H^2(\mathfrak{S})}.
\end{equation}

Substituting $\tilde{\phi}_j$ with $\tilde{\phi}_j(\tilde{\psi})$ in \eqref{54}, we consider the following problem
\begin{eqnarray}\label{e23}
\nonumber&&\tilde{\psi}_{xx}+a_{11}\tilde{\alpha}^{1-p}\tilde{\psi}_{zz}+p\left(1-\sum_{j=1}^N\eta_j\right)|\mathcal{V}-1|^{p-2}(\mathcal{V}-1)\tilde{\psi}-p\sum_{j=1}^N \eta_j \tilde{\psi} \\
\nonumber&=& \left(1-\sum_{j=1}^N\eta_j\right)h-\sum_{j=1}^N\left(\eta_{j,xx}\tilde{\phi}_j(\tilde{\psi})+2\eta_{j,x}\tilde{\phi}_{j,x}(\tilde{\psi})\right) -a_{11}\tilde{\alpha}^{1-p}\sum_{j=1}^N\left(\eta_{j,zz}\tilde{\phi}_j(\tilde{\psi})+2\eta_{j,z}\tilde{\phi}_{j,z}(\tilde{\psi})\right) \\
&& +\sum_{j=1}^N (1-\eta_j)c_j(\varepsilon z)w_{j,x} +\sum_{j=1}^N (1-\eta_j)d_j(\varepsilon z) Z_j.
\end{eqnarray}
It is easy to see that for $\varepsilon>0$ small enough,
\[p\left(1-\sum_{j=1}^N\eta_j\right)|\mathcal{V}-1|^{p-2}(\mathcal{V}-1)\tilde{\psi}-p\sum_{j=1}^N \eta_j \tilde{\psi}\le -\frac p2\]
Now we consider the solutions to \eqref{e23} via fixed point theorem.

Integrating the both sides of \eqref{50} by $w_{j,x}$, we get
\begin{eqnarray*}
c_j(\varepsilon z)\int_\R w_x^2dx&=& -\int_\R \tilde{\chi}_jh w_{j,x}dx +p\int_\R \left[|\mathcal{V}-1|^{p-2}(\mathcal{V}-1)-|w_j-1|^{p-2}(w_j-1)\right]\tilde{\chi}_j\tilde{\phi}_j w_{j,x}dx \\
&&+p\int_\R \left[|\mathcal{V}-1|^{p-2}(\mathcal{V}-1)+1\right]\tilde{\chi}_j \tilde{\psi}w_{j,x}dx.
\end{eqnarray*}
Then
\[\|(1-\eta_j)c_j(\varepsilon z)w_{j,x}\|_{L^2(\mathfrak{S})}\leq C\varepsilon^{1/2}\left[\|h\|_{L^2(\mathfrak{S})}+\sum_{j=1}^N \|\tilde{\phi}_j\|_{H^2(\mathfrak{S})}+\|\tilde{\psi}\|_{H^2(\mathfrak{S})}\right].\]
From the same method, we also get
\[\|(1-\eta_j)d_j(\varepsilon z)w_{j,x}\|_{L^2(\mathfrak{S})}\leq C\varepsilon^{1/2}\left[\|h\|_{L^2(\mathfrak{S})}+\sum_{j=1}^N \|\tilde{\phi}_j\|_{H^2(\mathfrak{S})}+\|\tilde{\psi}\|_{H^2(\mathfrak{S})}\right].\]
With the help of the Sobolev imbedding theorem, we have
\[\|\eta_{j,xx}\tilde{\phi}_j(\tilde{\psi})+2\eta_{j,x}\tilde{\phi}_j(\tilde{\psi})\|_{L^2(\mathfrak{S})}\le \frac{C}{|\log\varepsilon|^{1/2}}\|\tilde{\phi}_j\|_{H^2(\mathfrak{S})}\]
and
\[\left\|a_{11}\tilde{\alpha}^{1-p}\left(\eta_{j,zz}\tilde{\phi}_j(\tilde{\psi})+2\eta_{j,x}\tilde{\phi}_{j,x}(\tilde{\psi})\right)\right\|_{L^2(\mathfrak{S})}\leq C\varepsilon^{\frac12}\|\tilde{\phi}_j\|_{H^2(\mathfrak{S})}.\]
Using the fixed point theorem together with \eqref{e24} and \eqref{e70}, we find that \eqref{54} has a unique solution, which further satisfies the estimate
\begin{equation}\label{e75}
\|\tilde{\psi}\|_{H^2(\mathfrak{S})}\leq C\|h\|_{L^2(\mathfrak{S})}.
\end{equation}

As a conclusion,  \eqref{48} has a unique solution, which satisfies the estimate \eqref{e74}.

Now we prove the estimate \eqref{e73}, which we prove by estimating the Lipschitz dependence of $\tilde{\phi}_j$ and $\tilde{\psi}$ on $\mathbf{f}$ and $\mathbf{e}$.

\noindent\textbf{Claim:} $\tilde{\phi}_j(\tilde{\psi})$ is Lipschitz continuous on  $\mathbf{f}$, $\mathbf{e}$ and $\tilde{\psi}$:
\begin{eqnarray}\label{e76}
\nonumber&&\|\tilde{\phi}_j(\mathbf{f}_1,\mathbf{e}_1,\tilde{\psi}_1)-\tilde{\phi}_j(\mathbf{f}_2,\mathbf{e}_2,\tilde{\psi}_2)\|_{H^2(\mathfrak{S})} \\
&\le& C\|h\|_{L^2(\mathfrak{S})}\left[\|\mathbf{f}_1-\mathbf{f}_2\|_{H^2(0,1)}+\|\mathbf{e}_1-\mathbf{e}_2\|_*\right]+C\|\tilde{\psi}_1-\tilde{\psi}_2\|_{H^2(\mathfrak{S})}.
\end{eqnarray}

Denote $\tilde{\phi}_{ji}=\tilde{\phi}_j(\mathbf{f}_i,\mathbf{e}_i,\tilde{\psi}_i)$, $i=1,2$. Then it satisfies the following equation:
\begin{eqnarray*}
\nonumber \tilde{\phi}_{ji}&=&T_{j,\mathbf{f}_i}\left\{\tilde{\chi}_{ji} h+p\left[|w_{ji}-1|^{p-2}(w_{ji}-1)-|\mathcal{V}^{(i)}-1|^{p-2}(\mathcal{V}^{(i)}-1)\right]\tilde{\chi}_{ji} \tilde{\phi}_{ji} \right. \\ &&\left.-p\left[|\mathcal{V}^{(i)}-1|^{p-2}(\mathcal{V}^{(i)}-1)+1\right]\tilde{\chi}_{ji}\tilde{\psi}_i, \Lambda_{j1}^{(i)}, \Lambda_{j2}^{(i)}\right\},
\end{eqnarray*}
where
\[\tilde{\chi}_{ji}=\chi\left(\frac{x-\beta(\varepsilon z)f_{ji}(\varepsilon z)}R\right), \qquad \mathcal{V}^{(i)}=\mathcal{V}(\mathbf{f}_i,\mathbf{e}_i),\]
\[\Lambda_{j1}^{(i)}=\int_\R(1-\eta_{ji})\tilde{\phi}_j w_{ji,x}dx-\sum_{k\not=j}\int_\R\eta_{ki} \tilde{\phi}_{ki} w_{ji,x}dx-\int_\R \tilde{\psi}_iw_{ji,x}dx,\]
\[\Lambda_{j2}^{(i)}=\int_\R(1-\eta_{ji})\tilde{\phi}_j Z_{ji}dx-\sum_{k\not=j}\int_\R\eta_{ki} \tilde{\phi}_{ki} Z_{ji}dx-\int_\R \tilde{\psi}_iZ_{ji}dx\]
and
\[\eta_{ji}=\eta\left(\frac{x-\beta(\varepsilon z)f_{ji}(\varepsilon z)}R\right).\]
Let $\bar{\phi}_j=\tilde{\phi}_{j1}-\tilde{\phi}_{j2}$. Then it satisfies the following equation
\begin{eqnarray*}
\bar{\phi}_j&=&\left(T_{j,\mathbf{f}_1}-T_{j, \mathbf{f}_2}\right)\left\{\tilde{\chi}_{j1} h+p\left[|w_{j1}-1|^{p-2}(w_{j1}-1)-|\mathcal{V}^{(1)}-1|^{p-2}(\mathcal{V}^{(1)}-1)\right]\tilde{\chi}_{j1} \tilde{\phi}_{j1} \right. \\ &&\left.-p\left[|\mathcal{V}^{(1)}-1|^{p-2}(\mathcal{V}^{(1)}-1)+1\right]\tilde{\chi}_{j1}\tilde{\psi}_1, \Lambda_{j1}^{(1)}, \Lambda_{j2}^{(1)}\right\} \\
&&+ T_{j, \mathbf{f}_2}\left\{(\tilde{\chi}_{j1}-\tilde{\chi}_{j2})h -p[|\mathcal{V}^{(1)}-1|^{p-2}(\mathcal{V}^{(1)}-1)-|\mathcal{V}^{(2)}-1|^{p-2}(\mathcal{V}^{(2)}-1)]\tilde{\chi}_{j2}\tilde{\psi}_2\right. \\
&&-p(\tilde{\chi}_{j1}-\tilde{\chi}_{j2})\left[|\mathcal{V}^{(1)}-1|^{p-2}(\mathcal{V}^{(1)}-1)+1\right]\tilde{\psi}_1 -p\left[|\mathcal{V}^{(1)}-1|^{p-2}(\mathcal{V}^{(1)}-1)+1\right]\tilde{\chi}_{j2}(\tilde{\psi}_1-\tilde{\psi_2}) \\
&&+p\left[|w_{j1}-1|^{p-2}(w_{j1}-1)-|\mathcal{V}^{(1)}-1|^{p-2}(\mathcal{V}^{(1)}-1)\right]\tilde{\chi}_{j1}\tilde{\phi}_1  \\
&&\left.- p\left[|w_{j2}-1|^{p-2}(w_{j2}-1)-|\mathcal{V}^{(2)}-1|^{p-2}(\mathcal{V}^{(2)}-1)\right]\tilde{\chi}_{j2}\tilde{\phi}_2, \Lambda_{j1}^{(1)}-\Lambda_{j1}^{(2)},  \Lambda_{j2}^{(1)}-\Lambda_{j2}^{(2)}\right\}.
\end{eqnarray*}
From \eqref{e24}, \eqref{e75}, Lemma \ref{lme4} and direct computation, we get \eqref{e76}.

From the same method, we get
\begin{equation}\label{e77}
\|\tilde{\psi}(f_1,e_1)-\tilde{\psi}(f_2,e_2)\|_{H^2(\mathfrak{S})}\le \|h\|_{L^2(\mathfrak{S})}\left[\|\mathbf{f}_1-\mathbf{f}_2\|_{H^2(0,1)}+\|\mathbf{e}_1-\mathbf{e}_2\|_*\right].
\end{equation}
From \eqref{e76}, \eqref{e77} and direct computation, we get \eqref{e78}.
\end{proof}

Now we consider the following problem
\begin{equation}\label{e25}
\left\{\begin{array}{ll}
\mathcal{L}(\phi)=h+\sum_{j=1}^N c_j(\varepsilon z)w_{j,x} +\sum_{j=1}^N d_j(\varepsilon z)Z_j, &\mbox{in $\mathfrak{S}$,} \\
\int_\R \phi(z,x)w_{j,x}(x)dx= \int_\R \phi(z,x)Z_j(x)dx=0, &z\in (0,1/\varepsilon), \\
\phi(0,x)=\phi(1/\varepsilon,x), \quad \phi_z(0,x)=\phi_z(1/\varepsilon,x), &x\in \R.
\end{array}\right.
\end{equation}
Recall the operator $\mathcal{L}$ is defined in \eqref{62}. In the view of Proposition \ref{pre1}, \eqref{e25} is written into the following fixed point problem
\[\phi=\tilde{T}[-B_5(\phi)+h].\]
From the definition of $B_5(\phi)$, we get
\[\|B_5(\phi)\|_{L^2(\mathfrak{S})}\leq C\delta\|\phi\|_{H^2(\mathfrak{S})}.\]
Moreover, $B_5(\cdot)$ is Lipschitz dependent on $\mathbf{f}$ and $\mathbf{e}$:
\[\|B_{5,\mathbf{f}_1,\mathbf{e}_1}(\phi)-B_{5,\mathbf{f}_2,\mathbf{e}_2}(\phi)\|_{L^2(\mathfrak(S))}\le C\varepsilon^2\left[ \|\mathbf{f}_1-\mathbf{f}_2\|_{H^2(\mathfrak{S})}+\|\mathbf{e}_1-\mathbf{e}_2\|_*\right]\|\phi\|_{H^2(\mathfrak{S})}.\]
With the help of Proposition \ref{pre1} and fixed point theorem, we get the  following proposition.
\begin{proposition}\label{pre2}
Fix $\delta>0$ small enough, \eqref{e25} has a unique solution $\phi=T(h)$, where $T$ is a linear operator satisfies the following estimate
\begin{equation}\label{e78}
\|\phi\|_{H^2(\mathfrak{S})}\leq C\|h\|_{L^2(\mathfrak{S})}.
\end{equation}
Here $C>0$ is a constant independent of $\varepsilon$ and the choice of $\mathbf{f}$ and $\mathbf{e}$.

Moreover, $T$ is Lipschitz continously of $\mathbf{f}$ and $\mathbf{e}$:
\begin{equation}\label{e26}
\|T_{\mathbf{f}_1,\mathbf{e}_1}-T_{\mathbf{f}_2,\mathbf{e}_2}\|\le C\left[ \|\mathbf{f}_1-\mathbf{f}_2\|_{H^2(\mathfrak{S})}+\|\mathbf{e}_1-\mathbf{e}_2\|_*\right].
\end{equation}
\end{proposition}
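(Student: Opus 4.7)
The plan is to rewrite \eqref{e25} as a fixed point equation for the operator $\tilde{T}$ constructed in Proposition \ref{pre1}. Observing that $\mathcal{L}=L+B_5$ where $L$ is the model operator in \eqref{48}, a function $\phi$ solves \eqref{e25} if and only if $\phi=\tilde{T}\bigl(h-B_5(\phi)\bigr)$, since the orthogonality conditions and periodicity requirements are built into $\tilde{T}$.

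First I would establish the key smallness bound $\|B_5(\phi)\|_{L^2(\mathfrak{S})}\le C\delta\|\phi\|_{H^2(\mathfrak{S})}$. The operator $B_5$ is a sum of two pieces, both carrying the cutoff $\eta_{6\delta}^\varepsilon$. The contribution $\eta_{6\delta}^\varepsilon(B_4^1(\phi)+B_4^3(\phi))$ contains explicit powers of $\varepsilon$ multiplied by at most polynomially growing factors in $s$ (or equivalently in $x$); on the support of $\eta_{6\delta}^\varepsilon$ one has $\varepsilon|s|\le 12\delta$, so each such combination is bounded by a constant times $\delta$. The remaining potential term is estimated on the same support using standard pointwise inequalities for $|u|^{p-2}u$ together with the fact that $\tilde{\alpha}^{-1}\bar{\mathbf{q}}$ is close to $\mathcal{V}$ in $L^\infty$ there (and that $\eta_{3\delta}^\varepsilon=1$ on the support of $\eta_{6\delta}^\varepsilon$). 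With this estimate, the self-map $\Phi\mapsto \tilde{T}(h-B_5(\Phi))$ becomes a contraction on $H^2(\mathfrak{S})$ once $\delta$ is chosen so that $C\delta\cdot\|\tilde{T}\|<1$, thanks to \eqref{e74}. The contraction mapping principle yields the unique solution $\phi=T(h)$; linearity of $T$ is inherited from that of $\tilde{T}$ and $B_5$, and the estimate \eqref{e78} follows by absorbing $C\delta\|\phi\|_{H^2}$ in $\|\phi\|_{H^2}\le C\|h\|_{L^2}+C\delta\|\phi\|_{H^2}$.

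For the Lipschitz dependence on $(\mathbf{f},\mathbf{e})$, set $\phi_i=T_{\mathbf{f}_i,\mathbf{e}_i}(h)$ and subtract the two fixed-point identities to obtain
\[
\phi_1-\phi_2 = \bigl(\tilde{T}_{\mathbf{f}_1,\mathbf{e}_1}-\tilde{T}_{\mathbf{f}_2,\mathbf{e}_2}\bigr)\bigl(h-B_{5,\mathbf{f}_1,\mathbf{e}_1}(\phi_1)\bigr) + \tilde{T}_{\mathbf{f}_2,\mathbf{e}_2}\bigl(B_{5,\mathbf{f}_2,\mathbf{e}_2}(\phi_2)-B_{5,\mathbf{f}_1,\mathbf{e}_1}(\phi_1)\bigr).
\]
The first term is handled via \eqref{e73} combined with the uniform bound $\|\phi_1\|_{H^2}\le C\|h\|_{L^2}$ just established. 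Splitting the second as $B_{5,\mathbf{f}_2,\mathbf{e}_2}(\phi_2-\phi_1)+(B_{5,\mathbf{f}_2,\mathbf{e}_2}-B_{5,\mathbf{f}_1,\mathbf{e}_1})(\phi_1)$, the first piece is absorbed on the left using the smallness of $\delta$, while the second is controlled by the Lipschitz estimate for $B_5$ stated immediately before the proposition. Combining these bounds delivers \eqref{e26}.

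The main obstacle is really the bound $\|B_5(\phi)\|_{L^2}\le C\delta\|\phi\|_{H^2}$ with a constant independent of $\varepsilon$: one must pair each term in $B_4^1$ and $B_4^3$ (listed explicitly in Section \ref{sect2}) against the $\eta_{6\delta}^\varepsilon$-support restriction and verify that the polynomial growth in $s$ is compensated by the accompanying powers of $\varepsilon$, and separately check the perturbation of the potential is small in $L^\infty$ on that support. Once this estimate is in place together with its Lipschitz version, everything else is a routine contraction argument.
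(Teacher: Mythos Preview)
Your approach is exactly the paper's: rewrite \eqref{e25} as the fixed point equation $\phi=\tilde{T}(h-B_5(\phi))$, invoke the smallness bound $\|B_5(\phi)\|_{L^2}\le C\delta\|\phi\|_{H^2}$, and conclude by contraction together with the Lipschitz estimate \eqref{e73} for $\tilde{T}$ and the analogous one for $B_5$. One small slip: you write that $\eta_{3\delta}^\varepsilon=1$ on the support of $\eta_{6\delta}^\varepsilon$, but the inclusion goes the other way (the support of $\eta_{6\delta}^\varepsilon$ is $\{|x|\le 12\delta/\varepsilon\}$, where $\eta_{3\delta}^\varepsilon$ already vanishes for $|x|>6\delta/\varepsilon$). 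This does not spoil the estimate: on the region $|x|>3\delta/\varepsilon$ the function $\mathcal{V}$ is exponentially small, so both $\eta_{3\delta}^\varepsilon\mathcal{V}-\tilde{\alpha}^{-1}\bar{\mathbf{q}}$ and $\mathcal{V}-1$ are within $O(\delta)$ of $-1$, and the potential difference is still $O(\delta)$ in $L^\infty$ on $\operatorname{supp}\eta_{6\delta}^\varepsilon$.
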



\begin{remark}\label{rm2}
For $\delta>0$ small enough and a function $h$ supported in $|x|\le 2\delta/\varepsilon$, the function $\phi=T(h)$ satisfies the following estimate
\begin{equation}\label{e64}
|\phi(z,x)|+|\nabla \phi(z,x)|\le C\|\phi\|_{L^\infty}e^{-\frac{\sqrt{p}\delta}{3\varepsilon}}.
\end{equation}
\end{remark}
\begin{proof}
According to the definition of $\mathcal{L}$, we get $\phi$ satisfies an equation of the following form
\[\phi_{xx}+a_{11}\tilde{\alpha}^{1-p}\phi_{zz}+(-p+o(1))\phi+O(\delta)(|D^2\phi|+\varepsilon|\nabla\phi|)=0, \qquad \mathrm{for} \qquad |x|\ge \frac{7\delta}{3\varepsilon}.\]
Using the barrier function as the form
\[\varphi(z,x)=\|\phi\|_{L^\infty(\mathfrak{S})}e^{-\frac{\sqrt{p}}2(x-\frac{7\delta}{3\varepsilon})},\]
we get
\[|\phi(z,x)|\le C\|\phi\|_{L^\infty(\mathfrak{S})}e^{-\frac{\sqrt{p}}2(x-\frac{7\delta}{3\varepsilon})}\qquad \mathrm{for} \qquad |x|\ge \frac{7\delta}{3\varepsilon}.\]
Using local elliptic estimate, we get \eqref{e64}.
\end{proof}

\section{Nonlinear problem}\label{sect9}
In this section,  we will solve  \eqref{24}-\eqref{e2} via Proposition \ref{pre2} and Remark \ref{rm2}.
\begin{proposition}\label{pre4}
Fix the constant $p>2$. Then there exists a constant $D>0$ such that for $\varepsilon>0$ small enough and $(\mathbf{f},\mathbf{e})$ satisfying \eqref{66} and \eqref{11}, problem \eqref{24}-\eqref{e2} has a unique solution $\hat{\varphi}=\hat{\varphi}(\mathbf{f},\mathbf{e})$, which satisfies
\[\|\hat{\varphi}\|_{H^2(\mathfrak{S})}\leq D\varepsilon^{\frac32-\mu}\]
as well as
\begin{equation}\label{e68}
\|\hat{\varphi}\|_{L^\infty(|x|>3\delta/\varepsilon)}+\|\nabla\hat{\varphi}\|_{L^\infty(|x|>3\delta/\varepsilon)}\le \|\hat{\varphi}\|_{H^2(\mathfrak{S})}e^{-\frac{\sqrt{p}\delta}{4\varepsilon}}.
\end{equation}
Furthermore,  $\hat{\varphi}$ depends Lipschitz continuously on $\mathbf{f}$ and $\mathbf{e}$, \textit{i.e.}
\begin{equation}\label{e27}
\|\hat{\varphi}_{\mathbf{f}_1, \mathbf{e}_1}-\hat{\varphi}_{\mathbf{f}_2, \mathbf{e}_2}\|_{H^2(\mathfrak{S})}\le C\varepsilon^{\frac32-\mu}\left[\|\mathbf{f}_1-\mathbf{f}_2\|_{H^2(0,1)}+\|\mathbf{e}_1-\mathbf{e}_2\|_*\right].
\end{equation}
\end{proposition}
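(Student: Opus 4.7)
The strategy is to reformulate \eqref{24}--\eqref{e2} as a fixed point problem for the solution operator $T$ furnished by Proposition \ref{pre2}. Specifically, I would define the nonlinear map
\[
\mathcal{M}(\hat{\varphi}) = T\!\left[-\eta_\delta^\varepsilon \mathcal{E}_2 - \eta_\delta^\varepsilon \hat{N}(\hat{\varphi}+\hat{\psi}(\hat{\varphi})) - p\eta_\delta^\varepsilon\!\left(\left|\mathcal{V}-\tilde{\alpha}^{-1}\bar{\mathbf{q}}\right|^{p-2}\!(\mathcal{V}-\tilde{\alpha}^{-1}\bar{\mathbf{q}}) + \left|\tilde{\alpha}^{-1}\bar{\mathbf{q}}\right|^{p-2}\!(\tilde{\alpha}^{-1}\bar{\mathbf{q}})\right)\hat{\psi}(\hat{\varphi})\right],
\]
where $\hat{\psi}(\hat{\varphi})$ is the solution of the outer problem given by the analysis following Lemma \ref{lme1} (via $\varphi=\tilde{\alpha}\hat{\varphi}$). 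Solving \eqref{24}--\eqref{e2} is equivalent to finding a fixed point of $\mathcal{M}$, and the Lagrange multipliers $c_j,d_j$ are absorbed into the orthogonality constraints enforced by $T$.

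I would then apply the contraction mapping principle on the closed ball
\[
B = \{\hat{\varphi} \in H^2(\mathfrak{S}) : \|\hat{\varphi}\|_{H^2(\mathfrak{S})} \le D\varepsilon^{3/2-\mu}\}
\]
equipped with the orthogonality and periodicity constraints. To show $\mathcal{M}(B)\subset B$, I use: (i) the bound $\|\mathcal{E}_2\|_{L^2(\mathfrak{S})} \le C\varepsilon^{3/2-\mu}$ from \eqref{e33}; (ii) Sobolev embedding $\|\hat{\varphi}\|_{L^\infty} \le C\|\hat{\varphi}\|_{H^2(\mathfrak{S})} \le CD\varepsilon^{3/2-\mu}$, which together with the outer estimate \eqref{e29} yields $\|\hat{\psi}(\hat{\varphi})\|_{L^\infty} \lesssim \varepsilon^{3-2\mu} + e^{-\sqrt{p}\delta/(2\varepsilon)}$ (using $\min\{p,2\}=2$ since $p>3$); (iii) the quadratic-type estimate $\|\hat{N}(\hat{\varphi}+\hat{\psi})\|_{L^2(\mathfrak{S})} \lesssim \|\hat{\varphi}+\hat{\psi}\|_{L^\infty}^{\min\{p-1,1\}}\|\hat{\varphi}+\hat{\psi}\|_{L^2}$; (iv) the boundedness of the coefficient of $\hat{\psi}$ in the linear term. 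All these pieces are $o(\varepsilon^{3/2-\mu})$ once $D$ is taken sufficiently large and then applying \eqref{e78}.

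For the contraction step, I would combine the Lipschitz dependence \eqref{e32} for $\mathcal{E}_2$, the Lipschitz dependence of $\hat{\psi}$ on $\varphi$ recorded after \eqref{e29}, and the local Lipschitz property of $\hat{N}$ (which introduces a factor $\|\hat{\varphi}_1+\hat{\varphi}_2\|_{L^\infty}^{\min\{p-1,1\}}$ that is small). This yields a Lipschitz constant of order $\varepsilon^{3/2-\mu}+\varepsilon^{1/2}$, hence $\mathcal{M}$ is a strict contraction on $B$ for $\varepsilon$ small. The decay estimate \eqref{e68} is then immediate: the right-hand side inside $T$ is supported in $\{|x|\le 2\delta/\varepsilon\}$ thanks to the cutoff $\eta_\delta^\varepsilon$, so Remark \ref{rm2} applies and gives pointwise exponential decay at rate $e^{-\sqrt{p}\delta/(3\varepsilon)}$, which is stronger than $e^{-\sqrt{p}\delta/(4\varepsilon)}$. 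The Lipschitz estimate \eqref{e27} is obtained by differencing the fixed point identity for two parameter choices $(\mathbf{f}_i,\mathbf{e}_i)$, invoking the Lipschitz dependence \eqref{e26} of $T$, \eqref{e32} of $\mathcal{E}_2$, the Lipschitz dependence of the coefficients of $\hat{\varphi}$ appearing in $\mathcal{L}$ (through $\mathcal{V}$, which depends analytically on $\mathbf{f},\mathbf{e}$), and the Lipschitz property of $\hat{\psi}$; absorption of the $O(\varepsilon^{1/2})$-Lipschitz contribution from $\hat{\varphi}$ itself into the left-hand side closes the estimate.

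The main obstacle I anticipate is the coupling between the inner and outer problems: the term $\hat{\psi}(\hat{\varphi})$ is not small in $L^\infty$ relative to $\varphi$ in a naive sense, and one must carefully balance the gains $\varepsilon\|\hat{\varphi}\|_{|x|>3\delta/\varepsilon}$ and $\|\hat{\varphi}\|_\infty^{\min\{p,2\}}$ in \eqref{e29} against the target size $\varepsilon^{3/2-\mu}$. The decay bound \eqref{e68} must be established in tandem with the fixed point iteration (it is needed to feed back into the outer estimate \eqref{e29}), so I would close this circular dependence by iterating in $B$ and using Remark \ref{rm2} at every iteration to propagate the exponential decay uniformly, thereby guaranteeing that the weighted term $\varepsilon\|\hat{\varphi}\|_{L^\infty(|x|>3\delta/\varepsilon)}$ contributes only $O(\varepsilon e^{-\sqrt{p}\delta/(4\varepsilon)})$ and is negligible.
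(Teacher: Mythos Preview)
Your approach is essentially the same as the paper's: rewrite \eqref{24}--\eqref{e2} as a fixed point equation for the operator $T$ of Proposition~\ref{pre2}, apply the contraction mapping principle using the smallness of $\mathcal{E}_2$, the quadratic behavior of $\hat{N}$, and the outer estimate \eqref{e29}, then invoke Remark~\ref{rm2} for the decay and \eqref{e26}, \eqref{e32} for the Lipschitz dependence.

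The one point where the paper proceeds more cleanly than your sketch is the handling of the circularity you flag in the last paragraph. Rather than working on the ball $B=\{\|\hat{\varphi}\|_{H^2}\le D\varepsilon^{3/2-\mu}\}$ and trying to ``propagate the decay at every iteration,'' the paper simply builds the decay condition into the fixed point set itself:
\[
\mathfrak{B}=\Bigl\{\|\phi\|_{H^2(\mathfrak{S})}\le D\varepsilon^{\frac32-\mu},\quad \|\phi\|_{L^\infty(|x|>3\delta/\varepsilon)}+\sup_{|x|\ge \frac{3\delta}\varepsilon-1}\|\nabla\phi\|_{L^2(B_1(z,x))}\le \|\phi\|_{H^2(\mathfrak{S})}e^{-\frac{\sqrt{p}\delta}{4\varepsilon}}\Bigr\}.
\]
This matters because the outer solution $\hat{\psi}(\hat{\varphi})$ is only constructed under the decay hypothesis \eqref{e69}, so your map $\mathcal{M}$ is not a priori well-defined on all of $B$. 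With the set $\mathfrak{B}$, the map is well-defined on $\mathfrak{B}$, and since the right-hand side is supported in $\{|x|\le 2\delta/\varepsilon\}$, Remark~\ref{rm2} shows directly that $\mathcal{M}(\mathfrak{B})\subset\mathfrak{B}$. This removes the circularity without any iteration bookkeeping.
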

\begin{proof}
With the help of Proposition \ref{pre1}, \eqref{24}-\eqref{43} is written into the following fixed point problem
\begin{equation}\label{71}
\phi=\mathcal{A}(\phi),
\end{equation}
where
\[\mathcal{A}(\phi):=T\left\{-\eta_\delta^\varepsilon \mathcal{E}_2-\eta_\delta^\varepsilon \hat{N}(\phi+\hat{\psi}(\phi))-p\eta_\delta^\varepsilon\left[\left| \mathcal{V}-\tilde{\alpha}^{-1}\bar{\mathbf{q}}\right|^{p-2}\left( \mathcal{V}- \tilde{\alpha}^{-1}\bar{\mathbf{q}}\right)+\left|\tilde{\alpha}^{-1}\bar{\mathbf{q}}\right|^{p-2}\left(\tilde{\alpha}^{-1}\bar{\mathbf{q}}\right) \right]\hat{\psi}(\phi)\right\}.\]

Consider the following closed, bounded subset of $H^2(\mathfrak{S})$:
\[\mathfrak{B}=\left\{\|\phi\|_{H^2(\mathfrak{S})}\le D\varepsilon^{\frac32-\mu}, \quad \|\phi\|_{L^\infty(|x|>3\delta/\varepsilon)}+\sup_{|x|\ge \frac{3\delta}\varepsilon-1}\|\nabla\phi\|_{L^2(B_1(z,x)}\le \|\phi\|_{H^2(\mathfrak{S})}e^{-\frac{\sqrt{p}\delta}{4\varepsilon}}\right\}.\]
Here $D>0$ is a constant large enough, which we will claim later. We will show $\mathcal{A}$ is a contraction map from $\mathfrak{B}$ into itself.

From the definition of $\hat{N}(\phi)$, we get
\[|\hat{N}(\phi)|\le C\left[|\phi|^2+|\phi|^p\right].\]
Let
\[\tilde{N}_1(\phi)=\eta_\delta^\varepsilon \hat{N}(\phi+\hat{\psi}(\phi)).\]
Then we get
\[\|\tilde{N}_1(\phi)\|_{L^2(\mathfrak{S})}\le C\left[\|\phi\|_{L^4(\mathfrak{S})}^2+\|\phi\|_{L^{2p}(\mathfrak{S})}^p +\|\psi(\phi)\|_{L^4(\mathfrak{S}_\delta)}^2+\|\psi(\phi)\|_{L^{2p}(\mathfrak{S}_\delta)}^p\right].\]
where $\mathfrak{S}_\delta=\mathfrak{S}\cap \{|x|\le 2\delta /\varepsilon\}$. However, for the last two terms in the inequality above, we get
\[
\|\psi(\phi)\|_{L^4(\mathfrak{S}_\delta)}^2+\|\psi(\phi)\|_{L^{2p}(\mathfrak{S}_\delta)}^p\leq Ce^{-\frac{\sqrt{p}\delta}{4\varepsilon}}\left[1+\|\phi\|_{H^2(\mathfrak{S})}^2+\|\phi\|_{H^2(\mathfrak{S})}^p\right]+C\varepsilon^{-1}\|\phi \|_{H^2(\mathfrak{S})}^{2\min\{p,2\}}.
\]
For $\phi\in \mathfrak{B}$, we get
\[\|\tilde{N}_1(\phi)\|_{L^2(\mathfrak{S})}\leq C(\varepsilon^{\frac32-\mu})^{\min\{p,2\}}.\]

Now we estimate the Lipschitz property of $N_1$. From the definition of $\hat{N}(\phi)$ we have
\[|\hat{N}'(\phi)|\leq C|\phi|^{\min\{p-1,1\}}.\]
For $\phi_1, \phi_2\in \mathfrak{B}$,  we have
\begin{eqnarray*}
\|\tilde{N}_1(\phi_1)-\tilde{N}_1(\phi_2)\|_{L^2(\mathfrak{S})}&\leq& A \left[\|\phi_1-\phi_2\|_{L^4(\mathfrak{S})}+\|\phi_1-\phi_2\|_{L^{2p}(\mathfrak{S})} \right.\\ &&\qquad \left.+\|\psi(\phi_1)-\psi(\phi_2)\|_{L^4(\mathfrak{S_\delta})} +\|\psi(\phi_1)-\psi(\phi_2)\|_{L^{2p}(\mathfrak{S_\delta})}\right],
\end{eqnarray*}
where $A=A_1+A_2$ and
\[A_l=\|\phi_l\|_{L^{2p}(\mathfrak{S})}^{p-1}+\|\phi_l\|_{L^2(\mathfrak{S})}+\|\psi(\phi_l)\|_{L^{2p}(\mathfrak{S}_\delta)}^{p-1}+\|\psi(\phi_l)\|_{L^4(\mathfrak{S}_\delta)}, \qquad \mathrm{for} \quad l=1,2.\]
Since
\begin{eqnarray*}
&&\|\psi(\phi_l)\|_{L^{2p}(\mathfrak{S}_\delta)}^{p-1}+\|\psi(\phi_l)\|_{L^4(\mathfrak{S}_\delta)} \\
 &\le& |\mathfrak{S}_\delta|^{1/2}\|\psi(\phi)\|_{L^\infty} +|\mathfrak{S}_\delta|^{\frac{p-1}{2p}}\|\psi(\phi)\|_{L^\infty}^{p-1} \\
&\le&Ce^{-\frac{\sqrt{p}\delta}{4\varepsilon}}\left[1+\|\phi\|_{H^2(\mathfrak{S})}+\|\phi\|_{H^2(\mathfrak{S})}^{p-1}\right] +\varepsilon^{-\frac{p-1}{2p}}\|\phi\|_{H^2(\mathfrak{S})}^{(p-1)\min\{p,2\}}+\varepsilon^{-\frac12}\|\phi\|_{H^2(\mathfrak{S})}^{\min\{p,2\}}.
\end{eqnarray*}
We get
\begin{equation}\label{e28}
\|\tilde{N}_1(\phi_1)-\tilde{N}_1(\phi_2)\|_{L^2(\mathfrak{S})}\le \varepsilon^{\min\{p-1,1\}(\frac32-\mu)}\|\phi_1-\phi_2\|_{H^2(\mathfrak{S})}.
\end{equation}

Let
\[\tilde{N}_2(\phi)=p\eta_\delta^\varepsilon\left[\left| \mathcal{V}-\tilde{\alpha}^{-1}\bar{\mathbf{q}}\right|^{p-2}\left( \mathcal{V}- \tilde{\alpha}^{-1}\bar{\mathbf{q}}\right)+\left|\tilde{\alpha}^{-1}\bar{\mathbf{q}}\right|^{p-2}\left(\tilde{\alpha}^{-1}\bar{\mathbf{q}}\right)\right]\hat{\psi}(\phi).\]
It is apparent that
\begin{equation*}
\|\tilde{N}_2(\phi)\|_{L^2(\mathfrak{S})}\leq C\varepsilon^{-1/2}\left[ e^{-\frac{\sqrt{p}\delta}{2\varepsilon}} +\varepsilon\|\phi\|_{H^2(\mathfrak{S})}e^{-\frac{\sqrt{p}\delta}{4\varepsilon}}+\|\phi\|_{H^2(\mathfrak{S})}^{\min\{p,2\}}\right] \leq C\varepsilon^{-1/2}\varepsilon^{\min\{p,2\}}
\end{equation*}
and
\begin{eqnarray*}
\|\tilde{N}_2(\phi_1)-\tilde{N}_2(\phi_2)\|_{L^2(\mathfrak{S})} &\le&C\varepsilon^{-1/2}\left[\varepsilon\|\phi_1-\phi_2\|_{H^2(\mathfrak{S})}e^{-\frac{\sqrt{p}\delta}{4\varepsilon}} +(\varepsilon^{\frac32-\mu})^{\min\{p-1,1\}}\|\phi_1-\phi_2\|_{H^2(\mathfrak{S})}\right] \\
&\le& C\varepsilon^{-1/2}(\varepsilon^{\frac32-\mu})^{\min\{p-1,1\}}\|\phi_1-\phi_2\|_{H^2(\mathfrak{S})}.
\end{eqnarray*}

Then for $\phi\in \mathfrak{B}$, we have
\[\|\mathcal{A}(\phi)\|_{H^2(\mathfrak{S})}\le C_0\varepsilon^{3/2}|\log\varepsilon|^q.\]
For $\varepsilon>0$ small enough,
\[\|\mathcal{A}(\phi_1)-\mathcal{A}(\phi_2)\|_{H^2(\mathfrak{S})}\le \frac12\|\phi_1-\phi_2\|_{H^2(\mathfrak{S})}.\]

Then the fixed point problem \eqref{71} has a unique solution $\hat{\varphi}$ satisfying $\|\hat{\varphi}\|_{H^2(\mathfrak{S})}\leq C\varepsilon^{\frac32-\mu}$. From Remark \ref{rm2} and Sobolev embedding theorem, we get $\hat{\varphi}\in \mathfrak{B}$ and \eqref{e68} holds.

Now we estimate \eqref{e27}. It is easy to get
\[\partial_{f_k}\hat{N}(\phi)=p\left\{|\mathcal{V}-\tilde{\alpha}\bar{\mathbf{q}}+\phi|^{p-2}(\mathcal{V}-\tilde{\alpha}\bar{\mathbf{q}}+\phi)-|\mathcal{V}-\tilde{\alpha}\bar{\mathbf{q}} |^{p-2}(\mathcal{V}-\tilde{\alpha}\bar{\mathbf{q}})-(p-1)|\mathcal{V}-\tilde{\alpha}\bar{\mathbf{q}} |^{p-2}\phi\right\}\partial_{f_k}\mathcal{V}.\]
Then
\[|\partial_{f_k}\hat{N}(\phi)|\leq C\left[|\phi|^{p-1}+|\phi|^2\right].\]
We can also get the similar estimate of $\partial_{e_k}\hat{N}(\phi)$. Using the similar method as in \eqref{e28}, we get
\begin{eqnarray}\label{e30}
\|\tilde{N}_{1,\mathbf{f}_1,\mathbf{e}_1}-\tilde{N}_{1,\mathbf{f}_2,\mathbf{e}_2}\|_{L^2(\mathfrak{S})}&\leq & C(\varepsilon^{\frac32-\mu})^{\min\{p-1,2\}}\left[\|\mathbf{f}_1- \mathbf{f}_2\|_{H^2(0,1)}+\|\mathbf{e}_1-\mathbf{e}_2\|_*\right] \\
\nonumber && +C\varepsilon^{-1}(\varepsilon^{\frac32-\mu})^{\min\{p,2\}\min\{p-1,2\}} \left[\|\mathbf{f}_1- \mathbf{f}_2\|_{H^2(0,1)}+\|\mathbf{e}_1-\mathbf{e}_2\|_*\right].
\end{eqnarray}
From the same method and \eqref{e29}, we get
\begin{equation}\label{e31}
\|\tilde{N}_{2,\mathbf{f}_1,\mathbf{e}_1}-\tilde{N}_{2,\mathbf{f}_2,\mathbf{e}_2}\|_{L^2(\mathfrak{S})}\leq C\varepsilon^{-1/2}(\varepsilon^{\frac32-\mu})^{\min\{p,2\}}\left[\|\mathbf{f}_1- \mathbf{f}_2\|_{H^2(0,1)}+\|\mathbf{e}_1-\mathbf{e}_2\|_*\right].
\end{equation}
Hence we get \eqref{e27} from \eqref{e30}, \eqref{e31}, \eqref{e32}.
\end{proof}
From Proposition \ref{pre4}, we know the solution $\hat{\varphi}$ satisfies the assumption \eqref{e69} before solving the outer problem \eqref{4}.

\section{Reduced problem}\label{sect7}
To solve the inner problem, it suffices to set the constants $c_j(\varepsilon z)$'s and $d_j(\varepsilon z)$'s in \eqref{24} to zero. We only need to solve the following problems:
\begin{eqnarray}\label{77}
\nonumber&&\int_\R \mathcal{L}(\hat{\varphi})w_{j,x}dx +\int_\R p\eta_\delta^\varepsilon \left[|\mathcal{V}-\tilde{\alpha}^{-1}\bar{\mathbf{q}}|^{p-2}(\mathcal{V}-\tilde{\alpha}^{-1}\bar{\mathbf{q}})+|\tilde{\alpha}^{-1}\bar{\mathbf{q}}|^{p-2}(\tilde{\alpha}^{-1}\bar{\mathbf{q}}) \right]\hat{\psi}( \hat{\varphi})w_{j,x}dx \\
&& \qquad +\int_\R \eta_\delta^\varepsilon S(\mathcal{V})w_{j,x}dx +\int_\R \eta_\delta^\varepsilon \hat{N}(\hat{\varphi}+\hat{\psi}(\hat{\varphi}))w_{j,x}dx=0
\end{eqnarray}
and
\begin{eqnarray}\label{82}
\nonumber&&\int_\R \mathcal{L}(\hat{\varphi})Z_jdx +\int_\R p\eta_\delta^\varepsilon \left[|\mathcal{V}-\tilde{\alpha}^{-1}\bar{\mathbf{q}}|^{p-2}(\mathcal{V}-\tilde{\alpha}^{-1}\bar{\mathbf{q}})+|\tilde{\alpha}^{-1}\bar{\mathbf{q}}|^{p-2}(\tilde{\alpha}^{-1}\bar{\mathbf{q}})\right]\hat{\psi}( \hat{\varphi})Z_jdx \\
&& \qquad +\int_\R \eta_\delta^\varepsilon S(\mathcal{V})Z_jdx +\int_\R \eta_\delta^\varepsilon \hat{N}(\hat{\varphi}+\hat{\psi}(\hat{\varphi}))Z_jdx=0.
\end{eqnarray}
In this section, we treat the terms appearing in \eqref{77} and \eqref{79} as the function of $\theta= \varepsilon z$.

We first consider the equation \eqref{77}. Through direct computation,
\begin{eqnarray}\label{79}
\nonumber\int_\R \mathcal{L}(\hat{\varphi})w_{j,x}dx&=&a_{11}\tilde{\alpha}^{1-p}\int_\R \hat{\varphi}_{zz}w_{j,x}dx +p\int_R\left[|\mathcal{V}-1|(\mathcal{V}-1)-|w_j-1|^{p-2}(w_j-1)\right] \hat{\varphi}w_{j,x}dx \\
\nonumber&& +\int_\R p\eta_{6\delta}^\varepsilon \left[|\eta_{3\delta}^\varepsilon \mathcal{V}-\tilde{\alpha}^{-1}\bar{\mathbf{q}}|^{p-2}(\eta_{3\delta}^\varepsilon \mathcal{V}-\tilde{\alpha}^{-1}\bar{\mathbf{q}})-|\mathcal{V}-1|^{p-2}(\mathcal{V}-1)\right]\hat{\varphi}w_{j,x}dx \\
&& +\int_\R \eta_{6\delta}^\varepsilon (B_4^1(\hat{\varphi})+ B_4^3(\hat{\varphi}))w_{j,x}dx.
\end{eqnarray}
 From \eqref{43}, we get
\begin{equation*}
\int_\R \hat{\varphi}_{zz}w_{j,x}dx=2\varepsilon(\beta f_j)'\int_\R \hat{\varphi}_zw_{j,xx}dx+\varepsilon^2(\beta f_j)''\int_\R\hat{\varphi}w_{j,xx}dx-\varepsilon^2 |(\beta f_j)'|^2\int_\R \hat{\varphi}w_{j,xxx}dx=: I_1+I_2+I_3.
\end{equation*}
Then
\begin{equation*}
\|I_1\|_{L^2(0,1)}\le C\varepsilon\|f_j\|_{H^2(0,1)}\left(\int_0^1d\theta\int_\R \left|\hat{\varphi}_z(\theta/\varepsilon,x)\right|^2dx\right)^{1/2}\le C\varepsilon^{3/2}\|f_j\|_{H^2(0,1)}\|\hat{\varphi}\|_{H^2(\mathfrak{S})}\leq C\varepsilon^{\frac52+\mu_1},
\end{equation*}
where $\mu_1\in(0,1/2)$ is a constant.
By the same method, we also have
\[\|I_3\|_{L^2(0,1)}\leq C\varepsilon^3.\]
For the term $I_2$,
\begin{eqnarray*}
\|I_2\|_{L^2(0,1)}^2&\le& C\varepsilon^4\int_0^1 \left[|f_j|^2 +|f_j'|^2+|f_j''|^2\right]\left|\int_\R\hat{\varphi}(\theta/\varepsilon,x)w_{j,x}dx\right|^2 \\
&\le&  \varepsilon^4\int_0^1 \left[|f_j|^2 +|f_j'|^2+|f_j''|^2\right]\int_\R |\hat{\varphi}(\theta/\varepsilon,x)|^2 dx.
\end{eqnarray*}
Let
\[F(\theta)= \int_\R |\hat{\varphi}(\theta/\varepsilon,x)|^2 dx.\]
Direct computation yields
\[\|F\|_{W^{1,1}(0,1)}\leq C\|\hat{\varphi}\|_{H^2(\mathfrak{S})}^2.\]
Applying the Sobolev embedding theorem(\cite[Theorem 8.8]{Brezis2011}), we deduce
\[\sup_{\theta\in(0,1)}|F(\theta)|\leq \|F\|_{W^{1,1}(0,1)}\leq C\|\hat{\varphi}\|_{H^2(\mathfrak{S})}^2.\]
We thus obtain
\[\|I_2\|_{L^2(0,1)}\leq C\varepsilon^2\|f_j\|_{H^2(0,1)}\|\hat{\varphi}\|_{H^2(\mathfrak{S})}\leq C\varepsilon^3.\]

Next, let
\[I_7=p\int_\R \left[|\mathcal{V}-1|^{p-2}(\mathcal{V}-1)-|w_j-1|^{p-2}(w_j-1)\right]\hat{\varphi}w_{j,x}dx.\]
Then
\[|I_7|\le \left(\int_\R |\hat{\varphi}(\theta/\varepsilon,x)|^2dx\right)^{1/2}\left(\int_\R \left[|\mathcal{V}-1|^{p-2}(\mathcal{V}-1)-|w_j-1|^{p-2}(w_j-1)\right]^2w_{j,x}^2dx\right)^{1/2}.\]
It follows that
\[\|I_7\|_{L^2(0,1)}\le C\varepsilon^{3/2}\|\hat{\varphi}\|_{L^2(\mathfrak{S})}\le C\varepsilon^{\frac52+\mu_1}.\]

We now define
\[I_4=\int_\R \eta_{6\delta}^\varepsilon (B_4^1(\hat{\varphi})+ B_4^3(\hat{\varphi}))w_{j,x}dx.\]
From the expressions of $B_4^1$ and $B_4^3$, we have
\[\|I_4\|_{L^2(0,1)}\leq C\varepsilon^{\frac52+\mu_1}.\]

Finally, let
\[I_5=\int_\R p\eta_{6\delta}^\varepsilon \left[|\eta_{3\delta}^\varepsilon \mathcal{V}-\tilde{\alpha}^{-1}\bar{\mathbf{q}}|^{p-2}(\eta_{3\delta}^\varepsilon \mathcal{V}-\tilde{\alpha}^{-1}\bar{\mathbf{q}})-|\mathcal{V}-1|^{p-2}(\mathcal{V}-1)\right]\hat{\varphi}w_{j,x}dx.\]
It is straightforward to show that  $\|I_5\|_{L^2(0,1)}\leq C\varepsilon^{\frac52+\mu_1}$.

In the expression of \eqref{79}, we isolate all the non-regular terms. The first such term is given by
\[\tilde{I}_2=\varepsilon^2 \beta f_j''\int_\R \hat{\varphi}(\theta/\varepsilon,z)w_{j,x}dx.\]
From \eqref{e27} and direct computation, we obtain
\[\|\tilde{I}_2(\mathbf{e}_1,\mathbf{f}_1)-\tilde{I}_2(\mathbf{e}_2,\mathbf{f}_2)\|_{L^2(0,1)}\leq C\varepsilon^3\left[\|\mathbf{f}_1-\mathbf{f}_2\|_{H^2(0,1)}+\|\mathbf{e}_1-\mathbf{e}_2\|_*\right].\]
The remaining non-regular term $\tilde{I}_4$ in  \eqref{79}, however, arises from those terms in $B_4^1(\phi)$ and $B_4^3(\phi)$ that contain $\hat\varphi_{zz}$. Specifically, we have
\[\tilde{I}_4=\int_\R \eta_{6\delta}^\varepsilon \tilde{\alpha}^{1-p}(X_1-a_{11})\hat{\varphi}_{zz}w_{j,x}dx,\]
where $X_1$ is defined in Lemma \ref{lm8}.
It is easy to get
\[\|\tilde{I}_4(\mathbf{e}_1,\mathbf{f}_1)-\tilde{I}_4(\mathbf{e}_2,\mathbf{f}_2)\|_{L^2(0,1)}\leq C\varepsilon^{\frac52+\mu_1}\left[\|\mathbf{f}_1-\mathbf{f}_2\|_{H^2(0,1)}+\|\mathbf{e}_1-\mathbf{e}_2\|_*\right].\]
Following the same argument as in \cite{delPino_Kowalczyk_Wei2007,Wang_Zhao2019}, we conclude that the operator
\[\int_\R \mathcal{L}(\hat{\varphi})w_{j,x}dx-\tilde{I}_2-\tilde{I}_4\]
is compact.

For the remaining terms in \eqref{77}, we define
\[\Lambda_1=\int_\R p\eta_\delta^\varepsilon \left[|\mathcal{V}-\tilde{\alpha}^{-1}\bar{\mathbf{q}}|^{p-2}(\mathcal{V}-\tilde{\alpha}^{-1}\bar{\mathbf{q}})+|\tilde{\alpha}^{-1}\bar{\mathbf{q}}|^{p-2}(\tilde{\alpha}^{-1}\bar{\mathbf{q}}) \right]\hat{\psi}( \hat{\varphi})w_{j,x}dx\]
and
\[\Lambda_2=\int_\R \eta_\delta^\varepsilon \hat{N}(\hat{\varphi}+\hat{\psi}(\hat{\varphi}))w_{j,x}dx.\]
Direct computation gives
\[\|\Lambda_1\|_{L^2(0,1)}\leq C\|\psi(\hat{\varphi})\|_{L^\infty }\le C(\varepsilon^{3/2-\mu})^{\min\{p,2\}}\le C\varepsilon^{\frac52+\mu_1}\]
and
\[\|\Lambda_2\|_{L^2(0,1)}\leq C\varepsilon^{1/2}(\varepsilon^{3/2-\mu})^{\min\{p,2\}}+(\varepsilon^{3/2-\mu})^{2\min\{p,2\}}\le C\varepsilon^{\frac52+\mu_1}.\]\
From \eqref{e94}, we rewrite \eqref{77}  into the following form
\begin{eqnarray}\label{73}
\nonumber&&\varepsilon^2\alpha^{1-p}\beta\left\{-a_{11}f_j''+\left[a_{22}-b_{11}-a_{11}\left(\beta^{-1}\beta'+2\alpha^{-1}\alpha'\right)\right]f_j'+\left[a_{22}(\beta^{-1}\beta'+2\alpha^{-1}\alpha') +b_{22}-a_{33}\right]f_j\right. \\
\nonumber&&+\left[\frac{p+3}2\alpha^{p-2}\beta^{-2} \mathbf{q}_{tt}+\frac{p+2}{2(p+3)}\alpha^{1-p}\beta^2(a_{32})^2-\frac{p+1}{p+3}\alpha^{1-p}\beta^2 a_{32}b_{21}-\frac2{p+3}\alpha^{1-p}\beta^2 (b_{21})^2\right]f_j \\
\nonumber&&\left.+\left[-2\varepsilon a_{11}e_j' f_j'+2\varepsilon a_{22}e_j' f_j\right]\left(\int_\R w_x^2 dx\right)^{-1}\int_\R Z_x w_xdx\right\} \\
&&C_0 p\alpha_p\left[e^{-\sqrt{p}\beta(f_j-f_{j-1})}-e^{-\sqrt{p}\beta (f_{j+1}-f_j)}\right]=\varepsilon^2 M_{1j\varepsilon}.
\end{eqnarray}
Using the same procedure and \eqref{e95}, we rewrite \eqref{82} as
\begin{equation}\label{e80}
\varepsilon(\varepsilon^2a_{11}\alpha^{1-p}e_j''+\lambda_0 e_j) +p\alpha_p C_1\left[e^{-\sqrt{p}\beta(f_j-f_{j-1})}+e^{-\sqrt{p}\beta(f_{j+1}-f_j)}\right]=\varepsilon^2 M_{2j\varepsilon}.
\end{equation}
In the expression \eqref{73} and \eqref{e80}, $M_{ij\varepsilon}=A_{ij\varepsilon}+K_{ij\varepsilon}$, $i=1,2$, where $K_{ij\varepsilon}$ is a compact operator and $A_{ij\varepsilon}$ is a Lipschitz operator. These operators satisfy the following estimates:
\[\|A_{ij\varepsilon}(\mathbf{f}_1,\mathbf{e}_1)-A_{ij\varepsilon}(\mathbf{f}_2,\mathbf{e}_2)\|_{L^2(0,1)}\leq C\varepsilon^{\frac12+\mu_1}\left[\|\mathbf{f}_1-\mathbf{f}_2\|_{H^2(0,1)}+\|\mathbf{e}_1-\mathbf{e}_2\|_*\right]\]
and
\begin{equation}\label{e49}
\|A_{ij\varepsilon}\|_{L^2(0,1)}\leq C\varepsilon^{\frac12+\mu_1}, \qquad \|K_{ij\varepsilon}\|_{L^2(0,1)}\leq C\varepsilon^{\frac12+\mu_1}.
\end{equation}
\section{Proof of Theorem \ref{th1}}\label{sect8}
To prove Theorem \ref{th1}, it suffices to construct a solution to system \eqref{73}-\eqref{e80}. For this purpose, we require certain a priori estimates.

Let
\begin{equation}\label{e81}
\breve{f}_j(\theta)=\beta(\theta)f_j(\theta).
\end{equation}
Then we have
\[f_j=\beta^{-1}\breve{f}_j,\qquad f_j'=\beta^{-1}\breve{f}_j'-\beta^{-2}\beta' \breve{f}_j\]
and
\[f_j''=\beta^{-1}\breve{f}_j''-2\beta^{-2}\beta'\breve{f}_j'+\left[2\beta^{-3}(\beta')^2-\beta^{-2}\beta''\right]\breve{f}_j.\]
Hence \eqref{73} is transformed into the following problem
\begin{eqnarray}\label{93}
\nonumber&&\varepsilon^2 \alpha^{1-p}\left\{-a_{11}\breve{f}_j''+\left[a_{22}-b_{11}+a_{11}(\beta^{-1}\beta'-2\alpha^{-1}\alpha')\right]\breve{f}_j' -2\varepsilon a_{11}e_j'\breve{f}_j'\frac{\int_\R Z_xw_x dx}{\int_\R w_x^2 dx}\right. \\
\nonumber&&+\left[2\alpha^{-1}\alpha'a_{22}+\beta^{-1}\beta'b_{11}+b_{22}-a_{33}+\frac{p+3}2\alpha^{p-2}\beta^{-2}\mathbf{q}_{tt}+\frac{p+2}{2(p+3)}\alpha^{1-p} \beta^2 (a_{32})^2\right. \\
&&\left.-\frac{p+1}{p+3}\alpha^{1-p}\beta^2 a_{32}b_{21}-\frac{2}{p+3}\alpha^{1-p}\beta^2(b_{21})^2 +a_{11}\left(\beta^{-1}\beta''+2\alpha^{-1}\beta^{-1}\alpha'\beta'-\beta^{-2}(\beta')^2\right)\right]\breve{f}_j' \\
\nonumber&& \left.+\left(2\varepsilon a_{11}\beta^{-1}\beta'e_j'+2\varepsilon a_{22}e_j'\right)\breve{f}_j\frac{\int_\R Z_xw_x dx}{\int_\R w_x^2 dx}\right\} +p\alpha_p C_0 \left[e^{-\sqrt{p}(\breve{f}_j-\breve{f}_{j-1})}-e^{-\sqrt{p}(\breve{f}_{j+1}-\breve{f}_j)}\right]=\varepsilon^2 M_{1j\varepsilon}.
\end{eqnarray}

For notation simplicity, we denote
\begin{equation}\label{e103}
\Upsilon_2=\alpha^{1-p}a_{11}, \qquad  \Upsilon_1=\alpha^{1-p}\left[a_{22}-b_{11}+a_{11}(\beta^{-1}\beta'-2\alpha^{-1}\alpha')\right],
\end{equation}
\begin{eqnarray}\label{e104}
\Upsilon_0&=&-\alpha^{1-p}\left[ 2\alpha^{-1}\alpha'a_{22}+\beta^{-1}\beta'b_{11}+b_{22}-a_{33}+\frac{p+3}2\alpha^{p-2}\beta^{-2}\mathbf{q}_{tt}+\frac{p+2}{2(p+3)}\alpha^{1-p} \beta^2 (a_{32})^2\right. \\
&&\nonumber\left.-\frac{p+1}{p+3}\alpha^{1-p}\beta^2 a_{32}b_{21}-\frac{2}{p+3}\alpha^{1-p}\beta^2(b_{21})^2 +a_{11}\left(\beta^{-1}\beta''+2\alpha^{-1}\beta^{-1}\alpha'\beta'-\beta^{-2}(\beta')^2\right)\right],
\end{eqnarray}
\[\Upsilon_{1j}(\mathbf{e})=\Upsilon_1-2\varepsilon \alpha^{1-p}a_{11}e_j'\frac{\int_\R Z_xw_x dx}{\int_\R w_x^2 dx}\]
and
\[\Upsilon_{0j}(\mathbf{e})=\Upsilon_0-\alpha^{1-p}\left(2\varepsilon a_{11}\beta^{-1}\beta'e_j'+2\varepsilon a_{22}e_j'\right)\frac{\int_\R Z_xw_x dx}{\int_\R w_x^2 dx}.\]
Then \eqref{93} is written into the following form
\begin{equation*}
\varepsilon^2\left(-\Upsilon_2\breve{f}_j''+\Upsilon_{1j}(\mathbf{e})\breve{f}_j'-\Upsilon_{0j}(\mathbf{e})\breve{f}_j \right)+p\alpha_p C_0\left[e^{-\sqrt{p}(\breve{f}_j-\breve{f}_{j-1})}-e^{-\sqrt{p}(\breve{f}_{j+1}-\breve{f}_j)}\right]=\varepsilon^2 M_{1j\varepsilon}.
\end{equation*}

Let
\begin{equation}\label{e82}
\varepsilon^2\rho_\varepsilon=p\alpha_p C_0e^{-\sqrt{p}\rho_\varepsilon}.
\end{equation}
Then we deduce
\begin{equation}\label{e110}
\rho_\varepsilon=\frac2{\sqrt{p}}|\log\varepsilon|-\frac1{\sqrt{p}}\log\left[\frac2{\sqrt{p}}|\log\varepsilon|\right]+\frac1{\sqrt{p}}\log(p\alpha_p C_0)+O\left(\frac{\log|\log\varepsilon|}{|\log\varepsilon|}\right).
\end{equation}
Denote $\sigma=\rho_\varepsilon^{-1}$ and
\[\breve{f}_j(\theta)=\left(j-\frac{N+1}2\right)\rho_\varepsilon +d_j(\theta), \qquad j=1,\cdots,N, \]
Then $d_j$'s satisfy the following equations
\begin{equation}\label{e37}
\tilde{R}_j(\mathbf{d})=\sigma\left(-\Upsilon_2 d_j''+\Upsilon_{1j}(\mathbf{e})d_j'-\Upsilon_{0j}(\mathbf{e})\breve{f}_j\right)+\left[e^{-\sqrt{p}(d_j-d_{j-1})}-e^{-\sqrt{p}(d_{j+1}-d_j)}\right] =\sigma M_{1j\varepsilon},
\end{equation}
where $j=1,2,\cdots, N$ and $\mathbf{d}=(d_1, d_2, \cdots, d_N)^t$.

Using the notation \eqref{e81} and \eqref{e82}, we rewrite \eqref{e80} as the following form
\begin{equation}\label{e83}
\varepsilon^2 a_{11}\alpha^{1-p}e_j'' +\lambda_0 e_j +\varepsilon C_1 C_0^{-1}\rho_\varepsilon\left[e^{-\sqrt{p}(d_j-d_{j-1})}+e^{-\sqrt{p}(d_{j+1}-d_j)}\right]=\varepsilon M_{2j\varepsilon}.
\end{equation}
To solve \eqref{73}-\eqref{e80}, it suffices to find a solution to the problem given by \eqref{e37}-\eqref{e83}.

Let
\begin{equation*}
\mathbf{e}=\begin{bmatrix} e_1\\e_2\\ \vdots \\e_N\end{bmatrix},\qquad \tilde{\mathbf{R}}(\mathbf{d})=\begin{bmatrix}\tilde{R}_1(\mathbf{d}) \\ \tilde{R}_2(\mathbf{d})\\ \vdots \\\tilde{R}_N(\mathbf{d})\end{bmatrix}, \qquad \mathbf{M}_{i\varepsilon}=\begin{bmatrix} M_{i1\varepsilon} \\ M_{i2\varepsilon} \\ \vdots \\ M_{iN\varepsilon}\end{bmatrix}, \quad \mathrm{for} \quad i=1,2,
\end{equation*}
and
\begin{equation*}
\mathbf{B}(\mathbf{d})=\varepsilon C_1 C_0^{-1}\rho_\varepsilon\begin{bmatrix} e^{-\sqrt{p}(d_2-d_1)} \\e^{-\sqrt{p}(d_2-d_1)}+e^{-\sqrt{p}(d_3-d_2)}\\ \vdots \\ e^{-\sqrt{p}(d_{N-1}-d_{N-2})}+ e^{-\sqrt{p}(d_N-d_{N-1})}\\  e^{-\sqrt{p}(d_N-d_{N-1})} \end{bmatrix}.
\end{equation*}
Then \eqref{e37}-\eqref{e83} is written into the following problem
\begin{equation}\label{e86}
\left\{\begin{array}{ll}
\tilde{\mathbf{R}}(\mathbf{d})=\sigma \mathbf{M}_{1\varepsilon}, \\
\varepsilon^2a_{11}\alpha^{1-p}\mathbf{e}''+\lambda_0 \mathbf{e} +\mathbf{B}(\mathbf{d}) =\varepsilon \mathbf{M}_{2\varepsilon}.
\end{array}\right.
\end{equation}

It is apparent that $\tilde{R}_j(\mathbf{d})=R_j(\mathbf{d})-P_j(e_j,d_j)$, where
\[R_j(\mathbf{d}):=\sigma\left(-\Upsilon_2 d_j''+\Upsilon_1 d_j' -\Upsilon_0 \breve{f}_j\right)+\left[e^{-\sqrt{p}(d_j-d_{j-1})} -e^{-\sqrt{p}(d_{j+1}-d_j)}\right]\]
and
\[P_j(e_j,d_j):=2\varepsilon \alpha^{1-p}a_{11}e_j'd_j'\frac{\int_\R Z_xw_x dx}{\int_\R w_x^2dx}-2\varepsilon\alpha^{1-p}\left[a_{11}\beta^{-1}\beta'+a_{22}\right]e_j' \breve{f}_j\frac{\int_\R Z_xw_x dx}{\int_\R w_x^2dx}.\]
Denote $\mathbf{R}(\mathbf{d})=(R_1(\mathbf{d}), \cdots, R_N(\mathbf{d}))^t$.  Let
\[v_j=d_{j+1}-d_j, \qquad j=1, \cdots, N-1, \quad \mathrm{and} \quad v_N=\sum_{j=1}^N d_j.\]
Then
\begin{equation}\label{e38}
d_j=\frac1N v_N-\sum_{k=j}^{N-1} v_k +\frac1N \sum_{k=1}^{N-1} kv_k, \qquad \mathrm{for} \qquad j=1,2,\cdots,N.
\end{equation}
Direct computation yields
\[Q_N(v_N):=\sum_{j=1}^N R_j(\mathbf{d})=\sigma\left[-\Upsilon_2 v_N''+\Upsilon_1 v_N'-\Upsilon_0 v_N\right]\]
and
\begin{eqnarray*}
Q_j(\bar{\mathbf{v}})&=&R_{j+1}(\mathbf{d})-R_j(\mathbf{d}) \\
&=& \sigma\left(-\Upsilon_2 v_{j}''+\Upsilon_1 v_j'-\Upsilon_0(\rho_\varepsilon + v_j)\right)\\
&& +\left\{\begin{array}{ll}
-e^{-\sqrt{p}v_2}+2e^{-\sqrt{p}v_1}, & j=1, \\
-e^{-\sqrt{p}v_{j+1}}+2e^{-\sqrt{p}v_j}-e^{-\sqrt{p}v_{j-1}}, & j=2, \cdots, N-1, \\
2e^{-\sqrt{p}v_{N-1}}-e^{-\sqrt{p}v_{N-2}}, &j=N-1.
\end{array}\right.
\end{eqnarray*}
Denote
\[\mathbf{Q}(\mathbf{v})=\begin{bmatrix}
\bar{\mathbf{Q}}(\bar{\mathbf{v}}) \\
Q_N(v_N)
\end{bmatrix}, \qquad \bar{Q}(\bar{\mathbf{v}})=\begin{bmatrix}
Q_1(\bar{\mathbf{v}})\\
\vdots\\
Q_{N-1}(\bar{\mathbf{v}})
\end{bmatrix}.\]

Then
\[\mathbf{Q}(\mathbf{v})=B\mathbf{R}(B^{-1}(\mathbf{v})),\]\
where
\[B=\begin{bmatrix}
-1 & 1 & & & \\
0 & -1 & 1 & & \\
 & & \ddots &\ddots \\
 & & & -1 & 1 \\
 1& 1 & \cdots & 1& 1
\end{bmatrix}.\]
Then we deduce that $\mathbf{Q}(\mathbf{v})=0$ is equivalent to $\mathbf{R}(\mathbf{d})=0$.
\subsection{Approximate solution}
To solve the first equation in \eqref{e86}, we first construct an approximate solution to the following problem in this subsection:
\begin{equation}\label{e106}
R_j(\mathbf{d}):=\sigma\left(-\Upsilon_2 d_j''+\Upsilon_1 d_j' -\Upsilon_0 \breve{f}_j\right)+\left[e^{-\sqrt{p}(d_j-d_{j-1})} -e^{-\sqrt{p}(d_{j+1}-d_j)}\right] =0,
\end{equation}
where $j=1,2,\cdots, N$. It suffices to instead construct an approximate solution to $\mathbf{Q}(\mathbf{v})=0$.

From Remark \ref{rm3}, we get $Q_N(v_N)=0$ has the only trivial solution $v_N=0$. However, we have
\begin{equation}\label{94}
\bar{\mathbf{Q}}(\bar{\mathbf{v}})=\sigma\left[-\Upsilon_2 \bar{\mathbf{v}}''+\Upsilon_1 \bar{\mathbf{v}}'-\Upsilon_0 \bar{\mathbf{v}}\right]-\Upsilon_0\begin{bmatrix}1 \\ 1\\ \vdots \\1\end{bmatrix} +\bar{\mathbf{Q}}_0(\bar{\mathbf{v}})=0,
\end{equation}
where
\[\bar{\mathbf{Q}}_0(\bar{\mathbf{v}})=M\begin{bmatrix}e^{-\sqrt{p}v_1} \\ \vdots \\ e^{-\sqrt{p}v_{N-1}} \end{bmatrix},\qquad \mathrm{and} \qquad M=\begin{bmatrix}
2 &-1 & & & \\
-1 & 2 & -1 & & \\
& \ddots & \ddots & \ddots & \\
& & -1 &2 &-1 \\
& & & -1 &2
\end{bmatrix}.\]
Now we construct an approximate solution of \eqref{94}.

\begin{proposition}\label{pre3}
For any integer $k\ge1$, there exists a function $\bar{\mathbf{v}}_k(y,\sigma)=\bar{\mathbf{v}}_1+\sigma \eta_k(y,\sigma)$ such that $\bar{\mathbf{Q}}(\bar{\mathbf{v}}_k)=O(\sigma^k)$, where $\eta_1\equiv 0$, $\bar{\mathbf{v}}_1=-\frac1{\sqrt{p}}\log\left[\frac{\Upsilon_0}2(N-i)i\right]$ and $\eta_k$ is continuous on $\Gamma \times [0, +\infty)$.

Let
\begin{equation}\label{e43}
\mathbf{h}_k=B^{-1}\begin{bmatrix}\bar{\mathbf{v}}_k \\0\end{bmatrix}.
\end{equation}
There holds that $\mathbf{R}(\mathbf{h}_k)=O(\sigma^k)$.
\end{proposition}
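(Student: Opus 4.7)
The plan is to construct $\bar{\mathbf{v}}_k$ by an induction on $k$, correcting one order of $\sigma$ at a time, treating $\bar{\mathbf{Q}}$ as the sum of a "small" differential perturbation $\sigma[-\Upsilon_2 (\cdot)''+\Upsilon_1(\cdot)'-\Upsilon_0(\cdot)]$ and a "main" algebraic part $-\Upsilon_0\mathbf{1}+M(e^{-\sqrt{p}v_i})_i$ which can be solved by inverting the (strictly diagonally dominant, hence invertible) tridiagonal matrix $M$.

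First I would verify the base case $k=1$. With $a_i:=e^{-\sqrt{p}v_{1,i}}=\tfrac{\Upsilon_0}{2}(N-i)i$, a direct computation $2a_i-a_{i-1}-a_{i+1}=\Upsilon_0$ (using $a_0=a_N=0$) shows $M\mathbf{a}=\Upsilon_0\mathbf{1}$, so the algebraic part $-\Upsilon_0\mathbf{1}+\bar{\mathbf{Q}}_0(\bar{\mathbf{v}}_1)$ vanishes identically and only the $\sigma$-differential term remains; thus $\bar{\mathbf{Q}}(\bar{\mathbf{v}}_1)=\sigma[-\Upsilon_2\bar{\mathbf{v}}_1''+\Upsilon_1\bar{\mathbf{v}}_1'-\Upsilon_0\bar{\mathbf{v}}_1]=O(\sigma)$, matching the claim with $\eta_1\equiv 0$.

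For the inductive step, suppose I have $\bar{\mathbf{v}}_k=\bar{\mathbf{v}}_1+\sigma\eta_k$ with $\bar{\mathbf{Q}}(\bar{\mathbf{v}}_k)=\sigma^{k}\mathbf{r}_k(\theta,\sigma)+O(\sigma^{k+1})$, where $\mathbf{r}_k$ is continuous in $(\theta,\sigma)$. I would then set $\bar{\mathbf{v}}_{k+1}=\bar{\mathbf{v}}_k+\sigma^{k}\tilde{\eta}$ and Taylor-expand the exponential nonlinearity:
\begin{equation*}
e^{-\sqrt{p}((v_k)_i+\sigma^{k}\tilde{\eta}_i)}=e^{-\sqrt{p}(v_k)_i}\bigl(1-\sqrt{p}\,\sigma^{k}\tilde{\eta}_i+O(\sigma^{2k})\bigr),
\end{equation*}
so that the $\sigma^{k}$ contribution of $\tilde{\eta}$ to $\bar{\mathbf{Q}}_0$ is $-\sqrt{p}\,\sigma^{k}M\mathrm{diag}(a_i)\tilde{\eta}+O(\sigma^{k+1})$, while the contribution to the differential part is of order $\sigma^{k+1}$. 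Cancellation of the $\sigma^{k}$ residue thus amounts to the linear algebraic system
\begin{equation*}
M\,\mathrm{diag}(a_i)\,\tilde{\eta}=\tfrac{1}{\sqrt{p}}\mathbf{r}_k(\theta,0),
\end{equation*}
which admits a unique, continuous solution because $M$ is invertible and each $a_i=\tfrac{\Upsilon_0}{2}(N-i)i>0$ for $1\le i\le N-1$. Setting $\eta_{k+1}:=\eta_k+\sigma^{k-1}\tilde{\eta}$ and absorbing the higher-order remainder into the next step gives $\bar{\mathbf{Q}}(\bar{\mathbf{v}}_{k+1})=O(\sigma^{k+1})$ and the claimed form $\bar{\mathbf{v}}_{k+1}=\bar{\mathbf{v}}_1+\sigma\eta_{k+1}$ with $\eta_{k+1}$ continuous on $\Gamma\times[0,+\infty)$.

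For the final assertion on $\mathbf{h}_k$, I would exploit the change of variables identity $\mathbf{Q}(\mathbf{v})=B\mathbf{R}(B^{-1}(\mathbf{v}))$ already established before the statement. By the definition \eqref{e43}, $B\mathbf{h}_k=[\bar{\mathbf{v}}_k;0]^t$, so
\begin{equation*}
\mathbf{R}(\mathbf{h}_k)=B^{-1}\mathbf{Q}(B\mathbf{h}_k)=B^{-1}\begin{bmatrix}\bar{\mathbf{Q}}(\bar{\mathbf{v}}_k)\\ Q_N(0)\end{bmatrix}=B^{-1}\begin{bmatrix}O(\sigma^{k})\\ 0\end{bmatrix}=O(\sigma^{k}),
\end{equation*}
where $Q_N(0)=0$ is immediate from the explicit formula for $Q_N$. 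The main obstacle is a bookkeeping one: one must check that at each inductive step the residue $\mathbf{r}_k$ really is of the claimed form and continuous in $(\theta,\sigma)$, which requires tracking how the differential operator acts on the previously constructed $\eta_{k-1}$ terms (a routine but tedious expansion of $(v_1+\sigma\eta_k)''$ and of the exponential). No spectral obstruction arises because the relevant linearization at each step is the invertible matrix $M\,\mathrm{diag}(a_i)$, not the (singular-in-the-limit) Jacobi--Toda operator.
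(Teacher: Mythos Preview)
Your proof is correct and follows essentially the same inductive scheme as the paper: solve the algebraic equation $M(e^{-\sqrt{p}v_{1,i}})_i=\Upsilon_0\mathbf{1}$ for the base case, then at each step invert the fixed matrix $D\bar{\mathbf{Q}}_0(\bar{\mathbf{v}}_1)=-\sqrt{p}M\,\mathrm{diag}(a_i)$ to cancel the next order of $\sigma$, with the differential part always contributing one order higher. One small correction: $M$ is not strictly diagonally dominant (interior rows have $|2|=|{-1}|+|{-1}|$), but it is still invertible---it is the standard discrete Dirichlet Laplacian, positive definite with eigenvalues $2(1-\cos(j\pi/N))>0$---so your argument goes through unchanged.
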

\begin{proof}

Let $\bar{\mathbf{v}}_1=(\bar{\mathbf{v}}_{11}, \bar{\mathbf{v}}_{12}, \cdots, \bar{\mathbf{v}}_{1(N-1)})^t$ be the unique solution of the following linear problem
\begin{equation*}
\bar{\mathbf{Q}}_0(\bar{\mathbf{v}}_1)=M\begin{bmatrix}
e^{-\sqrt{p}\bar{\mathbf{v}}_{11}}\\ e^{-\sqrt{p}\bar{\mathbf{v}}_{12}}\\  \vdots\\ e^{-\sqrt{p}\bar{\mathbf{v}}_{1(N-1)}}\end{bmatrix}=\Upsilon_0 \begin{bmatrix}1 \\ 1\\ \vdots \\1\end{bmatrix}.
\end{equation*}
In fact, we have
\begin{equation}\label{e107}
\bar{\mathbf{v}}_{1i}=-\frac1{\sqrt{p}}\log\left(\frac{\Upsilon_0}2(N-i)i\right), \qquad i=1, \cdots, N-1.
\end{equation}
It follows that
\[\bar{\mathbf{Q}}(\bar{\mathbf{v}}_1)=\sigma[-\Upsilon_2\bar{\mathbf{v}}_1''+\Upsilon_1 \bar{\mathbf{v}}_1'-\Upsilon_0 \bar{\mathbf{v}}_1]=O(\sigma).\]
Thus, we have
\begin{equation}\label{97}
\bar{\mathbf{Q}}(\bar{\mathbf{v}}_1+\mathbf{w})=\sigma[-\Upsilon_2 \mathbf{w}''+\Upsilon_1\mathbf{w}'-\Upsilon_0 \mathbf{w}]+ \sigma [-\Upsilon_2\bar{\mathbf{v}}_1''+\Upsilon_1 \bar{\mathbf{v}}_1'-\Upsilon_0 \bar{\mathbf{v}}_1] +D\bar{\mathbf{Q}}_0(\bar{\mathbf{v}}_1)\mathbf{w} +N_1(\mathbf{w}),
\end{equation}
where
\begin{eqnarray*}
D\bar{\mathbf{Q}}_0(\bar{\mathbf{v}}_1)&=&-\sqrt{p}M\begin{bmatrix}e^{-\sqrt{p}\bar{\mathbf{v}}_{11}}, & & & \\
& e^{-\sqrt{p}\bar{\mathbf{v}}_{12}} && \\
& & \ddots & \\
& & & e^{-\sqrt{p}\bar{\mathbf{v}}_{1(N-1)}}\end{bmatrix} \\
 &=&-\frac{\sqrt{p}}2\Upsilon_0\begin{bmatrix}
2r_1 & -r_2 & & & & \\
-r_1 & 2r_2 & -r_3 & & & \\
& -r_2 & \ddots & \ddots & &  \\
&&\ddots&&& \\
 & &  & -r_{N-3} & 2r_{N-2} & -r_{N-1} \\
 & & & & -r_{N-2} & 2r_{N-1}
\end{bmatrix},
\end{eqnarray*}
$r_i=(N-i)i$ and
\[N_1(\mathbf{w})=\frac{\Upsilon_0}2M\begin{bmatrix}
r_1\left(e^{-\sqrt{p}w_1}-1+\sqrt{p}w_1\right) \\
r_2\left(e^{-\sqrt{p}w_2}-1+\sqrt{p}w_2\right) \\
\vdots \\
r_{N-1}\left(e^{-\sqrt{p}w_{N-1}}-1+\sqrt{p}w_{N-1}\right)
\end{bmatrix}\qquad \mathrm{and} \qquad  \mathbf{w}=
\begin{bmatrix}
w_1 \\
w_2 \\
\vdots \\
w_{N-1}
\end{bmatrix}.\]

Let $\mathbf{w}_1=O(\sigma)$ be unique solution of the following problem
\[-D\bar{\mathbf{Q}}_0(\bar{\mathbf{v}}_1)\mathbf{w}_1=\sigma[-\Upsilon_2 \bar{\mathbf{v}}_1''+ \Upsilon_1 \bar{\mathbf{v}}_1' -\Upsilon_0 \bar{\mathbf{v}}_1 ].\]
Then  $\bar{\mathbf{v}}_2:=\bar{\mathbf{v}}_1+\mathbf{w}_1$ satisfies
\[\bar{\mathbf{Q}}(\bar{\mathbf{v}}_2)=\sigma[-\Upsilon_2 \mathbf{w}_1''+\Upsilon_1\mathbf{w}_1'-\Upsilon_0 \mathbf{w}_1] +N_1(\mathbf{w}_1)=O(\sigma^2)\]
and
\begin{eqnarray*}
\bar{\mathbf{Q}}(\bar{\mathbf{v}}_2+\mathbf{w})&=&\bar{\mathbf{Q}}(\bar{\mathbf{v}}_1+\mathbf{w}_1+\mathbf{w}) \\
&=& \sigma[-\Upsilon_2 \mathbf{w}_1''+\Upsilon_1 \mathbf{w}_1' -\Upsilon_0 \mathbf{w}_1]+N_1(\mathbf{w}_1) +D\bar{\mathbf{Q}}_0(\bar{\mathbf{v}}_1)\mathbf{w} \\
&&+\sigma[-\Upsilon_2 \mathbf{w}''+\Upsilon_1 \mathbf{w}' -\Upsilon_0 \mathbf{w}]+N_1(\mathbf{w}_1+\mathbf{w})-N_1(\mathbf{w}_1).
\end{eqnarray*}
Denote $\mathbf{w}_2=O(\sigma^2)$ as the unique solution to the following equation
\[-D\bar{\mathbf{Q}}_0(\bar{\mathbf{v}}_1)\mathbf{w}_2=\sigma[-\Upsilon_2 \mathbf{w}_1''+\Upsilon_1 \mathbf{w}_1' -\Upsilon_0 \mathbf{w}_1]+N_1(\mathbf{w}_1).\]
Then the function $\bar{\mathbf{v}}_3=\bar{\mathbf{v}}_2+\mathbf{w}_2$ satisfies
\[\bar{\mathbf{Q}}(\bar{\mathbf{v}}_3)=\sigma[-\Upsilon_2 \mathbf{w}_2''+\Upsilon_1 \mathbf{w}_2' -\Upsilon_0 \mathbf{w}_2]+N_1(\mathbf{w}_1+\mathbf{w}_2)-N(\mathbf{w}_1).\]
Assume for $k\ge3$, the function $\bar{\mathbf{v}}_{k-1}=\bar{\mathbf{v}}_1+\sum_{j=1}^{k-1}\mathbf{w}_j$ satisfies
\[\bar{\mathbf{Q}}(\bar{\mathbf{v}}_{k-1})=\sigma[-\Upsilon_2 \mathbf{w}_{k-1}''+\Upsilon_1 \mathbf{w}_{k-1}' -\Upsilon_0 \mathbf{w}_{k-1}]+N_1(\mathbf{w}_1+\cdots +\mathbf{w}_{k-1})-N_1(\mathbf{w}_1+\cdots +\mathbf{w}_{k-2})\]
and $\mathbf{w}_j=O(\sigma^j)$, $j=1, \cdots, k-1$.

Let $\mathbf{w}_k=O(\sigma^k)$ be the unique solution of the following problem
\[-D\bar{\mathbf{Q}}_0(\bar{\mathbf{v}}_1)\mathbf{w}_k=\sigma [-\Upsilon_2 \mathbf{w}_{k-1}''+\Upsilon_1 \mathbf{w}_{k-1}' -\Upsilon_0 \mathbf{w}_{k-1}]+N_1(\mathbf{w}_1+\cdots+\mathbf{w}_{k-1})-N_1(\mathbf{w}_1+\cdots+\mathbf{w}_{k-2})\]
and $\bar{\mathbf{v}}_k=\bar{\mathbf{v}}_{k-1}+\mathbf{w}_k$.
From \eqref{97}, we get
\begin{eqnarray*}
\bar{\mathbf{Q}}(\bar{\mathbf{v}}_k)&=&\bar{\mathbf{Q}}(\bar{\mathbf{v}}_{k-1}+\mathbf{w}_k) \\
&=& \bar{\mathbf{Q}}(\bar{\mathbf{v}}_{k-1})+\sigma[-\Upsilon_2 \bar{\mathbf{w}}_k''+\Upsilon_1 \bar{\mathbf{w}}_k' -\Upsilon_0 \bar{\mathbf{w}}_k] +D\bar{\mathbf{Q}}_0(\bar{\mathbf{v}}_1)\mathbf{w}_k\\
&& +N_1(\mathbf{w}_1+\cdots+\mathbf{w}_k)- N_1(\mathbf{w}_1+\cdots+\mathbf{w}_{k-1})\\
&=& \sigma[-\Upsilon_2 \bar{\mathbf{w}}_k''+\Upsilon_1 \bar{\mathbf{w}}_k' -\Upsilon_0 \bar{\mathbf{w}}_k]+ N_1(\mathbf{w}_1+\cdots+\mathbf{w}_k)- N_1(\mathbf{w}_1+\cdots+\mathbf{w}_{k-1}).
\end{eqnarray*}
Hence $\bar{\mathbf{Q}}(\bar{\mathbf{v}}_k)=O(\sigma^{k+1})$. This proposition follows.

\end{proof}

From \eqref{e38}, we get
\begin{eqnarray}\label{e40}
\nonumber S_j(\mathbf{v})&:=&P_{j+1}(e_{j+1}, d_{j+1})-P_j(e_j, d_j) \\
\nonumber&=&2\varepsilon\alpha^{1-p}a_{11}\frac{\int_\R Z_xw_x dx}{\int_\R w_x^2dx}\left[e_{j+1}'\left(\frac1N v_N'-\sum_{k=j+1}^{N-1}v_k'+\frac1N\sum_{k=1}^{N-1} kv_k'\right)\right. \\
\nonumber&&\left.-e_j'\left(\frac1N v_N'-\sum_{k=j}^{N-1}v_k'+\frac1N\sum_{k=1}^{N-1} kv_k'\right)\right] -2\varepsilon\alpha^{1-p}[a_{11}\beta^{-1}\beta'+a_{22}]\frac{\int_\R Z_xw_x dx}{\int_\R w_x^2dx}\times\\
\nonumber&&\left\{e_{j+1}'\left[(j-\frac{N-1}2)\rho_\varepsilon +\frac1N v_N-\sum_{k=j+1}^{N-1}v_k+\frac1N\sum_{k=1}^{N-1} kv_k\right]\right. \\
&&\left.-e_j'\left[(j-\frac{N+1}2)\rho_\varepsilon +\frac1N v_N-\sum_{k=j}^{N-1}v_k+\frac1N\sum_{k=1}^{N-1} kv_k\right]\right\},
\end{eqnarray}
and
\begin{eqnarray*}
S_N(\mathbf{v})&:=&\sum_{j=1}^N P_j(e_j,d_j) \\
&=& 2\varepsilon \alpha^{1-p}a_{11}\frac{\int_\R Z_xw_x dx}{\int_\R w_x^2dx}\sum_{j=1}^N e_j'\left(\frac1N v_N'-\sum_{k=j}^{N-1}v_k'+\frac1N\sum_{k=1}^{N-1} kv_k'\right) \\
&& -2\varepsilon \alpha^{1-p}\left[a_{11}\beta^{-1}\beta'+a_{22}\right]\sum_{j=1}^N e_j'\left[(j-\frac{N+1}2)\rho_\varepsilon +\frac1N v_N-\sum_{k=j}^{N-1}v_k+\frac1N\sum_{k=1}^{N-1} kv_k\right].
\end{eqnarray*}
Denote
\begin{equation}\label{e79}
\mathbf{S}(\mathbf{v})=\begin{bmatrix}\bar{\mathbf{S}}(\mathbf{v})\\ S_N(\mathbf{v})\end{bmatrix}, \qquad \bar{\mathbf{S}}(\mathbf{v})=\begin{bmatrix}S_1(\mathbf{v})\\ S_2(\mathbf{v})\\  \vdots \\ S_{N-1}(\mathbf{v}))\end{bmatrix}.
\end{equation}
\subsection{Related problems}
Now we consider the problem
\begin{equation}\label{e42}
\tilde{\mathbf{R}}(\mathbf{d})=\mathbf{g}.
\end{equation}
From the calculation above,  we see that \eqref{e42} is equivalent to the following problem
\[\mathbf{Q}(\mathbf{u})=\bar{\mathbf{g}}+\mathbf{S}(\mathbf{u}),\]
where $\bar{\mathbf{g}}=B\mathbf{g}$.
The equation above is  equivalent to the following system
\begin{equation}\label{98}
\sigma[-\Upsilon_2 \mathbf{u}_N''+\Upsilon_1 \mathbf{u}_N' -\Upsilon_0 \mathbf{u}_N]=g_N+S_N(\mathbf{u})
\end{equation}
and
\begin{equation}\label{102}
\bar{\mathbf{Q}}(\bar{\mathbf{u}})=\sigma[-\Upsilon_2\bar{\mathbf{u}}''+\Upsilon_1 \bar{\mathbf{u}}' -\Upsilon_0 \bar{\mathbf{u}}]-\Upsilon_0\begin{bmatrix}1\\\vdots\\1\end{bmatrix}+\bar{\mathbf{Q}}_0(\bar{\mathbf{u}})=\bar{\mathbf{g}}+\bar{\mathbf{S}}(\mathbf{u}),
\end{equation}
where
\[\mathbf{u}=\begin{bmatrix}\bar{\mathbf{u}}\\ u_N\end{bmatrix}, \qquad  \mathrm{and} \qquad \mathbf{g}=\begin{bmatrix}\bar{\mathbf{g}}\\ g_N\end{bmatrix}.\]

To solve problem \eqref{98}-\eqref{102}, we first consider \eqref{102} for given $u_N$. To get a solution of the form $\bar{\mathbf{u}}=\bar{\mathbf{v}}_k+\mathbf{w}$ of \eqref{102}, we consider the following equation
\begin{equation}\label{108}
\mathbf{J}_\sigma(\mathbf{w}):=\sigma[-\Upsilon_2 \mathbf{w}''+\Upsilon_1 \mathbf{w}' -\Upsilon_0 \mathbf{w}]+D\bar{\mathbf{Q}}_0(\bar{\mathbf{v}}_k)\mathbf{w}=\bar{\mathbf{g}}+\bar{\mathbf{S}}(\bar{\mathbf{v}}_k+\mathbf{w})-N_2(\mathbf{w})-\bar{\mathbf{Q}}(\bar{\mathbf{v}}_k),
\end{equation}
where
\[N_2(\mathbf{w})=\bar{\mathbf{Q}}_0(\bar{\mathbf{v}}_k+\mathbf{w})-\bar{\mathbf{Q}}_0(\bar{\mathbf{v}}_k)-D\bar{\mathbf{Q}}_0(\bar{\mathbf{v}}_k) \mathbf{w}.\]

For the solvability of \eqref{108}, we first consider its associated linear problem:
\begin{equation}\label{103}
\mathbf{J}_\sigma(\mathbf{w})=\tilde{\mathbf{g}}, \qquad \mathrm{in} \qquad (0,1).
\end{equation}
\begin{lemma}\label{lme5}
For  sufficiently small $\varepsilon>0$ satisfying the condition
\begin{equation}\label{e41}
\left|\frac2{\sqrt{p}}\Lambda_i|\log\varepsilon|-\frac{4\pi^2j^2}{l_1^2}\right|>c_1\left(\frac2{\sqrt{p}|log\varepsilon|}\right)^{1/2}, \qquad \mathrm{for} \qquad  i=1,\cdots,N,
\end{equation}
\eqref{103}  has a unique solution $\mathbf{w}=F(\tilde{\mathbf{g}})$ provided $\tilde{\mathbf{g}}\in L^2(0,1)$. Moreover, $\mathbf{w}$ satisfies the following estimate
\[\sigma\|\mathbf{w}''\|_{L^2(0,1)}+\sigma^{\frac12}\|\mathbf{w}'\|_{L^2(0,1)}+\|\mathbf{w}\|_{L^2(0,1)}\leq C\sigma^{-\frac12}\|\tilde{\mathbf{g}}\|_{L^2(0,1)}.\]
\end{lemma}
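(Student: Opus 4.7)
The plan is to diagonalize the zeroth-order matrix $D\bar{\mathbf{Q}}_0(\bar{\mathbf{v}}_k)(\theta)$ by a smooth $\theta$-dependent transformation, reducing $\mathbf{J}_\sigma$ to a decoupled family of scalar second-order singularly perturbed ODEs, and then to invert each of those ODEs by a Fourier expansion in $\theta$ in which the non-resonance condition \eqref{e41} produces a quantitative lower bound on every Fourier symbol.

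To begin, Proposition \ref{pre3} gives $\bar{\mathbf{v}}_k = \bar{\mathbf{v}}_1 + O(\sigma)$, hence
\[D\bar{\mathbf{Q}}_0(\bar{\mathbf{v}}_k)(\theta) = -\tfrac{\sqrt{p}}{2}\Upsilon_0(\theta)\,M_r + O(\sigma),\]
where $M_r$ is the (non-symmetric) tridiagonal matrix appearing in the proof of Proposition \ref{pre3}. Conjugation by $D_r^{1/2}$ with $D_r=\mathrm{diag}(r_1,\ldots,r_{N-1})$ turns $M_r$ into the symmetric positive-definite matrix $D_r^{1/2}M_r D_r^{-1/2}$; its positive eigenvalues $\nu_1,\ldots,\nu_{N-1}$ are simple, so the leading matrix has distinct real eigenvalues $-\Lambda_i(\theta)=-\tfrac{\sqrt{p}}{2}\Upsilon_0(\theta)\nu_i + O(\sigma)$, and a smooth family of invertible matrices $P(\theta)$ diagonalizes $D\bar{\mathbf{Q}}_0(\bar{\mathbf{v}}_k)(\theta)$. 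Substituting $\mathbf{w}=P(\theta)\mathbf{z}$ and premultiplying by $P^{-1}$ brings the system to
\[\sigma\bigl[-\Upsilon_2\mathbf{z}'' + A_1(\theta)\mathbf{z}' + A_0(\theta,\sigma)\mathbf{z}\bigr] - \mathrm{diag}(\Lambda_i(\theta))\,\mathbf{z} = P^{-1}\tilde{\mathbf{g}},\]
with $A_1=\Upsilon_1 I - 2\Upsilon_2 P^{-1}P'$ and $A_0$ smooth matrices whose $C^1$-norms are bounded uniformly in $\sigma$; the off-diagonal couplings are therefore perturbations.

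Next, I will freeze the smooth coefficients $\Upsilon_2,\Upsilon_1,\Lambda_i$ at a generic base point via a finite partition of unity on $[0,1]$ (the $C^1$-variation of these coefficients being $O(1)$, the resulting commutator errors are of strictly higher order in $\sigma$ and may be absorbed in the left-hand side). For each scalar frozen equation expand $\mathbf{z}_i(\theta)=\sum_{j\in\mathbb{Z}}\hat z_i^{(j)}e^{2\pi i j\theta/l_1}$; the Fourier symbol is
\[\mathcal{T}_{i,j} = \sigma\Upsilon_2\tfrac{4\pi^2 j^2}{l_1^2} + 2\pi i\,\sigma\Upsilon_1\tfrac{j}{l_1} - \sigma\Upsilon_0 - \Lambda_i,\]
and the non-resonance assumption \eqref{e41}, interpreted via $\sigma^{-1}\sim\tfrac{2}{\sqrt{p}}|\log\varepsilon|$, gives the pointwise lower bound $|\mathcal{T}_{i,j}|\ge c\sigma^{1/2}$ uniformly in $i,j$. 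Parseval therefore yields $\|\mathbf{z}\|_{L^2}\le C\sigma^{-1/2}\|P^{-1}\tilde{\mathbf{g}}\|_{L^2}$; testing the equation against $\mathbf{z}'$ and $\mathbf{z}''$ in succession (using the skew-symmetry of $\partial_\theta$ and integration by parts) upgrades this to the weighted estimate $\sigma\|\mathbf{z}''\|_{L^2}+\sigma^{1/2}\|\mathbf{z}'\|_{L^2}+\|\mathbf{z}\|_{L^2}\le C\sigma^{-1/2}\|\tilde{\mathbf{g}}\|_{L^2}$. Inverting the transformation $\mathbf{w}=P(\theta)\mathbf{z}$ gives the desired bound. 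Uniqueness is immediate from the invertibility of every Fourier symbol, and existence then follows from a Galerkin truncation together with the a priori estimate.

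The hard part of the argument is extracting the uniform lower bound $|\mathcal{T}_{i,j}|\ge c\sigma^{1/2}$ from \eqref{e41}: for bounded $j$ the quantity $\sigma^{-1}\Lambda_i$ is asymptotically large and automatically separated from integers, for $\tfrac{4\pi^2 j^2}{l_1^2}\gg \sigma^{-1}\Lambda_i$ the second-derivative term dominates and coercivity is trivial, and the delicate case is the near-resonant window $j\sim\tfrac{l_1}{2\pi}\sqrt{\sigma^{-1}\Lambda_i}$, where the explicit gap \eqref{e41} is needed. A secondary difficulty is that two distinct eigenvalues $\Lambda_i\neq \Lambda_{i'}$ may have resonant windows with different values of $j$, so the diagonal decoupling must be performed before the mode-by-mode inversion, and the off-diagonal couplings generated by $P'(\theta)$ must be controlled perturbatively, which is where the assumed $\sigma$-smallness of all cross terms is used.
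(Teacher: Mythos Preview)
Your reduction to scalar equations is sound in spirit, but the step ``freeze the coefficients via a partition of unity and then Fourier-expand'' does not work as described, and this is where your argument diverges from the paper's.

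First, a simplification you miss: at leading order in $\sigma$ the zeroth-order matrix is $D\bar{\mathbf{Q}}_0(\bar{\mathbf{v}}_1)=-\tfrac{\sqrt{p}}{2}\Upsilon_0(\theta)\,M\,\mathrm{diag}(r_i)$, and after the \emph{constant} conjugation $\phi=M^{-1/2}\mathbf{w}$ this becomes $-\Upsilon_0(\theta)\mathbf{D}$ with $\mathbf{D}=\tfrac{\sqrt{p}}{2}M^{1/2}\mathrm{diag}(r_i)M^{1/2}$ a \emph{constant} symmetric positive matrix. Diagonalizing $\mathbf{D}$ by a constant orthogonal matrix decouples the system exactly (no $P'(\theta)$ terms) into scalar equations
\[
\sigma\bigl[-\Upsilon_2(\theta)\phi''+\Upsilon_1(\theta)\phi'\bigr]-\Lambda_i\,\Upsilon_0(\theta)\,\phi=g_i,
\]
with $\Lambda_i$ the constant eigenvalues of $\mathbf{D}$. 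The $O(\sigma)$ correction $C(y,\sigma)-C(y,0)$ is then handled by a fixed-point argument at the very end. Your $\theta$-dependent $P(\theta)$ is unnecessary and creates coupling you then have to argue away.

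The real gap is in the scalar step. The non-resonance condition \eqref{e41} involves the number $l_1=\int_0^1\sqrt{\Upsilon_0/\Upsilon_2}$, which is the Weyl constant for the \emph{variable-coefficient} periodic eigenvalue problem $-\Upsilon_2\psi''+\Upsilon_1\psi'=\lambda\Upsilon_0\psi$; its eigenvalues satisfy $\tilde\lambda_j=4\pi^2 j^2/l_1^2+o(j^{-2})$ via the Liouville transformation. The paper expands $\phi$ in these Sturm--Liouville eigenfunctions, so the resolvent bound $\|\phi\|_{L^2}\le C\sigma^{-1}\bigl(\min_j|\Lambda_i/\sigma-\tilde\lambda_j|\bigr)^{-1}\|g\|_{L^2}$ reads off directly from \eqref{e41}. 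Your Fourier-with-frozen-coefficients route cannot reach this: if you freeze at $\theta_0$, the frozen symbol vanishes near $j^2\approx \Lambda_i\Upsilon_0(\theta_0)/(4\pi^2\sigma\Upsilon_2(\theta_0))$, and as $\theta_0$ ranges over the partition this quantity varies continuously and will hit integers regardless of the global gap \eqref{e41}. Moreover, to make the coefficient variation perturbative you would need pieces of width $O(\sigma^{1/2})$, whence $\chi'=O(\sigma^{-1/2})$ and the commutator $\sigma\chi'\phi'$ is the same size as $\sigma^{1/2}\|\phi'\|$, i.e.\ not absorbable after multiplying by the $\sigma^{-1/2}$ loss in the resolvent. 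Replace the frozen Fourier analysis by the Sturm--Liouville eigenfunction expansion (or, equivalently, perform the global Liouville change of variables first); the derivative bounds then follow by testing the equation against $\phi$, exactly as in the paper.
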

\begin{proof}
Let $\phi=M^{-\frac12}\mathbf{w}$ and $\mathbf{g}_0=M^{-\frac12}\tilde{\mathbf{g}}$. Then \eqref{103} is written into the following problem
\[\sigma[-\Upsilon_2 \phi''+\Upsilon_1 \phi' -\Upsilon_0 \phi]-\sqrt{p}M^{\frac12}
\begin{bmatrix}
e^{-\sqrt{p}\bar{\mathbf{v}}_{k1}}&& \\
 & \ddots & \\
  && e^{-\sqrt{p}\bar{\mathbf{v}}_{k(N-1)}}
\end{bmatrix}
M^{\frac12}\phi=\mathbf{g}_0,\]
where $\bar{\mathbf{v}}_k=(\bar{\mathbf{v}}_{k1},\bar{\mathbf{v}}_{k2},\cdots, \bar{\mathbf{v}}_{k(N-1)})^t$.
We further rewrite this equation above into the following form
\begin{equation}\label{104}
\sigma[-\Upsilon_2 \phi''+\Upsilon_1 \phi']-\mathbf{C}(y, \sigma)\phi=\mathbf{g}_0,
\end{equation}
where
\[C(y, \sigma)=\sigma\Upsilon_0I_{N-1}+\sqrt{p}M^{\frac12}
\begin{bmatrix}
e^{-\sqrt{p}\bar{\mathbf{v}}_{k1}}&& \\
 & \ddots & \\
  && e^{-\sqrt{p}\bar{\mathbf{v}}_{k(N-1)}}
\end{bmatrix}
M^{\frac12}.\]

To solve \eqref{104}, we first consider the following problem
\begin{equation}\label{e34}
\sigma[-\Upsilon_2 \phi''+\Upsilon_1 \phi']-\mathbf{C}(y, 0)\phi=\mathbf{g}.
\end{equation}
From Proposition \ref{pre3} and \eqref{e107}, we get
\[C(y,0)=\sqrt{p}M^{\frac12}
\begin{bmatrix}
e^{-\sqrt{p}\bar{\mathbf{v}}_{11}}&& \\
 & \ddots & \\
  && e^{-\sqrt{p}\bar{\mathbf{v}}_{1(N-1)}}
\end{bmatrix}=\frac{\sqrt{p}}2\Upsilon_0(y)M^{\frac12}\begin{bmatrix}
r_1 && \\
& \ddots & \\
&& r_{N-1}
\end{bmatrix}
M^{\frac12}.\]
Let $\mathbf{g}=(g_1,g_2, \cdots, g_N)$,
\begin{equation*}
\mathbf{D}=\frac{\sqrt{p}}2M^{\frac12}\begin{bmatrix}
r_1 && \\
& \ddots & \\
&& r_{N-1}
\end{bmatrix}M^{\frac12},
\end{equation*}
and $\Lambda_1, \cdots, \Lambda_{N-1}$ be the eigenvalue of the matrix $\mathbf{D}$. Then \eqref{e34} is equivalent to the following system:
\begin{equation}\label{107}
\sigma[-\Upsilon_2 \phi''+\Upsilon_1\phi']-\Lambda_i \Upsilon_0\phi=g_i, \qquad i=1, \cdots, N-1.
\end{equation}
For simplicity, we write this problem into the generic form
\begin{equation}\label{105}
\sigma[-\Upsilon_2 \phi''+\Upsilon_1\phi']-\mu \Upsilon_0 \phi=g.
\end{equation}
It is well known that \eqref{105} has a unique solution provided that $\mu/\sigma$ differs from all the eigenvalues $\lambda=\tilde{\lambda}_j$ of the following problem
\begin{equation}\label{106}
\left\{\begin{array}{ll}
-\Upsilon_2 \phi''+\Upsilon_1 \phi'=\lambda \Upsilon_0 \phi, \qquad \mathrm{in} \quad(0,1), \\
\phi(0)=\phi(1), \quad \phi'(0)=\phi'(1).
\end{array}\right.
\end{equation}
Moreover, its solution has the following estimate
\[\|\phi\|_{L^2(0,1)}\leq \frac{C\sigma^{-1}}{\min_j\left|\mu/\sigma-\tilde{\lambda}_j\right|}\|g\|_{L^2(0,1)}.\]

Using the following Liouville transformation
\[l_1=\int_0^1 \sqrt{\frac{\Upsilon_0(t)}{\Upsilon_2(t)}}dt, \quad s=\frac{\pi}{l_1}\int_0^t \sqrt{\frac{\Upsilon_0(\theta)}{\Upsilon_2(\theta)}}d\theta, \quad \psi(s)=\Phi(t)\phi(t),\]
\[\Phi(t)=\sqrt[4]{\frac{\Upsilon_0(t)}{\Upsilon_2(t)}}\exp\left(-\frac12\int_0^t \frac{\Upsilon_1(t)}{\Upsilon_2(t)}dt\right),\]
\eqref{106} is written into the following form
\[\psi''+q(s)\psi+\frac{\lambda l_1^2}{\pi^2}\psi=0, \qquad \mathrm{in} \quad (0,\pi), \]
where $q(s)$ is a smooth function. From \cite{Levitan_Sargsjan1991}, we get the following estimate
\[\tilde{\lambda}_j=\frac{4\pi^2j^2}{l_1^2}+o(j^{-2})\qquad \mathrm{as} \qquad j\to\infty.\]
From \eqref{e110} and \eqref{e41}, we get
problem \eqref{107} has a unique solution
\[\phi=S(g)=(\phi_1,\phi_2, \cdots, \phi_{N-1})^t,\]
 which satisfies the following estimate
\begin{equation}\label{e35}
\|\phi\|_{L^2(0,1)}\leq C\sigma^{-\frac12}\|g\|_{L^2(0,1)}.
\end{equation}

Now we estimate the derivatives of $\phi$. Multiplying both sides of \eqref{105} by $\phi$ and integrating, we have
\[\sigma\int_0^1\left[-\Upsilon_2\phi''+\Upsilon_1 \phi'\right]\phi dt -\mu\int_0^1\Upsilon_0\phi^2 dt =\int_0^1 g\phi dt.\]
Thus,
\[\sigma\int_0^1 \Upsilon_2|\phi'|^2 dt +\sigma\int_0^1 (\Upsilon_1+\Upsilon_2')\phi \phi' dt=\mu\int_0^1\Upsilon_0\phi^2 dt+\int_0^1 g\phi dt.\]
Then we get
\[\sigma\|\phi'\|_{L^2(0,1)}^2\leq C\sigma\int_0^1 \Upsilon_2|\phi'|^2 dt\leq C\left[\|\phi\|_{L^2(0,1)}^2+\|g\|_{L^2(0,1)}\|\phi\|_{L^2(0,1)}+\sigma \|\phi\|_{L^2(0,1)}\|\phi'\|_{L^2(0,1)}\right].\]
From \eqref{e35}, we obtain
\[\sigma\|\phi'\|_{L^2(0,1)}^2\le C\sigma^{-1}\|g\|_{L^2(0,1)}^2.\]
That is
\begin{equation}\label{e36}
\sigma^{\frac12}\|\phi'\|_{L^2(0,1)}\leq C\sigma^{-\frac12}\|g\|_{L^2(0,1)}.
\end{equation}
From \eqref{105}, \eqref{e35} and \eqref{e36}, we get \eqref{105} has a unique solution, with $\phi$ satisfying the following estimate
\[\sigma\|\phi''\|_{L^2(0,1)}+\sigma^{\frac12}\|\phi'\|_{L^2(0,1)}+\|\phi\|_{L^2(0,1)}\leq C\sigma^{-\frac12}\|g\|_{L^2(0,1)}.\]

We write \eqref{104} into the following form
\[\sigma[-\Upsilon_2 \phi''+\Upsilon_1 \phi']-C(y,0)\phi=\mathbf{g}_0+\left[C(y, \sigma)-C(y,0)\right]\phi.\]
Using fixed point theorem, we get \eqref{104} has  a unique solution satisfying the  estimate
\[\sigma\|\phi''\|_{L^2(0,1)}+\sigma^{\frac12}\|\phi'\|_{L^2(0,1)}+\|\phi\|_{L^2(0,1)}\leq C\sigma^{-\frac12}\|\mathbf{g}_0\|_{L^2(0,1)}.\]
Hence this lemma follows.

\end{proof}

For the function $\phi \in H^2(0,1)$, we define the norm
\[\|\phi\|_b=\sigma\|\phi''\|_{L^2(0,1)}+\sigma^{\frac12}\|\phi'\|_{L^2(0,1)}+\|\phi\|_{L^2(0,1)}.\]
Recall $\mathbf{h}_k$ is defined in \eqref{e43}. Then we get the following lemma.
\begin{lemma}\label{lme6}
Let  $k>2$ and  $\varepsilon>0$ satisfies \eqref{e41}. For all the functions  $\mathbf{g}$ with $\|\mathbf{g}\|_{L^2(0,1)}\le \sigma^k$, \eqref{e42} has a unique solution of the form
\[\mathbf{d}=\mathbf{h}_k+H(\mathbf{g}),\]
where $H(\cdot)$ satisfies
\[\|H(\mathbf{g})\|_b\le C\sigma^{\frac{k+1}2}\]
and
\[\|H(\mathbf{g}_1)-H(\mathbf{g}_2)\|_b\le C\sigma^{-1}\|\mathbf{g}_1-\mathbf{g}_2\|_{L^2(0,1)}\]
\end{lemma}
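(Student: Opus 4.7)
The plan is to set up a Banach fixed point argument that combines the approximate solution from Proposition \ref{pre3} with the linear theory from Lemma \ref{lme5} and Remark \ref{rm3}. I would look for solutions of the equivalent system \eqref{98}--\eqref{102} in the form $\bar{\mathbf{u}} = \bar{\mathbf{v}}_k + \mathbf{w}$ (so that the $\mathbf{d} = \mathbf{h}_k + H(\mathbf{g})$ structure arises via $B^{-1}$ applied to $(\mathbf{w}, u_N)$). As already noted around \eqref{108}, the equation for $\mathbf{w}$ becomes
\begin{equation*}
\mathbf{J}_\sigma(\mathbf{w}) = \bar{\mathbf{g}} + \bar{\mathbf{S}}(\bar{\mathbf{v}}_k + \mathbf{w}, u_N) - N_2(\mathbf{w}) - \bar{\mathbf{Q}}(\bar{\mathbf{v}}_k),
\end{equation*}
and the scalar equation for $u_N$ is \eqref{98}; by Remark \ref{rm3} the operator $\sigma[-\Upsilon_2 u_N'' + \Upsilon_1 u_N' - \Upsilon_0 u_N]$ is invertible on the periodic class, and the same energy estimate carried out in the proof of Lemma \ref{lme5} yields a right-inverse $G$ with $\|G(\tilde g)\|_b \le C\sigma^{-1/2}\|\tilde g\|_{L^2(0,1)}$.

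Next I would set up the map $T(\mathbf{w}, u_N) = \bigl(F[\,\bar{\mathbf{g}} + \bar{\mathbf{S}} - N_2(\mathbf{w}) - \bar{\mathbf{Q}}(\bar{\mathbf{v}}_k)\,],\ G[\,g_N + S_N(\bar{\mathbf{v}}_k+\mathbf{w}, u_N)\,]\bigr)$ and show it is a contraction on the ball $\mathcal{B} = \{(\mathbf{w}, u_N) : \|\mathbf{w}\|_b + \|u_N\|_b \le C_0\sigma^{(k+1)/2}\}$. The key size estimates are:
\begin{itemize}
\item $\|\bar{\mathbf{g}}\|_{L^2} \le C\|\mathbf{g}\|_{L^2} \le C\sigma^k$ by hypothesis;
\item $\|\bar{\mathbf{Q}}(\bar{\mathbf{v}}_k)\|_{L^2} = O(\sigma^{k+1})$ from the iterative construction in Proposition \ref{pre3};
\item since the $b$-norm controls the $H^1$-norm, Sobolev embedding gives $\|\mathbf{w}\|_{L^\infty} \le C\sigma^{k/2}$ on $\mathcal{B}$, hence the quadratic term satisfies $\|N_2(\mathbf{w})\|_{L^2} \le C\|\mathbf{w}\|_{L^\infty}^2 \le C\sigma^k$;
\item from \eqref{e40} and the constraint \eqref{11}, the terms $\bar{\mathbf{S}}$, $S_N$ carry an explicit factor $\varepsilon$ together with $e_j'$ of size $\|\mathbf{e}\|_* \le \varepsilon^{1/2}$, and hence $\|\bar{\mathbf{S}}\|_{L^2} + \|S_N\|_{L^2} \le C\varepsilon^{3/2}|\log\varepsilon|^q$, which is $o(\sigma^N)$ for every $N$.
\end{itemize}
Applying $F$ and $G$ (each with the loss factor $\sigma^{-1/2}$) transforms each of these bounds into $O(\sigma^{k-1/2}) + O(\sigma^{k+1/2})$, so for $k > 2$ we have $\sigma^{k-1/2} \le \sigma^{(k+1)/2}$ and $T$ maps $\mathcal{B}$ into itself once $C_0$ is chosen appropriately.

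For the contraction property I would differentiate $N_2$ and use $|DN_2(\mathbf{w})| \le C|\mathbf{w}|$ together with the $L^\infty$ bound on $\mathcal{B}$ to get Lipschitz constant $O(\sigma^{k/2}) \ll 1$; the Lipschitz dependence of $\bar{\mathbf{S}}, S_N$ on $(\mathbf{w}, u_N)$ is controlled by $\varepsilon^{3/2}$, which is even smaller. Banach's theorem then produces a unique fixed point $(\mathbf{w}(\mathbf{g}), u_N(\mathbf{g})) \in \mathcal{B}$, and defining $H(\mathbf{g})$ by $B^{-1}(\mathbf{w}, u_N)$ gives the stated $\|H(\mathbf{g})\|_b \le C\sigma^{(k+1)/2}$. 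The Lipschitz-in-$\mathbf{g}$ estimate follows by applying $T$ to two different right-hand sides, using linearity of $F, G$ in the $\bar{\mathbf{g}}$-slot and absorbing the small Lipschitz constant of $N_2$; the $\sigma^{-1}$ loss (rather than $\sigma^{-1/2}$) arises because one must write the difference as a sum of two $F$-images with $L^2$-sources bounded by $\|\mathbf{g}_1-\mathbf{g}_2\|_{L^2}$, producing the stated factor after combining with the upper bound $\|\mathbf{w}_i\|_{L^\infty} \le C\sigma^{k/2}$.

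The main obstacle I anticipate is the bookkeeping that links the four small parameters that appear simultaneously: $\sigma = \rho_\varepsilon^{-1}$, $\varepsilon$, the approximation order $k$, and the ball radius $\sigma^{(k+1)/2}$. In particular one must verify that the gap condition \eqref{e41} actually yields the $\sigma^{-1/2}$ loss uniformly and that the coupling $\bar{\mathbf{S}}(\bar{\mathbf{v}}_k+\mathbf{w}, u_N)$ genuinely does not introduce additional $\sigma^{-1}$ losses; this is where one uses that $\bar{\mathbf{S}}$ vanishes to leading order when $\mathbf{e} \equiv 0$, together with the $\varepsilon$-prefactor already present in the definition \eqref{e40}. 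Beyond that, the argument is a standard contraction and does not require substantially new ideas.
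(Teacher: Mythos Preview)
Your overall strategy---reformulate \eqref{e42} as the system \eqref{98}--\eqref{102}, write $\bar{\mathbf u}=\bar{\mathbf v}_k+\mathbf w$, and run a contraction using Proposition~\ref{pre3} and Lemma~\ref{lme5}---is exactly what the paper does. But there is one genuine error in your linear theory for the $u_N$ component.

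You assert that the inverse $G$ of $u_N\mapsto\sigma[-\Upsilon_2 u_N''+\Upsilon_1 u_N'-\Upsilon_0 u_N]$ satisfies $\|G(\tilde g)\|_b\le C\sigma^{-1/2}\|\tilde g\|_{L^2}$ ``by the same energy estimate as in Lemma~\ref{lme5}''. This is false. The $\sigma^{-1/2}$ gain in Lemma~\ref{lme5} comes from the $O(1)$ zero-order term $D\bar{\mathbf Q}_0(\bar{\mathbf v}_k)$ in $\mathbf J_\sigma$, which pushes the effective eigenvalue parameter to $\mu/\sigma$ of order $\sigma^{-1}$ and then uses the gap condition \eqref{e41}. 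The $u_N$ operator has \emph{no} such term: the whole operator carries the prefactor $\sigma$, so dividing through gives $-\Upsilon_2 u_N''+\Upsilon_1 u_N'-\Upsilon_0 u_N=\sigma^{-1}\tilde g$, and Remark~\ref{rm3} (bounded inverse, independent of $\sigma$) yields only
\[
\|u_N\|_{H^2(0,1)}\le C\sigma^{-1}\|\tilde g\|_{L^2(0,1)},
\]
hence also $\|u_N\|_b\le C\sigma^{-1}\|\tilde g\|_{L^2}$. This is precisely the source of the $\sigma^{-1}$ in the Lipschitz estimate of the lemma---your alternative explanation for that factor is not the right mechanism.

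The fix does not destroy the scheme: with $\|g_N\|_{L^2}\le\sigma^k$ and $\|S_N\|_{L^2}=O(\varepsilon^{1/4})$ one gets $\|u_N\|_{H^2}\le C\sigma^{k-1}$, and for integer $k\ge 3$ (which is what $k>2$ means here, cf.\ Proposition~\ref{pre3}) one still has $\sigma^{k-1}\le\sigma^{(k+1)/2}$, so the ball radius survives. The paper organizes this as a \emph{nested} fixed point---first solve \eqref{102} for $\mathbf w=\tilde\Omega(\bar{\mathbf g},u_N)$ in the $b$-norm ball of radius $C\sigma^{(k+1)/2}$ via Lemma~\ref{lme5}, then solve \eqref{98} for $u_N$ in $H^2$ using the $\sigma^{-1}$ bound above---rather than your simultaneous contraction on $(\mathbf w,u_N)$; either packaging works once the correct $u_N$ estimate is in place.
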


\begin{proof}
Recall that \eqref{e42} is equivalent to \eqref{98} and \eqref{102}. We first consider the solution $\bar{\mathbf{u}}=\bar{\mathbf{v}}_k+\mathbf{w}$. With the help of  Lemma \ref{lme5}, \eqref{102} is written into the following fixed point problem
\begin{equation}\label{e39}
\mathbf{w}=F[\bar{\mathbf{g}}+\bar{S}(\bar{\mathbf{v}}_k+\mathbf{w})-N_2(\mathbf{w})-\bar{\mathbf{Q}}(\bar{\mathbf{v}}_k)]=\tilde{T}_1(\mathbf{w}).
\end{equation}
Let
\[\mathcal{D}=\left\{\mathbf{w}\in H^2(0,1)\;:\; \|\mathbf{w}\|_b\le \mu \sigma^{\frac{k+1}2}\right\}.\]
We solve the fixed point problem \eqref{102} in $\mathcal{D}$.

From Proposition \ref{pre3}, we get  $\|\bar{\mathbf{Q}}(\bar{\mathbf{v}}_k)\|_{L^2(0,1)}\leq C\sigma^k$. Using the definition of $\bar{\mathbf{S}}(\cdot)$ in \eqref{e40} and \eqref{e79}, we get
\begin{eqnarray*}
\|\bar{\mathbf{S}}(\bar{\mathbf{v}}_k+\mathbf{w})\|_{L^2(0,1)}&\le& C\varepsilon^{3/4}\sum_{j=1}^N\|e_j'\|\left(C+\|u_N\|_{H^2(0,1)}+\|\mathbf{w}\|_{H^2(0,1)}\right) \\
&\le & \varepsilon^{1/4}\left(C+\|u_N\|_{H^2(0,1)}+\|\mathbf{w}\|_b\right).
\end{eqnarray*}
From the definition of $N_2$, we get $|N_2(\mathbf{w})|\le C|\mathbf{w}|^2$. Hence
\[\|N_2(\mathbf{w})\|_{L^2(0,1)}\le C\|\mathbf{w}\|_{L^4(0,1)}^2\le C\sigma^{-1}\|\mathbf{w}\|_b^2.\]
For $\mathbf{w}\in \mathcal{D}$, we have
\begin{eqnarray*}
\|\tilde{T}(\mathbf{w})\|_b&\le& C\sigma^{-1/2}\left[\|\bar{\mathbf{g}}\|_{L^2(0,1)}+\varepsilon^{1/4}\left(C+\|u_N\|_{H^2(0,1)}+\sigma^{-1}\|\mathbf{w}\|_b\right)+\sigma^{-1}\| \mathbf{w}\|^2 +\sigma^k\right] \\
&\le& C\sigma^{-1/2}\left(\|\bar{\mathbf{g}}\|_{L^2(0,1)}+\sigma^k +\varepsilon^{1/4}\|u_N\|_{H^2(0,1)}\right).
\end{eqnarray*}
Then $\tilde{T}(\mathbf{w})\in \mathcal{D}$ for $\mu$ large enough.

However, for $\mathbf{w}_1, \mathbf{w}_2\in \mathcal{D}$, we get $|N_2(\mathbf{w}_1)-N_2(\mathbf{w}_2)|\le C\sigma^{\frac k2}|\mathbf{w}_1-\mathbf{w}_2|$. Hence
\[\|N_2(\mathbf{w}_1)-N_2(\mathbf{w}_2)\|_{L^2(0,1)}\le C\sigma^{\frac k2}\|\mathbf{w}_1-\mathbf{w}_2\|_{L^2(0,1)}.\]
From \eqref{e40}, we get
\[\|\bar{\mathbf{S}}(\bar{\mathbf{v}}_k+\mathbf{w}_1)-\bar{\mathbf{S}}(\bar{\mathbf{v}}_k+\mathbf{w}_2)\|_{L^2(0,1)}\le C\varepsilon^{1/2}\|\mathbf{w}_1-\mathbf{w}_2\|_{H^2(0,2)}\le C\varepsilon^{1/4}\|\mathbf{w}_1-\mathbf{w}_2\|_b.\]
Then
\begin{eqnarray*}
\|\tilde{T}_1(\mathbf{w}_1)-\tilde{T}_1(\mathbf{w}_2)\|_b&\le& C\sigma^{-1/2}\left(\sigma^{\frac k2}\|\mathbf{w}_1-\mathbf{w}_2\|_b+\varepsilon^{1/4}\|\mathbf{w}_1-\mathbf{w}_2\|_b\right) \\
&\le& C\sigma^{\frac {k-1}2}\|\mathbf{w}_1-\mathbf{w}_2\|_b.
\end{eqnarray*}

From fixed point theorem, we get \eqref{102} has a unique solution, which we denote by $\mathbf{w}=\tilde{\Omega}(\bar{\mathbf{g}}, u_N)$.

Now we estimate dependence of $\tilde{\Omega}(\cdot,\cdot)$ on its parameters. Let $\mathbf{w}_i=\tilde{\Omega}(\bar{\mathbf{g}}_i, u_{Ni})$, where $i=1,2$. Thus,
\[\mathbf{w}_1-\mathbf{w}_2=F[\bar{\mathbf{g}}_1-\bar{\mathbf{g}}_2+\bar{S}(\bar{\mathbf{v}}_k+\mathbf{w}_1,u_{N1})-\bar{S}(\bar{\mathbf{v}}_k+ \mathbf{w}_2,u_{N2})-(N_2(\mathbf{w}_1))-N_2( \bar{\mathbf{w}}_2)].\]
From \eqref{e40},
\[\|\bar{\mathbf{S}}(\bar{\mathbf{v}}_k+\mathbf{w}_1,u_{N1})-\bar{\mathbf{S}}(\bar{\mathbf{v}}_k+\mathbf{w}_2,u_{N2})\|_{L^2(0,1)}\le C\varepsilon^{1/2}\left(\|\mathbf{w}_1-\mathbf{w}_2\|_{H^2(0,1)}+\|u_{N1}-u_{N2}\|_{H^2(0,1)}\right).\]
Then we get
\begin{eqnarray*}
\|\mathbf{w}_1-\mathbf{w}_2\|_b &\le & C\sigma^{-1/2}\left[\|\bar{\mathbf{g}}_1-\bar{\mathbf{g}}_2\|_{L^2(0,1)} +\sigma^{\frac k2}\|\mathbf{w}_1-\mathbf{w}_2\|\right. \\
&&\left.+\varepsilon^{1/2}\left(\sigma^{-1}\|\mathbf{w}_1-\mathbf{w}_2\|_b+\|u_{N1}-u_{N2}\|_{H^2(0,1)}\right)\right].
\end{eqnarray*}
Hence
\begin{equation}\label{e44}
\|\tilde{\Omega}(\bar{\mathbf{g}}_1, u_{N1})-\tilde{\Omega}(\bar{\mathbf{g}}_2, u_{N2})\|_b\le C\left[\sigma^{-1/2}\|\bar{\mathbf{g}}_1-\bar{\mathbf{g}}_2\|_{L^2(0,1)}+\varepsilon^{1/4}\|u_{N1}-u_{N2}\|_{H^2(0,1)}\right].
\end{equation}

After solving the problem \eqref{102},  we consider \eqref{98}. From fixed point theorem, we get \eqref{98} has a unique solution $u_N=\tilde{T}_2(g_N)$ satisfying
\[\|u_N\|_{H^2(0,1)}\le  C\sigma^{-1}\left[\|g_N\|_{L^2(0,1)}+\varepsilon^{1/4}\right].\]
Let $u_{Ni}=\tilde{T}_2(g_{Ni})$, $i=1,2$. Using \eqref{e44}, we get
\begin{eqnarray*}
\|u_{N1}-u_{N2}\|_{H^2(0,1)}&\le& C\sigma^{-1}\left[\|g_{N1}-g_{N2}\|_{L^2(0,1)}+\varepsilon^{1/2}\left(\|u_{N1}-u_{N2}\|_{H^2(0,1)}+\sigma^{-1}\|\mathbf{w}_1-\mathbf{w}_2\|_b\right)\right]  \\
&\le& C\sigma^{-1}\left[\|g_{N1}-g_{N2}\|_{L^2(0,1)}+\varepsilon^{1/2}\|u_{N1}-u_{N2}\|_{H^2(0,1)}\right]+C\varepsilon^{1/4}\|\bar{\mathbf{g}}_1-\bar{\mathbf{g}}_2\|_{L^2(0,1)}.
\end{eqnarray*}
Hence
\[\|u_{N1}-u_{N2}\|_{H^2(0,1)}\le C\sigma^{-1}\|\mathbf{g}_1-\mathbf{g}_2\|_{L^2(0,1)}.\]
Therefore \eqref{e41} has a unique solution of the form
\[\mathbf{d}=B^{-1}\begin{bmatrix}\bar{\mathbf{v}}_k+\mathbf{w}\\ u_N\end{bmatrix}.\]
This lemma follows.
\end{proof}

To solve second equation in  \eqref{e86}, we first consider the following problem
\begin{equation}\label{88}
\varepsilon^2 a_{11}\alpha^{1-p}e''+\lambda_0 e=h, \quad \mathrm{in} \quad (0,1).
\end{equation}
From the same method in \cite[Lemma 8.1]{delPino_Kowalczyk_Wei2007}, we obtain the following lemma.

\begin{lemma}\label{lme7}
Assume
\begin{equation}\label{e108}
|\varepsilon^2 k^2 -\lambda_*|>c_2\varepsilon \qquad \mathrm{for} \qquad\forall k\in Z_+,
\end{equation}
where
\[\lambda_*=\lambda_0l_2^2/4\pi \quad \mathrm{and} \quad l_2=\int_0^1 \frac1{\sqrt{\Upsilon_2(t)}}dt.\]
If $h\in L^2(0,1)$, \eqref{88} has  unique solution $e=G(h)$, which satisfies
\[\varepsilon^2\|e''\|_{L^2(0,1)}+\varepsilon\|e'\|_{L^2(0,1)}+\|e\|_{L^\infty(0,1)}\leq C\varepsilon^{-1}\|h\|_{L^2(0,1)}.\]
However, if $d\in H^2(0,1)$, we have
\begin{equation}\label{e84}
\varepsilon^2\|e''\|_{L^2(0,1)}+\varepsilon\|e'\|_{L^2(0,1)}+\|e\|_{L^\infty(0,1)}\leq C \|h\|_{H^2(0,1)}.
\end{equation}
\end{lemma}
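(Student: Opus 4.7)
My plan is to diagonalize the problem by a Liouville change of variables and then do a Fourier analysis, following the scheme of \cite{delPino_Kowalczyk_Wei2007}. First, set
\[
l_2=\int_0^1\frac{dt}{\sqrt{\Upsilon_2(t)}},\qquad s(t)=\int_0^t\frac{d\tau}{\sqrt{\Upsilon_2(\tau)}},\qquad \psi(s)=\Phi(t)\,e(t),
\]
with an integrating factor $\Phi$ chosen so that the first-order term in $e'$ disappears and $s$ becomes a uniform coordinate on $[0,l_2]$ (with periodic identification inherited from $\Gamma\cong \R/\Z$). In the new coordinate the equation \eqref{88} takes the standard Schr\"odinger-type form
\[
\varepsilon^{2}\psi''(s)+\bigl(\lambda_{0}+\varepsilon^{2}q(s)\bigr)\psi(s)=\tilde h(s),\qquad s\in(0,l_2),
\]
with $q$ smooth and periodic, and with $\psi,\psi'$ periodic. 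Notice that the $\varepsilon^{2}q$ term is a lower-order perturbation and can eventually be absorbed by a fixed-point argument once the unperturbed operator is inverted.

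Next I would expand in the complete orthonormal system $\{e_{k}(s)=l_{2}^{-1/2}e^{2\pi ik s/l_2}\}_{k\in\Z}$. Writing $\psi=\sum_k \hat\psi_k e_k$ and $\tilde h=\sum_k \hat{\tilde h}_k e_k$, the problem becomes the diagonal system
\[
\Bigl(\lambda_{0}-\frac{4\pi^{2}\varepsilon^{2}k^{2}}{l_{2}^{2}}\Bigr)\hat\psi_k=\hat{\tilde h}_k\quad\text{for every }k\in\Z,
\]
modulo the $\varepsilon^{2}q$ correction. The hypothesis \eqref{e108}, rewritten as $\bigl|\varepsilon^{2}k^{2}-\lambda_{0}l_{2}^{2}/(4\pi^{2})\bigr|>c_{2}\varepsilon$, exactly guarantees $|\lambda_{0}-4\pi^{2}\varepsilon^{2}k^{2}/l_{2}^{2}|\ge c\,\varepsilon$ uniformly in $k$. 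Hence each Fourier mode is uniquely invertible with
\[
|\hat\psi_k|\le C\varepsilon^{-1}|\hat{\tilde h}_k|,
\]
and Parseval gives $\|\psi\|_{L^2}\le C\varepsilon^{-1}\|\tilde h\|_{L^2}$. Testing the equation against $\psi$ and $\psi''$ in $L^{2}$ (or equivalently multiplying mode-by-mode by $k^{2}\hat\psi_k$ and $k^{4}\hat\psi_k$ and resumming) yields the derivative estimates $\varepsilon\|\psi'\|_{L^{2}}+\varepsilon^{2}\|\psi''\|_{L^{2}}\le C\varepsilon^{-1}\|\tilde h\|_{L^{2}}$; Sobolev embedding on the one-dimensional interval $(0,l_2)$ controls $\|\psi\|_{L^{\infty}}$ in terms of $\|\psi\|_{L^{2}}+\|\psi'\|_{L^{2}}$ and thus delivers the $L^{\infty}$ bound. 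Once the unperturbed inverse $L_{0}^{-1}$ has the operator norm $O(\varepsilon^{-1})$ from $L^{2}$ to the norm $\varepsilon^{2}\|\cdot\|_{H^2}\cap L^{\infty}$, the full problem is solved by the contraction $\psi=L_{0}^{-1}(\tilde h-\varepsilon^{2}q\psi)$, since $\varepsilon^{2}q\cdot$ is a bounded perturbation of norm $O(\varepsilon^{2})\ll \varepsilon$. Undoing the Liouville transformation preserves all the norms up to harmless multiplicative constants depending only on $\Upsilon_{2}$, which gives the first estimate.

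For the sharper estimate when $h\in H^{2}(0,1)$, the idea is to exploit the extra smoothness on the Fourier side. After the Liouville change, $\tilde h\in H^2$ so $|\hat{\tilde h}_k|\le C(1+k^{2})^{-1}\|\tilde h\|_{H^{2}}$. I split the sum at the resonance threshold $k_{*}\sim\varepsilon^{-1}$: for $|k|\le k_*$ I use the crude gap bound $|\hat\psi_k|\le C\varepsilon^{-1}|\hat{\tilde h}_k|$, but since $\sum_{|k|\le k_*}|\hat{\tilde h}_k|^{2}\le C\|\tilde h\|_{H^{2}}^{2}\sum_{|k|\le k_*}(1+k^{2})^{-2}\le C\varepsilon\,\|\tilde h\|_{H^{2}}^{2}$, the resonance loss of $\varepsilon^{-1}$ is exactly cancelled by the $\varepsilon$ gain from $H^{2}$-decay; for $|k|\ge k_*$ the denominator $|\lambda_{0}-4\pi^{2}\varepsilon^{2}k^{2}/l_{2}^{2}|$ is of order $\varepsilon^{2}k^{2}$ and Parseval gives the desired control with an $\varepsilon^{0}$ constant. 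Summing the two contributions produces \eqref{e84}. The main subtlety throughout is precisely the sharp handling of the resonance window $|\varepsilon^{2}k^{2}-\lambda_{*}|\sim\varepsilon$: without the arithmetic assumption \eqref{e108} one could lose an additional power of $\varepsilon$ in the $L^{2}$-estimate, and the interplay between this sharp gap and the $H^{2}$-decay of $h$ is what makes the second estimate uniform in $\varepsilon$.
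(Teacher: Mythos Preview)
Your overall scheme --- Liouville transformation to a constant-coefficient principal part followed by Fourier expansion in the periodic variable $s$ --- is exactly the method the paper has in mind: the paper's own proof of this lemma is simply a reference to \cite[Lemma~8.1]{delPino_Kowalczyk_Wei2007}, which proceeds in precisely this way. Your treatment of the first (the $L^2$-data) estimate is correct.

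The argument for the second estimate \eqref{e84}, however, contains a concrete error. You assert
\[
\sum_{|k|\le k_*}(1+k^{2})^{-2}\le C\varepsilon,
\]
but this sum is dominated by the convergent series $\sum_{k\in\Z}(1+k^{2})^{-2}$, a universal constant independent of $k_*$ and hence of $\varepsilon$. The ``$\varepsilon$ gain from $H^{2}$-decay'' you invoke is therefore not present, and the low-mode contribution $\sum_{|k|\le k_*}|\hat\psi_k|^{2}$ is only controlled by $C\varepsilon^{-2}\|\tilde h\|_{H^{2}}^{2}$, which is too large.

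The fix is to notice that the $\varepsilon^{-1}$ loss in the multiplier does \emph{not} occur on the whole range $|k|\le k_*$. Writing $k_0=\sqrt{\lambda_*}/\varepsilon$, one has $\bigl|\lambda_0-4\pi^{2}\varepsilon^{2}k^{2}/l_2^{2}\bigr|\sim\varepsilon|k-k_0|$ for $k$ near $k_0$, so only $O(1)$ integer modes lie in the genuine resonance window; away from those the multiplier is either uniformly bounded or of order $(\varepsilon^{2}k^{2})^{-1}$. On those $O(1)$ resonant modes the pointwise bound $|\hat{\tilde h}_k|\le C(1+k_0^{2})^{-1}\|\tilde h\|_{H^{2}}\le C\varepsilon^{2}\|\tilde h\|_{H^{2}}$ yields $|\hat\psi_k|\le C\varepsilon\|\tilde h\|_{H^{2}}$, which suffices. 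Alternatively, and more cleanly, once the first estimate is established the second follows without further Fourier analysis: set $e=\lambda_0^{-1}h+e_1$; then $e_1$ solves \eqref{88} with right-hand side $-\varepsilon^{2}\lambda_0^{-1}\Upsilon_2 h''$, whose $L^{2}$-norm is $O(\varepsilon^{2})\|h\|_{H^{2}}$, and the first estimate applied to $e_1$ gives \eqref{e84} directly.
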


\subsection{Solving \eqref{e86}}
From now on, we impose the condition
\begin{equation}\label{e85}
\|\mathbf{d}\|_{H^2(0,1)}\le M_1,
\end{equation}
where $M_1>0$ is a constant large enough.
Let
\[\mathbf{e}_0(\mathbf{d})=G(\mathbf{B}(\mathbf{d})).\]
According to the estimate \eqref{e84} and the condition \eqref{e85}, we get $\|\mathbf{e}_0\|_*\le C\varepsilon |\log\varepsilon|$ and
\[\|\mathbf{e}_0(\mathbf{d}_1)-\mathbf{e}_0(\mathbf{d}_2)\|_*\le C\varepsilon |\log\varepsilon|\|\mathbf{d}_1-\mathbf{d}_2\|_{H^2(0,1)}.\]

Let $\mathbf{e}=\mathbf{e}_0+\tilde{\mathbf{e}}$. Then \eqref{e86} is transformed into the following equation
\begin{equation}\label{e87}
\left\{\begin{array}{ll}
\tilde{\mathbf{R}}(\mathbf{d})=\sigma \tilde{\mathbf{M}}_{1\varepsilon}(\mathbf{d},\tilde{\mathbf{e}}), \\
\varepsilon^2 a_{11}\alpha^{1-p}\tilde{\mathbf{e}}''+\lambda_0 \tilde{\mathbf{e}}=\varepsilon\tilde{\mathbf{M}}_{2\varepsilon}(\mathbf{d},\tilde{\mathbf{e}}).
\end{array}\right.
\end{equation}
where $\tilde{\mathbf{M}}_{i\varepsilon}(\mathbf{d},\tilde{\mathbf{e}})=\tilde{\mathbf{A}}_{i\varepsilon}(\mathbf{d},\tilde{\mathbf{e}})+\tilde{\mathbf{K}}_{i\varepsilon}( \mathbf{d},\tilde{\mathbf{e}})$, $i=1,2$. Here $\tilde{\mathbf{A}}_{i\varepsilon}$ is a Lipschitz operator satisfying
\[\|\tilde{\mathbf{A}}_{i\varepsilon}(\mathbf{d}_1,\mathbf{e}_1)-\tilde{\mathbf{A}}_{i\varepsilon}(\mathbf{d}_2,\mathbf{e}_2)\|_{L^2(0,1)}\leq C\varepsilon^{\frac12+\mu_1}\left[\|\mathbf{d}_1-\mathbf{d}_2\|_{H^2(0,1)}+\|\mathbf{e}_1-\mathbf{e}_2\|_*\right],\]
and $\tilde{\mathbf{K}}_{i\varepsilon}$ is a compact operator. There also hold that
\begin{equation*}
\|\tilde{\mathbf{A}}_{i\varepsilon}\|_{L^2(0,1)}\leq C\varepsilon^{\frac12+\mu_1}, \qquad \|\tilde{\mathbf{K}}_{i\varepsilon}\|_{L^2(0,1)}\leq C\varepsilon^{\frac12+\mu_1}.
\end{equation*}

Then we consider the following system
\begin{equation}\label{e45}
\left\{\begin{array}{ll}
\tilde{\mathbf{R}}(\mathbf{d})-\sigma \tilde{\mathbf{A}}_{1\varepsilon}(\mathbf{d},\tilde{\mathbf{e}}) =\tilde{\mathbf{h}}, \\
\varepsilon^2a_{11}\alpha^{1-p}\tilde{\mathbf{e}}''+\lambda_0 \tilde{\mathbf{e}}' -\varepsilon \tilde{\mathbf{A}}_{2\varepsilon}(\mathbf{d},\tilde{\mathbf{e}})=\tilde{\mathbf{g}}.
\end{array}\right.
\end{equation}
\begin{lemma}\label{lme8}
Assume the small constant $\varepsilon>0$ satisfy \eqref{e41} and \eqref{e108}.
Under the condition  $\|\tilde{\mathbf{h}}\|_{L^2(0,1)}\le C\sigma \varepsilon^{\frac12+\mu_2}$ and $\|\tilde{\mathbf{g}}\|_{L^2(0,1)}\le C\varepsilon^{\frac32+\mu_2}$, with $\mu_2\in(0,1/2)$, \eqref{e45} has a unique solution $(\mathbf{d},\tilde{\mathbf{e}})=(\mathbf{h}_k+\mathcal{R}_1(\tilde{\mathbf{h}},\tilde{\mathbf{g}}),\mathcal{R}_2(\tilde{\mathbf{h}},\tilde{\mathbf{g}}))$, which satisfies
\[\|\mathcal{R}_1(\tilde{\mathbf{h}},\tilde{\mathbf{g}})\|_b\le C\sigma^{\frac{k+1}2}, \qquad \mathrm{and} \qquad \|\mathcal{R}_2(\tilde{\mathbf{h}},\tilde{\mathbf{g}})\|_*\le C \varepsilon^{1/2+\mu_2}.\]
Moreover,
\begin{eqnarray}\label{e48}
\nonumber&& \|\mathcal{R}_1(\mathbf{h}_1,\mathbf{g}_1)-\mathcal{R}_1(\mathbf{h}_2,\mathbf{g}_2)\|_b+\|\mathcal{R}_2(\mathbf{h}_1,\mathbf{g}_1)-\mathcal{R}_1(\mathbf{h}_2,\mathbf{g}_2)\|_* \\
&\le& C\sigma^{-1}\|\mathbf{h}_1-\mathbf{h}_2\|_{L^2(0,1)}+C\varepsilon^{-1}\|\mathbf{g}_1-\mathbf{g}_2\|_{L^2(0,1)}.
\end{eqnarray}
\end{lemma}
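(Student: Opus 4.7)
The plan is to recast \eqref{e45} as a fixed-point problem for the pair $(\mathbf{d},\tilde{\mathbf{e}})$ by inverting the two uncoupled linear operators on the left via Lemma \ref{lme6} and Lemma \ref{lme7}, and treating the coupling terms $\sigma\tilde{\mathbf{A}}_{1\varepsilon}$ and $\varepsilon\tilde{\mathbf{A}}_{2\varepsilon}$ as small Lipschitz perturbations. More precisely, given $(\mathbf{d},\tilde{\mathbf{e}})$ I will set
\[
\tilde{\mathbf{d}}=\mathbf{h}_k+H\bigl(\tilde{\mathbf{h}}+\sigma\tilde{\mathbf{A}}_{1\varepsilon}(\mathbf{d},\tilde{\mathbf{e}})\bigr), \qquad \tilde{\tilde{\mathbf{e}}}=G\bigl(\tilde{\mathbf{g}}+\varepsilon\tilde{\mathbf{A}}_{2\varepsilon}(\mathbf{d},\tilde{\mathbf{e}})\bigr),
\]
where $H$ and $G$ are the inverse operators supplied by Lemmas \ref{lme6} and \ref{lme7}, and look for a fixed point of the map $(\mathbf{d},\tilde{\mathbf{e}})\mapsto(\tilde{\mathbf{d}},\tilde{\tilde{\mathbf{e}}})$ in the closed ball
\[
\mathcal{B}=\bigl\{(\mathbf{d},\tilde{\mathbf{e}}):\; \|\mathbf{d}-\mathbf{h}_k\|_b\le C_*\sigma^{(k+1)/2},\quad \|\tilde{\mathbf{e}}\|_*\le C_*\varepsilon^{1/2+\mu_1}\bigr\}
\]
for a sufficiently large constant $C_*$ independent of $\varepsilon$.

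To check that the map sends $\mathcal{B}$ into itself, note that for $(\mathbf{d},\tilde{\mathbf{e}})\in\mathcal{B}$ the a priori bound $\|\tilde{\mathbf{A}}_{i\varepsilon}\|_{L^2}\le C\varepsilon^{1/2+\mu_1}$ together with the hypotheses on $\tilde{\mathbf{h}},\tilde{\mathbf{g}}$ gives
\[
\|\tilde{\mathbf{h}}+\sigma\tilde{\mathbf{A}}_{1\varepsilon}\|_{L^2(0,1)}\le C\sigma\varepsilon^{1/2+\mu_1}\ll\sigma^{k}, \qquad \|\tilde{\mathbf{g}}+\varepsilon\tilde{\mathbf{A}}_{2\varepsilon}\|_{L^2(0,1)}\le C\varepsilon^{3/2+\mu_1},
\]
so Lemmas \ref{lme6} and \ref{lme7} are applicable and produce $\tilde{\mathbf{d}}-\mathbf{h}_k=H(\cdot)$ and $\tilde{\tilde{\mathbf{e}}}=G(\cdot)$ satisfying $\|\tilde{\mathbf{d}}-\mathbf{h}_k\|_b\le C\sigma^{(k+1)/2}$ and $\|\tilde{\tilde{\mathbf{e}}}\|_*\le C\varepsilon^{-1}\cdot\varepsilon^{3/2+\mu_1}=C\varepsilon^{1/2+\mu_1}$. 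Taking $C_*$ slightly larger than these absolute constants therefore gives self-mapping.

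The contraction estimate is the key point, and it depends on a cancellation between the size of the perturbation and the operator norm of the inverse. For $(\mathbf{d}_i,\tilde{\mathbf{e}}_i)\in\mathcal{B}$, the Lipschitz bound $\|H(\mathbf{g}_1)-H(\mathbf{g}_2)\|_b\le C\sigma^{-1}\|\mathbf{g}_1-\mathbf{g}_2\|_{L^2}$ combined with the Lipschitz bound on $\tilde{\mathbf{A}}_{1\varepsilon}$ yields
\[
\|\tilde{\mathbf{d}}_1-\tilde{\mathbf{d}}_2\|_b\le C\sigma^{-1}\cdot\sigma\cdot C\varepsilon^{1/2+\mu_1}\bigl[\|\mathbf{d}_1-\mathbf{d}_2\|_{H^2}+\|\tilde{\mathbf{e}}_1-\tilde{\mathbf{e}}_2\|_*\bigr],
\]
and similarly, Lemma \ref{lme7} gives $\|\tilde{\tilde{\mathbf{e}}}_1-\tilde{\tilde{\mathbf{e}}}_2\|_*\le C\varepsilon^{-1}\cdot\varepsilon\cdot C\varepsilon^{1/2+\mu_1}[\|\mathbf{d}_1-\mathbf{d}_2\|_{H^2}+\|\tilde{\mathbf{e}}_1-\tilde{\mathbf{e}}_2\|_*]$. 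The prefactors $\sigma\cdot\sigma^{-1}$ and $\varepsilon\cdot\varepsilon^{-1}$ cancel cleanly, so both contributions are of order $\varepsilon^{1/2+\mu_1}$, and contraction holds for $\varepsilon$ small. The main obstacle I expect is tracking the bookkeeping in these cancellations—this is really the whole reason for the particular scalings $\sigma$ and $\varepsilon$ appearing in \eqref{e45}—and in particular to confirm that when we pass from the norm $\|\cdot\|_b$ on $\mathbf{d}$ to $\|\cdot\|_{H^2}$ used on the input of $\tilde{\mathbf{A}}_{1\varepsilon}$, we do not lose the $\sigma$-smallness (which follows because the $\|\cdot\|_b$-norm controls $\|\cdot\|_{H^2}$ up to negative powers of $\sigma$, but those are absorbed by the $\varepsilon^{1/2+\mu_1}$ factor after choosing $\varepsilon$ small relative to $\sigma$).

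Finally, for \eqref{e48}, let $(\mathbf{d}_i,\tilde{\mathbf{e}}_i)=(\mathbf{h}_k+\mathcal{R}_1(\tilde{\mathbf{h}}_i,\tilde{\mathbf{g}}_i),\mathcal{R}_2(\tilde{\mathbf{h}}_i,\tilde{\mathbf{g}}_i))$, $i=1,2$, be the two fixed points. Subtracting the defining equations and applying the Lipschitz bounds for $H$ and $G$ yields
\[
\|\mathbf{d}_1-\mathbf{d}_2\|_b\le C\sigma^{-1}\|\tilde{\mathbf{h}}_1-\tilde{\mathbf{h}}_2\|_{L^2}+C\varepsilon^{1/2+\mu_1}\bigl[\|\mathbf{d}_1-\mathbf{d}_2\|_{b}+\|\tilde{\mathbf{e}}_1-\tilde{\mathbf{e}}_2\|_*\bigr],
\]
\[
\|\tilde{\mathbf{e}}_1-\tilde{\mathbf{e}}_2\|_*\le C\varepsilon^{-1}\|\tilde{\mathbf{g}}_1-\tilde{\mathbf{g}}_2\|_{L^2}+C\varepsilon^{1/2+\mu_1}\bigl[\|\mathbf{d}_1-\mathbf{d}_2\|_{b}+\|\tilde{\mathbf{e}}_1-\tilde{\mathbf{e}}_2\|_*\bigr].
\]
Absorbing the $\varepsilon^{1/2+\mu_1}$-terms on the left by taking $\varepsilon$ small and adding the two bounds gives \eqref{e48}, concluding the proof.
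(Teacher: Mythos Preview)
Your proposal is correct and follows essentially the same route as the paper: substitute $\mathbf{d}=\mathbf{h}_k+\mathbf{p}$, recast \eqref{e45} as the coupled fixed-point system $\mathbf{p}=H[\sigma\tilde{\mathbf{A}}_{1\varepsilon}+\tilde{\mathbf{h}}]$, $\tilde{\mathbf{e}}=G[\varepsilon\tilde{\mathbf{A}}_{2\varepsilon}+\tilde{\mathbf{g}}]$ via Lemmas \ref{lme6} and \ref{lme7}, and run the Banach contraction on the ball $\{\|\mathbf{p}\|_b\le\kappa\sigma^{(k+1)/2},\ \|\tilde{\mathbf{e}}\|_*\le\kappa_1\varepsilon^{1/2+\mu_1}\}$. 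Your observation about the $\|\cdot\|_b\to\|\cdot\|_{H^2}$ conversion costing a factor $\sigma^{-1}$, which is then swallowed by $\varepsilon^{\mu_1}$ (since $\sigma^{-1}\sim|\log\varepsilon|$), is exactly the bookkeeping the paper carries out when it passes from $\varepsilon^{1/2+\mu_1}\|\mathbf{p}_1-\mathbf{p}_2\|_{H^2}$ to $C\varepsilon^{1/2}\|\mathbf{p}_1-\mathbf{p}_2\|_b$ in the contraction step.
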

\begin{proof}
Substituting $\mathbf{d}=\mathbf{h}_k+\mathbf{p}$ into \eqref{e45}, and with the help of Lemmata \ref{lme6} and \ref{lme7}, we only need to solve the following fixed point problem
\begin{equation}\label{e47}
\left\{\begin{array}{ll}
\mathbf{p}=H\left[\sigma \tilde{\mathbf{A}}_{1\varepsilon}(\mathbf{h}_k+\mathbf{p},\tilde{\mathbf{e}})+\tilde{\mathbf{h}}\right], \\
\tilde{\mathbf{e}}=G\left[\varepsilon \tilde{\mathbf{A}}_{2\varepsilon}(\mathbf{h}_k+\mathbf{p},\tilde{\mathbf{e}})+\tilde{\mathbf{g}}\right].
\end{array}\right.
\end{equation}
We define
\[\mathcal{D}_1=\left\{(\mathbf{p},\mathbf{e})\;:\; \|\mathbf{p}\|_b\leq \kappa\sigma^{\frac{k+1}2},\; \|\mathbf{e}\|_*\le \kappa_1\varepsilon^{1/2+\mu_2}\right\},\]
where $\kappa$ and $\kappa_1$ are positive constants large enough.
Let
\[\mathcal{H}(\tilde{\mathbf{h}},\mathbf{p},\tilde{\mathbf{e}})=H\left[\sigma \tilde{\mathbf{A}}_{1\varepsilon}(\mathbf{h}_k+\mathbf{p},\tilde{\mathbf{e}})+\tilde{\mathbf{h}}\right]\]
and
\[\mathcal{G}(\tilde{\mathbf{g}},\mathbf{p},\tilde{\mathbf{e}})=G\left[\varepsilon \tilde{\mathbf{A}}_{2\varepsilon}(\mathbf{h}_k+\mathbf{p},\tilde{\mathbf{e}})+\tilde{\mathbf{g}}\right].\]

For $(\mathbf{p},\mathbf{e})\in \mathcal{D}_1$, we get
\[\|\sigma \mathbf{A}_{1\varepsilon}(\mathbf{h}_k+\mathbf{p},\mathbf{e})+\tilde{\mathbf{h}}\|_{L^2(0,1)}\le C\sigma\varepsilon^{\frac12+\mu_2} +\|\tilde{\mathbf{h}}\|_{L^2(0,1)}\le \sigma^k.\]
From Lemma \ref{lme6}, we get $\|\mathcal{H}(\tilde{\mathbf{h}},\mathbf{p},\mathbf{e})\|_b \le C\sigma^{\frac{k+1}2}$. However
\[\|\mathcal{G}(\tilde{\mathbf{g}},\mathbf{p},\mathbf{e})\|_*\le C\varepsilon^{-1}\left(\varepsilon^{\frac32+\mu_2}+\|\mathbf{g}\|_{L^2(0,1)}\right) \le C\varepsilon^{\frac12+\mu_1}.\]
Hence $(\mathcal{H}(\tilde{\mathbf{h}},\mathbf{p},\mathbf{e}),\mathcal{G}(\tilde{\mathbf{g}},\mathbf{p},\mathbf{e}) )\in \mathcal{D}_1$ for sufficient large $\kappa>0$ and $\kappa_1>0$.

For $(\mathbf{p}_1,\mathbf{e}_1), (\mathbf{p}_2,\mathbf{e}_2)\in \mathcal{D}_1$, we get
\begin{eqnarray*}
\|\mathcal{H}(\tilde{\mathbf{h}},\mathbf{p}_1,\mathbf{e}_1)-\mathcal{H}(\tilde{\mathbf{h}},\mathbf{p}_2,\mathbf{e}_2)\|_b
&\le& C \varepsilon^{\frac12+\mu_1}\left(\|\mathbf{p}_1-\mathbf{p}_2\|_{H^2(0,1)}+\|\mathbf{e}_1-\mathbf{e}_2\|_*\right) \\
&\le& C\varepsilon^{\frac12} \left(\|\mathbf{p}_1-\mathbf{p}_2\|_b+\|\mathbf{e}_1-\mathbf{e}_2\|_*\right),
\end{eqnarray*}
and
\begin{eqnarray*}
\|\mathcal{G}(\tilde{\mathbf{g}},\mathbf{p}_1,\mathbf{e}_1)-\mathcal{G}(\tilde{\mathbf{g}},\mathbf{p}_2,\mathbf{e}_2)\|_* &\le& C\varepsilon^{\frac12+\mu_1}\left(\|\mathbf{p}_1-\mathbf{p}_2\|_{H^2(0,1)}+\|\mathbf{e}_1-\mathbf{e}_2\|_*\right) \\
&\le& C\varepsilon^{\frac12} \left(\|\mathbf{p}_1-\mathbf{p}_2\|_b+\|\mathbf{e}_1-\mathbf{e}_2\|_*\right).
\end{eqnarray*}
From fixed point theorem, we get \eqref{e47} has a unique solution in $\mathcal{D}_1$. \eqref{e48} follows from   Lemma \ref{lme6} and Lemma \ref{lme7}.
\end{proof}

%
%

\noindent\textit{Proof of Theorem \ref{th1}.}
With the help of Lemma \ref{lme8}, we only need to consider the following problem
\begin{equation}\label{e112}
\left\{\begin{array}{ll}
\tilde{\mathbf{p}}=\mathcal{R}_1(\sigma \tilde{\mathbf{K}}_{1\varepsilon}(\mathbf{h}_k+\tilde{\mathbf{p}},\tilde{\mathbf{e}}), \varepsilon\tilde{\mathbf{K}}_{2\varepsilon}(\mathbf{h}_k+\tilde{\mathbf{p}},\tilde{\mathbf{e}})), \\
\tilde{\mathbf{e}}=\mathcal{R}_2(\sigma \tilde{\mathbf{K}}_{1\varepsilon}(\mathbf{h}_k+\tilde{\mathbf{p}},\tilde{\mathbf{e}}), \varepsilon\tilde{\mathbf{K}}_{2\varepsilon}(\mathbf{h}_k+\tilde{\mathbf{p}},\tilde{\mathbf{e}})).
\end{array}\right.
\end{equation}
Denote
\[\mathcal{R}(\tilde{\mathbf{p}},\tilde{\mathbf{e}})=\left(\mathcal{R}_1(\sigma \tilde{\mathbf{K}}_{1\varepsilon}(\mathbf{h}_k+\tilde{\mathbf{p}},\tilde{\mathbf{e}}), \varepsilon\tilde{\mathbf{K}}_{2\varepsilon}(\mathbf{h}_k+\tilde{\mathbf{p}},\tilde{\mathbf{e}})),\mathcal{R}_2(\sigma \tilde{\mathbf{K}}_{1\varepsilon}(\mathbf{h}_k+\tilde{\mathbf{p}},\tilde{\mathbf{e}}), \varepsilon\tilde{\mathbf{K}}_{2\varepsilon}(\mathbf{h}_k+\tilde{\mathbf{p}},\tilde{\mathbf{e}}))\right).\]
From \eqref{e48} and the fact $\tilde{\mathbf{K}}_{1\varepsilon}$, and $\tilde{\mathbf{K}}_{2\varepsilon}$ are compact operators, we get $\mathcal{R}(\tilde{\mathbf{p}},\tilde{\mathbf{e}})$ is a compact operator.
Then we rewrite \eqref{e112} into the following fixed point problem
\begin{equation}\label{e111}
(\tilde{\mathbf{p}},\tilde{\mathbf{e}})=\mathcal{R}(\tilde{\mathbf{p}},\tilde{\mathbf{e}})
\end{equation}
in
\[\mathcal{D}_2=\left\{(\mathbf{p},\mathbf{e})\;:\; \|\mathbf{p}\|_b\leq \frac1{|\log\varepsilon|^{\frac32}},\; \|\mathbf{e}\|_*\le \varepsilon^{1/2}\right\}.\]
By virtue of the Schauder fixed point theorem, we get \eqref{e111} has a unique solution provided $\varepsilon>0$ satisfying \eqref{e41} and \eqref{e108}. Hence  Theorem \ref{th1} follows.
$\hfill\Box$

\appendix
\section{Decay estimate of the solution to \eqref{29}}\label{sectB}
In this section, we assume the constant $p>1$ and estimate the decay property of the function $w$.
\begin{lemma}\label{lm9}
The unique solution to \eqref{29} satisfies the following estimate
\[w(t)=\alpha_p e^{-\sqrt{p}|t|}+O(e^{-\min\{p,2\}\sqrt{p}|t|}), \qquad \mathrm{as}  \qquad |t|\to \infty,\]
where the constant $\alpha_p>0$ is of the following form:
\begin{equation}\label{e115}
\alpha_p=\frac{\sqrt{p}c_p}2\int_\R \left[|w-1|^{p-2}(w-1)+1\right]\left(e^{\sqrt{p}t}-e^{-\sqrt{p}t}\right)w'(t)dt.
\end{equation}
\end{lemma}
\begin{proof}
We use the method in \cite[Section 4]{Gidas_Ni_Nirenberg1981} to prove this lemma. Since $w(t)$ is an even function, we only need to consider the asymptotic behaviours of $w(t)$ as $t\to +\infty$.

Let $g(s)=|s-1|^p-1+ps$. Then we write \eqref{29} into the following form
\begin{equation}\label{84}
-w''+pw=g(w), \quad \mathrm{in} \quad \R, \qquad w\to 0 \quad \mathrm{as} \quad |t|\to \infty.
\end{equation}
It is obvious that $g(w)=O(|w|^{\min\{p,2\}})$. For any constant $\varepsilon>0$, there exists $r_0>0$ large enough, such that $|g(w(t))|\leq \varepsilon w(t)$ for $t>r_0$, which implies
\begin{equation}\label{83}
-w''+pw<\varepsilon w, \qquad \mathrm{for} \quad t>r_0.
\end{equation}
Hence $w''>(p-\varepsilon)w>0$. So $w'$ is an increasing function for $t>r_0$. Then we conclude  $w'(t)<0$ for $t>r_0$.

Multiplying the both side of \eqref{83} by $2w'$, we get
\[\left(\left(w'\right)^2-(p-\varepsilon)w^2\right)'<0, \quad \mathrm{for} \quad t>r_0.\]
So $y(t):=\left(w'\right)^2-(p-\varepsilon)w^2$ is a decreasing function. Then  $w'(t)\to 0$ as $t\to +\infty$; otherwise, $y(t)\to c^2>0$ as $t\to +\infty$ which implies that $w'(t)\to -c$ as $t\to +\infty$. This contradicts  the fact that $w(t)\to 0$ as $t\to \infty$. Hence $\left(w'\right)^2-(p-\varepsilon)w^2\geq 0$ for $t>r_0$. Then
\[w'+\sqrt{p-\varepsilon}w\leq 0\qquad \mathrm{for} \qquad t>r_0.\]
Hence
\begin{equation}\label{46}
w(t)=O(e^{-a|t|})\qquad \mathrm{where} \qquad 0<a<\sqrt{p}.
\end{equation}

By means of the Green function and \eqref{84}, we get
\begin{equation*}
w(t)=c_p\int_\R e^{-\sqrt{p}|t-s|}g(w(s))ds.
\end{equation*}
From \eqref{46}, we have
\[e^{\sqrt{p}|t|}w(t)\leq c_p\int_\R e^{\sqrt{p}|s|}g(w(s))ds\leq C.\]
Hence $w(t)=O(e^{-\sqrt{p}|t|})$. Then
\begin{equation}\label{87}
g(w(t))=O(e^{-\min\{p,2\}\sqrt{p}|t|}).
\end{equation}

Now we consider the solution of the following problem
\begin{equation}\label{85}
-u''+pu=f(t), \quad \mathrm{in} \quad \R.
\end{equation}

\noindent \textbf{Claim:} If the function $f(t)$ satisfies the decay estimate $f(t)=O(e^{-\min\{p,2\}\sqrt{p}|t|})$ as $|t|\to \infty$, we get
\[\lim_{t\to +\infty} e^{\sqrt{p}|t|}u(t)=c_p\int_\R e^{\sqrt{p}s}f(s)ds.\]

If $f$ is a smooth function with compact support, we get
\[\lim_{t\to +\infty}e^{\sqrt{p}|t|}u(t)=\lim_{t\to +\infty} c_p\int_\R e^{\sqrt{p}(|t|-|t-s|)}f(s)ds=c_p\int_\R e^{\sqrt{p}s}f(s)ds.\]
However in the case of $f(t)=O(e^{-\min\{p,2\}\sqrt{p}|t|})$, we define the Banach space $B_{\tilde{\gamma}}$( $\sqrt{p}<\tilde{\gamma}<\min\{p,2\}\sqrt{p}$) with the norm
\[\|u\|_{\tilde{\gamma}} =\sup\{e^{\tilde{\gamma}|t|}|u(t)|\}.\]
Then $f\in B_{\tilde{\gamma}}$. There exists a sequence of compactly supported smooth functions  $\{f_n\}\in C^\infty_0$ such that $\|f_n-f\|_{\tilde{\gamma}}\to 0$. Denote the solution of \eqref{85} corresponding to $f=f_n$ by $u_n$. Then we get
\begin{eqnarray*}
e^{\sqrt{p}|t|}|u(t)-u_n(t)|&\leq& C\int_\R e^{\sqrt{p}(|t|-|t-s|)}|f(s)-f_n(s)|ds  \\
&\leq& C\|f-f_n\|_{\tilde{\gamma}}\int_\R e^{(\sqrt{p}-\tilde{\gamma})|s|}ds\leq C\|f-f_n\|_{\tilde{\gamma}}.
\end{eqnarray*}
Let $t\to +\infty$. Then we get
\begin{equation}\label{86}
\left|\lim_{t\to +\infty} e^{\sqrt{p}|t|}u(t)-c_p\int_\R e^{\sqrt{p}s}f_n(s)ds\right|\leq C\|f-f_n\|_{\tilde{\gamma}}.
\end{equation}
Using the dominated convergence theorem and \eqref{86}, we get the claim.

From \eqref{87} and the claim, we get the solution $w$ of \eqref{84} satisfies the estimate
\[\lim_{|t|\to \infty} e^{\sqrt{p}|t|}w(t)=\alpha_p,\]
where  the positive constant $\alpha_p$ is of the following form
\begin{eqnarray*}
\alpha_p&=&c_p \int_\R e^{\sqrt{p}s}\left(|w-1|^p-1+pw\right)ds \\
&=& \frac{c_p}2\int_\R \left(e^{\sqrt{p}s}+e^{-\sqrt{p}s}\right)\left(|w-1|^p-1+pw\right)ds \\
&=& \frac{\sqrt{p}c_p}2\int_\R \left(|w-1|^{p-2}(w-1)+1\right)\left(e^{\sqrt{p}s}+e^{-\sqrt{p}s}\right)w'(s)ds.
\end{eqnarray*}

Let
\[v(t)=w(t)-\alpha_p e^{-\sqrt{p}|t|}.\]
Then $v$ satisfies
\[-v''+pv=g(w(t)), \quad \mathrm{for} \quad t\not=0,\]
and
\[\lim_{|t|\to \infty} e^{\sqrt{p}|t|}v(t)=0.\]
From L'H\^{o}pital's rule, we get
\[\lim_{|t|\to \infty} e^{\sqrt{p}|t|}v'(t)=0.\]

Denote $\tilde{w}_1(t)=e^{\sqrt{p}|t|}v(t)$. This function satisfies
\[-\tilde{w}_1''+2\sqrt{p}\left(\mathrm{sgn}\;(t)\right)\tilde{w}_1'=e^{\sqrt{p}|t|}g(w(t)), \quad \mathrm{for} \quad t\not=0, \qquad \mathrm{and} \qquad \tilde{w}_1(\pm\infty)=\tilde{w}_1'(\pm\infty)=0.\]
From theory in ordinary differential equation and \eqref{87}, we have
\[\tilde{w}_1(t)=\int_{+\infty}^te^{2\sqrt{p}x}dx\int_{+\infty}^x e^{-\sqrt{p}|s|}g(w(s))ds=O(e^{-\min\{p-1,1\}\sqrt{p}|t|}),\]
for $t>0$ large enough. Hence this lemma follows.
\end{proof}

\section{Property of the negative solutions of \eqref{1}}\label{sectA}
In this section, we investigate the negative solutions to \eqref{1}.
\begin{proposition}\label{pro1}
Assume that $\Omega$ is a smooth bounded domain in $\R^{\mathcal{N}}(\mathcal{N}\ge 2)$ and the constant $p\in (1,\frac{\mathcal{N}+2}{\mathcal{N}-2})$ for $\mathcal{N}\ge 3$ and $p>1$ for $\mathcal{N}=1,2$. Let $\mathbf{\Psi}(x)$ be an eigenfunction corresponding to the first Dirichlet eigenvalue of the operator $\mathfrak{L}(u)=-\Div (A(x)\nabla u)$ on $\Omega$, where  $A(x)=\{A_{ij}(x)\}_{2\times 2}$ is a symmetric positive definite matrix-valued function satisfying \eqref{e98}.
There exists $\varepsilon_0>0$, such that for any  $\varepsilon\in (0,\varepsilon_0)$,   \eqref{1} has a unique negative solution $\bar{u}_\varepsilon> -\mathbf{\Psi}^{\frac1p}$ and  the following estimate holds on any compact sets in $\Omega$:
\begin{equation}\label{e101}
\bar{u}_\varepsilon (x)=-\mathbf{\Psi}^{\frac1p}(x)-\varepsilon^2\left(\frac{\Div(A(x)\nabla \mathbf{\Psi}^{\frac1p})}{p\mathbf{\Psi}^{\frac{p-1}p}(x)}+o(1)\right), \quad \mathrm{as} \quad \varepsilon \to 0^+.
\end{equation}
\end{proposition}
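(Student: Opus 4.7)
The plan is to reformulate \eqref{1} for a negative solution $\bar u_{\varepsilon}$ as an equation for $v:=-\bar u_{\varepsilon}>0$, namely
\begin{equation*}
-\varepsilon^{2}\Div(A(x)\nabla v)+v^{p}=\mathbf{\Psi}(x)\ \text{in }\Omega,\qquad v=0\ \text{on }\partial\Omega,
\end{equation*}
and to extract the two leading terms of its $\varepsilon\to 0$ expansion around $V:=\mathbf{\Psi}^{1/p}$. For existence, uniqueness and the one-sided bound I would first note that the functional
\[
J(v)=\frac{\varepsilon^{2}}{2}\int_{\Omega}\langle A\nabla v,\nabla v\rangle\,dx+\frac{1}{p+1}\int_{\Omega}|v|^{p+1}\,dx-\int_{\Omega}\mathbf{\Psi}\,v\,dx
\]
is strictly convex, coercive and weakly lower semicontinuous on $H^{1}_{0}(\Omega)$; its unique minimiser solves the equation, the comparison $J(|v|)\le J(v)$ combined with the strong maximum principle gives positivity, and uniqueness among positive solutions follows because the difference of any two satisfies a linear equation with strictly positive zeroth-order coefficient $p\xi^{p-1}$. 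A direct differentiation using $-\Div(A\nabla\mathbf{\Psi})=\lambda_{1}\mathbf{\Psi}$ gives
\[
\Div(A\nabla V)=\frac{1}{p}\Bigl[\frac{1-p}{p}\mathbf{\Psi}^{1/p-2}\langle A\nabla\mathbf{\Psi},\nabla\mathbf{\Psi}\rangle-\lambda_{1}\mathbf{\Psi}^{1/p}\Bigr]<0\quad\text{in }\Omega,
\]
so $V$ is a classical super-solution (and $0$ a sub-solution), whence the weak comparison principle forces $0<v_{\varepsilon}\le V$, i.e.\ $\bar u_{\varepsilon}\ge-\mathbf{\Psi}^{1/p}$, with strict inequality by the strong maximum principle.

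Next I would establish $v_{\varepsilon}\to V$ in $C^{2}_{\mathrm{loc}}(\Omega)$. The bound $0\le v_{\varepsilon}\le V$ gives $\mathbf{\Psi}-v_{\varepsilon}^{p}\ge 0$; testing the equation against a nonnegative smooth compactly supported $\phi$ and integrating by parts yields
\[
\int_{\Omega}(\mathbf{\Psi}-v_{\varepsilon}^{p})\phi\,dx=\varepsilon^{2}\int_{\Omega}v_{\varepsilon}\,\Div(A\nabla\phi)\,dx=O(\varepsilon^{2}),
\]
so $v_{\varepsilon}^{p}\to\mathbf{\Psi}$ in $L^{1}_{\mathrm{loc}}$ and, by the uniform $L^{\infty}$ bound and dominated convergence, $v_{\varepsilon}\to V$ in $L^{q}_{\mathrm{loc}}$ for every finite $q$. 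Interior Schauder estimates applied on any compact $K\Subset\Omega$, where $\mathbf{\Psi}$ is bounded away from zero and $V$ is smooth, upgrade this to $C^{2}(K)$ convergence.

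To obtain the $\varepsilon^{2}$ correction, set $\phi_{\varepsilon}:=v_{\varepsilon}-V$ and expand $v_{\varepsilon}^{p}-V^{p}=p\xi_{\varepsilon}^{p-1}\phi_{\varepsilon}$, which gives on $K$
\begin{equation*}
-\varepsilon^{2}\Div(A\nabla\phi_{\varepsilon})+p\xi_{\varepsilon}^{p-1}\phi_{\varepsilon}=\varepsilon^{2}\Div(A\nabla V),
\end{equation*}
a uniformly elliptic equation whose zeroth-order coefficient $p\xi_{\varepsilon}^{p-1}\to pV^{p-1}>0$ uniformly on $K$ and whose right-hand side is of size $\varepsilon^{2}$. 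Comparison with the constant barrier $\pm C\varepsilon^{2}$, applied on a slightly larger $K\Subset K'\Subset\Omega$ where the qualitative convergence already controls $\phi_{\varepsilon}$ on $\partial K'$, yields the a priori bound $\|\phi_{\varepsilon}\|_{L^{\infty}(K)}\le C_{K}\varepsilon^{2}$. Rescaling $w_{\varepsilon}:=\varepsilon^{-2}\phi_{\varepsilon}$ produces
\[
-\varepsilon^{2}\Div(A\nabla w_{\varepsilon})+p\xi_{\varepsilon}^{p-1}w_{\varepsilon}=\Div(A\nabla V),
\]
and passing to the limit with the local $C^{2}$ convergence of $v_{\varepsilon}$ identifies
\[
w_{0}=\lim_{\varepsilon\to 0}w_{\varepsilon}=\frac{\Div(A\nabla\mathbf{\Psi}^{1/p})}{p\mathbf{\Psi}^{(p-1)/p}}\quad\text{on }K,
\]
which is precisely \eqref{e101} after undoing the substitution $\bar u_{\varepsilon}=-v_{\varepsilon}$.

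The main obstacle is the boundary layer: the formal corrector $w_{0}$ blows up like $\mathbf{\Psi}^{-(p-1)/p}$ as $x\to\partial\Omega$ and the linearised coefficient $pV^{p-1}$ degenerates there, so no global remainder estimate is available and the statement must be confined to compact subsets of $\Omega$. The only delicate step is the bootstrap from qualitative $L^{\infty}_{\mathrm{loc}}$ smallness of $\phi_{\varepsilon}$ to the quantitative $O(\varepsilon^{2})$ bound on $K$, which is what lets the rescaled corrector $w_{\varepsilon}$ remain bounded and pass to the limit cleanly.
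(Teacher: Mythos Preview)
Your existence, uniqueness, and bound $0<v_\varepsilon\le V$ via strict convexity of $J$ are fine and indeed cleaner than the paper's truncation-plus-minimisation. The real difficulty, as you correctly identify, is the quantitative step, and there the proposal has a genuine gap.

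The constant barrier $\pm C\varepsilon^2$ on $K\Subset K'$ does \emph{not} work as written. For the comparison principle you must have $\phi_\varepsilon\ge -C\varepsilon^2$ on $\partial K'$, but the qualitative convergence only gives $\phi_\varepsilon=o(1)$ there, not $O(\varepsilon^2)$. Since the operator is singularly perturbed, the boundary data on $\partial K'$ propagates into the interior through a layer of width $O(\varepsilon)$; a constant cannot absorb an $o(1)$ boundary discrepancy. What is needed is a barrier of the form
\[
B_-(x)=-C\varepsilon^2-\Bigl(\sup_{\partial K'}|\phi_\varepsilon|\Bigr)\,e^{-\mu\,d(x,\partial K')/\varepsilon},
\]
with $\mu>0$ small enough that $\mu^2\langle A\nabla d,\nabla d\rangle<p\xi_\varepsilon^{p-1}$ on $K'$. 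Then $B_-$ is a subsolution with the right boundary values, and on $K$ (where $d\ge\delta>0$) the exponential term is $o(\varepsilon^N)$ for every $N$, yielding $\phi_\varepsilon\ge -C\varepsilon^2+o(\varepsilon^2)$. The same correction applied to $w_\varepsilon-w_0$ then gives the uniform $o(1)$ in \eqref{e101}. This exponential boundary-layer estimate is exactly what the paper supplies through the Hopf--Cole change $\psi_\varepsilon=-\varepsilon\log(t_{i,\varepsilon}-w_{i,a_0,c_0}^\varepsilon)$ in Lemmas~\ref{lm3}--\ref{lm4}, after localising to balls and comparing via Lemma~\ref{lm2}.

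A smaller issue: the claim that interior Schauder estimates upgrade $L^q_{\mathrm{loc}}$ convergence to $C^2(K)$ is false here, because the equation for $v_\varepsilon$ carries an $\varepsilon^{-2}$ in front of the right-hand side, so Schauder only gives $\|v_\varepsilon\|_{C^{2,\alpha}(K)}=O(\varepsilon^{-2})$. Fortunately you do not need $C^2$ convergence: uniform convergence $v_\varepsilon\to V$ on compacts (which one gets from monotonicity of $v_\varepsilon$ in $\varepsilon$ together with Dini's theorem, or by the paper's blow-up argument in Lemma~\ref{lm1}) suffices to control $\xi_\varepsilon^{p-1}$, and the barrier argument above then finishes the job without any higher regularity.
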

We will prove this Proposition \ref{pro1} via a similar method in \cite[Theorem 1.1]{Li_Yang_Yan2004} and \cite[Theorem 2.1]{Dancer_Yan2005}.

Let $u=-w$. Then  \eqref{1} reduces to the following problem:
\begin{equation}\label{3}
\left\{\begin{array}{ll}
-\varepsilon^2\Div (A(x)\nabla w)=\mathbf{\Psi}(x)-|w|^p, &\mbox{in $\Omega$,} \\
w=0, & \mbox{on $\partial \Omega$}.
\end{array}
\right.
\end{equation}
To solve \eqref{3}, we consider the following auxiliary problem problem first
\begin{equation}\label{5}
\left\{\begin{array}{ll}
-\varepsilon^2\Div (A(x)\nabla \tilde{w})=h(x,\tilde{w}), &\mbox{in $\Omega$,} \\
\tilde{w}=0, & \mbox{on $\partial \Omega$},
\end{array}
\right.
\end{equation}
where
\begin{equation*}
h(x,t)=\left\{
\begin{array}{ll}
\mathbf{\Psi}(x), &\mbox{for $t<0$,} \\
\mathbf{\Psi}(x)-|t|^p, &\mbox{for $t \geq 0$.}
\end{array}\right.
\end{equation*}
The energy functional of \eqref{5} is
\[J_\varepsilon(\tilde{w})=\frac{\varepsilon^2}2\int_\Omega \langle A(x)\nabla \tilde{w}, \nabla \tilde{w} \rangle dx-\int_\Omega H(x, \tilde{w})dx, \qquad \tilde{w}\in H^1_0(\Omega),\]
where $H(x,t)=\int_0^t h(x, \tau)d\tau$.
It is apparent that $J_\varepsilon$ is bounded from below on $H^1_0(\Omega)$. Let $\underline{u}_\varepsilon$ be its minimizer.

From the definition of $h(x,t)$, we get $H(x, t)<0$ for $t<0$ or $t>M_3$, where $M_3$ is a large positive constant. Consequently, we have $0< \underline{u}_\varepsilon<M_3$ and $\underline{u}_\varepsilon$ solves \eqref{5}.

Through direct computation, $0$ is a subsolution of \eqref{5} whereas $\mathbf{\Psi}(x)^{\frac1p}$ is a supersolution of \eqref{5}. From the same argument of \cite[Lemma A.1]{Clement_Sweers1987}, we find a solution $\tilde{w}$ of \eqref{5} satisfies $0\leq \tilde{w}(x)\leq \mathbf{\Psi}(x)^{\frac1p}$. Hence $\tilde{w}$ solves \eqref{3}.

A direct computation yields that the positive solution of \eqref{3} is unique. Hence $\underline{u}_\varepsilon=\tilde{w}$ and it satisfies
\begin{equation}\label{e97}
0\leq \underline{u}_\varepsilon(x)\leq \mathbf{\Psi}(x)^{\frac1p}.
\end{equation}

To investigate  the asymptotic behaviors of $\underline{u}_\varepsilon$, we consider the following minimization  problem
\begin{equation}\label{6}
\inf\{I(u)\;:\;u-c\in H^1_0(B_1(0))\},
\end{equation}
where $c\ge 0$ is a constant,
\[I(u)=\frac{\varepsilon^2}2\int_{B_1(0)} \langle A(x)\nabla u, \nabla u\rangle dx-\int_{B_1(0)} \bar{H}(u)dx, \quad \mathrm{and} \quad \bar{H}(t)=\int_0^t \bar{h}(\tau)d\tau.\]
Here $\bar{h}(t)$ is a non-increasing function satisfies $\bar{h}(t)>0$ for $t\in(-\infty, a)$ and $\bar{h}(t)\leq0$ for $t>a$, where $a\ge 0$ is a constant.

\begin{lemma}\label{lm1}
If $a\ge c$ and $u_\varepsilon$ is the minimizer of problem \eqref{6},  $u_\varepsilon(x)\to a$ on compact sets in $B_1(0)$, as $\varepsilon \to 0^+$.
\end{lemma}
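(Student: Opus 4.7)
\textbf{Proof plan for Lemma \ref{lm1}.} The strategy is to show that the minimizer $u_\varepsilon$ spends almost all of its action budget concentrating at the maximum $a$ of $\bar H$, and then upgrade an $L^q$-type statement to a local uniform one via elliptic regularity. The target, formally, is to compare $u_\varepsilon$ with the constant $a$ (which minimizes the potential term pointwise).

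First I would produce an upper bound on $\inf I$ by testing with an explicit interpolant. Fix $\delta\in(0,1)$ small and let $u_0\in c+H^1_0(B_1(0))$ equal $a$ on $B_{1-\delta}(0)$, equal $c$ on $\partial B_1(0)$, and linearly (radially) interpolate on $B_1\setminus B_{1-\delta}$, so $|\nabla u_0|\le C(a-c)/\delta$. Then
\[
I(u_0)\le \frac{\varepsilon^2}{2}\cdot\frac{C(a-c)^2\Lambda}{\delta}\,|B_1\setminus B_{1-\delta}|-\bar H(a)\,|B_{1-\delta}|+C\delta.
\]
Sending first $\varepsilon\to 0$ and then $\delta\to 0$ gives $\limsup_\varepsilon I(u_\varepsilon)\le -\bar H(a)|B_1(0)|$.

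Next I would truncate. Because $0\le c\le a$ and $\bar H$ is non-decreasing on $(-\infty,a]$ and non-increasing on $[a,\infty)$ with maximum $\bar H(a)$, replacing $u_\varepsilon$ by $\max(0,\min(u_\varepsilon,a))$ keeps it in $c+H^1_0(B_1(0))$ and does not increase $I$ (the Dirichlet energy strictly decreases on $\{u_\varepsilon\notin[0,a]\}$ and the potential part increases or is unchanged). So the minimizer itself satisfies $0\le u_\varepsilon\le a$. Coupling $I(u_\varepsilon)\ge -\bar H(a)|B_1(0)|+\frac{\varepsilon^2}{2}\int\langle A\nabla u_\varepsilon,\nabla u_\varepsilon\rangle+\int(\bar H(a)-\bar H(u_\varepsilon))$ with the upper bound above yields
\[
\frac{\varepsilon^2}{2}\int_{B_1(0)}\langle A\nabla u_\varepsilon,\nabla u_\varepsilon\rangle\,dx+\int_{B_1(0)}\bigl(\bar H(a)-\bar H(u_\varepsilon)\bigr)\,dx\longrightarrow 0.
\]
Since $\bar h>0$ on $(-\infty,a)$ the function $t\mapsto\bar H(a)-\bar H(t)$ is strictly positive on $[0,a)$, so for every $\delta>0$,
\[
\bigl|\{u_\varepsilon<a-\delta\}\bigr|\cdot\bigl(\bar H(a)-\bar H(a-\delta)\bigr)\le \int_{B_1(0)}\bigl(\bar H(a)-\bar H(u_\varepsilon)\bigr)\,dx\to 0,
\]
giving $u_\varepsilon\to a$ in measure, and (using $0\le u_\varepsilon\le a$) in every $L^q(B_1(0))$, $q<\infty$.

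Finally I would promote this integral convergence to uniform convergence on compact sets. The Euler--Lagrange equation is $-\Div(A(x)\nabla u_\varepsilon)=\varepsilon^{-2}\bar h(u_\varepsilon)$, so $w_\varepsilon:=a-u_\varepsilon\ge 0$ satisfies $-\Div(A(x)\nabla w_\varepsilon)=-\varepsilon^{-2}\bar h(u_\varepsilon)\le 0$ because $\bar h(u_\varepsilon)\ge 0$ when $u_\varepsilon\le a$. Thus $w_\varepsilon$ is a non-negative subsolution of a uniformly elliptic equation whose ellipticity constants \eqref{e98} do not depend on $\varepsilon$. Moser's local boundedness estimate then gives, for any $B_{2r}(x_0)\subset B_1(0)$,
\[
\sup_{B_r(x_0)}w_\varepsilon\le C(r)\|w_\varepsilon\|_{L^2(B_{2r}(x_0))}\longrightarrow 0,
\]
so $u_\varepsilon\to a$ uniformly on each compact $K\subset B_1(0)$. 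The main obstacle I anticipate is not any single step but the assembly: one needs that the truncation in the second paragraph actually lies in the admissible class (which requires $a\ge c\ge 0$) and that the Moser step can be applied to a subsolution whose ellipticity is $\varepsilon$-independent; without the sign $\bar h(u_\varepsilon)\ge 0$ made available precisely by the truncation, the $\varepsilon^{-2}$ blow-up on the right-hand side of the PDE would ruin any uniform estimate.
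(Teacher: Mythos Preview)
Your proof is correct and takes a genuinely different route from the paper's. The paper establishes $0\le u_\varepsilon\le a$ as you do, but then argues by \emph{blow-up and monotonicity}: it shows $u_{\varepsilon_1}\le u_{\varepsilon_2}$ for $\varepsilon_2<\varepsilon_1$ (using that $\bar h$ is non-increasing), rescales $\tilde v_\varepsilon(x)=u_\varepsilon(x_0+\varepsilon x)$, passes to a limit $\tilde v$ solving $-\Div(A(x_0)\nabla\tilde v)=\bar h(\tilde v)$ on $\R^n$, and uses the monotonicity together with the scaling to deduce $\tilde v(x)\le\tilde v(bx)$ for all $b>1$; the strong maximum principle then forces $\tilde v\equiv a$. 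Uniform convergence on compact sets is recovered again via the monotonicity in $\varepsilon$. This blow-up/Liouville step is where the paper deals with the anisotropy of $A$ (indeed, it says it is modifying the argument of Li--Yang--Yan precisely because $-\Div(A(x)\nabla\cdot)$ is not radially symmetric).

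Your argument sidesteps the blow-up entirely: energy comparison with an explicit competitor gives $\int_{B_1}(\bar H(a)-\bar H(u_\varepsilon))\to 0$, hence $u_\varepsilon\to a$ in measure, and then the observation that $a-u_\varepsilon\ge 0$ is an $A$-subsolution (with ellipticity constants independent of $\varepsilon$) upgrades this to local uniform convergence via the De Giorgi--Moser local maximum principle. This is more elementary and, notably, never uses the hypothesis that $\bar h$ is non-increasing---only the sign information $\bar h>0$ on $(-\infty,a)$ and $\bar h\le 0$ on $(a,\infty)$. The paper's approach, by contrast, extracts the monotone family $\varepsilon\mapsto u_\varepsilon$, which is of independent interest but not needed for the statement of the lemma itself. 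One small remark: your subsolution step uses $\bar h(u_\varepsilon)\ge 0$, which at the point $u_\varepsilon=a$ requires $\bar h(a)\ge 0$; this is automatic under the continuity of $\bar h$ that both proofs (and the applications in the paper) implicitly assume.
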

\begin{proof}
To prove this lemma, we only need to modify the argument in  \cite[Lemma 2.2]{Li_Yang_Yan2004}. Since $\bar{H}(t)<0$ for $t<0$, it follows that $u_\varepsilon\geq 0$. Otherwise, $u_\varepsilon^+=\max\{u_\varepsilon,0\}$ would have  lower energy.

It is apparent that $u_\varepsilon$ is a solution to the following boundary value problem
\begin{equation}\label{7}
\left\{\begin{array}{ll}
-\varepsilon^2 \Div(A(x)\nabla u_\varepsilon)=\bar{h}(u_\varepsilon), &\mbox{in $B_1(0)$,} \\
u_\varepsilon=c, &\mbox{on $\partial B_1(0)$.}
\end{array}\right.
\end{equation}
Multiplying the both side of \eqref{7} by $(u_\varepsilon-a)_+$ and integrating by part, we obtain $u_\varepsilon\leq a$. Consequently, $0\leq u_\varepsilon \leq a$, which implies  $\bar{h}(u_\varepsilon)\geq 0$. By virtue of elliptic estimates, we have  $u_\varepsilon \in C^{1, \gamma}(B_1(0))$ for some $\gamma \in (0,1)$.

Employing the same method as above, together with \eqref{7} and the non-increasing property of $\bar{h}$, we derive
\begin{equation}\label{e96}
u_{\varepsilon_1}(x)\leq u_{\varepsilon_2}(x)\quad \mathrm{for} \quad 0<\varepsilon_2<\varepsilon_1,\quad \mathrm{and} \quad  x\in B_1(0).
\end{equation}

For any fixed point $x_0\in B_1(0)$, we define $\tilde{v}_\varepsilon(x)=u_\varepsilon(x_0+\varepsilon x)$, which satisfies the following equation:
\begin{equation*}
\left\{
\begin{array}{ll}
-\Div(A(x_0+\varepsilon x)\nabla \tilde{v}_\varepsilon)=\bar{h}(\tilde{v}_\varepsilon), &\mbox{in $D_1$,} \\
\tilde{v}_\varepsilon(x)=c, &\mbox{on $\partial D_1$.}
\end{array}
\right.
\end{equation*}
where $D_1=\left(B_1(0)-x_0\right)/\varepsilon$. From elliptic estimate, we get $\tilde{v}_\varepsilon$ converges to a $C^{1,\gamma}$ function $\tilde{v}$ on compact sets. The limit function $\tilde{v}(x)$ satisfies
\begin{equation}\label{20}
-\Div(A(x_0)\nabla \tilde{v})=\bar{h}(\tilde{v}),\qquad \mathrm{in} \qquad \R^n.
\end{equation}
For any constant $b>1$, $\varepsilon/b<\varepsilon$. From \eqref{e96}, we get
\[\tilde{v}_\varepsilon(x)=u_\varepsilon(x_0+\varepsilon x)\le u_{\varepsilon/b}(x_0+\varepsilon x)=\tilde{v}_{\varepsilon/b}(bx).\]
Letting $\varepsilon \to 0^+$, we obtain $\tilde{v}(x)\le \tilde{v}(bx)$. Thus, the minimum of $\tilde{v}$ is attained at $0$. Applying the maximum principle, we conclude that $\tilde{v}$ is a constant function. Consequently, we get $\tilde{v}(x)\equiv a$ from \eqref{20}. Therefore, $u_\varepsilon(x_0)=\tilde{v}_\varepsilon(0)\to a$ as $\varepsilon \to 0^+$. Since $x_0$ is arbitrarily chosen in $B_1(0)$, we get  $u_\varepsilon(x)\to a$ as $\varepsilon \to 0^+$ for all $x\in B_1(0)$.

For any compact set $K\subset B_1(0)$, the maximum principle implies
\begin{equation}\label{15}
\min_{\partial K} u_\varepsilon \leq u_\varepsilon(x)\leq a,\qquad \mathrm{for}\qquad \forall x\in K.
\end{equation}

Next, we prove that $\min_{\partial K} u_\varepsilon\to a$ as $\varepsilon \to 0^+$. There is $x_\varepsilon \in \partial K$ such that $u_\varepsilon(x_\varepsilon)=\min_{\partial K} u_\varepsilon$. Hence $x_\varepsilon \to z_0\in \partial K$. Then for $0<\varepsilon<\varepsilon_1$, we have $u_{\varepsilon_1}(x_\varepsilon)\leq u_{\varepsilon}(x_\varepsilon)$. Hence
\[u_{\varepsilon_1}(z_0)=\lim_{\varepsilon\to 0^+} u_{\varepsilon_1}(x_\varepsilon)\leq \lim_{\varepsilon\to 0^+}u_\varepsilon(x_\varepsilon)\leq a.\]
Letting $\varepsilon_1\to 0^+$,  we obtain $\min_{\partial K} u_\varepsilon \to a$ as $\varepsilon \to 0^+$, as desired.

Combining this result with \eqref{15}, we conclude that the lemma holds.

\end{proof}

Next, we consider the following minimization problem
\begin{equation}\label{9}
\inf\left\{\bar{J}_\varepsilon(u, D)=\frac{\varepsilon^2}2\int_D \langle A(x)\nabla u, \nabla u \rangle-\int_D \bar{G}(x,u)\;:\;u-\eta\in H^1_0(D)\right\},
\end{equation}
where $D\subset\Omega$, $\eta\in H^1(D)$ and $\bar{G}(x,t)=\int_0^t \bar{g}(x, \tau)d\tau$. By repeating the argument presented in \cite[Lemma 2.3]{Dancer_Yan2004}, we obtain the following lemma:
\begin{lemma}\label{lm2}
Let $u_{\varepsilon i}$ be the minimizer of \eqref{9} corresponding to $\bar{g}=g_i$ and $\eta=\eta_i$, $i=1,2$. If $\eta_1\geq \eta_2$ and
\[g_1(x,t)\geq g_2(x,t),\quad \mathrm{for} \quad x\in D \quad \mathrm{and} \quad t\in \left[\min_{i=1,2}\min_{z\in D}u_{\varepsilon i}(z), \max_{i=1,2}\max_{z\in D}u_{\varepsilon i}(z)\right], \]
We obtain $u_{\varepsilon1}\geq u_{\varepsilon2}$ in $D$.
\end{lemma}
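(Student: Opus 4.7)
The strategy is the standard max/min rearrangement argument for simultaneous minimizers of two related variational problems. First I would define
\[v_1(x) := \max\{u_{\varepsilon 1}(x), u_{\varepsilon 2}(x)\}, \qquad v_2(x) := \min\{u_{\varepsilon 1}(x), u_{\varepsilon 2}(x)\},\]
and verify they are admissible competitors. On $\partial D$ one has $u_{\varepsilon i} = \eta_i$, and since $\eta_1 \geq \eta_2$, the boundary values of $v_1,v_2$ are $\eta_1,\eta_2$ respectively, so $v_1 - \eta_1, v_2 - \eta_2 \in H_0^1(D)$. Thus $v_1$ is admissible in \eqref{9} for the data $(g_1,\eta_1)$ and $v_2$ for $(g_2,\eta_2)$.

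Next I would split the functional into Dirichlet and potential parts and compare. On the measurable set $\{u_{\varepsilon 1}\geq u_{\varepsilon 2}\}$ the pair $(v_1,v_2)$ agrees with $(u_{\varepsilon 1}, u_{\varepsilon 2})$, while on $\{u_{\varepsilon 1}<u_{\varepsilon 2}\}$ it is the swapped pair, so the Dirichlet part is exactly preserved:
\[\int_D \langle A\nabla v_1,\nabla v_1\rangle + \int_D \langle A\nabla v_2,\nabla v_2\rangle = \int_D \langle A\nabla u_{\varepsilon 1},\nabla u_{\varepsilon 1}\rangle + \int_D \langle A\nabla u_{\varepsilon 2},\nabla u_{\varepsilon 2}\rangle.\]
For the potential part, the contribution from $\{u_{\varepsilon 1}\geq u_{\varepsilon 2}\}$ vanishes, while on $\{u_{\varepsilon 1}<u_{\varepsilon 2}\}$ it equals $\int_{u_{\varepsilon 1}(x)}^{u_{\varepsilon 2}(x)}[g_1(x,t)-g_2(x,t)]\,dt \geq 0$ by the hypothesis $g_1\geq g_2$ on the relevant range. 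Adding:
\[\bar J_{\varepsilon}(v_1,D)|_{g_1} + \bar J_{\varepsilon}(v_2,D)|_{g_2} \leq \bar J_{\varepsilon}(u_{\varepsilon 1},D)|_{g_1} + \bar J_{\varepsilon}(u_{\varepsilon 2},D)|_{g_2}.\]
Combined with the minimality $\bar J_{\varepsilon}(u_{\varepsilon i},D)|_{g_i}\leq \bar J_{\varepsilon}(v_i,D)|_{g_i}$, both inequalities collapse to equalities. Hence $v_1$ is also a minimizer of $(g_1,\eta_1)$ and $v_2$ of $(g_2,\eta_2)$.

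Finally I would upgrade ``$v_1$ is a minimizer'' to ``$v_1\equiv u_{\varepsilon 1}$'' via the Euler--Lagrange equation. Both $v_1$ and $u_{\varepsilon 1}$ solve $-\varepsilon^2\Div(A\nabla u)=g_1(x,u)$ with boundary value $\eta_1$, so the nonnegative difference $w:=v_1-u_{\varepsilon 1}$ satisfies $-\varepsilon^2\Div(A\nabla w)=c(x)w$ in $D$ with $w=0$ on $\partial D$, where $c(x)=\int_0^1 \partial_t g_1(x, u_{\varepsilon 1}+s w)\,ds\in L^\infty$. By the strong maximum principle, $w\equiv 0$, i.e.\ $u_{\varepsilon 1}=v_1\geq u_{\varepsilon 2}$. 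Applying the same argument to $v_2$ gives the companion identity $u_{\varepsilon 2}=v_2$.

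The main obstacle is the last step: for fixed $\varepsilon>0$ the potential coefficient $c(x)$ may have either sign because $g_1$ need not be monotone in $t$, and the strong maximum principle for $-\varepsilon^2\Div(A\nabla\cdot)-c$ requires the first Dirichlet eigenvalue of this operator to be positive. One has to exploit the uniform ellipticity bound \eqref{e98}, the a priori $L^\infty$ bounds on $u_{\varepsilon i}$ (cf.\ the setup before Lemma \ref{lm1}) to control $\|c\|_\infty$, and then use that $\varepsilon^{-2}c$ stays below the first eigenvalue of $-\Div(A\nabla\cdot)$ once $\varepsilon$ is small enough; this is where the restriction on $\varepsilon$ in Lemma \ref{lm2}'s intended applications enters. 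An alternative route, sufficient for the application to Proposition \ref{pro1}, is to invoke uniqueness of the minimizer directly (as established there for $\underline u_\varepsilon$) and conclude $v_1=u_{\varepsilon 1}$ without appealing to maximum principles.
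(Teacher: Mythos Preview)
Your rearrangement argument is correct and is exactly the approach the paper defers to: the paper gives no proof of its own and simply cites \cite[Lemma~2.3]{Dancer_Yan2004}, where this max/min swap is carried out. The energy comparison you wrote is right, and it does force $v_1,v_2$ to be minimizers as well.

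The flaw is in your first proposed resolution of the last step. If $c$ can be positive somewhere, then $\varepsilon^{-2}c$ blows up as $\varepsilon\to 0$; it certainly does not stay below the first Dirichlet eigenvalue of $-\Div(A\nabla\cdot)$ on $D$. Smallness of $\varepsilon$ works against you here, not for you, so that route is a dead end. Your alternative route is the correct one and is what is implicitly used: the lemma is stated with the definite article (``the minimizer''), so uniqueness is part of the hypothesis. In every application in Appendix~\ref{sectA} the comparison nonlinearities $h(x,t+\mathbf{\Psi}^{1/p})$, $h_1(t)$, $h_2(t)$ (plus the constant $\varepsilon^2 a_0$) are nonincreasing in $t$; subtracting two solutions of the Euler--Lagrange equation and testing against their difference then gives uniqueness immediately, with no restriction on $\varepsilon$. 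Once uniqueness is in hand, $v_1=u_{\varepsilon 1}\geq u_{\varepsilon 2}$ follows and the proof is complete.
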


With the help of Lemma \ref{lm1}, Lemma \ref{lm2} and \eqref{e97}, we conclude that $\underline{u}_\varepsilon$ converges to $\mathbf{\Psi}^{\frac1p}$ on compact sets in $\Omega$ as $\varepsilon\to 0^+$, following the same argument  \cite[Lemma 2.3]{Li_Yang_Yan2004}.

Note that  $\mathbf{\Psi}^{\frac{p-1}p}$ is a positive continuous function on $\Omega$ and
\[f(x):=\frac{\Div(A(x)\nabla \mathbf{\Psi}^{\frac1p})}{p\mathbf{\Psi}^{\frac{p-1}p}}\]
is a negative valued continuous function.

Fix an arbitrary point $x_0\in \Omega$. For any $\eta>0$ sufficient small, there exist $\eta_1>0$ such that $(-f(x_0)+\eta)\eta_1<\eta$. So there exists a constant $\delta>0$ such that  $B_\delta(x_0)\subset \Omega$, and  for all $x$ satisfying $|x-x_0|<\delta$, the following inequalities hold:
\[\mathbf{\Psi}^{\frac{p-1}p}(x)>\frac12\mathbf{\Psi}^{\frac{p-1}p}(x_0),\quad |f(x)-f(x_0)|<\eta, \quad\mathrm{and} \quad \left|\mathbf{\Psi}^{\frac{p-1}p}(x)-\mathbf{\Psi}^{\frac{p-1}p}(x_0)\right|<\frac14\mathbf{\Psi}^{\frac{p-1}p}(x_0)\eta_1.\]

Since $\underline{u}_\varepsilon$ is the minimizer of the functional $J_\varepsilon$ on $H_0^1(\Omega)$, we have
\begin{equation*}
  \frac{\varepsilon^2}2\int_{B_\delta(x_0)}\langle A(x)\nabla \underline{u}_\varepsilon, \nabla \underline{u}_\varepsilon\rangle dx-\int_{B_\delta(x_0)} H(x,\underline{u}_\varepsilon)dx \leq \frac{\varepsilon^2}2\int_{B_\delta(x_0)}\langle A(x)\nabla \bar{w}, \nabla \bar{w}\rangle dx-\int_{B_\delta(x_0)} H(x,\bar{w})dx,
\end{equation*}
where $\bar{w}-\underline{u}_\varepsilon\in H_0^1(B_\delta(x_0))$.

Let $v_\varepsilon=\underline{u}_\varepsilon-\mathbf{\Psi}^{\frac1p}$. Then $-M_3\leq -(\mathbf{\Psi}(x))^{\frac1p}\leq v_\varepsilon(x) \leq 0$ and the following estimate holds:
\begin{eqnarray*}
&& \frac{\varepsilon^2}2\int_{B_\delta(x_0)}\langle A(x)\nabla v_\varepsilon, \nabla v_\varepsilon\rangle dx-\int_{B_\delta(x_0)} \left[H(x,v_\varepsilon+\mathbf{\Psi}^{\frac1p})+\varepsilon^2\Div (A(x)\nabla \mathbf{\Psi}^{\frac1p})v_\varepsilon \right]dx \\
&\leq& \frac{\varepsilon^2}2\int_{B_\delta(x_0)}\langle A(x)\nabla \bar{w}, \nabla \bar{w}\rangle dx-\int_{B_\delta(x_0)}\left[H(x,\bar{w}+\mathbf{\Psi}^{\frac1p})+\varepsilon^2\Div (A(x)\nabla \mathbf{\Psi}^{\frac1p})\bar{w}\right] dx,
\end{eqnarray*}
where $\bar{w}-v_\varepsilon\in H_0^1(B_\delta(x_0))$. Consequently, $v_\varepsilon$ is the minimizer of the following problem
\begin{equation}\label{13}
  \inf\left\{\frac{\varepsilon^2}2\int_{B_\delta(x_0)}\langle A(x)\nabla \bar{w}, \nabla \bar{w}\rangle dx-\int_{B_\delta(x_0)}G(x,\bar{w}) dx;\;\; \bar{w}-v_\varepsilon\in H_0^1(B_\delta(x_0))\right\},
\end{equation}
where
\[G(x,t)=H(x,t+\mathbf{\Psi}^{\frac1p})+\varepsilon^2\Div (A(x)\nabla \mathbf{\Psi}^{\frac1p})t.\]

It is apparent that
\[h(x, t+\mathbf{\Psi}^{\frac1p})=\left\{
\begin{array}{ll}
\mathbf{\Psi}(x)-|t+\mathbf{\Psi}(x)^{\frac1p}|^p, & t>-\mathbf{\Psi}(x)^{\frac1p}, \\
\mathbf{\Psi}(x), &t\leq -\mathbf{\Psi}(x)^{\frac1p}.
\end{array}\right.\]
It is straightforward to verify that for  $t>-\mathbf{\Psi}(x)^{\frac1p}$, the following estimates hold:
\[\mathbf{\Psi}(x)-\left|t+\mathbf{\Psi}(x)^{\frac1p}\right|^p\leq -p\mathbf{\Psi}(x)^{\frac{p-1}p}t, \]
and
\[\mathbf{\Psi}(x)-\left|t+\mathbf{\Psi}(x)^{\frac1p}\right|^p\geq -p\mathbf{\Psi}(x)^{\frac{p-1}p}t-M_4t^2-M_5|t|^p,\]
where $M_4$ and $M_5$ are positive constant large enough.
 In particular, for $-\mathbf{\Psi}(x)^{\frac1p}<t<0$, we have
\[\mathbf{\Psi}(x)-\left|t+\mathbf{\Psi}(x)^{\frac1p}\right|^p\geq -p\mathbf{\Psi}(x)^{\frac{p-1}p}t-\tilde{M}|t|^{\min\{2,p\}},\]
where $\tilde{M}$ is a positive constant.

Let $\tilde{f}(t)=-pb^{\frac{p-1}p}t-\tilde{M}|t|^{\min\{2,p\}}$, and let $t_0$ denote the largest negative maximum point of $\tilde{f}(t)$. We define the functions $h_1(t)$ and $h_2(t)$ as follows:
\[h_1(t)=\left\{\begin{array}{ll}
pB, &\mbox{if $t<-B^{\frac1p}$}, \\
-pB^{\frac{p-1}p}t, &\mbox{if $-B^{\frac1p}\leq t<0$}, \\
-pb^{\frac{p-1}p}t, &\mbox{if $t\geq 0$},
\end{array}\right.
\]
and
\[
h_2(t)=\left\{\begin{array}{ll}
\tilde{f}(t_0), &\mbox{if $t<t_0$}, \\
\tilde{f}(t), &\mbox{if $t_0\leq t<0$}, \\
-pB^{\frac{p-1}p}t-M_4t^2-M_5|t|^p, &\mbox{if $t\ge 0$},
\end{array}\right.
\]
where
\[B=\sup_{x\in B_\delta(x_0)}\mathbf{\Psi}(x), \quad \mathrm{and} \quad b=\inf_{x\in B_\delta(x_0)}\mathbf{\Psi}(x).\]
It is apparent that
\begin{equation}\label{e99}
h_2(t)\leq h(x, t+\mathbf{\Psi}^{\frac1p}(x))\leq h_1(t).
\end{equation}

Then for any $x,y\in B_\delta(x_0)$, we have
\[\left|\frac{\mathbf{\Psi}^{\frac{p-1}p}(x)}{\mathbf{\Psi}^{\frac{p-1}p}(y)}-1\right|<\eta_1,\]
and
\begin{equation}\label{e100}
X_1<\Div(A(x)\nabla \mathbf{\Psi}^{\frac1p})<Y_1,
\end{equation}
where $X_1=-pB^{\frac{p-1}p}\eta+pB^{\frac{p-1}p}f(x_0)$,  $Y_1=pB^{\frac{p-1}p}\eta+pb^{\frac{p-1}p}f(x_0)$.

To estimate $v_\varepsilon$ which is the minimizer of \eqref{13}, we investigate the following minimization problem
\begin{equation}\label{14}
\min\left\{\frac{\varepsilon^2}2\int_{B_\delta(x_0)}\langle A(x)\nabla w, \nabla w \rangle dx-\int_{B_\delta(x_0)}H_i(w)+\varepsilon^2 a_0 w\;:\; w-c_0\in H_0^1(B_\delta(x_0))\right\},
\end{equation}
where $i=1,2$, $c_0\leq 0$, $a_0\leq 0$ and $H_i(t)=\int_0^th_i(\tau)d\tau$. Let $w^\varepsilon_{i,a_0,c_0}$ be the minimizer of \eqref{14} and it solves the following problem
\begin{equation*}
\left\{\begin{array}{ll}
-\varepsilon^2 \Div(A(x)\nabla w)=\tilde{g}_i(w), &\mbox{in $B_\delta(x_0)$}, \\
w=c_0,  &\mbox{on $\partial B_\delta(x_0)$},
\end{array}\right.
\end{equation*}
where $\tilde{g}_i(t):=h_i(t)+\varepsilon^2 a_0$. It is apparent that $\tilde{g}_i(t)$ has a falling zero which we denote by $t_{i,\varepsilon}$. In fact,
\[t_{1,\varepsilon}=\varepsilon^2\frac{a_0}{pB^{\frac{p-1}p}}, \quad \mathrm{and} \quad t_{2,\varepsilon}=\varepsilon^2\frac{a_0}{pb^{\frac{p-1}p}}+o(\varepsilon^2).\]

From the similar method as in Lemma \ref{lm1}, we get $w^\varepsilon_{i,a_0,c_0}$ converges to $0$ uniformly on compact sets in $\Omega$ as $\varepsilon \to 0^+$. We also have $c_0< w^\varepsilon_{i,a_0,c_0}< t_{i,\varepsilon}$ for $c_0<0$ and $t_{i,\varepsilon}<w^\varepsilon_{i,a_0,c_0}<0$ for $c_0=0$. We only consider the case $c_0<0$ for simplicity, since similar conclusions below also hold in the case $c_0=0$ via the same method.

Let $\psi_\varepsilon=-\varepsilon \log(t_{i,\varepsilon}-w^\varepsilon_{i,a_0,c_0})$. Then it solves the following problem
\begin{equation}\label{17}
\left\{\begin{array}{ll}
\varepsilon \Div(A(x)\nabla \psi_\varepsilon)-\langle A(x)\nabla \psi_\varepsilon, \nabla\psi_\varepsilon\rangle+ e^{\psi_\varepsilon/\varepsilon}g(t_{i,\varepsilon}-e^{-\psi_\varepsilon/\varepsilon})=0, &\mbox{in $B_\delta(x_0)$} \\
\psi_\varepsilon=-\varepsilon \log(t_{i,\varepsilon}-c_0), &\mbox{on $\partial B_\delta(x_0)$}.
\end{array}\right.
\end{equation}
In order to estimate the solutions of \eqref{17}, we consider the following auxiliary problem as in \cite[Lemma 4.2]{Ni_Wei1995}:
\begin{equation}\label{10}
\left\{\begin{array}{ll}
\varepsilon \Div(A(x)\nabla \psi)-\langle A(x)\nabla \psi, \nabla \psi\rangle +1=0, &\mbox{in $B_\delta(x_0)$} \\
\psi=0, &\mbox{on $\partial B_\delta(x_0)$}.
\end{array}\right.
\end{equation}

\begin{lemma}\label{lm3}
For $\varepsilon>0$ small enough, \eqref{10} admits a unique solution $\psi^\varepsilon$, which satisfies $\|\psi^\varepsilon\|_{L^\infty(\Omega)}\leq C_1$ with $C_1>0$ be a constant independent of $\varepsilon$. Moreover, $\psi^\varepsilon$ also satisfies the following estimate
\begin{equation}\label{16}
\mu d(x, \partial B_\delta(x_0))\leq \psi^\varepsilon(x) \leq \rho  d(x, \partial B_\delta(x_0)),
\end{equation}
where $\mu$ and $\rho$ are positive constants.
\end{lemma}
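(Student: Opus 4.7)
My plan is to prove Lemma \ref{lm3} by reducing the nonlinear equation \eqref{10} to a linear Dirichlet problem via the Hopf--Cole transformation, and then read off all three statements by combining classical linear elliptic theory with barrier functions built from the distance function.

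\textbf{Linearization and existence.} I will set $\phi(x) = e^{-\psi(x)/\varepsilon}$, so that $\nabla \psi = -\varepsilon \nabla \phi/\phi$. A direct computation, using $\Div(A\nabla\psi)=-\varepsilon\,\Div(A\nabla\phi)/\phi + \varepsilon\langle A\nabla\phi,\nabla\phi\rangle/\phi^{2}$ and $\langle A\nabla\psi,\nabla\psi\rangle=\varepsilon^{2}\langle A\nabla\phi,\nabla\phi\rangle/\phi^{2}$, shows that \eqref{10} is equivalent to the linear problem
\begin{equation*}
-\varepsilon^{2}\Div(A(x)\nabla\phi)+\phi=0 \quad\text{in } B_\delta(x_0),\qquad \phi=1 \quad\text{on }\partial B_\delta(x_0).
\end{equation*}
The bilinear form $(u,v)\mapsto\int\!\big(\varepsilon^{2}\langle A\nabla u,\nabla v\rangle+uv\big)\,dx$ is coercive on $H^{1}_{0}$ by \eqref{e98}, so Lax--Milgram gives a unique solution $\phi-1\in H^{1}_{0}(B_\delta(x_0))$. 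Applying the strong maximum principle to the operator $L:=-\varepsilon^{2}\Div(A\nabla\cdot)+I$ yields $0<\phi\le 1$, so $\psi^{\varepsilon}:=-\varepsilon\log\phi\ge 0$ is a well-defined classical solution, and the transformation $\phi\mapsto\psi$ is bijective on $\{\phi>0\}$, giving uniqueness.

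\textbf{The lower barrier for $\phi$ (upper bound on $\psi$).} Let $d(x)=\delta-|x-x_0|$ so that $|\nabla d|=1$ and $\langle A\nabla d,\nabla d\rangle\ge\lambda$. For $\underline{u}:=e^{-\rho d/\varepsilon}$ a straightforward computation gives
\begin{equation*}
L\underline{u}=\underline{u}\bigl[\rho\varepsilon\,\Div(A\nabla d)-\rho^{2}\langle A\nabla d,\nabla d\rangle+1\bigr].
\end{equation*}
The singular term $\Div(A\nabla d)\sim -\,\tr A/|x-x_{0}|$ is bounded above on $B_\delta(x_0)$ (and in fact $\to -\infty$ near $x_0$, which only helps). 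Choosing $\rho$ with $\rho^{2}\lambda>2$ and $\varepsilon$ small enough so that $\rho\varepsilon|\Div(A\nabla d)|<1$ away from $x_0$ makes the bracket nonpositive, hence $L\underline{u}\le 0=L\phi$ with $\underline{u}=1=\phi$ on $\partial B_\delta(x_0)$. Weak comparison (the operator $L$ satisfies the weak maximum principle) then yields $\phi\ge\underline{u}$, i.e.\ $\psi^{\varepsilon}\le\rho\,d(x,\partial B_\delta(x_0))\le\rho\delta$. This gives both the uniform $L^\infty$ bound $\|\psi^{\varepsilon}\|_{\infty}\le C_{1}$ with $C_{1}=\rho\delta$ and the upper inequality in \eqref{16}.

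\textbf{The upper barrier for $\phi$ (lower bound on $\psi$).} For $\bar v:=e^{-\mu d/\varepsilon}$ the same computation gives the bracket $\mu\varepsilon\Div(A\nabla d)-\mu^{2}\langle A\nabla d,\nabla d\rangle+1$. On the annulus $\mathcal{A}=\{\delta/2<|x-x_0|<\delta\}$ the divergence term is bounded, so choosing $\mu>0$ with $\mu^{2}\Lambda<1$ and $\varepsilon$ small gives $L\bar v\ge 0$ on $\mathcal{A}$. On $\partial B_\delta(x_0)$ we have $\bar v=1=\phi$; on the inner sphere $\{|x-x_0|=\delta/2\}$ I need $\bar v\ge\phi$, equivalently $\phi\le e^{-\mu\delta/(2\varepsilon)}$. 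I will obtain this interior exponential decay of $\phi$ by an iterative barrier argument: first apply the annular barrier just established on $\{\delta-r_0<|x-x_0|<\delta\}$ with $r_0$ so small that the singularity of $\Div(A\nabla d)$ is irrelevant, propagating the bound $\phi\le e^{-\mu_1 r_0/\varepsilon}$ across a thin shell; then iterate on shrunken balls $B_{\delta-kr_{0}}(x_{0})$ (whose solutions dominate $\phi$ by comparison) to push the exponential decay to the centre. Comparison on $\mathcal{A}$ then gives $\phi\le\bar v$ throughout $\mathcal{A}$, hence $\psi^{\varepsilon}\ge\mu d$ in $\mathcal{A}$. Since $\psi^{\varepsilon}\ge 0=d$ on $\partial B_\delta(x_0)$ and the interior decay estimate gives $\psi^{\varepsilon}\ge c/\varepsilon^{0}>\mu\delta/2\ge\mu d$ on $\{|x-x_0|\le\delta/2\}$ after possibly shrinking $\mu$, the lower inequality in \eqref{16} holds on all of $B_\delta(x_0)$.

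\textbf{Main obstacle.} The only real difficulty is the lower bound $\psi^{\varepsilon}\ge\mu d$: the natural barrier $e^{-\mu d/\varepsilon}$ fails to be a global supersolution for $L$ precisely because $\Div(A\nabla d)\to-\infty$ at the centre $x_{0}$. Localizing to an annulus trades this singularity for the need to bound $\phi$ by $e^{-c/\varepsilon}$ on the inner sphere; the iterated-shell argument above (or, equivalently, a direct comparison with the explicit radial solution in the isotropic case $A=I$, transplanted to variable $A$ via the ellipticity bounds $\lambda I\le A\le\Lambda I$) supplies exactly that missing interior decay.
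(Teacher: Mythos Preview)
Your approach via the Hopf--Cole substitution $\phi=e^{-\psi/\varepsilon}$ is genuinely different from the paper's and in several respects cleaner. The paper works directly with the nonlinear equation \eqref{10}: it sandwiches a solution between the subsolution $0$ and an affine supersolution $g(x)=\langle x,X_0\rangle+b$ (with $X_0$ chosen so that $\langle A(x)X_0,X_0\rangle>2$) via the Amann--Crandall theorem, gets the $L^\infty$ bound from $g$, and then simply asserts that $\rho d$ and $\mu d$ are super- and subsolutions of \eqref{10} for suitable $\rho,\mu$. Your linearization turns existence and uniqueness into a one-line Lax--Milgram/maximum-principle argument, and your derivation of the upper bound $\psi^\varepsilon\le\rho d$ is correct.

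The genuine gap is in your lower bound. The iterated-shell scheme does not get started: to run the comparison with $\bar v=e^{-\mu_1 d/\varepsilon}$ on the first annulus $\{\delta-r_0<|x-x_0|<\delta\}$ you already need $\phi\le e^{-\mu_1 r_0/\varepsilon}$ on its \emph{inner} boundary, which is precisely the interior decay you are trying to establish; and iterating on shrunken balls $B_{\delta-kr_0}(x_0)$ does not help, since every one of them still contains the singular point $x_0$ and its own solution has the same defect. A much simpler repair is available: replace $d$ by any smooth $\tilde d$ with $\tilde d=d$ near $\partial B_\delta(x_0)$ and bounded first and second derivatives on the whole ball (e.g.\ round off the cone near $x_0$). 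Then the bracket $\mu\varepsilon\,\Div(A\nabla\tilde d)-\mu^{2}\langle A\nabla\tilde d,\nabla\tilde d\rangle+1$ is everywhere positive for $\mu$ small and all small $\varepsilon$, so $e^{-\mu\tilde d/\varepsilon}$ is a global supersolution of $L$ with boundary value $1$; comparison gives $\phi\le e^{-\mu\tilde d/\varepsilon}$, and since $\tilde d\ge c_1 d$ for some $c_1\in(0,1)$ one obtains $\psi^\varepsilon\ge \mu\tilde d\ge c_1\mu\,d$ in a single step. (It is worth noting that the paper's bare claim that $\mu d$ is a subsolution ``for $\mu$ small'' glosses over exactly the same singularity of $\Div(A\nabla d)$ at $x_0$; you deserve credit for having spotted it.)
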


\begin{proof}
It is obvious that $0$ is a subsolution of \eqref{10}. With the help of \eqref{e98},  we fix a vector $X_0$ such that $\langle A(x)X_0,X_0\rangle>2$ for any $x\in \R^n$. Choose positive constant $b$ large enough, such that $g(x)=\langle x, X_0\rangle +b>0$ on $\partial B_\delta (x_0)$. For $\varepsilon>0$ small enough, we get
\begin{equation*}
\left\{\begin{array}{ll}
\varepsilon \Div(A(x)\nabla g)-\langle A(x)\nabla g, \nabla g\rangle +1<0, &\mbox{in $B_\delta(x_0)$},  \\
g>0, &\mbox{on $\partial B_\delta(x_0)$}.
\end{array}\right.
\end{equation*}
From \cite[Theorem 1]{Amann_Crandall1978}, we get a solution $\psi^\varepsilon$ of \eqref{10} satisfying $0<\psi^\varepsilon<g(x)$. Moreover, the solution of \eqref{10} is unique via maximum principal. Then we have $\|\psi^\varepsilon\|_{L^\infty(B_\delta(x_0))}\leq C_1$.

Now we prove the estimate \eqref{16}. Let $d(x)=d(x, \partial B_\delta(x_0))$. For $x\not=x_0$, $d(x)$ is a $C^2$ function. We define $\psi_\varepsilon^+(x)=\rho d(x)$ where $\rho$ is a positive constant large enough, so that $\psi_\varepsilon^+(x)$ is a supersolution of \eqref{10}.

Let $\psi_\varepsilon^-(x)=\mu d(x)$,  where $\mu$ is a positive small enough so that $\psi_\varepsilon^-(x)$ is a subsolution of \eqref{10}. Hence the estimate \eqref{16} holds.
\end{proof}

From the similar argument of \cite[Theorem 2.1]{Dancer_Wei1997}, we obtain the following lemma.

\begin{lemma}\label{lm4}
The solution $\psi_\varepsilon$ of \eqref{17} has the following estimate
\[\mu \nu_0 d(x, \partial B_\delta (x_0))\leq \psi_\varepsilon(x) \leq \rho \nu_0 d(x, \partial B_\delta (x_0)),\]
where $\nu_0=\sqrt{-\tilde{g}_i'(t_{i,\varepsilon})}$.
\end{lemma}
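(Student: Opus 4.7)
The plan is to mirror the barrier argument of Lemma \ref{lm3}, with the constant ``$+1$'' in \eqref{10} effectively replaced by ``$+\nu_0^2$'' after extracting the leading behavior of the exponential nonlinearity. First I would Taylor expand the reaction term around the zero $t_{i,\varepsilon}$ of $\tilde{g}_i$:
\[
e^{\psi/\varepsilon}\tilde{g}_i\bigl(t_{i,\varepsilon}-e^{-\psi/\varepsilon}\bigr)
= -\tilde{g}_i'(t_{i,\varepsilon})+O\!\bigl(e^{-\psi/\varepsilon}\bigr)
= \nu_0^2+O\!\bigl(e^{-\psi/\varepsilon}\bigr),
\]
valid uniformly in the regime where $\psi/\varepsilon$ is large. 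Thus, to leading order, \eqref{17} reads $\varepsilon\Div(A\nabla\psi)-\langle A\nabla\psi,\nabla\psi\rangle+\nu_0^2\approx 0$, and the rescaling $\tilde\psi=\psi/\nu_0$ converts it into the equation of Lemma~\ref{lm3} with $\varepsilon$ replaced by $\varepsilon/\nu_0$. The intuition is that the expected bound on $\tilde\psi$ provided by Lemma~\ref{lm3} gives $\mu d(x)\le \tilde\psi\le \rho d(x)$, i.e.\ the claimed estimate for $\psi_\varepsilon=\nu_0\tilde\psi$.

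Second, I would make this rigorous by constructing explicit barriers $\psi^{-}(x)=\mu\nu_0 d(x,\partial B_\delta(x_0))$ and $\psi^{+}(x)=\rho\nu_0 d(x,\partial B_\delta(x_0))$ (with small $\mu$ and large $\rho$ to be chosen), plug them into the left-hand side of \eqref{17}, and check the sign. The dominant contribution is $-\rho^2\nu_0^2\langle A\nabla d,\nabla d\rangle+\nu_0^2$, which is negative once $\rho^2\langle A\nabla d,\nabla d\rangle>1$, possible by uniform ellipticity \eqref{e98} and $|\nabla d|=1$ away from the center; this gives the super-solution property. The analogous choice of $\mu$ small (using that $\mu\nu_0\varepsilon\,\Div(A\nabla d)$ and the remainder term $O(e^{-\mu\nu_0 d/\varepsilon})$ are subdominant compared to $\nu_0^2$) gives the sub-solution property.

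Third, I would apply a comparison principle to conclude $\psi^{-}\le\psi_\varepsilon\le\psi^{+}$. The cleanest way to avoid technicalities tied to the quadratic gradient term $-\langle A\nabla\psi,\nabla\psi\rangle$ is to undo the substitution and work with $w^\varepsilon=t_{i,\varepsilon}-e^{-\psi_\varepsilon/\varepsilon}$, which satisfies the \emph{semilinear} equation $-\varepsilon^2\Div(A\nabla w)=\tilde{g}_i(w)$ with standard comparison (the relevant sign $\tilde{g}_i'(t_{i,\varepsilon})<0$ is guaranteed by $\nu_0^2>0$). In this variable the barriers become $t_{i,\varepsilon}-e^{-\mu\nu_0 d/\varepsilon}$ and $t_{i,\varepsilon}-e^{-\rho\nu_0 d/\varepsilon}$, and the computations above translate directly into sub-/super-solution inequalities for $w^\varepsilon$.

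The main obstacle I anticipate is twofold: (i) matching boundary values, since the prescribed boundary datum $-\varepsilon\log(t_{i,\varepsilon}-c_0)$ is $O(\varepsilon)$ rather than zero, so the barriers $\psi^{\pm}$ vanish on $\partial B_\delta(x_0)$ but $\psi_\varepsilon$ does not; this can be repaired by shifting the barriers by an $O(\varepsilon)$ additive constant, which is absorbed into the choice of $\mu,\rho$ (and affects the estimate only in an $O(\varepsilon)$-neighborhood of $\partial B_\delta(x_0)$, where the bound is trivial anyway); and (ii) validity of the Taylor expansion near the boundary, where $\psi_\varepsilon/\varepsilon$ is not large and the remainder $O(e^{-\psi_\varepsilon/\varepsilon})$ is not small. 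Both issues are handled by restricting the barrier comparison to the region $\{d(x,\partial B_\delta(x_0))\ge K\varepsilon\}$ for a large constant $K$ and using the continuity of $\psi_\varepsilon$ together with its explicit boundary values to cover the remaining $O(\varepsilon)$-layer directly.
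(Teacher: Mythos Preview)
Your heuristic is exactly right: once one knows the reaction term in \eqref{17} behaves like $\nu_0^2$, the problem reduces to that of Lemma~\ref{lm3} after a rescaling. The difficulty is entirely in how this is made rigorous, and here your plan and the paper diverge in an essential way.

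The gap in your argument is the treatment of the boundary layer. When you plug the barrier $\psi^{\pm}=\mu\nu_0 d$ or $\rho\nu_0 d$ into \eqref{17}, the mean--value form of the nonlinearity is $-\tilde g_i'(\xi)$ with $\xi\in\bigl(t_{i,\varepsilon}-e^{-\psi^{\pm}/\varepsilon},\,t_{i,\varepsilon}\bigr)$. Near $\partial B_\delta(x_0)$ the exponential $e^{-\psi^{\pm}/\varepsilon}$ is of order one, so $\xi$ ranges over a fixed interval and $-\tilde g_i'(\xi)$ need not be close to $\nu_0^2$; for $h_2$ it can even vanish (on $\{t<t_0\}$), which destroys the subsolution inequality for $\psi^-$. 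Your proposed fix---restrict to $\{d\ge K\varepsilon\}$ and patch the layer ``by continuity''---is circular: to run comparison on $\{d\ge K\varepsilon\}$ you would need to already know $\psi_\varepsilon$ on the inner boundary $\{d=K\varepsilon\}$, and continuity alone (without a gradient bound) does not give that.

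The paper avoids this trap by reversing the roles of $\psi_\varepsilon$ and the comparison function. It does \emph{not} try to make $\mu\nu_0 d$ or $\rho\nu_0 d$ sub/super-solutions of \eqref{17}. Instead it uses the previously established interior convergence $w^\varepsilon_{i,a_0,c_0}\to t_{i,\varepsilon}$ on compact sets to fix a region on which $e^{-\psi_\varepsilon/\varepsilon}<\eta$; there the mean--value theorem gives
\[
\tau\;\le\;e^{\psi_\varepsilon/\varepsilon}\tilde g_i\!\bigl(t_{i,\varepsilon}-e^{-\psi_\varepsilon/\varepsilon}\bigr)\;\le\;\tilde\tau,
\qquad
\tau=\min_{[t_{i,\varepsilon}-\eta,\,t_{i,\varepsilon}]}(-\tilde g_i'),\quad
\tilde\tau=\max_{[t_{i,\varepsilon}-\eta,\,t_{i,\varepsilon}]}(-\tilde g_i').
\]
Hence $\psi_\varepsilon$ itself is, on that region, a supersolution of the constant--source problem $\varepsilon\Div(A\nabla\psi)-\langle A\nabla\psi,\nabla\psi\rangle+\tau=0$ (lower bound) and a subsolution of the analogue with $\tilde\tau$ (upper bound). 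One then compares $\psi_\varepsilon$ to the \emph{exact solutions} of these constant--source problems, to which Lemma~\ref{lm3} (after the rescaling $\psi\mapsto\psi/\sqrt{\tau}$) applies directly, yielding $\sqrt{\tau}\,\mu\,d\le\psi_\varepsilon\le\sqrt{\tilde\tau}\,\rho\,d+\tilde e$. The regions are chosen as a slightly smaller ball $B_{\bar\delta}(x_0)$ for the lower bound and the set $T=\{w^\varepsilon>t_{i,\varepsilon}-\eta\}$ with an auxiliary boundary constant $\tilde e$ for the upper bound; letting $\bar\delta\to\delta$, $\eta\to0$, $\tilde e\to0$ gives $\tau,\tilde\tau\to\nu_0^2$ and the stated estimate. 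The point is that the ``good regime'' is identified for $\psi_\varepsilon$ via prior information on $w^\varepsilon$, not for the barriers, so no boundary--layer circularity arises.
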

\begin{proof}
Let $\bar{\delta}\in (0,\delta)$ be a constant sufficient near $\delta$. Then $t_{i,\varepsilon}-w^\varepsilon_{i,a_0,c_0}$ converges to $0$ on $B_{\bar\delta}(x_0)$. For any $\eta>0$, we have $t_{i,\varepsilon}-w^\varepsilon_{i,a_0,c_0}<\eta$ on $B_{\bar\delta}(x_0)$ for small $\varepsilon>0$. Let $w_\varepsilon^+$ be the unique solution of the following problem
\begin{equation}\label{47}
\left\{\begin{array}{ll}
\varepsilon \Div(A(x)\nabla w_\varepsilon^+)-\langle A(x)\nabla w_\varepsilon^+, \nabla w_\varepsilon^+\rangle +\tau=0, &\mbox{in $B_{\bar\delta}(x_0)$}, \\
w_\varepsilon^+=0, &\mbox{on $\partial B_{\bar\delta}(x_0)$},
\end{array}\right.
\end{equation}
where
\[\tau=\min_{t_{i,\varepsilon}-\eta<s<t_{i,\varepsilon}}(-\tilde{g}_i'(s)), \quad \mathrm{and} \quad \tilde\tau=\max_{t_{i,\varepsilon}-\eta<s<t_{i,\varepsilon}}(-\tilde{g}_i'(s)).\]
It is apparent that $\psi_\varepsilon$ is a supersolution of \eqref{47}. Then we have
\begin{equation}\label{19}
\psi_\varepsilon(x) \geq w_\varepsilon^+(x)\geq \sqrt{\tau}\mu d(x, \partial B_{\bar{\delta}}(x_0)), \quad \mathrm{where} \quad x\in  B_{\bar{\delta}}(x_0)).
\end{equation}

Now we construct a supersolution of \eqref{17}. Define
\[T=\{x\in B_\delta(x_0): w^\varepsilon_{i,a_0,c_0}> t_{i,\varepsilon}-\eta \}.\]
Since $t_{i,\varepsilon}-w^\varepsilon_{i,a_0,c_0}$ converges to $0$ on compact sets, we realize that given any compact set $K\subset B_{\delta}(x_0)$, we have $K\subset T$ for $\varepsilon$ small enough. Let $w_\varepsilon^-$ be the unique solution of the following problem
\begin{equation*}
\left\{\begin{array}{ll}
\varepsilon \Div(A(x)\nabla w_\varepsilon^-)-\langle A(x)\nabla w_\varepsilon^-, \nabla w_\varepsilon^-\rangle +\tilde{\tau}=0, &\mbox{in $B_\delta(x_0)$}, \\
w_\varepsilon^-=\tilde{e}, &\mbox{on $\partial B_\delta(x_0)$},
\end{array}\right.
\end{equation*}
where $\tilde{e}$ is a fixed constant some enough. From Lemma \ref{lm3}, we get
\[w_\varepsilon^-(x)\geq \sqrt{\tilde{\tau}}\mu d(x, \partial B_\delta(x_0))+\tilde{e}.\]
It is easy to see that $w_\varepsilon^-(x)$ is a supersolution of \eqref{17} on $T$. However, on $\partial T$, we have $\psi_\varepsilon(x)=-\varepsilon \log \eta\leq \tilde{e}/2\leq w_\varepsilon^-(x)$. Hence the following estimate holds on $T$:
\begin{equation}\label{18}
\psi_\varepsilon(x)\leq w_\varepsilon^-(x)\leq \sqrt{\tilde\tau}\rho d(x, \partial B_\delta(x_0))+\tilde{e}.
\end{equation}
On $B_\delta(x_0)\char92 T$, we also get the estimate above from Lemma \ref{lm3} and  direct computation.

Let $\bar\delta\to \delta$, $\eta\to 0$ and $\tilde{e}\to 0$, we get this lemma from \eqref{19} and \eqref{18}.
\end{proof}

With the help of Lemma \ref{lm4}, we get that  we get $w^\varepsilon_{i,a_0,c_0}=t_i+o(\varepsilon^2)$ in $B_{\frac{\delta}2}(x_0)$. That is
\[w^\varepsilon_{1,a_0,c_0}=\varepsilon^2\frac{a_0}{pB^{\frac{p-1}p}}+o(\varepsilon^2), \quad \mathrm{and} \quad w^\varepsilon_{2,a_0,c_0}=\varepsilon^2\frac{a_0}{pb^{\frac{p-1}p}}+o(\varepsilon^2).\]
From \eqref{e99}, \eqref{e100} and  Lemma \ref{lm2}, we also get
\[w^\varepsilon_{2, X_1, -M}\leq v_\varepsilon\leq w^\varepsilon_{1, Y_1,0}\quad \mathrm{in} \quad B_{\delta}(x_0).\]
For $\varepsilon>0$ small enough,
\[\frac{X_1}{pb^{\frac{p-1}p}}-\eta\leq \frac{w^\varepsilon_{1, X_1, -M}}{\varepsilon^2}\leq \frac{v_\varepsilon}{\varepsilon^2}\leq \frac{w^\varepsilon_{2,Y_1,0}}{\varepsilon^2}\leq \frac{Y_1}{pB^{\frac{p-1}p}}+\eta.\]
In fact,
\[\frac{Y_1}{pB^{\frac{p-1}p}}+\eta\leq f(x_0)+2\eta-f(x_0)\eta_1\leq f(x)+4\eta,\]
and
\[\frac{X_1}{pb^{\frac{p-1}p}}-\eta\geq f(x_0)-2\eta+(f(x_0)-\eta)\eta_1\geq f(x)-4\eta.\]
Hence we have
\[\left|\frac{v_\varepsilon(x)}{\varepsilon^2}-f(x)\right|\leq 4\eta, \quad \mathrm{where} \quad x\in B_{\frac{\delta}2}(x_0). \]

Let $K$ be a compact subset in $\Omega$. For any $\eta>0$, we cover $K$ by a finite number of balls $B_{\frac{\delta}2}(x_0)$, $x_0\in K$. Using the relationship above, we get for $\varepsilon>0$ small enough,
\[\left|\frac{v_\varepsilon(x)}{\varepsilon^2}-f(x)\right|\leq 4\eta, \quad \mathrm{where} \quad x\in K. \]
Then we get
\[\underline{u}_\varepsilon(x)=\mathbf{\Psi}^{\frac1p}(x)+\varepsilon^2\left(\frac{\Div(A(x)\nabla \mathbf{\Psi}^{\frac1p})}{p\mathbf{\Psi}^{\frac{p-1}p}}+o(1)\right), \qquad x\in K.\]
Then $\bar{u}_\varepsilon:=-\underline{u}_\varepsilon$ is the unique negative solution of \eqref{1} satisfying \eqref{e101}. Hence  Proposition \ref{pro1} follows.

\section{Projections of the error}\label{sect4}
Recall $S(\mathcal{V})$ is expanded in \eqref{74}. In this section, we expand the terms
\[\int_\R \eta_\delta^\varepsilon S(\mathcal{V})w_{j,x}dx\qquad  \mathrm{and}  \qquad \int_\R \eta_\delta^\varepsilon S(\mathcal{V})w_{j,x}dx.\]

We first consider the term $\int_\R \eta_\delta^\varepsilon S(\mathcal{V})w_{j,x}dx$. It is easy to get
\begin{equation*}
\int_\R \sum_{k=1}^N \varepsilon (\lambda_0 e_k +\varepsilon^2 a_{11}\alpha^{1-p}e_k'')Z_k w_{j,x}=O(\varepsilon^{\frac52+\mu_1}\sum_{k\not=j}(|e_k|+\varepsilon^2|e_k''|)).
\end{equation*}
From \eqref{75}, we get
\begin{equation*}
\sum_{k\not=j}\varepsilon^2\int_\R \hat{\mathbf{A}}_k(z,x-\beta f_k)w_{j,x}dx=O(\varepsilon^3)\sum_{k\not=j}[|f_k|+|f_k'|+|f_k''|+\varepsilon |e_k'|(|f_k|+|f_k'|)].
\end{equation*}
However
\begin{eqnarray*}
&&\int_\R p\alpha_p \chi_{\mathcal{U}_j}[|w_j-1|^{p-2}(w_j-1)+1]\left(e^{-\sqrt{p}(x-\beta f_{j-1})}+e^{\sqrt{p}(x-\beta f_{j+1})}\right)w_{j,x}dx \\
&=& \int_{-\frac{\beta}2(f_j-f_{j-1})}^{\frac{\beta}2(f_{j+1}-f_j)}p\alpha_p [|w-1|^{p-2}(w-1)+1]\left[e^{-\sqrt{p}x}e^{-\sqrt{p}\beta (f_j-f_{j-1})}+e^{\sqrt{p}x}e^{-\sqrt{p}\beta (f_{j+1}-f_j)}\right]w_x(x)dx \\
&=&p\alpha_p C_0 \left[e^{-\sqrt{p}\beta (f_j-f_{j-1})}-e^{-\sqrt{p}\beta(f_{j+1}-f_j)}\right] +O(\varepsilon^{3-\mu}),
\end{eqnarray*}
where
\begin{equation}\label{e114}
C_0=\frac12\int_\R [|w-1|^{p-2}(w-1)+1](e^{-\sqrt{p}x}-e^{\sqrt{p}x})w_xdx>0
\end{equation}
from Lemma \ref{lm9}.
Along the same lines, we get
\begin{equation*}
\sum_{k\not=j} p\alpha_p\int_\R \chi_{\mathcal{U}_k}[|w_k-1|^{p-2}(w_k-1)+1]\left[e^{-\sqrt{p}(x-\beta f_{k-1})}+e^{\sqrt{p}(x-\beta f_{k+1})}\right]w_{j,x}dx =O(\varepsilon^{\frac52+\mu_1}),
\end{equation*}

We first derive some identities. Taking derivatives of both sides of \eqref{34}, we have
\begin{eqnarray}\label{76}
&&-w_{0,xxx}-p|w-1|^{p-2}(w-1)w_{0,x}-p(p-1)|w-1|^{p-2}w_0w_x \\
\nonumber&=&w_{xx}+\frac{2p}{p+3}[|w-1|^{p-2}(w-1)+1]+\frac{2p(p-1)}{p+3}|w-1|^{p-2}xw_x.
\end{eqnarray}
Multiplying the both side of \eqref{76} by $w_2$ and $w_3$, respectively and integrating,  we get the following identities from \eqref{35} and \eqref{36}:
\begin{eqnarray}\label{e58}
&&\int_\R [w_{0,xx}+p(p-1)|w-1|^{p-2}w_0w_2]w_x dx \\
\nonumber&=&\int_\R w_{2,x}w_xdx-\frac{2p}{p+3}\int_\R [|w-1|^{p-2}(w-1)+1]w_2dx-\frac{2p(p-1)}{p+3}\int_\R |w-1|^{p-2}xw_2 w_xdx
\end{eqnarray}
and
\begin{eqnarray}\label{e59}
&&(p-1)\int_\R |w-1|^{p-2}w_0w_xdx+p(p-1)\int_\R |w-1|^{p-2}w_0w_3 w_x dx \\
\nonumber&=& \int_\R w_{3,x}w_x dx-\frac{2p}{p+3}\int_\R [|w-1|^{p-2}(w-1)+1]w_3dx-\frac{2p(p-1)}{p+3}\int_\R |w-1|^{p-2}xw_3 w_xdx.
\end{eqnarray}
From \eqref{37}, \eqref{35} and \eqref{36}, we get the following identities via the same method as above:
\begin{eqnarray}\label{e60}
\nonumber&&\int_\R [w_{1,xx}+p(p-1)|w-1|^{p-2}w_1w_2]w_x dx \\
\nonumber&=&-\int_\R w_{2,x}w_xdx-\int_\R xw_{2,xx}w_x dx+\frac{p}{p+3}\int_\R [|w-1|^{p-2}(w-1)+1]w_2dx \\
&&+\frac{p(p-1)}{p+3}\int_\R |w-1|^{p-2}xw_2w_xdx,
\end{eqnarray}
and
\begin{eqnarray}\label{e61}
&&\int_\R [xw_{3,xx}+p(p-1)|w-1|^{p-2}w_1w_3]w_xdx +(p-1)\int_\R |w-1|^{p-2}w_1w_xdx \\
\nonumber&=& -\int_\R w_{3,x}w_xdx +\frac{p}{p+3}\int_\R [|w-1|^{p-2}(w-1)+1]w_3dx+\frac{p(p-1)}{p+3}\int_\R |w-1|^{p-2}xw_3w_xdx.
\end{eqnarray}

By virtue of the condition \eqref{e57} and direct computation, we get
\begin{eqnarray*}
&&\int_\R \hat{\mathbf{A}}_j(z,x-\beta f_j)w_{j,x}dx \\
&=&\int_\R w_x^2dx \left\{-a_{11}\alpha^{1-p}(\beta f_j)'' -2a_{11}\alpha^{-p}\alpha'(\beta f_j)' +a_{11}\alpha^{1-p}\beta^{-1}\beta'(\beta f_j)'+a_{22}\alpha^{1-p}(\beta f_j)'\right. \\
&&-b_{11}\alpha^{1-p}(\beta f_j)'+2a_{11}\alpha^{-p}\alpha'\beta'f_j-a_{11}\alpha^{1-p}\beta^{-1}(\beta')^2f_j+a_{11}\alpha^{1-p}\beta''f_j+2a_{22}\alpha^{-p}\alpha'\beta f_j \\
&&\left. +\frac{p+3}2\beta^{-1}\alpha^{-1}\mathbf{q}_{tt}f_j-a_{33}\alpha^{1-p}\beta f_j -2a_{22}\alpha^{1-p}\beta'f_j +b_{11}\alpha^{1-p}\beta' f_j+b_{22}\alpha^{1-p}\beta f_j+2a_{22}\alpha^{1-p}\beta' f_j\right\} \\
&&+\int_\R Z_xw_x dx\left\{-2\varepsilon a_{11}\alpha^{1-p}(\beta f_j)'e_j +2\varepsilon a_{11}\alpha^{1-p}\beta'f_j e_j' +2\varepsilon a_{22}\alpha^{1-p}\beta f_j e_j'\right\} \\
&&+\alpha^{2-2p}\beta^3(a_{32})^2f_j\left\{\int_\R \left[w_{1,xx}+p(p-1)|w-1|^{p-2}w_1w_2+xw_{2,xx}-\frac{p(p-1)}{p+3}|w-1|^{p-2}xw_2\right]w_xdx\right.  \\
&&-\frac{p}{p+3}\int_\R \left[xw_{3,xx}+p(p-1)|w-1|^{p-2}w_1w_3-\frac{p(p-1)}{p+3}|w-1|^{p-2}xw_3+(p-1)|w-1|^{p-2}w_1\right]w_xdx \\
&&\left. +\frac{p(p-1)}{(p+3)^2} \int_\R x\left[|w-1|^{p-2}-1\right]w_xdx\right\} \\
&&\alpha^{2-2p}\beta^3 a_{32}b_{21} f_j \left\{\left[w_{0,xx}+p(p-1)|w-1|^{p-2}w_0w_2+w_{2,x} +\frac{2p(p-1)}{p+3}|w-1|^{p-2}xw_2\right]w_x dx\right. \\
&&-\frac{p}{p+3}\int_\R \left[p(p-1)|w-1|^{p-2}w_0w_3+(p-1)|w-1|^{p-2}w_0+\frac{2p(p-1)}{p+3}|w-1|^{p-2}xw_3+w_{3,x}\right]w_xdx  \\
&& +\frac{2p}{p+3}\int_\R \left[xw_{3,xx}+p(p-1)|w-1|^{p-2}w_1w_3+(p-1)|w-1|^{p-2}w_1-\frac{p(p-1)}{p+3}|w-1|^{p-2}xw_3\right]w_x dx  \\
&&\left.-\frac{4p(p-1)}{(p+3)^2}\int_\R x\left[|w-1|^{p-2}-1\right]w_x dx\right\} \\
&& +\alpha^{2-2p}\beta^3(b_{21})^2f_j \left\{\frac{2p}{p+3}\int_\R \left[(p-1)|w-1|^{p-2}w_0+p(p-1)|w-1|^{p-2}w_0w_3+w_{3,x}\right]w_x dx\right. \\
&& \left.+\frac{4p^2(p-1)}{(p+3)^2}\int_\R|w-1|^{p-2}xw_3w_x dx +\frac{4p(p-1)}{(p+3)^2}\int_\R x\left[|w-1|^{p-2}-1\right]w_x dx \right\}.
\end{eqnarray*}
From \eqref{31}, \eqref{32} and direct computation, we have
\[\int_\R w_{2,x}w_x dx=-\frac14\int_\R w_x^2dx, \qquad \int_\R w_{3,x}w_x dx=-\frac{p+3}{4p}\int_\R w_x^2 dx,\]
\[\int_\R \left[|w-1|^{p-2}(w-1)+1\right]w_2 dx=\frac{p+3}{4p}\int_\R w_x^2 dx,\]
\[\int_\R x\left[|w-1|^{p-2}-1\right]w_xdx =-\frac1{p-1}\int_\R \left[|w-1|^{p-2}(w-1)+1\right]dx +\frac{p+3}{2p}\int_\R w_x^2dx, \]
and
\[\int_\R \left[|w-1|^{p-2}(w-1)+1\right]w_3 dx=\frac{(p+3)(p-3)}{4p^2}\int_\R w_x^2 dx-\frac1p\int_\R \left[|w-1|^{p-2}(w-1)+1\right]dx.\]
From these identities, \eqref{e57}, \eqref{31}, \eqref{32}, \eqref{e58}, \eqref{e59}, \eqref{e60} and \eqref{e61}, we get
\begin{eqnarray*}
\int_\R \hat{\mathbf{A}}_j(z,x-\beta f_j)w_{j,x}dx&=& \alpha^{1-p}\beta \int_\R w_x^2 dx\left\{-a_{11}f_j''+\left[a_{22}-b_{11}-a_{11}\left(\beta^{-1}\beta' +2\alpha^{-1}\alpha'\right)\right]f_j'\right. \\
&& \left.+\left[a_{22}\left(\beta^{-1}\beta'+2\alpha^{-1}\alpha'\right) +b_{22}-a_{33}+\frac{p+3}2\alpha^{p-2}\beta^{-2} \mathbf{q}_{tt}\right]f_j \right\} \\
&& +\alpha^{1-p}\beta\left(-2\varepsilon a_{11}e_j' f_j' +2\varepsilon a_{22}e_j' f_j\right)\int_\R Z_x w_x dx \\
&& -\frac{p+1}{p+3}\alpha^{2-2p}\beta^3 a_{32}b_{21} f_j\int_\R w_x^2 dx+\frac{p+2}{2(p+3)}\alpha^{2-2p}\beta^3 (a_{32})^2 f_j\int_\R w_x^2 dx \\
&& -\frac2{p+3}\alpha^{2-2p}\beta^3(b_{21})^2 f_j \int_\R w_x^2 dx.
\end{eqnarray*}
However, from the definitions of $\tilde{\theta}_j$ and $\tilde{\theta}_{j2}$ in \eqref{e62} and \eqref{e63}, we get
\[\sum_{k=1}^N \int_\R \tilde{\theta}_k w_{j,x}=O(\varepsilon^{3-\mu}), \quad \mathrm{and} \quad \sum_{k=1}^N \int_\R \tilde{\theta}_{k2} w_{j,x}=O(\varepsilon^{3-\mu})\]
In summary, we get
\begin{eqnarray}\label{e94}
\nonumber\int_\R \eta_\delta^\varepsilon S(\mathcal{V})w_{j,x}dx&=& \varepsilon^2 \alpha^{1-p}\beta \int_\R w_x^2 dx\left\{ -a_{11}f_j''+\left[a_{22}-b_{11}-a_{11}\left(\beta^{-1}\beta' +2\alpha^{-1}\alpha'\right)\right]f_j' \right. \\
\nonumber&&+\left[a_{22}\left(\beta^{-1}\beta'+2\alpha^{-1}\alpha'\right) +b_{22}-a_{33}+\frac{p+3}2\alpha^{p-2}\beta^{-2} \mathbf{q}_{tt}\right]f_j \\
\nonumber&&\left. + \left[\frac{p+2}{2(p+3)}\alpha^{1-p}\beta^2 (a_{32})^2  -\frac{p+1}{p+3}\alpha^{1-p}\beta^2 a_{32}b_{21}-\frac2{p+3}\alpha^{1-p}\beta^2(b_{21})^2\right]f_j\right\} \\
\nonumber&& +\varepsilon^2\alpha^{1-p}\beta\left(-2\varepsilon a_{11}e_j' f_j' +2\varepsilon a_{22}e_j' f_j\right)\int_\R Z_x w_x dx   \\
&& +p\alpha_pC_0 \left[e^{-\sqrt{p}\beta (f_j-f_{j-1})}-e^{-\sqrt{p}\beta(f_{j+1}-f_j)}\right] +\varepsilon^2 \Theta_j(\varepsilon z)
\end{eqnarray}
where $\|\Theta_j\|_{L^2(0,1)}\leq C\varepsilon^{1-\mu}$.

Now we consider the term $\int_\R \eta_\delta^\varepsilon S(\mathcal{V})Z_j(x)dx$.  It is easy to get
\begin{equation*}
\sum_{k=1}^N \varepsilon (\varepsilon^2 a_{11}\alpha^{1-p}e_k''+\lambda_0 e_k )\int_\R Z_k Z_j dx = \varepsilon (\varepsilon^2 a_{11}\alpha^{1-p}e_j''+\lambda_0 e_j) +O\left(\varepsilon^{3-\mu}\sum_{k\not=j}\left(|e_k|+|e_k''|\right)\right)
\end{equation*}
and
\[\sum_{k=1}^N\varepsilon^2 \int_\R \hat{\mathbf{A}}_k(z, x-\beta f_k)Z_j(x)dx=O\left(\varepsilon^{4-\mu}\sum_{k\not=j}\left[|f_k|+|f_k'|+|f_k''|+\varepsilon |e_k'|\left(|f_k|+|f_k'|\right)\right]\right).\]
However,
\begin{eqnarray*}
&&p\alpha_p \int_\R \chi_{\mathcal{U}_j}\left[|w_k-1|^{p-2}(w_k-1)\right]\left(e^{-\sqrt{p}(x-\beta f_{k-1})}+e^{\sqrt{p}(x-\beta f_{k+1})}\right)Z_j(x)dx \\
&=& p \alpha_p \int_{\frac12\beta(f_{j-1}+f_j)}^{\frac12\beta(f_{j+1}+f_j)} \left[|w_k-1|^{p-2}(w_k-1)\right]\left(e^{-\sqrt{p}(x-\beta f_{k-1})}+e^{\sqrt{p}(x-\beta f_{k+1})}\right)Z_j(x)dx \\
&=& p \alpha_p e^{-\sqrt{p}(f_j-f_{j-1})} \int_{\frac12\beta(f_{j-1}-f_j)}^{\frac12\beta(f_{j+1}-f_j)}\left[|w-1|^{p-2}(w-1)+1\right]e^{-\sqrt{p}x}Z(x)dx \\
&&+ p \alpha_p e^{-\sqrt{p}(f_{j+1}-f_j)} \int_{\frac12\beta(f_{j-1}-f_j)}^{\frac12\beta(f_{j+1}-f_j)}\left[|w-1|^{p-2}(w-1)+1\right]e^{\sqrt{p}x}Z(x)dx \\
&=& p\alpha_p C_1\left[e^{-\sqrt{p}(f_j-f_{j-1})}+ e^{-\sqrt{p}(f_{j+1}-f_j)}\right],
\end{eqnarray*}
where
\[C_1=\frac12\int_\R [|w-1|^{p-2}(w-1)+1]\left(e^{\sqrt{p}x}+e^{-\sqrt{p}x}\right)Z(x)dx.\]
Other terms is estimated by a similar method. We get
\begin{equation}\label{e95}
\int_\R \eta_\delta^\varepsilon S(\mathcal{V})Z_j(x)dx=\varepsilon(\varepsilon^2 a_{11}\alpha^{1-p}e_j'' +\lambda_0 e_j) +p\alpha_p C_1\left[e^{-\sqrt{p}(f_j-f_{j-1})}+ e^{-\sqrt{p}(f_{j+1}-f_j)}\right] +\varepsilon^2\Xi_j(\varepsilon z),
\end{equation}
where $\|\Xi_j\|_{L^2(0,1)}\le C\varepsilon^{1-\mu}$.

\section*{Declarations}
\noindent\textbf{Data availability statement:} No data was used for the research described in the article.

\noindent\textbf{Conflict of Interest:} No potential conflict of interest was reported by the author.

\end{document}